\newcommand\norm[1]{\left\lVert#1\right\rVert}
\newtheorem{theorem}{Theorem}[section]
\newtheorem{thm}{Theorem}[section]
\newtheorem{definition}{Definition}[section]
\newtheorem{lemma}{Lemma}[section]
\newtheorem{corollary}{Corollary}[section]
\newtheorem{remark}{Remark}[section]
\newtheorem{proposition}{Proposition}[section]
\newcommand{\bea}{\begin{eqnarray*}}
\newcommand{\eea}{\end{eqnarray*}}                          
\newcommand{\ben}{\begin{eqnarray}}
\newcommand{\een}{\end{eqnarray}}
\newcommand{\beq}{\begin{equation}}
\newcommand{\eeq}{\end{equation}}
\newcommand{\CC}{\ensuremath{\mathbb{C}}}
\newcommand{\RR}{\ensuremath{\mathbb{R}}}
\newcommand{\TT}{\ensuremath{\mathbb{T}}}
\newcommand{\ZZ}{\ensuremath{\mathbb{Z}}}
\renewcommand{\Im}{\operatorname{Im}}
\renewcommand{\Re}{\operatorname{Re}}
\renewcommand{\hat}[1]{\widehat{#1}}
\begin{document}

\title{Justification of Peregrine soliton from full  water waves}
\email{qingtang@umich.edu}
\address{University of Michigan, Ann Arbor, 
Department of Mathematics
}

\author{Qingtang Su}

\begin{abstract}
The Peregrine soliton $Q(x,t)=e^{it}(1-\frac{4(1+2it)}{1+4x^2+4t^2})$ is an exact solution of the 1d focusing nonlinear schr\"{o}dinger equation (NLS) $iB_t+B_{xx}=-2|B|^2B$, having the feature that it decays to $e^{it}$ at the spatial and time infinities, and with a peak and troughs in a local region. 
It is considered as a prototype of  the  rogue waves by the ocean waves community. The 1D NLS is related to the full water wave system in the sense that asymptotically it is the envelope equation for the full water waves. 
In this paper, working in the framework of water waves which decay non-tangentially, we give a rigorous justification of the NLS from the full water waves equation in a regime that allows for  the Peregrine soliton. As a byproduct, we prove long time existence of solutions for the full water waves equation with small initial data in space of the form $H^s(\mathbb{R})+H^{s'}(\mathbb{T})$, where $s\geq 4, s'>s+\frac{3}{2}$.

\end{abstract}

\maketitle

\section{Introduction}
The motion of the two dimensional inviscid incompressible irrotational infinite depth water waves without surface tension is described by the free boundary Euler equations (It's called the water wave equations, or water wave system)
\begin{equation}\label{system}
    \begin{cases}
    v_t+v\cdot\nabla v=-\nabla P-(0,1)\quad \quad & on ~\Omega(t),\quad t\geq 0  \\
    div~v=0,\quad curl~v=0,  & on~\Omega(t), \quad  t\geq 0\\
    P\Big|_{\Sigma(t)}\equiv 0,  & t\geq 0\\
    (1,v)~\text{ is tangent to the free surface } (t,\Sigma(t)).
    \end{cases}
\end{equation}
Here $\Omega(t)$ is the fluid region, with the free interface $\Sigma(t):=\partial \Omega(t)$, $v$ is the fluid velocity, and $P$ is the pressure. $\Sigma(t)$ separates the fluid region below $\Sigma(t)$ with density one from the air with density zero.  We identify a point $(x,y)\in \RR^2$ with $x+iy\in \CC$. It implies from $div~v=0$ and $curl~v=0$ that $\bar{v}$ is holomoprhic in $\Omega(t)$, so $v$ is completely determined by its boundary value on $\Sigma(t)$. Let  the interface $\Sigma(t)$ be given by $z=z(\alpha,t)$, with $\alpha\in\mathbb R$ the Lagrangian coordinate, so that $z_t(\alpha,t)=v(z(\alpha,t),t)$, and $v_t+v\cdot\nabla v\Big|_{\Sigma(t)}=z_{tt}$. Because $P(z(\alpha,t),t)\equiv 0$, we can write $\nabla p\Big|_{\Sigma(t)}=-iaz_{\alpha}$, where $a:=-\frac{\partial P}{\partial \boldmath{n}}\frac1{|z_\alpha|}$ is a real valued function. So the momentum equation $v_t+v\cdot \nabla v=-(0,1)-\nabla p$ along $\Sigma(t)$ can be written as
\begin{equation}
z_{tt}-iaz_{\alpha}=-i.
\end{equation}
Since $\bar{z}_t$ is the boundary value of $\bar{v}$, 
the water wave equations (\ref{system}) is equivalent to 
\begin{equation}\label{system_boundary}
\begin{cases}
z_{tt}-iaz_{\alpha}=-i\\
\bar{z}_t~is~holomorphic.
\end{cases}
\end{equation}
Here, by $\bar{z}_t$ holomorphic, we mean that there is a holomorphic function $\Phi(\cdot,t)$ on $\Omega(t)$ such that $\bar{z}_t(\alpha,t)=\Phi(z(\alpha,t),t)$.


The motion of water waves is a fascinating subject that has attracted the attention of scientists for centuries. For early works, see Newton \cite{newton}, 
Stokes\cite{Stokes}, Levi-Civita\cite{Levi-Civita}, and G.I. Taylor \cite{Taylor}. In recent years there have been numerous study on the wellposedness  of the periodic water waves or water waves which are at rest at spatial infinity.   Nalimov \cite{Nalimov}, Yosihara\cite{Yosihara} and Craig \cite{Craig} proved local well-posedness for 2d water waves equation (\ref{system}) for small initial data. In S. Wu's breakthrough works \cite{Wu1997}\cite{Wu1999}, she proved that for $n\geq 2$ the important strong Taylor sign condition
\begin{equation}
-\frac{\partial P}{\partial \boldmath{n}}\Big|_{\Sigma(t)}\geq c_0>0
\end{equation}
always holds for the infinite depth water wave system (\ref{system}), as long as the interface is non-self-intersecting and smooth, and she proved that the initial value problem for (\ref{system}) is locally well-posed in $H^s(\mathbb{R}), s\geq 4$ without smallness assumption. Since then, a lot of interesting local well-posedness results were obtaind, see for example \cite{alazard2014cauchy}, \cite{ambrose2005zero}, \cite{christodoulou2000motion}, \cite{coutand2007well}, \cite{iguchi2001well}, \cite{lannes2005well}, \cite{lindblad2005well}, \cite{ogawa2002free}, \cite{shatah2006geometry}, \cite{zhang2008free}.
 Recently, almost global and global well-posedness for water waves (\ref{system}) under irrotational assumption have also been proved, see \cite{Wu2009}, \cite{Wu2011}, \cite{germain2012global}, \cite{Ionescu2015}, \cite{AlazardDelort},  and see also \cite{HunterTataruIfrim1}, \cite{HunterTataruIfrim2},  and \cite{wang2018global}.  More recently, there are strong interests in understanding the singularities of water waves, see for example  \cite{kinsey2018priori}, \cite{Wu2}, \cite{Wu3}, \cite{Wu4}. For the formation of splash singularities, see for example \cite{castro2012finite}\cite{castro2013finite}\cite{coutand2014finite}\cite{coutand2016impossibility} . Note that all the aforementioned works assume either the water waves is periodic or at rest at spatial infinity. Regarding water waves that are nonvanishing at infinity\footnote{By nonvanishing, we mean that the water wave is neither periodic nor at rest at spatial infinity}, in \cite{Alazard2016337}, Alazard, Burq and Zuily showed that the water waves system is locally wellposed in Kato's uniform local Sobolev spaces $H_{ul}^s$.


Another important research direction concerns the behavior of the water waves in various asymptotic regimes, 
see for example \cite{craig1985existence}\cite{schneider2000long}\cite{alvarez2008large}. The 1d cubic NLS 
\begin{equation}\label{NLS}
iu_t+u_{xx}=-2|u|^2u
\end{equation}
is relevant in deep water regime. It
is completely integrable, and has many exact solutions.  
The 1d NLS is related to the full water wave system, in the sense that asymptotically it is the envelope equation for the free interface of the water waves. If one performs multiscale analysis to determine the modulation approximations to the solution of the finite or infinite depth 2d water waves equations,  i.e., a solution $z(\alpha,t)$ of the parametrized free interface which is to the leading order a wave packet of the form
\begin{equation}\label{amplitudeevolve}
W(\alpha,t):=\alpha+\epsilon B(X, T)e^{i(k\alpha+\gamma t)}  \quad  (\epsilon ~small,\quad k, \gamma: constant),
\end{equation}
then $B$ solves the 1d focusing cubic NLS.
Here $X=\epsilon(\alpha+\frac{1}{2\gamma} t), T=\epsilon^2t$, and $\gamma=\sqrt{k}$. So the envelope $B$ is a profile that travels at the group velocity $\frac{1}{2\gamma}=\frac{d\gamma}{dk}$ determined by the dispersion relation of the water wave equations on time scale $O(\epsilon^{-1})$, and evolves according to the NLS on time scale $O(\epsilon^{-2})$. 

This discovery was derived formally by Zakharov \cite{zakharov1968stability} for the infinite depth case,  and by
Hasimoto and Ono \cite{hasimoto1972nonlinear} for the finite depth case. In \cite{craig1992nonlinear}, Craig, Sulem and Sulem applied
modulation analysis to the finite depth 2D water wave equation, derived an approximate
solution of the form of a wave packet and showed that the modulation approximation
satisfies the 2D finite depth water wave equation to the leading order. In \cite{schneider2011justification}, Schneider and Wayne justified the NLS as the modulation approximation for a quasilinear model that
captures some of the main features of the water wave equations.

The rigorous justification of the NLS for the full water waves was given by Totz and Wu \cite{Totz2012} in infinite depth case, and the justification in a canal of finite depth was proved by D\"{u}ll, Schneider and Wayne \cite{dull2016justification} . See also \cite{ifrim2018nls}. All of these works assume the data vanish at spatial infinity. However, there are many important solitons of NLS that are neither periodic nor vanishing at $\infty$.  One such important example is  the Peregrine soliton discovered by Peregrine in 1983 \cite{Peregrine}, 
which is defined by
\begin{equation}
    Q(x,t)=e^{it}(1-\frac{4(1+2it)}{1+4x^2+4t^2}).
\end{equation}
Plug $Q$ in (\ref{amplitudeevolve}), one observes that $W$ a weakly oscillatory periodic wave at the time and spatial infinity, but $W$ has peaks and troughs at a local region. 
The Peregrine soliton is important to the ocean waves because  the feature of the corresponding wave packet $W$ is consistent with the qualitative description of a rogue wave in the ocean. We call the wave packet corresponding to the Peregrine soliton just by the Peregrine soliton. Indeed, the Peregrine soliton is conjectured to be one of the mechanisms for the formation of  rogue waves by the ocean waves community,  see \cite{Shrira2010} for more details.   In 2010, the Peregrine soliton was observed in fibre optics \cite{FibreOptics}, which shows that the Peregrine soliton is a nature phenomena rather than just a mathematical prediction! Stimulated by this discovery, there have been a lot of efforts to produce the Peregrine solitons in other backgrounds, for example, in  \cite{WaterTanks}, the authors carried out the first experiment to observe Peregrine-type breather solutions in a water tank.  These experiments suggest the Peregrine soliton is plausible description of the formation of rogue waves.  So it's desirable to have a mathematical theory to justify that the Peregrine soliton can be developed in water waves. Since the motion of the water waves is governed by the water wave equations, while the Peregrine soliton is an exact solution of the NLS, we ask the following question:

\vspace*{1ex}

\noindent \textbf{Question 1.} Is there any solution $z(\alpha,t)$ to the system (\ref{system_boundary}) with its envelope looks like the Peregrine soliton?

\vspace*{2ex}

Since the leading order of the envelope of the wave packet $W(\alpha,t)=\alpha+\epsilon B(X, T)e^{i(k\alpha+\gamma t)}$ evolves according to the NLS on time scale $O(\epsilon^{-2})$, in order to observe the evolution of the wave packet, the observer must focus on the water waves on time scale $O(\epsilon^{-2})$. So a more precise formulation of Question 1 is as follows:

\vspace*{2ex}

\noindent \textbf{Question 1'.} Is there any solution $z(\alpha,t)$ to (\ref{system_boundary}) such that 
\begin{equation}
    \sup_{t\in [0,O(\epsilon^{-2})]}\|(z-W, \quad z_t-W_t,\quad z_{tt}-W_{tt})\|=o(\epsilon) ?
\end{equation}
Here $\|\cdot\|$ denotes some norm. Since $B$ is neither periodic nor vanishing at $\infty$, the framework in \cite{Totz2012} or \cite{dull2016justification}  cannot be applied to justify the Peregrine soliton from the full water wave equations. 

In this paper, we give an affirmative answer to $\textbf{Question 1'}$. Denote 
\begin{equation}
    1+H^s:=\{f=1+g: g\in H^s\}.
\end{equation}
\vspace*{1ex}
\noindent \textbf{Notation.} Denote $\mathbb{T}:=[-\pi, \pi]$.

\vspace*{1ex}

\noindent Let $f=f_0+f_1$, where $f_0\in H^{s+s_0}(\TT), f_1\in H^s(\RR)$, where $s_0>3/2$. Define
\begin{equation}
    \norm{f}_{X^s}:=\norm{f_0}_{H^{s+s_0}(\mathbb{T})}+\norm{f_1}_{H^s(\mathbb{R})}.
\end{equation}
The main result of this paper is the following which gives a rigorous justification of the NLS with nonzero boundary values at spatial infinity from the full water waves.
\begin{theorem}\label{main}
Let $M_0>0$, $s\geq 4$ and $k> 0$ be given. $B(0)\in 1+H^{s+7}(\RR)$. Denote by $B(X,T)$ the solution of the NLS: $2iB_T+\frac{1}{4k^{3/2}}B_{XX}+k^{5/2} B|B|^2=0$ with initial data $B(X,T=0)$, and let $\zeta^{(1)}(\alpha,t)=B(X,T)e^{i\phi}$, where $X=\epsilon(\alpha+\frac{1}{2\sqrt{k}}t)$, $T=\epsilon^2t$, and $\phi=k\alpha+\sqrt{k} t$. There exists a constant $\epsilon_0=\epsilon_0(s,M_0,\|B(0)-1\|_{H^{s+7}})>0$ such that for all $\epsilon<\epsilon_0$, there exists initial data $(z(\cdot,0), z_t(\cdot,0), z_{tt}(\cdot,0))$ to the water wave system (\ref{system_boundary}) such that 
\begin{equation}
\begin{split}
&\|(z_{\alpha}(\cdot,0)-1, z_t(\cdot,0), z_{tt}(\cdot,0)) -\epsilon( \partial_{\alpha}\zeta^{(1)}(0),  \partial_t\zeta^{(1)}(0),  \partial_t^2 \zeta^{(1)}(0))\|_{X^{s-1/2}\times X^{s+1/2}\times X^{s}}\\
\leq &M_0\epsilon^{3/2},
\end{split}
\end{equation}
and there exists a constant $k_0=k_0(s)>0$ such that for all such initial data, the water waves system has a unique solution with $$(z_{\alpha}(\cdot,t)-1, z_t, z_{tt})\in C([0, k_0\epsilon^{-2}]; X^{s-1/2}\times X^{s+1/2}\times X^s)$$ satisfying 
\begin{equation}\label{lagrangejustify}
\begin{split}
\sup_{0\leq t\leq k_0\epsilon^{-2}}\|(Im\{z_{\alpha}-1\}, z_t, z_{tt})-\epsilon (Im\{\partial_{\alpha}\zeta^{(1)}\}, \zeta_t^{(1)}, \zeta_{tt}^{(1)})\|_{X^{s-1/2}\times X^{s+1/2}\times X^s}\leq C\epsilon^{3/2}, 
\end{split}
\end{equation}
for some constant $C=C(s, M_0, \norm{B(0)-1}_{H^{s+7}})$.
\end{theorem}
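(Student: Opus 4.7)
The plan is to extend the modulation-approximation strategy of Totz--Wu \cite{Totz2012} to profiles that do not decay at spatial infinity, by constructing a high-order approximate solution to (\ref{system_boundary}) whose residual is small on the long time scale, and then controlling the difference with the true solution in the mixed space $X^s$. The approximate solution takes the form
\begin{equation*}
W_a(\alpha,t) = \alpha + \sum_{j=1}^{N} \epsilon^{j}\, \zeta^{(j)}(\alpha,t),
\end{equation*}
where $\zeta^{(1)} = B(X,T)e^{i\phi}$ and the correctors $\zeta^{(j)}$ for $j\geq 2$ are obtained from a formal multiscale expansion in $X=\epsilon(\alpha+t/(2\sqrt{k}))$, $T=\epsilon^2 t$, by matching powers of $\epsilon$ and solving a sequence of linear, non-resonant problems. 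The NLS displayed in the theorem arises precisely as the solvability condition at order $\epsilon^{3}$, removing the would-be secular growth in $\zeta^{(1)}$. Choosing $N$ large enough (in practice $N\geq 4$) so that the residual $\mathcal{R}(W_a) := (W_a)_{tt} - i a\,(W_a)_\alpha + i$ together with the holomorphicity defect of $\overline{(W_a)_t}$ satisfies $\|\mathcal{R}(W_a)\|_{X^{s}} = o(\epsilon^{7/2})$ uniformly on times $O(\epsilon^{-2})$. The water wave initial data are then defined by taking $(W_a,(W_a)_t,(W_a)_{tt})|_{t=0}$ and applying a small projection onto the subspace on which $\bar z_t$ is holomorphic; the smallness of the defect gives the initial bound of size $M_0\epsilon^{3/2}$.

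Next I would establish local well-posedness of (\ref{system_boundary}) in the space $(z_\alpha-1,z_t,z_{tt})\in X^{s-1/2}\times X^{s+1/2}\times X^{s}$. The crucial point is that the singular integral operators underlying Wu's quasilinear formulation (the Cauchy integral, the Hilbert transform along the curve $z(\alpha)$, and the Dirichlet--Neumann operator) preserve the decomposition $X^s = H^{s+s_0}(\TT) + H^{s}(\RR)$, because constants and periodic oscillations are reproduced by these operators whenever the interface is a graph of an $X^s$ function. Together with Wu's proof of the strong Taylor sign (which only uses that the interface is a non-self-intersecting graph and is insensitive to decay), this provides a Cauchy theory in $X^s$ via standard quasilinear energy estimates.

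To push the existence time to $O(\epsilon^{-2})$ and prove (\ref{lagrangejustify}), I set $r := z - W_a$ and derive the equation for $r$, which has the schematic form $r_{tt} - i a\, r_\alpha = \mathcal{R}(W_a) + \mathcal{L}(W_a)\, r + \mathcal{Q}(r,W_a)$, with $\mathcal{L}$ linear in $r$ whose coefficients vanish at $\epsilon = 0$ and $\mathcal{Q}$ at least quadratic. Using Wu's normal-form and modified-energy construction from \cite{Wu2009,Wu2011} to eliminate the quadratic resonant interactions between $r$ and the oscillatory background $\zeta^{(1)}$, I would obtain an energy identity of the form
\begin{equation*}
\frac{d}{dt} E(t) \lesssim \epsilon\, E(t) + \|\mathcal{R}(W_a)\|_{X^{s}}^{2} + (\text{cubic in } r),
\end{equation*}
and close via a bootstrap argument on $[0,k_0\epsilon^{-2}]$, giving $E(r)^{1/2}\lesssim \epsilon^{3/2}$ throughout, which translates to (\ref{lagrangejustify}).

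The main obstacle is this last energy estimate in the mixed space $X^s$. Wu's normal-form machinery was developed for $H^{s}(\RR)$ data, so every symbol, commutator identity, and cancellation in her argument has to be re-checked when one factor sits in the non-decaying $\TT$-periodic component of $X^s$. In particular the wave packet $\zeta^{(1)}$ acts as a spatially non-decaying oscillatory coefficient in the linearization, and the cubic gain required to reach the $\epsilon^{-2}$ time scale can only be recovered after carefully separating the periodic and $\RR$ parts of both $r$ and the background, and exploiting the dispersive structure of the water-wave flow to absorb the cross-interactions between them. I expect this to dominate the technical effort.
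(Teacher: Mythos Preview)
Your outline has the right shape but there are two concrete gaps that would prevent it from closing.

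\textbf{The energy inequality is one power of $\epsilon$ short.} With $E\sim\epsilon^{3}$ and a residual of size $\epsilon^{7/2}$ in $X^s$, the inequality $\tfrac{d}{dt}E\lesssim \epsilon\,E+\|\mathcal{R}\|^2+\cdots$ only yields control on a time interval of length $O(\epsilon^{-1})$: Gr\"onwall gives a factor $e^{c\epsilon\cdot\epsilon^{-2}}$, which blows up. To reach $O(\epsilon^{-2})$ you need $\tfrac{d}{dt}E\lesssim \epsilon^{2}E+\epsilon^{7/2}E^{1/2}+\cdots$, i.e.\ the linear-in-$r$ part of the error equation must have coefficients that are \emph{quadratic} in the background. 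In Lagrangian variables this does not hold: $a-1$ contains genuinely first-order terms, so Wu's transform applied to $r=z-W_a$ still leaves quadratic interactions. The paper avoids this by first passing to Wu's coordinates $\zeta$ (chosen so that $\bar\zeta-\alpha$ is holomorphic), in which the coefficients $b$ and $A-1$ are quadratic by the formulas \eqref{formulaforbbb}--\eqref{formulaforaaa}; only after the long-time estimate is obtained does one change back to Lagrangian variables (and this is exactly why only $\Im(z_\alpha-1)$ is controlled in \eqref{lagrangejustify}).

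\textbf{Singular integrals do not simply preserve the $X^s$ splitting.} You assert that the Cauchy integral and Hilbert transform along $z(\alpha)$ respect the decomposition $H^{s+s_0}(\TT)+H^s(\RR)$; the paper explains in \S1.1.3--1.1.4 why this is the heart of the difficulty and why a direct $X^s$ energy (or a partition-of-unity localization as in Alazard--Burq--Zuily) destroys the cubic structure. The paper's resolution is structural rather than operator-theoretic: it introduces an \emph{exact} periodic water-wave solution $\omega$ (with its own $b_0,A_0$ determined by \eqref{bb0}--\eqref{AA0}), proves cubic lifespan for $\omega$ separately, and then works with the difference quantity
\[
\lambda=(I-\mathcal{H}_\zeta)(\zeta-\alpha)-(I-\mathcal{H}_\omega)(\omega-\alpha),
\]
which lives in $H^s(\RR)$ and satisfies $(D_t^2-iA\partial_\alpha)\lambda=\text{cubic}$. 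The remainder is likewise split as $r_0=\omega-\tilde\omega$ (periodic) and $r_1=\xi_1-\tilde\xi_1$ (decaying), each with its own governing equation and energy. The ``cross-interactions between periodic and $\RR$ parts'' you flag are handled not by dispersive gains but by this algebraic subtraction at the level of Wu's nonlinear transform. Your proposal, which leaves this step as ``carefully separating the periodic and $\RR$ parts,'' is missing precisely the mechanism that makes the argument work.
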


\begin{remark}
Theorem \ref{main} gives rigorous justification of the NLS with nonzero boundary vlaues at $\infty$ in Lagrangian coordinates.
In (\ref{lagrangejustify}), please note that we only justify the modulation approximation for the imaginary part of $z_{\alpha}-1$,, i.e.,  
\begin{equation}
    \sup_{t\in [0,k_0\epsilon^{-2}]}\norm{Im\{z_{\alpha}-1\}-\epsilon \Im\{\partial_{\alpha}\zeta^{(1)}\}}_{X^{s-1/2}}\lesssim \epsilon^{3/2}.
\end{equation}
In Theorem \ref{wuwu}, we give a full justification of the Peregrine soliton from full water waves in a different coordinates. We fail to rigorously justify $\sup_{t\in [0,k_0\epsilon^{-2}]}\norm{Re\{z_{\alpha}-1\}-\epsilon Re\{\partial_{\alpha}\zeta^{(1)}\}}_{X^{s-1/2}}\lesssim \epsilon^{3/2}$ in the Lagrangian coordinates because we are unable to obtain good control of the change of variables on time scale $O(\epsilon^{-2})$, please see Theorem \ref{wuwu} and Remark \ref{remarkchangeback} for the details.
\end{remark}

\begin{remark}
Let $B$ solves $iB_T+B_{XX}=-2|B|^2B$. Then $$U(X,T):=\frac{2}{\sqrt{k^{5/2}}}B( \sqrt{8k^{3/2}}X, T)$$ solves $2iU_T+\frac{1}{4k^{3/2}}U_{XX}+k^{5/2} U|U|^2=0$.

Applying Theorem  \ref{main} to $U(X,0)=\frac{2}{\sqrt{k^{5/2}}}Q( \sqrt{8k^{3/2}}X,0)$ gives an affirmative answer to Question 1'.
\end{remark}

\begin{remark}
$s_0>3/2$ is of course not optimal. 
We take $s_0>3/2$ to avoid getting into too many technical issues.
\end{remark}

\subsection{Challenges of the problem and the strategy.}

\subsubsection{First difficulty: find a right class of water waves to work with. } Suppose $B$ is the Peregrine soliton, then the wave packet $W$ is nonvanishing.
As a consequence, in order to justify the Peregrine soliton from the full water waves, we need to show that water waves with nonvanishing data of size $O(\epsilon)$ exist on time scale $O(\epsilon^{-2})$. In \cite{Alazard2016337}, Alazard, Burq, and Zuily proved local wellposedness of (\ref{system}) with nonvanishing data in Kato's uniform local space $H_{ul}^s(\RR)$. Their result implies that for initial data of size $O(\epsilon)$, the lifespan of the solution is at least of order $O(\epsilon^{-1})$, which is not enough for justifying the Peregrine soliton. 
Even though the long time existence has been well-known for periodic waves and localized waves,  to the author's best of knowledge, for nonvanishing water waves, no long time existence results with lifespan of the solution longer than the order of $O(\epsilon^{-1})$ exist, and the analytical tools developed for the vanishing or periodic data can not be directly used in this setting. 

In order to prove long time existence of the water wave system, one needs to find a cubic structure for the water wave equations. More precisely, we need to find some quantity $\theta$ such that $\partial_t\theta\approx z_t$ and 
\begin{equation}
    (\partial_t^2-ia\partial_{\alpha})\theta=F,
\end{equation}
with $F$ consists of cubic and higher order nonlinearities. For water waves with data in Sobolev spaces, 
there are two ways of doing this. The first one is the fully nonlinear transform constructed by S. Wu. In \cite{Wu2009}, S. Wu considered $\theta:=(I-\mathfrak{H})(z-\bar{z})$ and showed that $(\partial_t^2-ia\partial_{\alpha})\theta=cubic$. Here, 
\begin{equation}
    \mathfrak{H}f(\alpha)=\frac{1}{\pi i}\int_{-\infty}^{\infty}\frac{z_{\beta}}{z(\alpha,t)-z(\beta,t)}f(\beta)d\beta
\end{equation}
is the Hilbert transform associated with the free interface labeled by $z(\alpha,t)$. Using this fully nonlinear transform, S. Wu was able to prove the almost global existence for the irrotational water waves with small localized initial data. The method implies lifespan of order $O(\epsilon^{-2})$ for nonlocalized data of size $O(\epsilon)$ in Sobolev spaces. This nonlinear transform is also used in \cite{Wu2011}\cite{Totz2012}\cite{Totz2015}\cite{Ionescu2015}\cite{su2018long}. See also \cite{HunterTataruIfrim1}\cite{HunterTataruIfrim2} for similar ideas. The second way is to use the normal form transformation to construct a cubic structure, see for example \cite{AlazardDelort}\cite{Ionescu2015}\cite{wang2018global}\cite{ionescu2018long}\cite{berti2018birkhoff}. These two methods work well for water waves with periodic data or with data in Sobolev spaces. 

However, for nonvanishing water waves, such a cubic structure was unclear for both methods. The first difficulty we confront is to find a right class of water waves that we can work with. This class of water waves must be non-vanishing at spatial infinity along the free interface. However, if the water waves have too many activities at infinity, then it's not obvious at all that why the water waves should exist for a long time.

\subsubsection{The idea of resolving the first difficulty: water waves that decays nontangentially}
Let $B$ be the Peregrine soliton, then the wave packet $W$ can be decomposed as
\begin{equation}
    W=W_0+W_1,\quad \quad W_0=\epsilon e^{i\epsilon^2 t}e^{i\phi}, \quad \quad W_1=\alpha-\epsilon e^{i\epsilon^2t}\frac{1+8i\epsilon^2t}{1+4(\epsilon \alpha)^2+4(\epsilon^2t)^2}e^{i\phi}
\end{equation}
Note that $W_0$ is periodic, $W_1-\alpha$ vanishes at infinity, therefore, we consider water waves which is a superposition of periodic waves and waves which vanish at infinity. Moreover, since $W_0\in C^{\infty}(\mathbb{T})$, we can assume that the periodic waves has more regularity than the localized waves. This motivates us to 
work in the function space
$X^s:=H^s(\mathbb{R})+H^{s+s_0}(\mathbb{T})$, where $s_0>3/2$. 

\noindent \textbf{Key observation:} Although the velocity $v$ is nonvanishing along the free interface, however, away from the free interface, $v$ can  vanish at spatial infinity. In other words, although the water waves have a lot of activity at spatial infinity along the free interface, however, away from the interface, the water waves can be at rest at infinity. This observation suggests that, away from the free interface, the interaction between the periodic waves and the localized waves is weak.

To make the above discussion precise,  we use the notion of \emph{decay nontangentially}. \begin{definition}[Cone]
Let $z_0\in \mathbb{C}$. Let $\theta_0\in (0,\pi/2)$. Denote
$$C_{\theta_0}(z_0):=\Big\{z\in \mathbb{C}: \Big|\frac{\Re(z-z_0)}{\Im(z-z_0)}\Big|\leq \tan \theta_0\quad \&\quad \Im z\leq \Im z_0\Big\}.$$
That is, $C_{\theta_0}(z_0)$ is the cone with vertex $z_0$ and angle $\theta_0$.
\end{definition}

\begin{definition}[Decay nontangentially]
Let $\phi(z)$ be a function in $\Omega(t)$. Let $z_0\in \mathbb{C}$ be a fixed point. We say that $\phi(z)\rightarrow 0$ nontangentially as $z\rightarrow \infty$ if for any $0<\theta<\pi/2$, 
\begin{equation}
\lim_{\substack{
z\in \Omega(t)\cap C_{\theta_0}(z_0)\\
|z|\rightarrow \infty}}\phi(z)=0.
\end{equation}
\end{definition}
\begin{remark}
Note that the definition above is invariant if we use different $z_0$. As a consequence, we choose $z_0=0$ and write $C_{\theta_0}(0)$ as $C_{\theta_0}$.
\end{remark}

\begin{remark}
If $\phi$ is a periodic function in $\Omega(t)$, and 
$$\lim_{\Im z\rightarrow -\infty}\phi(z)=0,$$
then $\phi(z)$ decays nontangentially.
\end{remark}

It turns out that the \emph{decay nontangentially} is the right setting for nonvanishing water waves. If we assume the velocity field $v$ decays nontangentially, follow S. Wu's method in \cite{Wu2009}, at least formally (in BMO sense, because the Hilbert transform $\mathfrak{H}$ maps $L^{\infty}$  to $BMO$), we can show that the quantity $(I-\mathfrak{H})(z-\bar{z})$ satisfies
\begin{align}
    &(\partial_t^2-ia\partial_{\alpha})(I-\mathfrak{H})(z-\bar{z})\\
    =&-2[z_t,\mathfrak{H}\frac{1}{z_{\alpha}}+\bar{\mathfrak{H}}\frac{1}{\bar{z}_{\alpha}}]z_{t\alpha}+\frac{1}{\pi i}\int_{-\infty}^{\infty}\Big(\frac{z_t(\alpha,t)-z_t(\beta,t)}{z(\alpha,t)-z(\beta,t)}\Big)^2(z-\bar{z})_{\beta}d\beta:=g.
\end{align}
Formally, $g$ is cubic, while $a-1$ contains first order terms, so $(\partial_t^2+|D|)\theta$ contains quadratic terms, which does not imply cubic lifespan. To resolve the problem, we follow S. Wu's idea and consider the change of variables $\kappa:\mathbb{R}\rightarrow \mathbb{R}$ such that $\bar{\zeta}-\alpha$ is boundary value of a holomorphic function which decays nontangentially, where $\zeta=z\circ\kappa^{-1}$. Denote $b=\kappa_t\circ\kappa^{-1}$, $A=(a\kappa_{\alpha})\circ\kappa^{-1}$. 
In new variables, the system (\ref{system_boundary}) is written as 
\begin{equation}\label{system_new}
\begin{cases}
(D_t^2-iA\partial_{\alpha})\zeta=-i\\
(I-\mathcal{H}_{\zeta})D_t\bar{\zeta}=0,
\end{cases}
\end{equation}
and we have
\begin{align}\label{perfect_new}
&(D_t^2-iA\partial_{\alpha})(I-\mathcal{H}_{\zeta})(\zeta-\bar{\zeta})\\
=&-2[D_t\zeta,\mathcal{H}\frac{1}{\zeta_{\alpha}}+\bar{\mathcal{H}}\frac{1}{\bar{\zeta}_{\alpha}}]\partial_{\alpha}D_t\zeta+\frac{1}{\pi i}\int_{-\infty}^{\infty}\Big(\frac{D_t\zeta(\alpha,t)-D_t\zeta(\beta,t)}{\zeta(\alpha,t)-\zeta(\beta,t)}\Big)^2(\zeta-\bar{\zeta})_{\beta}d\beta,
\end{align}
where 
\begin{equation}
\mathcal{H}_{\zeta}f(\alpha):=\frac{1}{\pi i}p.v.\int_{-\infty}^{\infty}\frac{\zeta_{\beta}}{\zeta(\alpha,t)-\zeta(\beta,t)}f(\beta)d\beta.
\end{equation}
Moreover, we have
\begin{equation}\label{formulaforbbb}
(I-\mathcal{H})b=-[D_t\zeta,\mathcal{H}]\frac{\bar{\zeta}_{\alpha}-1}{\zeta_{\alpha}},
\end{equation}
\begin{equation}\label{formulaforaaa}
 (I-\mathcal{H})(A-1)=i[D_t\zeta,\mathcal{H}]\frac{\partial_{\alpha}D_t\bar{\zeta}}{\zeta_{\alpha}}+i[D_t^2\zeta,\mathcal{H}]\frac{\bar{\zeta}_{\alpha}-1}{\zeta_{\alpha}}.
\end{equation}
So $b$, $A-1$ are quadratic. Therefore, at least formally, we have 
\begin{equation}
    (\partial_t^2+|D|)(I-\mathcal{H}_{\zeta})(\zeta-\bar{\zeta})=cubic.
\end{equation}
If $\zeta-\alpha, D_t\zeta$ decay\footnote{In this paper,  by a function $f$ decays at $\infty$, we mean that $f\in H^s(\RR)$ for some $s\geq 0$, even though it could be possible that $\lim_{x\rightarrow \infty}f(x)$ does not exist. } at spatial infinity or periodic, then use S. Wu's method, we could prove that (\ref{system}) is wellposed on time scale $O(\epsilon^{-2})$.

\subsubsection{The second difficulty} If the water wave is nonvanishing,  then it's difficult to define an energy associated with (\ref{perfect_new}) which still preserves the cubic structure. Indeed, because $\theta\notin L^q(\RR)$ for any $q\neq \infty$, we cannot estimate $\theta$ in $H^s(\RR)$. If we estimate $\theta$ in $W^{s,\infty}(\RR)$, as is explained by Alazard, Burq and Zuily in \cite{Alazard2016337}, there is loss of derivative in such spaces.  One might try to estimate $\theta$ in Kato's uniform local Sobolev spaces as in \cite{Alazard2016337}. Assume $\{\chi_n\}$ is a partition of unity of $\RR$.  One needs to consider the quantity $\chi_n\theta$. It turns out that $\mathcal{P}\chi_n\theta$ has first and quadratic nonlinearities, which are difficult to get rid of.  

\subsubsection{Idea of resolving the second difficulty}
To resolve this problem, we note that if $\zeta_{\alpha}-1\in X^s$, then $\zeta$ can be decomposed uniquely as
\begin{equation}
    \zeta=\omega+\xi_1,
\end{equation}
where $\omega-\alpha$ is periodic, and $\xi_1$ decays at spatial infinity. Let $b_0$ and $A_0$ be determined by
\begin{equation}\label{bb0}
(I-\mathcal{H}_p)b_0=-[D_t^0\omega, \mathcal{H}_p]\frac{\bar{\omega}_{\alpha}-1}{\omega_{\alpha}},
\end{equation}
\begin{equation}\label{AA0}
(I-\mathcal{H}_p)(A_0-1)=i[(D_t^0)^2\omega,\mathcal{H}_p]\frac{\bar{\omega}_{\alpha}-1}{\omega_{\alpha}}+i[(D_t^0)\omega,\mathcal{H}_p]\frac{\partial_{\alpha}(D_t^0)\bar{\omega}}{\omega_{\alpha}}.
\end{equation}
where
\begin{equation}
\mathcal{H}_pf(\alpha):=\frac{1}{2\pi i}p.v.\int_{\TT} \omega_{\beta}(\beta)\cot(\frac{1}{2}(\omega(\alpha)-\omega(\beta))f(\beta)d\beta.
\end{equation}
Denote $D_t^0:=\partial_t+b_0\partial_{\alpha}$, then $\omega$ satisfies 
\begin{equation}
    (D_t^0)^2\omega-iA_0\omega_{\alpha}=-i.
\end{equation}
It has been well known that the periodic water waves with initial data of size $O(\epsilon)$ exists on lifespan of order at least $O(\epsilon^{-2})$. So it suffices to control $\xi_1$ and $D_t\zeta-D_t^0\omega$ on time scale $O(\epsilon^{-2})$. In $BMO$ sense, we have
\begin{equation}
    ((D_t^0)^2-iA_0\partial_{\alpha})(I-\mathcal{H}_{\omega})(\omega-\bar{\omega})=cubic,
\end{equation}
where $\mathcal{H}_{\omega}$ is the Hilbert transform associated with $\omega$, i.e.,
\begin{equation}
    \mathcal{H}_{\omega}f(\alpha,t)=p.v. \frac{1}{\pi i}\int_{-\infty}^{\infty} \frac{\omega_{\beta}(\beta,t)}{\omega(\alpha,t)-\omega(\beta,t)}f(\beta,t)d\beta.
\end{equation}
Now consider the quantity
\begin{equation}
    \lambda:=(I-\mathcal{H}_{\zeta})\Big((I-\mathcal{H}_{\zeta})(\zeta-\alpha)-(I-\mathcal{H}_{\omega})(\omega-\alpha)\Big).
\end{equation}
Then $\lambda\approx \xi_1$. Moreover, we can prove that 
\begin{equation}
    (D_t^2-iA\partial_{\alpha})\lambda=cubic.
\end{equation}
Since $\lambda$ is in Sobolev space, we can use energy method to prove the following result:
\begin{theorem}\label{theorem1}
Let $s\geq 4$. Let $\|(z_{\alpha}(\cdot,0)-1, z_t(t=0), z_{tt}(t=0))\|_{X^{s-1/2}\times X^{s+1/2}\times X^s}\leq \epsilon$. Assume $\bar{z}_t(t=0)\in \mathcal{H}ol_{\mathcal{N}}(\Omega(0))$. Then there exists $\epsilon_0=\epsilon_0(s)>0$ sufficiently small and a constant $C_1=C_1(s)>0$ such that for all $0\leq \epsilon\leq \epsilon_0$, the water wave equations (\ref{system_boundary}) admit a unique solution $(z_{\alpha}(\cdot,t)-1, z_t(\cdot,t), z_{tt}(\cdot,t))\in C([0,C_1\epsilon^{-2}]; X^{s-1/2}\times X^{s+1/2}\times X^s)$. Moreover, 
\begin{equation}
    \sup_{0\leq t\leq C_1\epsilon^{-2}}\|(z_{\alpha}-1, z_t, z_{tt})\|_{X^{s-1/2}\times X^{s+1/2}\times X^s }\leq C\epsilon,
\end{equation}
for some constant $C=C(s)$.
\end{theorem}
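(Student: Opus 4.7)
The plan is to adopt S.~Wu's cubic-lifespan framework in the two-scale setting suggested by the Key Observation. I work in holomorphic Lagrangian coordinates $\zeta=z\circ\ka^{-1}$, where $\ka:\RR\to\RR$ is the reparametrization that makes $\bar{\zeta}-\alpha$ the boundary value of a holomorphic function decaying nontangentially; this reduces (\ref{system_boundary}) to (\ref{system_new}) and produces the quadratic identities (\ref{formulaforbbb})--(\ref{formulaforaaa}) for $b$ and $A-1$ together with the cubic identity (\ref{perfect_new}). I then split $\zeta=\om+\xi_1$ uniquely with $\om-\alpha$ periodic in $H^{s+s_0}(\TT)$ and $\xi_1\in H^{s+1/2}(\RR)$, and similarly for $D_t\zeta$, $D_t^2\zeta$. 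The periodic parts are taken as data for the periodic water wave system associated with $(\om,b_0,A_0)$ determined by (\ref{bb0})--(\ref{AA0}); by the cubic-lifespan theory for $2$d periodic infinite-depth water waves with $O(\eps)$ data (applicable here since $s_0>3/2$), I obtain $\om$ on $[0,C_0\eps^{-2}]$ with $\no{\om_\alpha-1}_{H^{s+s_0}(\TT)}+\no{D_t^0\om}_{H^{s+s_0}(\TT)}+\no{(D_t^0)^2\om}_{H^{s+s_0}(\TT)}\lesssim \eps$. I then treat $\om$ as a known background and reduce the problem to controlling the localized correction $\xi_1$ in Sobolev spaces on $\RR$.

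\textbf{The quantity $\la$ and its cubic equation.} I consider
\[
\la:=(I-\mathcal{H}_{\zeta})\Big[(I-\mathcal{H}_{\zeta})(\zeta-\alpha)-(I-\mathcal{H}_{\om})(\om-\alpha)\Big].
\]
Two features are decisive: (a) $\la=\xi_1$ modulo cubic errors in $H^{s-1/2}(\RR)$, so controlling $\la$ controls the localized unknown; (b) subtracting (\ref{perfect_new}) applied to $\zeta$ from the same identity applied to $\om$ and then applying the outer $(I-\mathcal{H}_\zeta)$, together with the quadratic identities (\ref{formulaforbbb})--(\ref{formulaforaaa}) used also for $b-b_0$ and $A-A_0$, yields
\[
(D_t^2-iA\partial_\alpha)\la=\cN,
\]
where $\cN$ is a trilinear expression in $\xi_1,\,\bar{\xi}_1,\,D_t\zeta-D_t^0\om,\,b-b_0,\,A-A_0$ and the already-small periodic unknowns. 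The reason the difference of the $\zeta$- and $\om$-equations produces cubic (and not merely quadratic) remainders is that every one of the differences $\mathcal{H}_\zeta-\mathcal{H}_\om$, $b-b_0$, $A-A_0$ carries at least one factor of $\xi_1$ without loss of derivative.

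\textbf{Energy estimate and bootstrap.} I define
\[
E(t)=\sum_{j=0}^{s}\Big(\no{\partial_\alpha^j D_t\la}_{L^2(\RR)}^2+\no{|D|^{1/2}\partial_\alpha^j\la}_{L^2(\RR)}^2\Big)+E_{\mathrm{low}}(t),
\]
with $E_{\mathrm{low}}$ collecting lower-order $H^{s-1/2}(\RR)$ norms of $\xi_1,\,D_t\zeta-D_t^0\om,\,b-b_0,\,A-A_0$ (the last two recovered from $\xi_1$ via Cauchy-integral identities analogous to (\ref{formulaforbbb})--(\ref{formulaforaaa})). Using that $iA\partial_\alpha\sim|D|$ at leading order and substituting the cubic $\cN$, the standard Wu-type energy identity combined with commutator estimates for $\mathcal{H}_\zeta$ yields
\[
\frac{d}{dt}E(t)\lesssim \bigl(\eps^2+E(t)\bigr)E(t),
\]
so that a Gronwall/bootstrap closes on $[0,C_1\eps^{-2}]$ with $E(t)\lesssim \eps^2$. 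Inverting the change of variables $\ka$---whose $\ka_\alpha-1$ and $\ka_t$ are bounded on this time scale via $b$- and $b_\alpha$-estimates---recovers the stated bounds on $(z_\alpha-1,z_t,z_{tt})$ in $X^{s-1/2}\times X^{s+1/2}\times X^s$, and local well-posedness in this space plus the a priori bound gives uniqueness and continuous dependence.

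\textbf{Main obstacle.} The hardest step will be the derivation and $H^{s-1/2}(\RR)$-control of $\cN$. The difference $\mathcal{H}_\zeta-\mathcal{H}_\om$ of two Hilbert transforms on \emph{different} curves acts on inputs ($\om,\,D_t^0\om,\ldots$) that are \emph{not} in $L^2(\RR)$, so to land back in $H^{s-1/2}(\RR)$ I must exploit the explicit Cauchy-kernel structure and bound each resulting commutator by a product of one $L^2(\RR)$-norm (the $\xi_1$ factor) and periodic norms of the remaining inputs, \emph{without derivative loss}. The nontangential-decay hypothesis enters precisely here: it ensures that holomorphic extensions of the periodic background decouple from the localized part away from the free interface, which is what makes the off-diagonal contributions to these commutators integrable and supplies the crucial cancellations. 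A related, secondary difficulty is that the change of variables $\ka$ itself cannot be controlled in $L^\infty$ on the $O(\eps^{-2})$ time scale---only its derivative $\ka_\alpha-1$ and its time derivative $\ka_t$ can---so all final estimates must be phrased in terms of quantities invariant under $\alpha$-translation.
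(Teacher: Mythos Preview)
Your proposal is essentially the paper's own approach: decompose $\zeta=\om+\xi_1$, handle the periodic background by the known cubic-lifespan result, and control $\xi_1$ via the quantity $\la=(I-\mathcal{H}_\zeta)\bigl[(I-\mathcal{H}_\zeta)(\zeta-\alpha)-(I-\mathcal{H}_\om)(\om-\alpha)\bigr]$, for which $\mathcal{P}\la$ is cubic; the paper indeed omits the details and defers to the same machinery (\S\ref{govern}--\S\ref{energyestimatesection}) you invoke.

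One technical omission is worth flagging. Your single energy for $\la$ controls $\no{D_t\la}_{H^s}$ and $\no{|D|^{1/2}\la}_{H^s}$, but the conclusion requires $z_t\in X^{s+1/2}$, i.e.\ $D_t\xi_1\in H^{s+1/2}(\RR)$, which is a half-derivative beyond what the $\la$-energy alone delivers. The paper closes this gap by introducing a companion quantity
\[
\sigma=(I-\mathcal{H}_\zeta)\bigl[D_t(I-\mathcal{H}_\zeta)(\zeta-\alpha)-D_t^0(I-\mathcal{H}_\om)(\om-\alpha)\bigr]
\]
and showing $\mathcal{P}\sigma$ is again cubic; the point (Remark after the definition of $\sigma$ in \S4.10) is that one cannot simply use $D_t\la$, because $\mathcal{P}(D_t\la)$ contains a $D_t^2(b-b_0)$ term that loses a derivative, whereas the outer $(I-\mathcal{H}_\zeta)$ in $\sigma$ restores antiholomorphicity and prevents the loss. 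With the pair $(\la,\sigma)$ and the Wu energy applied to each, the bootstrap closes at the stated regularity. Apart from this, your outline---including the identification of the $\mathcal{H}_\zeta-\mathcal{H}_\om$ commutators acting on non-$L^2$ periodic inputs as the main analytic difficulty, and the remark on the $\ka$ change of variables---matches the paper.
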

\vspace*{1ex}

\noindent To our best knowledge, Theorem \ref{theorem1} is the first long time existence for nonvanishing water waves. More importantly, using this long time existence result, we are able to justify the NLS from the full water waves in a regime that allows for Peregrine solitons, and prove Theorem \ref{main}.

\subsubsection{Rigorous justification of the Peregrine soliton from water waves .} We prove Theorem \ref{main} through the following steps.
\begin{itemize}
    \item [Step 1.] \underline{Construction of approximate solution.} 
    
  \noindent   Let $B$ be a solution to the NLS. Consider interface of the form 
    \begin{equation}\label{special}
        \zeta(\alpha,t)=\alpha+\sum_{n=1}^{\infty}\epsilon^n \zeta^{(n)}.
    \end{equation}
By multiscale analysis, we can choose $\zeta^{(n)}, n=1,2,3$ be such that $\zeta^{(1)}=B(X,T)e^{i\phi}$ and $\zeta^{(2)}$, $\zeta^{(3)}$ depend on $B$ and $\phi$ only. We define an approximate solution $\tilde{\zeta}$ to $\zeta$ by
\begin{equation}
    \tilde{\zeta}:=\alpha+\sum_{n=1}^3 \epsilon^n \zeta^{(n)},
\end{equation}
then formally\footnote{This is in $\infty$-norm sense, i.e., $\|\zeta-\tilde{\zeta}\|_{W^{s,\infty}}=O(\epsilon^4)$. In $X^s$ norm, $\|\zeta-\tilde{\zeta}\|_{X^s}=O(\epsilon^{7/2})$}, 
\begin{equation}
    |\zeta-\tilde{\zeta}|=O(\epsilon^4).
\end{equation}
Similarly, we approximate $D_t\zeta$, $D_t^2\zeta$ by some appropriate functions $\tilde{D}_t\tilde{\zeta}, \tilde{D}_t^2\tilde{\zeta}$ such that 
\begin{equation}
    |D_t\zeta-\tilde{D}_t\tilde{\zeta}|=O(\epsilon^4),\quad \quad |D_t^2\zeta-\tilde{D}_t\tilde{\zeta}|=O(\epsilon^4).
\end{equation}
    
\end{itemize}
Denote 
    \begin{equation}
        r:=\zeta-\tilde{\zeta}=r_0+r_1,  
    \end{equation}
    where $r_0$ is the periodic part of $r$, and $r_1$ decays at spatital infinity. Denote $\tilde{\xi}_0$ the periodic part of $\tilde{\zeta}-\alpha$. Denote 
    \begin{equation}
        \tilde{\omega}=\alpha+\tilde{\xi}_0, \quad \quad \tilde{\xi}_1:=\tilde{\zeta}-\tilde{\omega}.
    \end{equation}
    To rigorous justify the NLS from the water waves, we need to control the error $r$ on time scale $O(\epsilon^{-2})$. 
    
    \begin{itemize}
    \item [Step 2.] \underline{A priori error estimates for the periodic part}
    
\noindent     For data of the form (\ref{special}), we show that 
    \begin{equation}
        ((D_t^0)^2-iA_0\partial_{\alpha})(I-\mathcal{H}_p)r_0=\text{fourth order}.
    \end{equation}
  So we can obtain
  \begin{equation}
      \sup_{t\in [0, O(\epsilon^{-2})]}\|(\partial_{\alpha}r_0, D_t^2 r_0, (D_t^0)^2r_0)\|_{H^{s+s_0}(\TT)\times H^{s+s_0+1/2}(\TT)\times H^{s+s_0}(\TT)}\leq C\epsilon^{3/2}.
  \end{equation}

 \item [Step 3.] \underline{A priori error estimates for the vanishing part}
 
 \noindent Consider the quantity
 \begin{equation}
\rho_1:=(I-\mathcal{H}_{\zeta})\Big\{\Big((I-\mathcal{H}_{\zeta})\xi-(I-\mathcal{H}_{\omega})\xi_0\Big)-\Big((I-\mathcal{H}_{\tilde{\zeta}})\tilde{\xi}-(I-\mathcal{H}_{\tilde{\omega}})\tilde{\xi}_0\Big)\Big\}.
\end{equation}
 We remind the readers that $\mathcal{H}_{\tilde{\zeta}}$ and $\mathcal{H}_{\tilde{\omega}}$ are the Hilbert transforms associated with $\tilde{\zeta}$ and $\tilde{\omega}$, respectively. We can show that $\rho_1\approx r_1$.
 
 By exploring the structure of $\tilde{\zeta}$, we show that 
 \begin{equation}
     (D_t^2-iA\partial_{\alpha})\rho_1=\text{fourth order}.
 \end{equation}
So we can obtain
  \begin{equation}
      \sup_{t\in [0, O(\epsilon^{-2})]}\|(\partial_{\alpha}r_1, D_t r_1, D_t^2r_1)\|_{H^{s}(\RR)\times H^{s+1/2}(\RR)\times H^{s}(\RR)}\leq C\epsilon^{3/2}.
  \end{equation}
 
 \item [Step 4.] In Step 1, we've constructed an approximate solution $(\tilde{\zeta}, \tilde{D}_t\tilde{\zeta}, \tilde{D}_t^2\tilde{\zeta})$ which exists on time scale $O(\epsilon^{-2})$. In Step 2 and Step 3, we obtain a priori bound on the energy for the remainder $r$ on long time scale $O(\epsilon^{-2})$. However, since $\tilde{\zeta}$ does not in general satisfy the water wave equations, the wave packet like data $(\tilde{\zeta}(0), \tilde{D}_t\tilde{\zeta}(0), \tilde{D}_t^2\tilde{\zeta})(0)$ cannot be taken as the initial data of the water wave equations. Similar to that in \cite{Totz2012}, we show that there is initial data for the water wave system that is
within $O(\epsilon^{3/2})$ to the wave packet $(\tilde{\zeta}(0), \tilde{D}_t\tilde{\zeta}(0), \tilde{D}_t^2\tilde{\zeta})(0)$. By long time existence of (\ref{system_new}) with initial data $(\partial_{\alpha}(\zeta-\alpha), D_t\zeta, D_t^2\zeta)$ of size $\epsilon$ in $X^s\times X^{s+1/2}\times X^s$, the solution of the system (\ref{system_new}) exists on time scale $O(\epsilon^{-2})$.  The a priori bound
on $r$ gives the estimate of the error between $\zeta$ and the wave packet $\tilde{\zeta}$ on the order $O(\epsilon^{3/2})$
for time on the $O(\epsilon^{-2})$ scale. The appropriate wave packet approximation to $z$ is then
obtained upon changing coordinates back to the Lagrangian variable.
\end{itemize}

\subsection{Outline of this paper} In \S\ref{notation}, we introduce some basic notation and convention. Further notation and convention will be made throughout the paper if necessary. In \S\ref{prelim} we will provide some analytical tools and the basic definitions that will be used in later sections.
In section 3, we sketch a proof of long time existence of the periodic water waves system, which we will use in later sections.
In Section 4,  we set up the water waves system with data in $X^s$, derive formula for the corresponding quantities, and then prove long time existence of water waves in the function space $X^s$.
In Section 5, we formally derive NLS with non-vanishing boundary value at $\infty$ from non-vanishing water waves system that we set up in Section 4, and obtain an approximation $\tilde{\zeta}$ to water waves system. 
In Section 6, we derive governing equations for $r_0$, then we show that $r_0$ remains small on time scale $O(\epsilon^{-2})$.
In Section 7, we derive governing equations for $r_1$, and define corresponding energies that could be used to control norms of $r_1$. 
In Section 8, we obtain a priori bounds of a list of quantities that appear in the energy estimates, and in Section 9, we obtain energy estimates on time scale $O(\epsilon^{-2})$. As a consequence of the energy estimates, we prove our Main Theorem \ref{main} in Section 10. In the appendix, we show that $e^{-ik\alpha}$ cannot be the boundary value of a holomorphic function in the region below the curve $\{\omega(\alpha,t)=\alpha+c(t)e^{ik\alpha}\}$.

\subsection{Notation and convention}\label{notation} Assume $f$ a function on boundary of $\Omega(t)$. By saying $f$ holomorpihc,  we mean $f$ is boundary value of a holomorhpic function in $\Omega(t)$. Let $h\in L_{loc}^2(\RR)$, if $h$ is neither periodic  nor vanishing at spatial infinity, then we say that $h$ is nonvanishing.

We use $C(X_1, X_2,...,X_k)$ to denote a positive constant $C$ depends continuous on the parameters $X_1,...,X_k$. Throughout this paper, such constant $C(X_1,...,X_k)$ could be different even we use the same letter $C$. 
The commutator $[A,B]=AB-BA$. Given a function $g(\cdot,t):\mathbb{R}\rightarrow \mathbb{R}$, the composition $f(\cdot, t)\circ g:=f(g(\cdot,t),t)$. We identify the $\mathbb{R}^2$ with the complex plane. A point $(x,y)$ is identified as $x+iy$. For a point $z=x+iy$, $\bar{z}$ represents the complex conjugate of $z$.

\section{Preliminaries}\label{prelim}

In this section, we define the class of holomorphic functions that are considered in this paper, and define the function spaces, norms involved. Also, we collect the preliminary analytical tools such as double layer potential theory, commutator estimates and some basic identities.

\subsection{Two classes of holomorphic functions}  We define two classes of holomorphic functions. 
\begin{itemize}
\item [(1)]  Bounded holomorphic functions which decays nontangentially, 

\item [(2)] Periodic holomorphic functions which approaches 0 as $y\rightarrow -\infty$. 

\end{itemize}
\noindent Periodic holomorphic functions are used to explore the periodic water waves system, while bounded holomorphic functions which decays non-tangentially is a good setting for water waves with initial data of the form $X^s$.
For convenience, we introduce the following notation.
\begin{definition}
Denote  
\begin{equation}
\begin{split}
\mathcal{H}ol_{\mathcal{N}}(\Omega(t)):&= \Big\{F(\cdot,t):\Omega(t)\rightarrow \mathbb{C} \text{ bounded holomorphic,}  \text{ decays nontangentially in } \Omega(t).\Big\}
\end{split}
\end{equation}
Denote 
\begin{equation}
\begin{split}
\mathcal{H}ol_{\mathcal{P}}(\Omega^0(t)):=\Big\{\phi(\cdot,t):&\Omega^0(t)\rightarrow \CC~\text{ bounded, holomorphic, } 2\pi \text{ periodic in }\Omega^0(t),\\
&\lim_{\Im z\rightarrow -\infty}\phi(z,t)=0\Big\}.
\end{split}
\end{equation}
\end{definition}
\begin{remark}
Let $f\in L_{loc}^2(\RR)$. If $f=\Phi\circ \zeta$ for some $\Phi\in \mathcal{H}ol_{\mathcal{N}}(\Omega(t))$, then we say $f\in \mathcal{H}ol_{\mathcal{N}}(\Omega(t))$.
\end{remark}

\subsection{Fourier transform} In this subsection we define the Fourier transform on $\RR$ and on $\mathbb{T}:=[-\pi, \pi]$. 
\begin{definition}
Let $f\in L^2(\RR)$, then we Fourier transform of $f$ as 
$$\hat{f}(\xi)=\int_{-\infty}^{\infty} f(x)e^{-2\pi ix\xi}dx.$$
Let $g\in L^2(\mathbb{T})$. Then define Fourier transform of $f$ on $\mathbb{T}$, still denoted by $\hat{g}$:
$$\hat{g}(\xi):=\frac{1}{2\pi}\int_{\TT} g(x)e^{- ix\xi}dx.$$
\end{definition}


\subsection{Function spaces} In this subsection, we define some function spaces that we'll use in this paper.
\begin{definition}
(1) Let $s\geq 0$, we define 
$$H^s(\RR):=\{f\in L^2(\RR): \int_{-\infty}^{\infty}(1+|\xi|^2)^{s}|\hat{f}(\xi)|^2 d\xi<\infty\},$$
\noindent and we define the norm $\norm{\cdot }_{H^s}$ by
$$\norm{f}_{H^s}^2=\int_{-\infty}^{\infty}(1+|\xi|^2)^{s}|\hat{f}(\xi)|^2 d\xi.$$

(2) We define
$$H^s(\mathbb{T}):=\{f\in L^2(\mathbb{T}): \sum_{m\in\ZZ}(1+m^2)^{s}|\hat{f}(m)|^2<\infty\},$$

\noindent and we define the norm 
$$\norm{f}_{H^s(\mathbb{T})}^2:=\sum_{m\in \ZZ}(1+m^2)^{s}|\hat{f}(m)|^2.$$

(3) Let $J=\RR$ or $\mathbb{T}$. Without loss of generality, assume $s\geq 0$ is an integer. Define
$$W^{s,\infty}(J):=\{f\in L^{\infty}(J): \sum_{m=0}^s \norm{\partial_{\alpha}^m f}_{L^{\infty}(J)}<\infty\}.$$
Define the norm
$$\norm{f}_{W^{s,\infty}(J)}:= \sum_{m=0}^s \norm{\partial_{\alpha}^m f}_{L^{\infty}(J)}.$$
\end{definition}
\noindent We'll use the following Sobolev embedding a lot.
\begin{lemma}\label{L2toinfty}
(1) If $s>1/2$, and $f\in H^s(\RR)$, then $f\in L^{\infty}$, and 
$$\norm{f}_{L^{\infty}(\RR)}\leq C(s)\norm{f}_{H^s(\RR)}.$$

(2) If $s>1/2$, and $f\in H^s(\mathbb{T})$, then $f\in L^{\infty}$, and 
$$\norm{f}_{L^{\infty}(\mathbb{T})}\leq C(s)\norm{f}_{H^s(\mathbb{T})}.$$
\end{lemma}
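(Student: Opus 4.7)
The plan is to prove both parts by the standard Fourier-analytic argument: bound $|f(x)|$ by the $L^1$ norm of its Fourier transform via inversion, then use Cauchy-Schwarz with weight $(1+|\xi|^2)^s$ so that the Sobolev norm appears on one side and a convergent $(1+|\xi|^2)^{-s}$ integral/sum on the other. The condition $s>1/2$ is exactly what makes that weight integrable (in dimension one).

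For part (1), I would first assume $f$ is a Schwartz function so that everything converges absolutely, then extend by density of Schwartz in $H^s$. From Fourier inversion,
\begin{equation*}
|f(x)| = \left| \int_{-\infty}^{\infty} \hat{f}(\xi)\, e^{2\pi i x\xi}\, d\xi \right| \leq \int_{-\infty}^{\infty} |\hat{f}(\xi)|\, d\xi.
\end{equation*}
Writing $|\hat{f}(\xi)| = (1+|\xi|^2)^{s/2}|\hat{f}(\xi)| \cdot (1+|\xi|^2)^{-s/2}$ and applying Cauchy--Schwarz gives
\begin{equation*}
|f(x)| \leq \left( \int_{-\infty}^{\infty} (1+|\xi|^2)^{-s}\, d\xi \right)^{1/2} \| f \|_{H^s(\mathbb{R})}.
\end{equation*}
The integral on the right is finite precisely when $2s>1$, i.e.\ $s>1/2$, and its square root depends only on $s$, giving the desired $C(s)$. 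Taking the supremum over $x$ and extending by density completes part (1).

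For part (2), I would run the identical argument with the Fourier series in place of the Fourier integral. For a trigonometric polynomial (or by density, any $f\in H^s(\mathbb{T})$),
\begin{equation*}
|f(x)| = \left| \sum_{m\in\mathbb{Z}} \hat{f}(m)\, e^{i m x} \right| \leq \left( \sum_{m\in\mathbb{Z}} (1+m^2)^{-s} \right)^{1/2} \| f\|_{H^s(\mathbb{T})},
\end{equation*}
and again $\sum_m (1+m^2)^{-s}$ converges iff $s>1/2$.

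There is no real obstacle here; the only thing to be careful about is the normalization conventions for the Fourier transform on $\mathbb{R}$ and the Fourier series on $\mathbb{T}$ used earlier in the paper, since these determine the exact form of the inversion formula (and thus whether any $2\pi$ factors appear in $C(s)$). Once the inversion is written with the correct constants, the Cauchy--Schwarz step is immediate and both inequalities follow at once.
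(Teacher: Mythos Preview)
Your argument is the standard and correct Fourier-analytic proof via Cauchy--Schwarz. The paper itself offers no proof of this lemma at all: it is stated as a well-known Sobolev embedding and used freely thereafter, so there is nothing to compare against beyond noting that your write-up fills in exactly the routine details the paper omits.
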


\begin{definition}
Let $s\geq 0$. Let $s_0>3/2$ be fixed. Define
\begin{equation}
    X^{s}:=\{f=f_0+f_1:  f_1\in H^s(\mathbb{R}), f_0\in H^{s+s_0}(\TT)\}.
\end{equation}
Associate $X^s$ with the norm
\begin{equation}
    \norm{f}_{X^s}=\norm{f_0}_{H^{s+s_0}(\mathbb{T})}+\norm{f_1}_{H^s(\mathbb{R})}.
\end{equation}
\end{definition}
\begin{lemma}
Let $s\geq 0$. Then $X^s$ is a Banach space.
\end{lemma}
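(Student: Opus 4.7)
The plan is to verify the two Banach space axioms: well-definedness of the norm (which requires uniqueness of the decomposition $f=f_0+f_1$), and completeness.

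First I would establish that the decomposition $f = f_0 + f_1$ with $f_0 \in H^{s+s_0}(\mathbb{T})$ and $f_1 \in H^s(\mathbb{R})$ is unique. If $f_0 + f_1 = g_0 + g_1$, then $f_0 - g_0 = g_1 - f_1$ would be a $2\pi$-periodic function lying in $H^s(\mathbb{R}) \subset L^2(\mathbb{R})$; but the only $2\pi$-periodic $L^2(\mathbb{R})$ function is $0$. Hence $f_0 = g_0$ and $f_1 = g_1$. This shows $\|\cdot\|_{X^s}$ is well-defined, and the triangle inequality and homogeneity then follow immediately from those of the two component norms, while $\|f\|_{X^s}=0$ forces $f_0 = f_1 = 0$ and so $f=0$.

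Next I would verify completeness. Let $(f^n)$ be Cauchy in $X^s$, with decompositions $f^n = f_0^n + f_1^n$. By the uniqueness established above, the periodic part of $f^n - f^m$ is exactly $f_0^n - f_0^m$, and its decaying part is $f_1^n - f_1^m$, so
\begin{equation*}
\|f^n - f^m\|_{X^s} = \|f_0^n - f_0^m\|_{H^{s+s_0}(\mathbb{T})} + \|f_1^n - f_1^m\|_{H^s(\mathbb{R})}.
\end{equation*}
Consequently $(f_0^n)$ is Cauchy in $H^{s+s_0}(\mathbb{T})$ and $(f_1^n)$ is Cauchy in $H^s(\mathbb{R})$. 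By the completeness of these standard Sobolev spaces, $f_0^n \to f_0$ in $H^{s+s_0}(\mathbb{T})$ and $f_1^n \to f_1$ in $H^s(\mathbb{R})$. Setting $f := f_0 + f_1 \in X^s$, one checks directly that $\|f^n - f\|_{X^s} \to 0$.

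The only mildly delicate step is the uniqueness of the decomposition; once that is in hand, the remainder of the argument is a routine coordinate-wise reduction to the known completeness of $H^{s+s_0}(\mathbb{T})$ and $H^s(\mathbb{R})$. I do not expect any genuine obstacle.
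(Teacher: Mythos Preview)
Your proposal is correct and follows the natural route. The paper itself does not supply a proof of this lemma; it only states it and records the uniqueness of the decomposition as a separate remark immediately afterward. Your argument fills in exactly what the paper omits, and the key step you identify---that a $2\pi$-periodic function in $L^2(\mathbb{R})$ must vanish---is precisely the content of that remark.
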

\begin{remark}
Let $f\in X^s$. The decomposition $f=f_0+f_1$ for $f_0\in H^{s+s_0}(\TT)$ and $f_1\in H^s(\RR)$ is unique.
\end{remark}

\subsection{Hilbert transform and double layer potential} Let $\zeta=\zeta(\alpha,t)$ be a chord-arc for every fixed time $t$, we define the Hilbert transform associated with $\zeta$ by
\begin{equation}
\mathcal{H}_{\zeta}f(\alpha):=\frac{1}{\pi i}p.v.\int_{-\infty}^{\infty} \frac{\zeta_{\beta}(\beta)}{\zeta(\alpha, t)-\zeta(\beta, t)}f(\beta)d\beta.
\end{equation}
\begin{remark}
We'll also use the notation  $\mathcal{H}_{\omega},  \mathcal{H}_{\tilde{\zeta}}, \mathcal{H}_{\tilde{\omega}}$ in this paper, represents Hilbert transform associated with $\omega, \tilde{\zeta}, \tilde{\omega}$, respectively. We denote $\mathbb{H}$ the Hilbert transform associated with $\zeta(\alpha)=\alpha$.
\end{remark}
The double layer potential operator $\mathcal{K}$ associated with $\zeta$ is given by
\begin{equation}
\mathcal{K}_{\zeta}f(\alpha):=p.v.\int_{-\infty}^{\infty} \Re\{\frac{1}{\pi i}\frac{\zeta_{\beta}}{\zeta(\alpha,t)-\zeta(\beta,t)}\} f(\beta)d\beta.
\end{equation}
The adjoint of the double layer potential operator $\mathcal{K}_{\zeta}^{\ast}$ associated with $\zeta$ is defined by
\begin{equation}
\mathcal{K}_{\zeta}^{\ast}f(\alpha):=p.v.\int_{-\infty}^{\infty} \Re\{-\frac{1}{\pi i}\frac{\zeta_{\alpha}}{|\zeta_{\alpha}|}\frac{|\zeta_{\beta}|}{\zeta(\alpha,t)-\zeta(\beta,t)}\} f(\beta)d\beta.
\end{equation}

For periodic functions, we use the following version of Hilbert transform. Let $\Gamma(t)$ be a chord-arc, $\Gamma(t)=\{\omega(\alpha,t):\alpha\in \RR\}$, where $\omega-\alpha$ is periodic. Let $\Omega^0(t)$ be the region below the curve $\Gamma(t)$. Define the periodic Hilbert transform associated with $\Gamma$ as
\begin{equation}
\mathcal{H}_pf(\alpha):=\frac{1}{2\pi i}p.v.\int_{\TT} \omega_{\beta}(\beta)\cot(\frac{1}{2}(\omega(\alpha,t)-\omega(\beta,t))f(\beta)d\beta.
\end{equation}
\begin{remark}
Please note the difference between  $\mathcal{H}_{\omega}$ ad $\mathcal{H}_p$. 
\end{remark}
\noindent The corresponding double layer potential operator $\mathcal{K}_{p}$ is given by
\begin{equation}
\mathcal{K}_p f(\alpha):=p.v. \int_{\TT}\Re\{\frac{1}{2\pi i}\omega_{\beta}(\beta)\cot(\frac{1}{2}(\omega(\alpha,t)-\omega(\beta,t)))\}f(\beta)d\beta.
\end{equation}
The corresponding adjoint $\mathcal{K}_{p}^{\ast}$ of $\mathcal{K}_{p}$ is given by
\begin{equation}
\mathcal{K}_p^{\ast} f(\alpha):=p.v. \int_{\TT}\Re\{-\frac{1}{2\pi i}\frac{\omega_{\alpha}}{|\omega_{\alpha}|}|\omega_{\beta}(\beta)|\cot(\frac{1}{2}(\omega(\alpha,t)-\omega(\beta,t)))\}f(\beta)d\beta.
\end{equation}
\subsubsection{Characterization of holomorphic functions} For holomorphic functions which decay nontangentially, we have the following description.
\begin{lemma}\label{goodgood}
Let $f\in \mathcal{H}ol_{\mathcal{N}}(\Omega(t))$. Then $\mathcal{H}_{\zeta}f$ is defined and 
$$(I-\mathcal{H}_{\zeta})f=0.$$
\end{lemma}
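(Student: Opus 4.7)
The plan is to prove $f = \mathcal{H}_\zeta f$ via the Cauchy integral formula applied in $\Omega(t)$, with the nontangential decay providing both the existence of the singular integral $\mathcal{H}_\zeta f$ and the vanishing of the auxiliary contour at infinity.

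First, let $F$ denote the holomorphic extension of $f$ to $\Omega(t)$ and fix an interior reference point $z_0\in\Omega(t)$. For large $R$ and small $\epsilon>0$, I would form a bounded Jordan subdomain $\Omega_{R,\epsilon}\subset\Omega(t)$ whose boundary consists of the interface segment $\zeta([-R,R])$ together with a rectangular cap $\Gamma_{R,\epsilon}$ consisting of two vertical sides from $\zeta(\pm R)$ down to $(\pm R,-R\tan\epsilon)$ and a horizontal bottom at height $y=-R\tan\epsilon$. Cauchy's integral formula then yields
\begin{equation*}
F(z_0)=-\frac{1}{2\pi i}\int_{-R}^R\frac{\zeta_\beta(\beta,t)f(\beta)}{\zeta(\beta,t)-z_0}\,d\beta + \frac{1}{2\pi i}\int_{\Gamma_{R,\epsilon}}\frac{F(w)}{w-z_0}\,dw,
\end{equation*}
where the sign on the first integral comes from the positive orientation of $\partial\Omega_{R,\epsilon}$ (the interface being traversed from $\zeta(R)$ to $\zeta(-R)$, since $\Omega(t)$ lies below).

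Second, I would show the auxiliary contribution vanishes upon sending $R\to\infty$ and then $\epsilon\to 0$. The horizontal bottom at depth $R\tan\epsilon$ lies entirely inside the cone $C_{\pi/2-\epsilon/2}$; direct estimation gives a logarithmic factor $O(\log\cot\epsilon)$ from the kernel, multiplied by $\sup|F|$ on the bottom, which tends to zero for each fixed $\epsilon$ by nontangential decay. The two vertical sides of $\Gamma_{R,\epsilon}$ lie only partially inside a cone: on the deep portion where $|y|\geq R\tan(\epsilon/2)$ nontangential decay applies, while on the remaining ``bad'' portion of length $O(R\epsilon)$ only boundedness of $F$ is available, contributing at most $O(\epsilon)\|F\|_{L^\infty}$ against the kernel bound $O(R^{-1})$. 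Taking the limits in order yields
\begin{equation*}
F(z_0)=-\frac{1}{2\pi i}\lim_{R\to\infty}\int_{-R}^{R}\frac{\zeta_\beta(\beta,t)f(\beta)}{\zeta(\beta,t)-z_0}\,d\beta,
\end{equation*}
so in particular the improper integral converges and $\mathcal{H}_\zeta f$ is defined via the analogous boundary limit.

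Third, I would take the nontangential limit $z_0\to\zeta(\alpha,t)$ from within $\Omega(t)$ and invoke the Plemelj--Sokhotski jump formula, valid on chord-arc interfaces by Coifman--McIntosh--Meyer. The half-residue produces
\begin{equation*}
f(\alpha)=\frac{1}{2}f(\alpha)+\frac{1}{2}\mathcal{H}_\zeta f(\alpha),
\end{equation*}
which rearranges to $(I-\mathcal{H}_\zeta)f=0$. The main technical obstacle is the design of the auxiliary contour: a naive large circular arc fails because its portions near the horizontal asymptote lie outside every fixed cone $C_{\theta_0}$ with $\theta_0<\pi/2$, and no decay is available there. The rectangular cap adapted to the cone geometry trades a small $O(\epsilon)$ remnant on its vertical sides for decisive $o(1)$ control on the bottom, and the remnant is eliminated by the final limit $\epsilon\to 0$.
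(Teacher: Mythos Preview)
Your proposal is correct and follows the same approach the paper intends: the paper's proof is the single sentence ``This is a consequence of Cauchy's theorem,'' and your argument is precisely the standard way to make that sentence rigorous in the nontangential-decay setting. Your careful contour design (rectangular cap adapted to the cone geometry, followed by the double limit $R\to\infty$ then $\epsilon\to 0$) correctly handles the only nontrivial point, namely that a naive semicircular arc would stray outside every fixed cone near the horizontal directions; the paper simply suppresses this detail.
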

\begin{proof}
This is a consequence of Cauchy's theorem.
\end{proof}

We have the following well-known characterization of periodic holomorphic functions.
\begin{lemma}
Assume $f\in L^2(\TT)$. Then $f\in\mathcal{H}ol_{\mathcal{P}}(\Omega^0(t))$ if and only if 
$$(I-\mathcal{H}_p)f=0.$$
\end{lemma}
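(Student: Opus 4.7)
The strategy is to mirror Lemma \ref{goodgood}, now with the periodic Cauchy kernel $K(w,z) := \frac{1}{2}\cot(\frac{w-z}{2})$, which is $2\pi$-periodic in $w$, has simple poles of residue $1$ at $w = z + 2\pi n$, and satisfies $K(w,z) \to \mp i/2$ as $\Im(w-z) \to \pm\infty$. A periodic Cauchy integral formula will handle the forward implication; for the backward one, the same formula defines a Cauchy extension $\Phi$, and the delicate point will be verifying the decay $\Phi(z) \to 0$ as $\Im z \to -\infty$.

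For the forward direction, let $\Phi \in \mathcal{H}ol_\mathcal{P}(\Omega^0(t))$ have boundary value $f$, and fix $z \in \Omega^0(t)$. I would apply the residue theorem to $\Phi(w) K(w,z)$ on the truncated fundamental domain $D_R := \Omega^0(t) \cap \{a \leq \Re w < a+2\pi,\ \Im w > -R\}$, with $a$ chosen so that the sole pole $w=z$ lies in $D_R$. The two vertical sides cancel by the joint $2\pi$-periodicity of $\Phi$ and $K$, while the bottom at $\Im w = -R$ is $o(1)$ as $R \to \infty$, since $\Phi \to 0$ uniformly there (by periodicity plus the decay hypothesis) and $K$ stays bounded. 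This produces
\begin{equation*}
\Phi(z) = -\frac{1}{4\pi i}\int_\TT \omega_\beta(\beta,t)\, f(\beta)\cot\!\left(\frac{\omega(\beta,t)-z}{2}\right) d\beta.
\end{equation*}
Then the non-tangential limit $z \to \omega(\alpha,t)$ from within $\Omega^0(t)$, combined with the Plemelj--Sokhotski jump formula (applicable because $K$ has the same local singularity as $1/(w-z)$), yields $f = \frac{1}{2}(I + \mathcal{H}_p)f$, equivalently $(I - \mathcal{H}_p)f = 0$.

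For the backward direction, assume $(I - \mathcal{H}_p)f = 0$ and define $\Phi(z)$ via the displayed formula, separately for $z \in \Omega^0(t)$ and for $z$ in the periodic region above $\Gamma(t)$. In either region $\Phi$ is manifestly holomorphic in $z$ and $2\pi$-periodic. Plemelj yields, on $\Gamma(t)$, the interior boundary value $\frac{1}{2}(I + \mathcal{H}_p)f = f$ and the exterior boundary value $-\frac{1}{2}(I - \mathcal{H}_p)f = 0$, both using the hypothesis. The main obstacle is to show $\Phi(z) \to 0$ as $\Im z \to -\infty$. A direct computation using $K \to \mp i/2$ yields
\begin{equation*}
\lim_{\Im z \to -\infty} \Phi(z) = c, \qquad \lim_{\Im z \to +\infty} \Phi(z) = -c, \qquad c := \frac{1}{4\pi}\int_\TT \omega_\beta(\beta,t) f(\beta)\, d\beta.
\end{equation*}
To force $c = 0$, I would invoke boundary uniqueness for the upper extension: it is a bounded holomorphic $2\pi$-periodic function above $\Gamma(t)$, with zero boundary value on $\Gamma(t)$ and bounded limit $-c$ at $+\infty$. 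The conformal change of variables $z \mapsto e^{iz}$ sends the periodic upper region to a Jordan domain punctured at $0$ (with $0$ corresponding to $+\infty$), on which $\Phi$ becomes a bounded holomorphic function with zero boundary value on the image of $\Gamma(t)$; Riemann's removable singularity theorem extends it across $0$, and the maximum modulus principle then forces $\Phi \equiv 0$ above $\Gamma(t)$, whence $c = 0$. Once $c = 0$ is established, $\Phi|_{\Omega^0(t)}$ lies in $\mathcal{H}ol_\mathcal{P}(\Omega^0(t))$ and restricts to $f$ on $\Gamma(t)$, finishing the proof.
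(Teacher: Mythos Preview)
The paper does not actually supply a proof of this lemma; it is stated as a ``well-known characterization of periodic holomorphic functions'' and left at that. Your argument is a correct and standard one: the periodic Cauchy kernel $\frac{1}{2}\cot(\frac{w-z}{2})$ together with the residue theorem on a truncated fundamental strip yields the periodic Cauchy integral formula, and Plemelj handles both directions. The device of showing $c=0$ by pushing the upper extension through $z\mapsto e^{iz}$ and invoking boundary uniqueness is clean.

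One small point worth flagging: the class $\mathcal{H}ol_{\mathcal{P}}(\Omega^0(t))$ in the paper consists of \emph{bounded} holomorphic periodic functions, while for a general $f\in L^2(\TT)$ the Cauchy extension you build in the backward direction is a priori only in the periodic Hardy class, not $L^\infty$. Your maximum-modulus argument for the upper extension tacitly uses boundedness; strictly speaking one should either replace it with Hardy-space uniqueness (an $H^2$ function with vanishing nontangential boundary values is identically zero), or note that in the paper's applications $f$ always carries enough regularity that boundedness is automatic. This is a routine technicality and does not affect the correctness of your approach.
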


We'll use the following boundedness of Hilbert transform and double layer potential operators. Suppose that $\zeta, \omega$ exist on $[0,T_0]$ for some constant $T_0>0$, and satisfy the following chord-arc condition: There exist constants $\alpha_0, \beta_0, \alpha_0', \beta_0'$ such that for all $t\in [0,T_0]$,
\begin{equation}\label{chordarc_zeta}
    \alpha_0|\alpha-\beta|\leq |\zeta(\alpha,t)-\zeta(\beta,t)|\leq \beta_0|\alpha-\beta|,
\end{equation}
and
\begin{equation}\label{chordarc_omega}
    \alpha_0'|\alpha-\beta|\leq |\omega(\alpha,t)-\omega(\beta,t)|\leq \beta_0'|\alpha-\beta|,
\end{equation}

\begin{lemma}\label{layer}
Assume $\zeta(\alpha,t), \omega(\alpha,t)$ satisfy (\ref{chordarc_zeta}) and (\ref{chordarc_omega}), respectively. Then there exist constants $C_1=C_1(\alpha_0,\beta_0)$ and $C_2=C_2(\alpha_0', \beta_0')$ such that 
\begin{equation}
\norm{\mathcal{H}_{\zeta}f}_{L^2(\RR)}\leq C_1\norm{f}_{L^2(\RR)}.
\end{equation}
\begin{equation}
\norm{\mathcal{H}_{p}f}_{L^2(\mathbb{T})}\leq C_2\norm{f}_{L^2(\mathbb{T})}.
\end{equation}
\begin{equation}
\norm{(I-\mathcal{K}_{\zeta})^{-1}f}_{L^2(\RR)}\leq C_1\norm{f}_{L^2(\RR)}.
\end{equation}
\begin{equation}\label{doubleperiodic}
\norm{(I-\mathcal{K}_{p})^{-1}f}_{L^2(\mathbb{T})}\leq C_2\norm{f}_{L^2(\mathbb{T})}.
\end{equation}
\begin{equation}
\norm{(I-\mathcal{K}_{\zeta}^{\ast})^{-1}f}_{L^2(\RR)}\leq C_1\norm{f}_{L^2(\RR)}.
\end{equation}
\begin{equation}\label{doubleperiodic}
||(I-\mathcal{K}_{p}^{\ast})^{-1}f||_{L^2(\mathbb{T})}\leq C_2||f||_{L^2(\mathbb{T})}.
\end{equation}
\end{lemma}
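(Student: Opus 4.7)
The plan is to reduce every bound to the Coifman--McIntosh--Meyer (CMM) theorem on $L^2$-boundedness of the Cauchy integral on Lipschitz/chord-arc curves, together with the classical double-layer potential invertibility theory (Verchota for Lipschitz curves and its chord-arc extension). First I would handle the boundedness of $\mathcal{H}_\zeta$ on $L^2(\RR)$. The condition (\ref{chordarc_zeta}) says that $\alpha\mapsto\zeta(\alpha,t)$ is bi-Lipschitz with constants controlled by $\alpha_0,\beta_0$, so the image $\Gamma=\zeta(\RR,t)$ is a chord-arc curve. After changing variables $w=\zeta(\beta,t)$, $\mathcal{H}_\zeta$ becomes the Cauchy singular integral on $\Gamma$ applied to $f\circ\zeta^{-1}$, and CMM gives $L^2$-boundedness with constant $C_1=C_1(\alpha_0,\beta_0)$. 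For $\mathcal{H}_p$ I would use the partial-fraction expansion $\cot(z/2)=2/z+h(z)$, with $h$ entire and bounded on horizontal strips of fixed width. The $2/z$ piece gives (after periodization) a Cauchy integral along the Lipschitz graph $\omega(\RR,t)\subset\CC$, which is again handled by CMM; the remainder is a bounded integral operator on $\mathbb{T}$ with a smooth kernel. The bounds for $\mathcal{K}_\zeta,\mathcal{K}_p,\mathcal{K}_\zeta^{\ast},\mathcal{K}_p^{\ast}$ follow immediately from the Hilbert-transform bounds, since their kernels are, up to bounded factors, the real parts of the corresponding Cauchy kernels.

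Next I would address the invertibility statements, which are the substantive content of the lemma. For $I-\mathcal{K}_\zeta$ on $L^2(\RR)$, the strategy is standard layer-potential theory. I would first show Fredholmness of index zero by continuous deformation through the family of chord-arc curves $\zeta_s(\alpha)=(1-s)\alpha+s\zeta(\alpha,t)$, $s\in[0,1]$, to the flat case where $\mathcal{K}_0=0$ and $I$ is trivially invertible. Fredholm index is invariant along this path because $\mathcal{K}_{\zeta_s}$ depends continuously in operator norm on $s$, and each $\zeta_s$ is chord-arc with constants controlled by $\alpha_0,\beta_0$. I would then prove injectivity by the jump-relation argument: if $(I-\mathcal{K}_\zeta)f=0$, the double-layer potential of $f$ is harmonic in both $\Omega(t)$ and its complement, decays nontangentially, and has vanishing nontangential trace from one side; uniqueness of the exterior Dirichlet problem in the class $\mathcal{H}ol_\mathcal{N}$ forces it to vanish there, and the jump relations then give $f\equiv 0$. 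The same argument handles $I-\mathcal{K}_\zeta^{\ast}$ by duality, and the periodic operators $I-\mathcal{K}_p,I-\mathcal{K}_p^{\ast}$ on $L^2(\mathbb{T})$ via the periodic strip $\Omega^0(t)$ and the uniqueness class $\mathcal{H}ol_\mathcal{P}$.

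The main obstacle is to quantify the invertibility so that the norms of $(I-\mathcal{K}_\zeta)^{-1}$ and $(I-\mathcal{K}_p)^{-1}$ depend only on the chord-arc constants $(\alpha_0,\beta_0)$ and $(\alpha_0',\beta_0')$, not on any finer structure of $\zeta$ or $\omega$. I would handle this in one of two standard ways: either (i) invoke the Rellich-type identity on chord-arc curves, which tracks the dependence on chord-arc constants explicitly; or (ii) run a compactness argument, noting that the class of bi-Lipschitz parametrizations with fixed chord-arc constants is compact in an appropriate weak topology in which $\zeta\mapsto(I-\mathcal{K}_\zeta)^{-1}$ is continuous, so the inverse has norm bounded by a constant depending only on $(\alpha_0,\beta_0)$. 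Both routes are standard in the water-waves literature (e.g.\ \cite{Wu1997,Wu1999}), and for the present lemma I would simply cite the quantitative form established there rather than redoing the proof from scratch.
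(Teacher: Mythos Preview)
Your proposal is correct and outlines in detail the standard route (CMM for boundedness, Verchota-type layer-potential theory for invertibility with quantitative dependence on the chord-arc constants). The paper, however, does not prove this lemma at all: it simply cites Chapter~4 of \cite{taylor2000tools} for the $L^2(\RR)$ case and remarks that the periodic case is similar. So while your approach is essentially the machinery that such a reference would contain, you are doing far more work than the paper does here; the paper treats Lemma~\ref{layer} as a black box from the literature, and the subsequent Remark even notes that in the smooth small-data regime of the paper one can get the periodic $(I-\mathcal{K}_p)^{-1}$ bound trivially from $\norm{\mathcal{K}_p f}_{H^s(\mathbb{T})}\leq C\epsilon\norm{f}_{H^s(\mathbb{T})}$ and a Neumann series.
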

\begin{proof}
See for example Chapter 4 of \cite{taylor2000tools} for the case on $L^2(\RR)$. The case on $L^2(\mathbb{T})$ can be proved in a similar way. 
\end{proof}
\begin{remark}
Because we consider smooth and small solution, indeed we have for real function $f$, for $\omega$ such that $\omega-\alpha$ small,  an easy calculation gives
$$\norm{\mathcal{K}_p f}_{H^s(\mathbb{T})}\leq C\epsilon \norm{f}_{H^s(\mathbb{T})}.$$
From this, the boundedness of $(I-\mathcal{K}_p)^{-1}$ follows immediately.
\end{remark}

\subsection{Some basic identities}
\begin{lemma}\label{lemmaseven}
Let $T_0>0$ be fixed. Assume $D_t\zeta, \zeta_{\alpha}-1\in C^1([0,T_0];X^1)$, $f\in X^2$. We have 
\begin{align}
[D_t,\mathcal{H}_{\zeta}]f=&[D_t\zeta,\mathcal{H}_{\zeta}]\frac{\partial_{\alpha}D_tf}{\zeta_{\alpha}}\\
[D_t^2,\mathcal{H}_{\zeta}]f=&[D_t^2\zeta,\mathfrak{H}]\frac{f_{\alpha}}{\zeta_{\alpha}}+2[D_t\zeta,\mathcal{H}_{\zeta}]\frac{\partial_{\alpha}D_tf}{\zeta_{\alpha}}\\
&-\frac{1}{\pi i}\int_{-\infty}^{\infty} (\frac{D_t\zeta(\alpha,t)-D_t\zeta(\beta,t)}{\zeta(\alpha,t)-\zeta(\beta,t)})^2f_{\beta}(\beta,t)d\beta\\
[A\partial_{\alpha},\mathcal{H}_{\zeta}]f=&[A\zeta_{\alpha},\mathcal{H}_{\zeta}]\frac{f_{\alpha}}{\zeta_{\alpha}},\quad \quad  \partial_{\alpha}\mathcal{H}_{\zeta}f=\zeta_{\alpha}\mathcal{H}_{\zeta}\frac{f_{\alpha}}{\zeta_{\alpha}}\\
[D_t^2-iA\partial_{\alpha},\mathcal{H}_{\zeta}]f=&2[D_t\zeta,\mathcal{H}_{\zeta}]\frac{\partial_{\alpha}D_tf}{\zeta_{\alpha}}-\frac{1}{\pi i}\int_{-\infty}^{\infty} (\frac{D_t\zeta(\alpha,t)-D_t\zeta(\beta,t)}{\zeta(\alpha,t)-\zeta(\beta,t)})^2f_{\beta}(\beta,t)d\beta \label{abcde}
\end{align}
\end{lemma}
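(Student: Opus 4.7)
The plan is to prove each identity by direct computation from the integral representation
\[
\mathcal{H}_\zeta f(\alpha,t) \;=\; \frac{1}{\pi i}\,\text{p.v.}\int_{-\infty}^{\infty}\frac{\zeta_\beta(\beta,t)}{\zeta(\alpha,t)-\zeta(\beta,t)}\,f(\beta,t)\,d\beta,
\]
using differentiation under the principal-value integral, the kernel identity $\partial_\beta[1/(\zeta(\alpha)-\zeta(\beta))] = \zeta_\beta/(\zeta(\alpha)-\zeta(\beta))^2$, integration by parts in $\beta$, and (at the very end) the water wave equation $D_t^2\zeta - iA\zeta_\alpha = -i$.

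I would first dispose of the two spatial identities. Differentiating the kernel in $\alpha$ gives $\partial_\alpha \mathcal{H}_\zeta f = -\frac{\zeta_\alpha(\alpha)}{\pi i}\,\text{p.v.}\int \zeta_\beta f/(\zeta(\alpha)-\zeta(\beta))^2\,d\beta$, and integrating by parts in $\beta$ against $f$ (rewriting the squared-kernel factor as $\partial_\beta[1/(\zeta(\alpha)-\zeta(\beta))]$) produces $\zeta_\alpha\,\mathcal{H}_\zeta(f_\alpha/\zeta_\alpha)$. The identity for $[A\partial_\alpha,\mathcal{H}_\zeta]$ is then purely algebraic: write $A\partial_\alpha\mathcal{H}_\zeta f = A\zeta_\alpha\,\mathcal{H}_\zeta(f_\alpha/\zeta_\alpha)$ and $\mathcal{H}_\zeta(A\partial_\alpha f) = \mathcal{H}_\zeta(A\zeta_\alpha\cdot f_\alpha/\zeta_\alpha)$, and subtract.

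Next I would compute $[D_t,\mathcal{H}_\zeta]f$ by applying $D_t = \partial_t + b\partial_\alpha$ to the integral. The product rule for $\partial_t$ gives three pieces (acting on $\zeta_\beta$, on the Cauchy kernel, and on $f$), while $b\partial_\alpha\mathcal{H}_\zeta f$ contributes a fourth piece coming from the $\alpha$-derivative of the kernel. The kernel-derivative contributions combine at the outer slot to reconstitute $\partial_t\zeta(\alpha)+b(\alpha)\zeta_\alpha(\alpha) = D_t\zeta(\alpha)$; after integrating the $\partial_t\zeta_\beta$ term by parts in $\beta$, the inner-slot contributions with $\partial_t\zeta(\beta)$ and $b(\beta)\zeta_\beta$ likewise reorganize into $D_t\zeta(\beta)$, while a leftover $b(\beta)\zeta_\beta f_\beta$ piece exactly reproduces the difference between $\mathcal{H}_\zeta\partial_t f$ and $\mathcal{H}_\zeta D_t f$. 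What remains after subtracting $\mathcal{H}_\zeta D_tf$ is precisely $D_t\zeta(\alpha)\mathcal{H}_\zeta(f_\alpha/\zeta_\alpha) - \mathcal{H}_\zeta(D_t\zeta\cdot f_\alpha/\zeta_\alpha) = [D_t\zeta,\mathcal{H}_\zeta](f_\alpha/\zeta_\alpha)$, which gives the first stated identity.

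The $[D_t^2,\mathcal{H}_\zeta]$ identity then follows by iteration via $[D_t^2,\mathcal{H}_\zeta] = 2[D_t,\mathcal{H}_\zeta]D_t + [D_t,[D_t,\mathcal{H}_\zeta]]$. The first term yields $2[D_t\zeta,\mathcal{H}_\zeta](\partial_\alpha D_t f/\zeta_\alpha)$ immediately. For the second, apply $D_t$ once more to $[D_t\zeta,\mathcal{H}_\zeta](f_\alpha/\zeta_\alpha)$ and track where the derivative lands: hitting the first slot of the commutator gives $[D_t^2\zeta,\mathcal{H}_\zeta](f_\alpha/\zeta_\alpha)$, while hitting the Cauchy kernel inside $\mathcal{H}_\zeta$ a second time produces, by the same kernel-derivative computation, the squared-difference integral $-\frac{1}{\pi i}\int((D_t\zeta(\alpha)-D_t\zeta(\beta))/(\zeta(\alpha)-\zeta(\beta)))^2 f_\beta\,d\beta$. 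Finally, to obtain \eqref{abcde} I subtract $i[A\partial_\alpha,\mathcal{H}_\zeta]f = i[A\zeta_\alpha,\mathcal{H}_\zeta](f_\alpha/\zeta_\alpha)$ and invoke the water wave equation: the first-order piece collapses via
\[
[D_t^2\zeta,\mathcal{H}_\zeta](f_\alpha/\zeta_\alpha) - i[A\zeta_\alpha,\mathcal{H}_\zeta](f_\alpha/\zeta_\alpha) = [D_t^2\zeta - iA\zeta_\alpha,\mathcal{H}_\zeta](f_\alpha/\zeta_\alpha) = [-i,\mathcal{H}_\zeta](f_\alpha/\zeta_\alpha) = 0,
\]
leaving exactly the two cubic terms displayed in \eqref{abcde}. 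The main obstacle I expect is justifying the integration-by-parts and differentiation-under-the-integral steps as honest principal-value manipulations in the nonvanishing setting: since $D_t\zeta$ and $\zeta_\alpha - 1$ lie only in $X^1 = H^1(\mathbb{R}) + H^{1+s_0}(\mathbb{T})$ and the periodic component does not decay, one must split into periodic and localized pieces and invoke the chord-arc condition together with the nontangential-decay framework set up earlier in the paper to guarantee that the boundary terms at infinity vanish and that every principal-value integral encountered is well-defined.
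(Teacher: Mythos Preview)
Your proof is correct and follows exactly the standard computation from \cite{Wu2009}, which is all the paper does here (it simply cites that reference without reproducing the argument). One small point: your calculation for $[D_t,\mathcal{H}_\zeta]f$ correctly yields $[D_t\zeta,\mathcal{H}_\zeta](f_\alpha/\zeta_\alpha)$, which is indeed the right identity---the printed statement has a typo ($\partial_\alpha D_tf$ should be $f_\alpha$, as is clear from the periodic analogue in the next lemma and from the way the identity is used throughout the paper).
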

\noindent For proof, see \cite{Wu2009}.

\begin{lemma}\label{lemmmaeight}
Let $T_0>0$ be fixed. Assume that $f\in C^2_{t,x}([0,T_0]\times \mathbb{T})$. We have 
\begin{equation}\label{U1}
[\partial_{\alpha}, \mathcal{H}_p]f=[\omega_{\alpha}, \mathcal{H}_p]\frac{f_{\alpha}}{\omega_{\alpha}}.
\end{equation}
\begin{equation}\label{U2}
[g\partial_{\alpha}, \mathcal{H}_p]f=[g\omega_{\alpha}, \mathcal{H}_p]\frac{f_{\alpha}}{\omega_{\alpha}}, \quad \quad \forall~g\in L^{\infty}(\mathbb{T}).
\end{equation}
\begin{equation}\label{U3}
[\partial_t, \mathcal{H}_p]f=[\omega_t, \mathcal{H}_p]\frac{f_{\alpha}}{\omega_{\alpha}}.
\end{equation}

\begin{equation}\label{U4}
[D_t^0, \mathcal{H}_p]f=[D_t^0\omega, \mathcal{H}_p]\frac{f_{\alpha}}{\omega_{\alpha}}.
\end{equation}

\begin{equation}\label{U5}
\begin{split}
[(D_t^0)^2, \mathcal{H}_p]f=&[(D_t^0)^2\omega, \mathcal{H}_p]\frac{f_{\alpha}}{\omega_{\alpha}}+2[D_t^0\omega, \mathcal{H}_p]\frac{\partial_{\alpha}D_t^0f}{\omega_{\alpha}}\\
&-\frac{1}{4\pi i}\int_{\TT}\Big(\frac{D_t^0\omega(\alpha)-D_t^0\omega(\beta)}{\sin(\frac{\pi}{2}(\omega(\alpha)-\omega(\beta)))}\Big)^2 f_{\beta}d\beta.
\end{split}
\end{equation}

\begin{equation}\label{U6}
\begin{split}
[(D_t^0)^2-iA_0\partial_{\alpha}, \mathcal{H}_p]f=&2[D_t^0\omega, \mathcal{H}_p]\frac{\partial_{\alpha}D_t^0f}{\omega_{\alpha}}-\frac{1}{4\pi i}\int_{\TT}\Big(\frac{D_t^0\omega(\alpha)-D_t^0\omega(\beta)}{\sin(\frac{\pi}{2}(\omega(\alpha)-\omega(\beta)))}\Big)^2 f_{\beta}d\beta.
\end{split}
\end{equation}
\end{lemma}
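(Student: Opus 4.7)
The plan is to prove these six commutator identities by direct manipulation of the periodic kernel $\tfrac{1}{2}\cot\!\bigl(\tfrac{1}{2}(\omega(\alpha)-\omega(\beta))\bigr)$, in exact parallel with the proofs of the analogous Cauchy-kernel identities in Lemma~\ref{lemmaseven}. The algebraic fact that drives (\ref{U1})--(\ref{U4}) is the antisymmetry
\begin{equation*}
\omega_\beta(\beta)\,\partial_\alpha\cot\!\bigl(\tfrac{1}{2}(\omega(\alpha)-\omega(\beta))\bigr) \;=\; -\omega_\alpha(\alpha)\,\partial_\beta\cot\!\bigl(\tfrac{1}{2}(\omega(\alpha)-\omega(\beta))\bigr),
\end{equation*}
together with the analogous relation for time derivatives. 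This lets me convert an $\alpha$- or $t$-derivative falling on the kernel into a $\beta$-derivative (after multiplying by $\omega_\beta$), which is then discharged by integration by parts; periodicity kills the boundary terms and a symmetric cutoff preserves the principal value.

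Starting from this template, (\ref{U1}) reduces to $\partial_\alpha(\mathcal{H}_p f)=\omega_\alpha\mathcal{H}_p\tfrac{f_\alpha}{\omega_\alpha}$, obtained by differentiating under the integral and integrating by parts once. Identity (\ref{U2}) follows from (\ref{U1}) by premultiplying by $g$. For (\ref{U3}) the same procedure with $\partial_t$ produces the factor $\omega_t(\alpha)-\omega_t(\beta)$, and reorganizing reassembles $[\omega_t,\mathcal{H}_p]\tfrac{f_\alpha}{\omega_\alpha}$. Identity (\ref{U4}) is then (\ref{U3}) plus (\ref{U2}) with $g=b_0$, using $D_t^0\omega=\omega_t+b_0\omega_\alpha$.

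For (\ref{U5}) I would start from the product-rule decomposition
\begin{equation*}
[(D_t^0)^2,\mathcal{H}_p]f \;=\; D_t^0\bigl([D_t^0,\mathcal{H}_p]f\bigr) + [D_t^0,\mathcal{H}_p](D_t^0 f),
\end{equation*}
and apply (\ref{U4}) to both terms. The second term immediately gives one copy of $[D_t^0\omega,\mathcal{H}_p]\tfrac{\partial_\alpha D_t^0 f}{\omega_\alpha}$. Expanding the first term via Leibniz, the derivative hitting $D_t^0\omega$ produces $[(D_t^0)^2\omega,\mathcal{H}_p]\tfrac{f_\alpha}{\omega_\alpha}$; the derivative hitting $\tfrac{f_\alpha}{\omega_\alpha}$, combined with the inner commutator $[D_t^0,\mathcal{H}_p]$ treated again by (\ref{U4}), contributes the second copy of $[D_t^0\omega,\mathcal{H}_p]\tfrac{\partial_\alpha D_t^0 f}{\omega_\alpha}$ plus cancelling correction terms; and the derivative hitting the cotangent kernel itself produces a second factor $D_t^0\omega(\alpha)-D_t^0\omega(\beta)$ against $\tfrac{1}{2}\csc^2$, which is precisely the remainder integral in (\ref{U5}). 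Finally, (\ref{U6}) follows from (\ref{U5}) by subtracting $[iA_0\partial_\alpha,\mathcal{H}_p]f=[iA_0\omega_\alpha,\mathcal{H}_p]\tfrac{f_\alpha}{\omega_\alpha}$ (via (\ref{U2}) with $g=iA_0$) and invoking the periodic interface equation $(D_t^0)^2\omega-iA_0\omega_\alpha=-i$: since the commutator of a constant with $\mathcal{H}_p$ vanishes, the $[(D_t^0)^2\omega,\mathcal{H}_p]\tfrac{f_\alpha}{\omega_\alpha}$ term cancels exactly against the $[iA_0\omega_\alpha,\mathcal{H}_p]\tfrac{f_\alpha}{\omega_\alpha}$ contribution.

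The only real obstacle is the bookkeeping for (\ref{U5}): verifying that the corrections produced by the asymmetry between the $\alpha$-variable (where $D_t^0$ acts as $\partial_t+b_0\partial_\alpha$) and the $\beta$-variable (where only $\partial_t$ appears), together with the various Leibniz terms from differentiating $\tfrac{f_\alpha}{\omega_\alpha}$ and from the repeated use of (\ref{U4}), collapse cleanly into three terms with no leftover lower-order remainders. This mirrors Wu's computation of \eqref{abcde} in \cite{Wu2009}, with $\partial_x\cot(x/2)=-\tfrac{1}{2}\csc^2(x/2)$ here playing the role that $\partial_x(1/x)=-1/x^2$ plays in the real-line setting.
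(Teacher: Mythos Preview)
Your proposal is correct and follows essentially the same approach as the paper: both derive (\ref{U1})--(\ref{U4}) from the same integration-by-parts identity on the cotangent kernel (the paper phrases it via the logarithm representation $\mathcal{H}_pf=\frac{1}{\pi i}\int_{\TT}\log\sin(\tfrac{1}{2}(\omega(\alpha)-\omega(\beta)))f_\beta\,d\beta$, which is equivalent to your antisymmetry observation), and both obtain (\ref{U5}) from the product-rule decomposition of the second-order commutator.

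The one difference worth noting concerns the bookkeeping you flag for (\ref{U5}). The paper sidesteps it entirely: rather than tracking the $b_0$ terms coming from $D_t^0=\partial_t+b_0\partial_\alpha$ directly, it first changes variables to Lagrangian coordinates (precomposing with $\kappa_0$), so that $D_t^0$ becomes $\partial_t$ and it suffices to prove the simpler identity $[\partial_t^2,\mathcal{H}_p]f=[\partial_t^2\omega,\mathcal{H}_p]\tfrac{f_\alpha}{\omega_\alpha}+2[\omega_t,\mathcal{H}_p]\tfrac{f_{t\alpha}}{\omega_\alpha}-\tfrac{1}{4\pi i}\int_{\TT}\bigl(\tfrac{\omega_t(\alpha)-\omega_t(\beta)}{\sin(\tfrac{\pi}{2}(\omega(\alpha)-\omega(\beta)))}\bigr)^2 f_\beta\,d\beta$, which follows immediately from (\ref{U3}) and $[\partial_t^2,\mathcal{H}_p]=\partial_t[\partial_t,\mathcal{H}_p]+[\partial_t,\mathcal{H}_p]\partial_t$. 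This change of variables eliminates exactly the ``asymmetry between the $\alpha$-variable and the $\beta$-variable'' you identify as the obstacle, and is worth adopting.
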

\begin{proof}
Note that
\begin{align*}
\mathcal{H}_pf(\alpha)=&-\frac{1}{ \pi i}\int_{\TT}\partial_{\beta}\log \sin(\frac{1}{2}(\omega(\alpha)-\omega(\beta))) f(\beta)d\beta\\
=&\frac{1}{\pi i}\int_{\TT}\log \sin(\frac{1}{2}(\omega(\alpha)-\omega(\beta))) f_{\beta}(\beta)d\beta.
\end{align*}
Using this, we obtain (\ref{U1}). (\ref{U2}) is proved exactly the same way.  (\ref{U3}) is prove similarly.  (\ref{U4}) is a direct consequence of (\ref{U2}) and (\ref{U3}).

To prove (\ref{U5}), by changing of variable, it suffices to prove 
\begin{equation}\label{V2}
\begin{split}
&[\partial_t^2, \mathcal{H}_p]f\\
=&[\partial_t^2\omega, \mathcal{H}_p]\frac{f_{\alpha}}{\omega_{\alpha}}+2[\omega_t, \mathcal{H}_p]\frac{f_{t\alpha}}{\omega_{\alpha}}-\frac{1}{4\pi i}\int_{\TT}\Big(\frac{\omega_t(\alpha)-\omega_t(\beta)}{\sin(\frac{\pi}{2}(\omega(\alpha)-\omega(\beta)))}\Big)^2 f_{\beta}d\beta
.
\end{split}
\end{equation}
(\ref{V2}) is a direct consequence of (\ref{U3}) and the following identity:
$$[\partial_t^2, \mathcal{H}_p]f=\partial_t[\partial_t, \mathcal{H}_p]f+[\partial_t, \mathcal{H}_p]\partial_t f.$$
\end{proof}

\begin{remark}
The identities in lemma \ref{lemmaseven} and lemma \ref{lemmmaeight} hold true in BMO sense.
\end{remark}

\subsection{Basic commutator estimates}
Let $m\geq 1$ be an integer. Define
\begin{equation}
S_1(A,f)=\int_{\RR} \prod_{j=1}^m \frac{A_j(\alpha)-A_j(\beta)}{\gamma_j(\alpha)-\gamma_j(\beta)} \frac{f(\beta)}{\gamma_0(\alpha)-\gamma_0(\beta)}d\beta.
\end{equation}

\begin{equation}
S_2(A,f)=\int_{\RR} \prod_{j=1}^m \frac{A_j(\alpha)-A_j(\beta)}{\gamma_j(\alpha)-\gamma_j(\beta)} f_{\beta}(\beta)d\beta.
\end{equation}
\noindent We have the following comutator estimates, which can be found in \cite{Totz2012}, \cite{Wu2009}.
\begin{proposition}\label{singular}
(1) Assume each $\gamma_j$ satisfies the chord-arc condition
\begin{equation}\label{chordarc1}
C_{0,j}|\alpha-\beta|\leq |\gamma_j(\alpha)-\gamma_j(\beta)|\leq C_{1,j}|\alpha-\beta|,
\end{equation}
where $C_{0,j}, C_{1,j}$ are positive constants and $C_{0,j}\leq C_{1,j}, 1\leq j\leq m$. 
Then both $\norm{S_1(A,f)}_{L^2}$ and $\norm{S_2(A,f)}_{L^2}$ are bounded by
$$C\prod_{j=1}^m \norm{A_j'}_{X_j}\norm{f}_{X_0},$$
where one of the $X_0, X_1, ...X_m$ is equal to $L^2$ and the rest are $L^{\infty}$. The constant $C$ depends on $\norm{\gamma_j'}_{L^{\infty}}^{-1}, j=1,..,m$.

(2) Let $s\geq 3$ be given, and suppose chord-arc condition (\ref{chordarc1}) holds for each $\gamma_j$, assume $\gamma_j-1\in H^{s-1}$.then 
$$\norm{S_2(A,f)}_{H^s}\leq C\prod_{j=1}^m \norm{A_j'}_{Y_j}\norm{f}_Z,$$
where for all $j=1,...,m$, $Y_j=H^{s-1}$ or $W^{s-2,\infty}$ and $Z=H^s$ or $W^{s-1,\infty}$. The constant $C$ depends on $\norm{\gamma_j'-1}_{H^{s-1}}, j=1,...,m$.
\end{proposition}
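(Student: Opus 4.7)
The plan is to reduce both statements to the Coifman--McIntosh--Meyer (CMM) theorem on the $L^2$-boundedness of the Cauchy integral on Lipschitz graphs, applied in a multilinear form. The key observation is that under the chord-arc condition \eqref{chordarc1}, each ratio
\begin{equation*}
\frac{A_j(\alpha)-A_j(\beta)}{\gamma_j(\alpha)-\gamma_j(\beta)} = \frac{\int_0^1 A_j'(\beta+s(\alpha-\beta))\,ds}{\int_0^1 \gamma_j'(\beta+s(\alpha-\beta))\,ds}
\end{equation*}
is pointwise comparable to an average of $A_j'/\gamma_j'$, and hence bounded by $C_{0,j}^{-1}\|A_j'\|_{L^\infty}$ whenever $A_j'\in L^\infty$. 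Both $S_1$ and $S_2$ are iterated commutator forms of Calder\'on type, and the estimates in the stated generality are already contained in Proposition 2.4 of \cite{Wu2009} and the appendix of \cite{Totz2012}; I would structure the argument to invoke those directly.

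For part~(1), I would split into cases according to which slot holds the $L^2$ norm. If $f\in L^2$ and every $A_j'\in L^\infty$, then $S_1(A,f)$ is a multilinear perturbation of the Cauchy integral along the Lipschitz graph of $\gamma_0$ by bounded multipliers, and the CMM theorem (together with the polynomial growth of the norm in the Lipschitz constants) yields the bound. The same applies to $S_2(A,f)$ after writing $f_\beta$ as the derivative of $f$ and observing that $S_2$ is likewise a standard multilinear singular integral with one factor being the Hilbert-type kernel $1/(\gamma_0(\alpha)-\gamma_0(\beta))$ composed with the Lipschitz change of variables $\gamma_0$. When instead some $A_{j_0}'\in L^2$ and $f\in L^\infty$ (or a different slot carries the $L^2$ norm), the multilinear boundedness $L^\infty\times\cdots\times L^2\times\cdots\times L^\infty\to L^2$ follows from the fully symmetric multilinear version of CMM, which is the statement that the $m$-th Calder\'on commutator maps any one ``bad'' slot in $L^2$ and the rest in $L^\infty$ to $L^2$.

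For part~(2), I would apply $\partial_\alpha^\ell$ for $0\le \ell\le s$ to $S_2(A,f)$ and distribute the derivatives via the product rule, producing a finite sum of integrals of the same type $S_2$ (and, after integrating by parts when a derivative hits the difference quotient, of type $S_1$) with the differentiations allocated among the factors $A_j$, $\gamma_j$, and $f$. In each resulting term exactly one factor carries the ``top'' derivative count --- placed in $H^{s-1}$ via the chord-arc assumption $\gamma_j-1\in H^{s-1}$ or in $H^s$ if it is $f$ --- and every remaining factor carries at most $s-2$ derivatives, which by Sobolev embedding $H^{s-1}\hookrightarrow W^{s-2,\infty}$ (valid since $s\ge 3$) sits in $L^\infty$. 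Part~(1) then bounds each such term by the required product $\prod_j \|A_j'\|_{Y_j}\|f\|_Z$.

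The main obstacle is purely combinatorial bookkeeping: enumerating all terms that arise when $\partial_\alpha^s$ distributes across the multilinear integrand and checking that in every term the factor landing in $L^2$ is precisely the one indexed by $Y_j=H^{s-1}$ (or $Z=H^s$), while all other factors admit the $W^{s-2,\infty}$ control the embedding furnishes. Some care is needed when a derivative falls on the kernel $1/(\gamma_j(\alpha)-\gamma_j(\beta))$, producing factors of $\gamma_j'(\alpha)$ or $\gamma_j'(\beta)$ together with an extra difference quotient; these either combine into new chord-arc ratios or become singular integrals to which part~(1) applies again. Since this is a routine expansion and the final estimates are explicitly tabulated in \cite{Wu2009,Totz2012}, I would present the derivative-distribution step schematically and refer to those sources for the detailed enumeration.
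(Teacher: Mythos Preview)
Your proposal is correct and takes essentially the same approach as the paper: the paper does not give a proof of this proposition at all, but simply cites \cite{Totz2012} and \cite{Wu2009} for the result, which is exactly what you do (with additional sketch detail on the CMM reduction and derivative distribution that the paper omits entirely).
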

\vspace*{1ex}

\noindent Let $m\geq 1$ be  integer. Define

\begin{equation}
    \tilde{S}_1(f,g):=[g,\mathcal{H}_p]\frac{f_{\alpha}}{\omega_{\alpha}}.
\end{equation}
\begin{equation}
\tilde{S}_2(A,f)=\int_{\TT} \prod_{j=1}^m \frac{A_j(\alpha)-A_j(\beta)}{\sin(\frac{1}{2}(\omega(\alpha,t)-\omega(\beta,t))} f_{\beta}(\beta)d\beta.
\end{equation}
We have 
\begin{proposition}\label{singularperiodic}
Assume $\omega$ satisfies the chord-arc condition (\ref{chordarc_omega}). Then
\begin{equation}
    \norm{\tilde{S}_1(f,g)}_{H^s(\TT)}\leq C\norm{f}_{H^s(\TT)}\norm{g}_{H^s(\TT)},
\end{equation}
\begin{equation}
    \norm{\tilde{S}_2(A,f)}_{H^s}\leq C\prod_{j=1}^m\norm{A_j'}_{H^{s-1}(\TT)}\norm{f}_{H^s},
\end{equation}
where the constant $C$ depends on $\norm{\omega_{\alpha}}_{H^{s-1}(\TT)}, j=1,...,m$.
\end{proposition}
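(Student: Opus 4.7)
The plan is to parallel the real-line Proposition~\ref{singular} by separating the periodic trigonometric kernel into a principal singular piece behaving like $\frac{1}{\omega(\alpha)-\omega(\beta)}$ plus a smooth remainder. Start from the Mittag--Leffler expansions
\begin{equation*}
\csc(w/2)=\tfrac{2}{w}+L(w),\qquad \cot(w/2)=\tfrac{2}{w}+K(w),
\end{equation*}
where $K,L$ are real-analytic on $(-2\pi,2\pi)$. Since $\omega-\alpha$ is $2\pi$-periodic and $\omega_\alpha$ is close to $1$ in the regime of interest, for $\alpha,\beta\in\TT$ the argument $w=\omega(\alpha)-\omega(\beta)$ stays in a fixed compact subset of $(-2\pi,2\pi)$; thus $K(w)$ and $L(w)$ yield smooth bounded functions of $(\alpha,\beta)\in\TT^2$ whose $H^{s-1}(\TT)$ norms in each variable are controlled by $\|\omega_\alpha\|_{H^{s-1}(\TT)}$ via the chain and Moser product rules.

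For $\tilde{S}_2$, expanding $\csc(w/2)^m$ via the binomial theorem yields a leading term
\begin{equation*}
2^m\int_\TT \prod_{j=1}^m\frac{A_j(\alpha)-A_j(\beta)}{\omega(\alpha)-\omega(\beta)}\,f_\beta(\beta)\,d\beta
\end{equation*}
that is the direct periodic analog of $S_2$ in Proposition~\ref{singular}. Since $\omega(\TT)$ is a closed chord-arc curve, the Coifman--McIntosh--Meyer/$T(1)$ argument underlying Proposition~\ref{singular} carries over verbatim on $\TT$, giving the claimed $H^s(\TT)$ product estimate for this piece. Each remaining term in the binomial expansion replaces at least one $\tfrac{2}{w}$ factor by the smooth $L(w)$; each such replacement is paid for by a surplus factor $(A_j(\alpha)-A_j(\beta))=(\alpha-\beta)\int_0^1 A_j'(\beta+s(\alpha-\beta))\,ds$, producing a kernel that is smooth and bounded on $\TT^2$. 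The resulting operators are then bounded on $H^s(\TT)$ by the Moser product rule
\begin{equation*}
\|uv\|_{H^s}\lesssim\|u\|_{L^\infty}\|v\|_{H^s}+\|u\|_{H^s}\|v\|_{L^\infty},
\end{equation*}
together with Lemma~\ref{L2toinfty}, picking up $\prod_j\|A_j'\|_{H^{s-1}(\TT)}\|f\|_{H^s(\TT)}$.

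For $\tilde{S}_1$, unfolding the commutator and using $\omega_\alpha(\beta)=\omega_\beta(\beta)$ yields
\begin{equation*}
\tilde{S}_1(f,g)(\alpha)=\frac{1}{2\pi i}\int_\TT(g(\alpha)-g(\beta))\cot\!\Big(\tfrac{1}{2}(\omega(\alpha)-\omega(\beta))\Big)f_\alpha(\beta)\,d\beta.
\end{equation*}
Substituting $\cot(w/2)=\frac{2}{w}+K(w)$ identifies this with a $\tilde{S}_2$-type integral ($m=1$, $A_1=g$, and $f_\alpha$ in place of $f_\beta$) plus a smooth-kernel piece, and the $\tilde{S}_2$ bound together with the embedding $\|g'\|_{H^{s-1}}\leq\|g\|_{H^s}$ gives the desired $\|\tilde{S}_1(f,g)\|_{H^s(\TT)}\lesssim\|f\|_{H^s}\|g\|_{H^s}$. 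The main technical obstacle is verifying that the CMM/$T(1)$ principal estimate really passes from $\RR$ to the closed chord-arc curve $\omega(\TT)$ while preserving the product structure in the higher-order commutators; this is standard folklore but requires careful bookkeeping of $T(1)$-paraproducts on $\TT$ and a Littlewood--Paley decomposition adapted to the circle.
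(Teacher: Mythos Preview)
Your proposal is correct and is precisely the kind of derivation the paper has in mind: the paper's entire proof of this proposition is the single sentence ``This can be derived from Proposition~\ref{singular},'' and your argument (splitting $\cot(w/2)$ and $\csc(w/2)$ into the principal $2/w$ piece plus an analytic remainder, then invoking the real-line commutator estimates for the principal part and product/Sobolev bounds for the smooth remainder) is exactly a concrete way to carry out that reduction. The only point worth noting is that your observation that the mixed binomial terms yield genuinely smooth bounded kernels on $\TT^2$ is correct---each surplus factor $A_j(\alpha)-A_j(\beta)$ contributes a power of $(\alpha-\beta)$ that cancels the residual $(2/w)^{m-k}$ singularity---so no further CMM input is needed there; the CMM/$T(1)$ transfer to $\TT$ is required only for the single leading term, and that is indeed standard.
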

\begin{proof}
This can be derived from Proposition \ref{singular}.
\end{proof}

\section{Water wave system in periodic setting}
In this section, we use S. Wu's method (see  (\cite{Wu2009}, \cite{Wu2011}) to give a sketch of proof of long time existence of water wave system in periodic setting. Long time existence of periodic water waves is not new, the methods in \cite{Wu2009}\cite{Ionescu2015}\cite{AlazardDelort}\cite{HunterTataruIfrim1} all imply cubic lifespan for 2d gravity water waves with small initial data. We use S. Wu's method to sketch the proof here because we need to bound the quantities such as $b_0, A_0, \omega-\alpha, D_t^0\omega, (D_t^0)^2\omega$ on time scale $O(\epsilon^{-2})$ in later sections, and also because we need to use this method to prove the remainder term $r_0$ remains small for sufficiently long times.  Solution of this periodic water waves system has the same boundary values at spatial infinity as a water waves system whose initial data is assumed to be  in $X^s$. 
\subsection{Notation}
$D_t^0=\partial_t+b_0\partial_{\alpha}$, for some function $b_0$. $D_t=\partial_t+b\partial_{\alpha}$. $\Omega^0(t)$ be the region bounded above by the graph $\omega$.

\subsection{Set up of the  periodic water waves system}

Consider the periodic water waves system
\begin{equation}\label{special}
\begin{cases}
((D_t^0)^2-iA_0\partial_{\alpha})\omega=-i\\
\bar{\omega}-\alpha, D_t^0\bar{\omega}\in \mathcal{H}ol_{\mathcal{P}}(\Omega^0(t)).
\end{cases}
\end{equation}
Let $\kappa_0:\RR\rightarrow \RR$ be a diffeomorphism given by 
\begin{equation}\label{kappa0}
(\kappa_0)_t=b_0\circ\kappa_0.
\end{equation}
Then the change of coordinates $\alpha\mapsto \kappa_0(\alpha)$ brings the system (\ref{special}) back to Lagrangian coordinates, namely,  with $(a_0\partial_{\alpha}\kappa_0)\circ \kappa_0^{-1}=A_0$, we have 
\begin{equation}\label{back}
\begin{cases}
(\omega\circ \kappa_0)_{tt}-ia_0\partial_{\alpha}\omega\circ\kappa_0=-i\\
(\overline{\omega\circ\kappa_0})_t\in \mathcal{H}ol_{\mathcal{P}}(\Omega^0(t)).
\end{cases}
\end{equation}
Take $\partial_t$ on both sides of the above equation, we get :
\begin{equation}\label{ttttt}
(\partial_t^2-ia_0\partial_{\alpha})(\omega\circ \kappa_0)_t=i(a_0)_t \partial_{\alpha}\omega\circ\kappa_0=\frac{(a_0)_t}{a
_0}ia_0\omega_{\alpha}\circ\kappa.
\end{equation}
Precomposing with $\kappa_0^{-1}$ on both sides of the above equation, we obtain,
\begin{equation}\label{quaqua}
((D_t^0)^2-iA_0\partial_{\alpha})D_t^0\omega=i\frac{(a_0)_t}{a_0}\circ\kappa_0^{-1}A_0\omega_{\alpha}.
\end{equation}

Similar to the derivation of formula for $b$, $A$, $\frac{a_t}{a}\circ\kappa^{-1}$ in \cite{Wu2009}, we can derive formula for $b_0, A_0, \frac{(a_0)_t}{a_0}\circ \kappa_0^{-1}$.  We give the details for the derivation of formula for $b_0$. Formula for $A_0, \frac{(a_0)_t}{a_0}\circ \kappa_0^{-1}$ follow in a similar way.

\subsection{Formula for $b_0$}
We have 
\begin{equation}\label{bb0}
(I-\mathcal{H}_p)b_0=-[D_t^0\omega, \mathcal{H}_p]\frac{\bar{\omega}_{\alpha}-1}{\omega_{\alpha}}.
\end{equation}
\begin{proof}
By assumption, $\bar{\omega}-\alpha=\Phi_0(\omega(\alpha,t),t),  D_t\bar{\omega}=\Psi_0(\omega(\alpha,t),t)$, where $\Phi_0, \Psi_0\in \mathcal{H}ol_{\mathcal{P}}$. We have 
\begin{equation}\label{adw}
\begin{split}
D_t^0\bar{\omega}=&(\partial_t+b_0\partial_{\alpha})(\bar{\omega}-\alpha)+b_0\\
=& D_t^0\omega(\Phi_0)_{\omega}\circ \omega+ (\Phi_0)_t\circ \omega+b_0.
\end{split}
\end{equation}
Note that $D_t^0\bar{\omega}, (\Phi_0)_t\circ \omega\in \mathcal{H}ol_{\mathcal{P}}$, we have 
$$(I-\mathcal{H}_p)D_t\bar{\omega}=0,\quad \quad (I-\mathcal{H}_p)(\Phi_0)_t\circ\omega=0.$$
Also note that 
$$(\Phi_0)_{\omega}\circ \omega=\frac{\bar{\omega}_{\alpha}-1}{\omega_{\alpha}}.$$
Apply $I-\mathcal{H}_p$ on both sides of  (\ref{adw}),  we have 
\begin{align*}
(I-\mathcal{H}_p)b_0=(I-\mathcal{H}_p)D_t^0\omega \frac{\bar{\omega}_{\alpha}-1}{\omega_{\alpha}}=-[D_t^0\omega, \mathcal{H}_p]\frac{\bar{\omega}_{\alpha}-1}{\omega}.
\end{align*}
\end{proof}

\subsection{Formula for $A_0$}
\begin{equation}\label{AA0}
(I-\mathcal{H}_p)(A_0-1)=i[(D_t^0)^2\omega,\mathcal{H}_p]\frac{\bar{\omega}_{\alpha}-1}{\omega_{\alpha}}+i[(D_t^0)\omega,\mathcal{H}_p]\frac{\partial_{\alpha}(D_t^0)\bar{\omega}}{\omega_{\alpha}}.
\end{equation}

\subsection{Formula for $\frac{(a_0)_t}{a_0}\circ \kappa_0^{-1}$}
We have
\begin{equation}\label{b0b0b0}
\begin{split}
-i(I-\mathcal{H}_p)\Big(A_0 \bar{\omega}_{\alpha}(\frac{(a_0)_t}{a_0})\circ \kappa_0^{-1}\Big)=& 2[(D_t^0)^2\omega, \mathcal{H}_{p}]\frac{\partial_{\alpha}D_t^0\bar{\omega}}{\omega_{\alpha}}+2[D_t^0\omega, \mathcal{H}_{p}]\frac{\partial_{\alpha}(D_t^0)^2\bar{\omega}}{\omega_{\alpha}}\\
&-\frac{1}{4\pi i}\int_{\TT} \Big(\frac{D_t^0\omega(\alpha)-D_t^0\omega(\beta)}{\sin(\frac{\pi}{2}(\omega(\alpha)-\omega(\beta)))}\Big)^2\partial_{\beta}D_t^0\bar{\omega}d\beta.
\end{split}
\end{equation}


\subsection{Local well-posedness} (\ref{special}) is a fully nonlinear system.
To prove local well-posedness, one way is to quasilinearize this system. In \cite{Wu1997}, S. Wu showed that for water waves which vanish at infinity,  one can quasilinearize (\ref{special}) by just taking one derivative in time. For periodic case, we have quasilinearization
\begin{equation}\label{quasi1}
\begin{cases}
((D_t^0)^2-iA_0\partial_{\alpha})D_t^0\omega=i \frac{(a_0)_t}{a_0}\circ\kappa^{-1}\omega_{\alpha},\\
D_t^0\bar{\omega}\in \mathcal{H}ol_{\mathcal{P}}(\Omega^0(t)).
\end{cases}
\end{equation}
By formula (\ref{bb0}), (\ref{AA0}) together with Proposition \ref{singularperiodic}, the system (\ref{quasi1}) is a quasilinear system.  So the local well-posedness can be obtained similar to the work of S. Wu\cite{Wu1997}. We omit the details and state the result as follows:
\begin{thm}[local well-posedness]\label{localperiodic}
Let $s\geq 4$. Assume that $\omega(0), D_t^0\omega(0), (D_t^0)^2\omega(0)$  satify the compatiability condition, i.e., $\bar{\omega}(0)-\alpha,  D_t^0\bar{\omega}(0)\in \mathcal{H}ol_{\mathcal{P}}(\Omega^0(0))$, and 
$$(I-\mathcal{H}_{\mathcal{P}})(A_0(0)-1)=i[(D_t^0)^2\omega(0),\mathcal{H}_{\mathcal{P}}]\frac{\partial_{\alpha}\bar{\omega}(0)-\alpha}{\partial_{\alpha}\omega(0)}+i[D_t^0\omega(0),\mathcal{H}_{\mathcal{P}}]\frac{\partial_{\alpha} D_t^0\bar{\omega}(0)}{\partial_{\alpha}\omega(0)}.$$
Assume $(\partial_{\alpha}\omega(0)-1, D_t^0\omega(0), (D_t^0)^2\omega(0))\in H^{s}(\mathbb{T})\times H^{s+1/2}(\mathbb{T})\times H^s(\mathbb{T})$. Then there is $T>0$ depending on the norm of the initial data such that  the water waves system (\ref{special}) has a unique solution $\omega=\omega(\alpha,t)$ for $t\in [0,T]$, satisfying 
\begin{equation}
(\omega_{\alpha}(t)-1, D_t^0\omega(t), (D_t^0)^2\omega(t))\in C([0,T]; H^{s}(\mathbb{T})\times H^{s+1/2}(\mathbb{T})\times H^s(\mathbb{T})).
\end{equation}
Moreover, if $T_{max}$ is the supremum over all such times $T$, then either $T_{max}=\infty$, or $T_{max}<\infty$, but
\begin{equation}
\begin{split}
\lim_{t\uparrow T_{max}}  &\norm{(D_t^0\omega(t), (D_t^0)^2\omega(t))}_{H^{s}(\mathbb{T})\times H^s(\mathbb{T})}=\infty,
\end{split}
\end{equation}
or
\begin{equation}
    \sup_{\alpha\neq \beta}\Big|\frac{\omega(\alpha,t)-\omega(\beta,t)}{\alpha-\beta}\Big|+\sup_{\alpha\neq \beta}\Big|\frac{\alpha-\beta}{\omega(\alpha,t)-\omega(\beta,t)}\Big|=\infty,
\end{equation}
or \begin{equation}
    \lim_{t\uparrow T_{max}}\inf_{\alpha\in \RR}A_0(\alpha,t)|\omega_{\alpha}(\alpha,t)|\leq 0.
\end{equation}
\end{thm}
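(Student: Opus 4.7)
The plan is to adapt Wu's quasilinearization scheme from \cite{Wu1997} to the periodic framework. The fully nonlinear system for $\omega$ is quasilinearized by applying $D_t^0$ to the first equation, yielding the system (\ref{quasi1}) whose principal operator $(D_t^0)^2 - iA_0\partial_\alpha$ acts on the new unknown $D_t^0\omega$ under the holomorphicity constraint $D_t^0\bar{\omega}\in \mathcal{H}ol_{\mathcal{P}}(\Omega^0(t))$. The coefficient formulas (\ref{bb0}), (\ref{AA0}), and (\ref{b0b0b0}) express $b_0$, $A_0-1$, and $\frac{(a_0)_t}{a_0}\circ\kappa_0^{-1}$ as $(I-\mathcal{H}_p)^{-1}$ applied to commutators with $\mathcal{H}_p$; together with Lemma \ref{layer} and Proposition \ref{singularperiodic} these gain one derivative over their arguments, producing bounds in terms of $\|\omega_\alpha-1\|_{H^s}$, $\|D_t^0\omega\|_{H^{s+1/2}}$, $\|(D_t^0)^2\omega\|_{H^s}$. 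This is what reveals the quasilinear structure of (\ref{quasi1}).

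\textbf{Approximation and energy estimate.} First I would mollify the initial data and construct approximate solutions via an iteration scheme in which, at each step, a linear wave-type equation with frozen coefficients is solved together with its holomorphicity constraint; the coefficients at the next step are reconstructed from $b_0, A_0$ through the formulas above. The central quantitative step is a uniform a priori estimate. Define
\begin{equation*}
E(t) := \|\omega_\alpha - 1\|_{H^s(\mathbb{T})}^2 + \|D_t^0\omega\|_{H^{s+1/2}(\mathbb{T})}^2 + \|(D_t^0)^2\omega\|_{H^s(\mathbb{T})}^2,
\end{equation*}
together with its higher-order analogue obtained by testing $(D_t^0)^j\partial_\alpha^k D_t^0\omega$ against the natural quadratic form of $(D_t^0)^2-iA_0\partial_\alpha$. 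Under the Taylor sign assumption $A_0|\omega_\alpha|\ge c_0>0$ this functional is coercive and comparable to $E(t)$. Differentiating it in time, using the commutator identity (\ref{U6}) and the evolution equation (\ref{quaqua}), and estimating the resulting commutators by Proposition \ref{singularperiodic}, one obtains a differential inequality $\frac{d}{dt}E \le \mathcal{F}(E)\cdot E$ on a uniform interval $[0,T]$ whose length depends only on the initial values of $E$, the chord-arc constants, and the Taylor sign lower bound.

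\textbf{Passing to the limit, uniqueness, continuation.} Standard weak compactness together with the uniform estimate produces a solution $\omega$ at the claimed regularity; improvement from weak to strong continuity in time uses the equation and $E(t)$ being continuous along approximate solutions. Uniqueness is proved by repeating the energy argument for the difference of two solutions at one derivative lower. For the blow-up alternative, a continuation argument applies: as long as $\|(D_t^0\omega, (D_t^0)^2\omega)\|_{H^s\times H^s}$, the two chord-arc ratios, and $\inf_\alpha A_0(\alpha,t)|\omega_\alpha(\alpha,t)|$ remain respectively finite and positive on $[0,T_{\max})$, the local existence theorem can be restarted past any time $t<T_{\max}$, contradicting maximality unless one of the three alternatives listed in the statement fails.

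\textbf{Main obstacle.} The hardest point is closing the top-order energy estimate without loss of derivatives. The periodic commutator formula (\ref{U6}) contains the apparently top-order term $2[D_t^0\omega,\mathcal{H}_p]\frac{\partial_\alpha D_t^0 f}{\omega_\alpha}$ when $f$ is replaced by $(D_t^0)^j\omega$; the derivative is recovered precisely by the gain of one derivative in Proposition \ref{singularperiodic}(2) applied to this commutator together with the holomorphicity-based cancellation encoded in $(I-\mathcal{H}_p)D_t^0\bar\omega=0$. Coupling this gain with the coercivity afforded by the Taylor sign condition is what allows the energy estimate to close, so verifying propagation of the Taylor sign on a short interval (via (\ref{b0b0b0}) and continuity) is a necessary prerequisite before the full estimate can be obtained.
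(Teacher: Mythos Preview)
Your proposal is correct and follows precisely the approach the paper takes: the paper quasilinearizes (\ref{special}) by applying $D_t^0$ to obtain (\ref{quasi1}), observes via (\ref{bb0}), (\ref{AA0}), (\ref{b0b0b0}) and Proposition~\ref{singularperiodic} that the resulting system is genuinely quasilinear, and then simply states that local well-posedness follows as in Wu~\cite{Wu1997}, omitting all details. Your sketch fills in exactly those omitted details (iteration, energy estimate, compactness, uniqueness, continuation), so there is nothing to correct or contrast.
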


\subsection{Long time behavior}
We use S. Wu's method (\cite{Wu2009}) to study the long time behavior of periodic water waves with small initial data. Consider the quantity $\tilde{\theta}_0:=(I-\mathcal{H}_{\mathcal{P}})(\omega-\bar{\omega})$ and $\tilde{\sigma}_0:=D_t^0(I-\mathcal{H}_{\mathcal{P}})(\omega-\bar{\omega})$. One can show that 
\begin{equation}
    \norm{\partial_{\alpha}\tilde{\theta}_0}_{H^{s'}(\TT)}\approx \norm{\partial_{\alpha}\omega-1}_{H^{s'}(\TT)},\quad \quad \norm{\tilde{\sigma}_0}_{H^{s'+1/2}(\TT)}\approx \norm{D_t^0\omega}_{H^{s'+1/2}(\TT)}.
\end{equation}
Apply lemma \ref{lemmmaeight}, we obtain 
\begin{equation}
\begin{split}
&((D_t^0)^2-iA_0)\tilde{\theta}_0\\
=&-2[D_t^0\omega, \mathcal{H}_p\frac{1}{\omega_{\alpha}}+\bar{\mathcal{H}}_p\frac{1}{\bar{\omega}_{\alpha}}]\frac{\partial_{\alpha}D_t^0\omega}{\omega_{\alpha}}+\frac{1}{4 \pi i}\int_{\TT} \Big(\frac{D_t^0\omega(\alpha)-D_t^0\omega(\beta)}{\sin(\frac{\pi}{2}(\omega(\alpha)-\omega(\beta)))}\Big)^2\partial_{\beta}(\omega-\bar{\omega})d\beta\\
:=& G_0,
\end{split}
\end{equation}
and
\begin{equation}
((D_t^0)^2-iA_0\partial_{\alpha})\tilde{\sigma}_0=D_t^0 G_0+[(D_t^0)^2-iA_0\partial_{\alpha}, D_t^0]\tilde{\theta}_0.
\end{equation}
Note that 
\begin{equation}
[(D_t^0)^2-iA_0\partial_{\alpha}, D_t^0]\tilde{\theta}_0=i(\frac{(a_0)_t}{a_0})\circ\kappa_0^{-1} A_0\partial_{\alpha}\tilde{\theta}_{0}, \quad \quad 
\end{equation}
which is cubic. So we can prove long time existence.  We state the result as follows.
\begin{thm}[Long time existence]\label{longperiodic}
Let $s'\geq 6$. Let $\omega(0), D_t^0\omega(0), (D_t^0)^2\omega(0)$ satisfy the compatibility  condition as in Theorem \ref{localperiodic}. There exists $\epsilon_0=\epsilon_0(s')>0$ such that for all $\epsilon<\epsilon_0$, if 
$$\norm{(\partial_{\alpha}\omega(0)-1, D_t^0\omega(0), (D_t^0)^2\omega(0))}_{H^{s'}(\mathbb{T})\times H^{s'+1/2}(\mathbb{T})\times H^{s'}(\mathbb{T})}\leq \epsilon,$$
then there exists a positive constant $C_0=C_0(s')$ such that the solution to (\ref{special}) exists on $[0, C_0\epsilon^{-2}]$, and 
$$\sup_{t\in [0, C_0\epsilon^{-2}]}\norm{(\partial_{\alpha}\omega(t)-1, D_t^0\omega(t), (D_t^0)^2\omega(t))}_{H^{s'}(\mathbb{T})\times H^{s'+1/2}(\mathbb{T})\times H^{s'}(\mathbb{T})}\leq 2\epsilon.$$
\end{thm}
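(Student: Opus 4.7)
\medskip
\noindent\textbf{Proof proposal for Theorem \ref{longperiodic}.}

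The plan is to follow S. Wu's scheme adapted to the periodic Hilbert transform $\mathcal{H}_p$. First, I would work with the almost-holomorphic unknowns
\[
\tilde{\theta}_0 := (I-\mathcal{H}_p)(\omega-\bar{\omega}),\qquad \tilde{\sigma}_0 := D_t^0 \tilde{\theta}_0,
\]
as already introduced before the statement. Since $\omega-\alpha \in \mathcal{H}ol_{\mathcal{P}}$ is not quite true but $\bar{\omega}-\alpha\in\mathcal{H}ol_{\mathcal{P}}$ is, one has $(I-\mathcal{H}_p)(\omega-\bar{\omega}) \approx 2(\omega-\alpha)$ modulo terms involving $(I-\mathcal{H}_p)(\omega-\alpha)$ which by the commutator identity $[\partial_\alpha,\mathcal{H}_p]$ of Lemma \ref{lemmmaeight} are of smaller (quadratic) size. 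This together with formula (\ref{bb0}), formula (\ref{AA0}) and the boundedness of $(I-\mathcal{K}_p)^{-1}$ and $(I-\mathcal{K}_p^\ast)^{-1}$ from Lemma \ref{layer} and Proposition \ref{singularperiodic} should give the norm equivalences
\[
\norm{\partial_\alpha\tilde{\theta}_0}_{H^{s'}(\TT)} \sim \norm{\omega_\alpha-1}_{H^{s'}(\TT)},\qquad \norm{\tilde{\sigma}_0}_{H^{s'+1/2}(\TT)}\sim \norm{D_t^0\omega}_{H^{s'+1/2}(\TT)},
\]
as well as the quadratic bounds $\norm{b_0}, \norm{A_0-1}, \norm{(a_0)_t/a_0\circ\kappa_0^{-1}}\lesssim \epsilon^2$ in the appropriate norms.

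Next I would use Lemma \ref{lemmmaeight} (identity (\ref{U6})) to compute
\[
((D_t^0)^2-iA_0\partial_\alpha)\tilde{\theta}_0 = G_0,
\]
with $G_0$ as in the expression displayed just above Theorem \ref{longperiodic}. The point is that $G_0$ is genuinely cubic: each of the two terms contains either two factors of $D_t^0\omega$ plus one derivative of $\omega-\bar{\omega}$ (which is quadratic through $\bar{\omega}_\alpha-1$ plus the anti-holomorphic piece $\omega_\alpha-1$), or a quadratic chord-arc kernel multiplied by $\partial_\beta(\omega-\bar{\omega})$. Applying $D_t^0$ once more and using
\[
[(D_t^0)^2-iA_0\partial_\alpha,D_t^0]\tilde{\theta}_0 = i\bigl(\tfrac{(a_0)_t}{a_0}\bigr)\!\circ\kappa_0^{-1} A_0\partial_\alpha\tilde{\theta}_0,
\]
combined with the quadratic control of $(a_0)_t/a_0$ via (\ref{b0b0b0}), one obtains $((D_t^0)^2-iA_0\partial_\alpha)\tilde{\sigma}_0 = $ cubic. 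Thus both $\tilde{\theta}_0$ and $\tilde{\sigma}_0$ satisfy a Klein--Gordon type equation whose right-hand sides are cubic in the smallness parameter.

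The third step is the energy estimate. For the operator $\mathcal{P}=D_t^0(D_t^0)-iA_0\partial_\alpha$ one has a natural energy, for each $|m|\le s'$,
\[
E_m(t) = \int_\TT \frac{1}{A_0}|D_t^0 \partial_\alpha^m \tilde{\theta}_0|^2 + i\,\partial_\alpha^m\tilde{\theta}_0 \overline{\partial_\alpha^{m+1}\tilde{\theta}_0}\,d\alpha,
\]
together with its analogue for $\tilde{\sigma}_0$. Differentiating $E_m$ in $t$ and using $\mathcal{P}\tilde{\theta}_0 = G_0$, the quadratic smallness of $b_0, A_0-1$ and of $(a_0)_t/a_0$, together with the commutator estimates of Proposition \ref{singularperiodic} applied to derivatives of $G_0$, should yield
\[
\frac{d}{dt}(E_{\tilde{\theta}_0}(t)+E_{\tilde{\sigma}_0}(t)) \le C(s')\,\epsilon^2\,(E_{\tilde{\theta}_0}(t)+E_{\tilde{\sigma}_0}(t)).
\]
Gronwall then gives $E(t)\lesssim \epsilon^2$ on a time interval of length $O(\epsilon^{-2})$, which upon inverting the norm-equivalence described above is a control of size $2\epsilon$ on $\|(\omega_\alpha-1,D_t^0\omega,(D_t^0)^2\omega)\|_{H^{s'}\times H^{s'+1/2}\times H^{s'}}$. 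A standard bootstrap using the local well-posedness Theorem \ref{localperiodic} and the chord-arc plus Taylor sign conditions (both preserved as long as the solution stays $O(\epsilon)$-small) then extends the solution to $[0,C_0\epsilon^{-2}]$.

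The main obstacle I expect is not the structural identity producing the cubic right-hand side, which is dictated by Lemma \ref{lemmmaeight}, but rather the technical step of propagating all the auxiliary quantities simultaneously: one must show that $b_0, A_0-1$ and $(a_0)_t/a_0\circ\kappa_0^{-1}$ (each defined implicitly through a $(I-\mathcal{H}_p)$-equation) remain quadratically small in the correct Sobolev norms, and that the chord-arc constants and Taylor sign constant stay bounded away from their degenerate values on the full time interval. This is where one must carefully balance derivative counts in Proposition \ref{singularperiodic} against the loss coming from the $\partial_\alpha D_t^0\omega$ factor inside $G_0$, choosing $s'\ge 6$ precisely so that this balance closes.
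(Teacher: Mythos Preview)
Your proposal is correct and follows essentially the same approach as the paper: the paper also introduces $\tilde{\theta}_0=(I-\mathcal{H}_p)(\omega-\bar{\omega})$ and $\tilde{\sigma}_0=D_t^0\tilde{\theta}_0$, uses Lemma \ref{lemmmaeight} to obtain the cubic right-hand side $G_0$, exploits the identity $[(D_t^0)^2-iA_0\partial_\alpha,D_t^0]\tilde{\theta}_0=i\bigl(\tfrac{(a_0)_t}{a_0}\bigr)\circ\kappa_0^{-1}A_0\partial_\alpha\tilde{\theta}_0$ together with the quadratic bound on $(a_0)_t/a_0$ from (\ref{b0b0b0}), and then runs the energy/bootstrap argument. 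The paper in fact only sketches this and states the result, so your write-up is a faithful expansion of exactly that sketch.
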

As a consequence, use formula (\ref{bb0}), (\ref{AA0}), (\ref{b0b0b0}),  and use  lemma \ref{layer}, we obtain bounds for $b_0, A_0, \frac{(a_0)_t}{a_0}\circ\kappa_0^{-1}$.
\begin{corollary}
With the assumptions in Theorem \ref{longperiodic}, there exists $C>0$ independent of $\epsilon$ and $\omega$ such that for all $t\in [0, C_0\epsilon^{-2}]$, 
\begin{equation}\label{wanli}
\norm{b_0(t)}_{H^{s'}(\mathbb{T})}+\norm{A_0(t)-1}_{H^{s'}(\mathbb{T})}+\norm{\frac{(a_0)_t}{a_0}\circ \kappa_0^{-1}}_{H^{s'}(\mathbb{T})}\leq C\epsilon^2.
\end{equation}
In particular, by Sobolev embedding, we have 
\begin{equation}\label{wuqiong}
\norm{b_0(t)}_{W^{s'-1,\infty}(\mathbb{T})}+\norm{A_0(t)-1}_{W^{s'-1,\infty}(\mathbb{T})}+\norm{\frac{(a_0)_t}{a_0}\circ \kappa_0^{-1}}_{W^{s'-1,\infty}(\mathbb{T})}\leq C\epsilon^2.
\end{equation}
\end{corollary}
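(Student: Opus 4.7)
The plan is to derive (\ref{wanli}) as a direct consequence of Theorem \ref{longperiodic} by unwinding the identities (\ref{bb0}), (\ref{AA0}), (\ref{b0b0b0}), and then to obtain (\ref{wuqiong}) by Sobolev embedding (Lemma \ref{L2toinfty}). The heart of the matter is that each of those three right-hand sides is a commutator (or a sum of commutators and a singular integral) that is manifestly quadratic in the small quantities $\omega_\alpha - 1$, $D_t^0\omega$, and $(D_t^0)^2\omega$, all of which are $O(\epsilon)$ in $H^{s'}(\mathbb{T})$ for $t \in [0, C_0\epsilon^{-2}]$ by Theorem \ref{longperiodic}.

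First I would verify the chord-arc condition (\ref{chordarc_omega}) for $\omega$ on $[0, C_0\epsilon^{-2}]$: since $\|\omega_\alpha - 1\|_{L^\infty(\mathbb{T})} \lesssim \epsilon$ by Sobolev embedding, one has $|\omega_\alpha| \geq \tfrac{1}{2}$ uniformly, and (\ref{chordarc_omega}) holds with constants independent of $\epsilon$. This allows uniform application of Lemma \ref{layer} and, via the remark following Lemma \ref{layer}, the $H^{s'}$-boundedness of $(I - \mathcal{K}_p)^{-1}$ (since $\mathcal{K}_p$ is $O(\epsilon)$ on $H^{s'}$, the operator $(I - \mathcal{K}_p)^{-1}$ is given by a convergent Neumann series).

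Next I would apply Proposition \ref{singularperiodic} to each right-hand side. The commutator $[D_t^0\omega, \mathcal{H}_p]\tfrac{\bar\omega_\alpha - 1}{\omega_\alpha}$ is of type $\tilde{S}_1$ with $g = D_t^0\omega$ and $f = \bar\omega - \alpha$, so its $H^{s'}$ norm is bounded by $C\|D_t^0\omega\|_{H^{s'}}\|\bar\omega - \alpha\|_{H^{s'}} \lesssim \epsilon^2$. The same reasoning applied to the two commutators in (\ref{AA0}) and to the two commutators in (\ref{b0b0b0}) gives an $O(\epsilon^2)$ bound in each case (each being a product of two $O(\epsilon)$ factors), while the degree-two singular integral in (\ref{b0b0b0}) is of type $\tilde{S}_2$ with $m = 2$ and is therefore $O(\epsilon^3)$, hence negligible. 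This yields the master estimate
\begin{equation*}
\|(I - \mathcal{H}_p)b_0\|_{H^{s'}} + \|(I - \mathcal{H}_p)(A_0 - 1)\|_{H^{s'}} + \|(I - \mathcal{H}_p)(A_0 \bar\omega_\alpha F)\|_{H^{s'}} \lesssim \epsilon^2,
\end{equation*}
where $F := \tfrac{(a_0)_t}{a_0}\circ \kappa_0^{-1}$.

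Finally I would invert $(I - \mathcal{H}_p)$ on each real-valued unknown. Since $b_0$ and $A_0 - 1$ are real, taking real parts and using $\Re \mathcal{H}_p g = \mathcal{K}_p g$ for real $g$ converts the left-hand sides into $(I - \mathcal{K}_p)b_0$ and $(I - \mathcal{K}_p)(A_0 - 1)$, and applying $(I - \mathcal{K}_p)^{-1}$ gives the desired $O(\epsilon^2)$ bounds. For $F$ the multiplier $A_0 \bar\omega_\alpha$ is complex-valued, but since $\bar\omega_\alpha - 1$ is the boundary value of a function in $\mathcal{H}ol_{\mathcal{P}}(\Omega^0(t))$, I would split $A_0\bar\omega_\alpha F = A_0 F + A_0(\bar\omega_\alpha - 1)F$, apply the real-part trick to the first piece (using that $A_0 F$ is real), and absorb the second piece by a bootstrap argument on $\|F\|_{H^{s'}}$ since $\|\bar\omega_\alpha - 1\|_{L^\infty} = O(\epsilon)$. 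The estimate (\ref{wuqiong}) then follows by the embedding $H^{s'}(\mathbb{T}) \hookrightarrow W^{s'-1,\infty}(\mathbb{T})$. I expect this last inversion step for $F$ to be the only genuine technical subtlety; everything else is a mechanical combination of the cited commutator estimates with the smallness provided by Theorem \ref{longperiodic}.
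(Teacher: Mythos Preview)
Your proposal is correct and follows essentially the same approach as the paper: bound the right-hand sides of (\ref{bb0}), (\ref{AA0}), (\ref{b0b0b0}) by $O(\epsilon^2)$ via the commutator estimates, take real parts to reduce $(I-\mathcal{H}_p)$ to $(I-\mathcal{K}_p)$, invert using (\ref{doubleperiodic}), and then apply Lemma \ref{L2toinfty}. The paper's own proof is a two-line sketch that omits the details you supply; in particular your handling of the complex multiplier $A_0\bar\omega_\alpha$ in front of $F$ via the splitting and bootstrap is a detail the paper leaves implicit.
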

\begin{proof}
For (\ref{wanli}), take real part of equations (\ref{bb0}), (\ref{AA0}), and (\ref{b0b0b0}), respectively,  then use (\ref{doubleperiodic}). For (\ref{wuqiong}), use lemma \ref{L2toinfty} and (\ref{wanli}).
\end{proof}
\noindent Another consequence is the following. 
\begin{corollary}\label{zerozero}
The quantities $\mathcal{H}_{\omega}(\bar{\omega}-\alpha)$, $\mathcal{H}_{\omega}D_t^0\bar{\omega}$, $\mathcal{H}_{\omega}\frac{\bar{\omega}_{\alpha}-1}{\omega_{\alpha}}$ and $\mathcal{H}_{\omega}\frac{\partial_{\alpha}D_t^0\bar{\omega}}{\omega_{\alpha}}$ are well-defined, and 
\begin{equation}
(I-\mathcal{H}_{\omega})(\bar{\omega}-\alpha)=0.
\end{equation}
\begin{equation}
(I-\mathcal{H}_{\omega})D_t^0\bar{\omega}=0.
\end{equation}
\begin{equation}
(I-\mathcal{H}_{\omega})\frac{\bar{\omega}_{\alpha}-1}{\omega_{\alpha}}=0.
\end{equation}
\begin{equation}
(I-\mathcal{H}_{\omega})\frac{\partial_{\alpha}D_t^0\bar{\omega}}{\omega_{\alpha}}=0.
\end{equation}
\end{corollary}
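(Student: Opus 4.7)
The plan is to reduce each of the four identities to Lemma \ref{goodgood}, which says that $(I-\mathcal{H}_{\omega})$ annihilates the boundary values of functions in $\mathcal{H}ol_{\mathcal{N}}(\Omega^0(t))$. The key general fact I would use is the Remark recorded in the preliminaries: any periodic holomorphic function on $\Omega^0(t)$ that vanishes as $\Im z\to -\infty$ decays nontangentially, i.e.\ $\mathcal{H}ol_{\mathcal{P}}(\Omega^0(t))\subset \mathcal{H}ol_{\mathcal{N}}(\Omega^0(t))$. So it is enough to exhibit each of the four quantities as the trace on $\omega$ of an element of $\mathcal{H}ol_{\mathcal{P}}(\Omega^0(t))$.

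For the first two identities this is built directly into \eqref{special}: by assumption there exist $\Phi_0,\Psi_0\in\mathcal{H}ol_{\mathcal{P}}(\Omega^0(t))$ with $\bar\omega-\alpha=\Phi_0\circ\omega$ and $D_t^0\bar\omega=\Psi_0\circ\omega$. For the third and fourth identities I differentiate these holomorphy relations in $\alpha$; the chain rule gives
\[
\frac{\bar\omega_\alpha-1}{\omega_\alpha}=(\Phi_0)_z\circ\omega,\qquad \frac{\partial_\alpha D_t^0\bar\omega}{\omega_\alpha}=(\Psi_0)_z\circ\omega.
\]
Since $\Phi_0,\Psi_0$ are holomorphic, $2\pi$-periodic in the real direction, and tend to $0$ as $\Im z\to-\infty$, the same properties are inherited by $(\Phi_0)_z$ and $(\Psi_0)_z$ (e.g.\ by differentiating the Fourier expansion $\sum_{n\geq 1}a_n(t)e^{-inz}$ termwise, or equivalently by Cauchy's integral formula applied on a strip just below the interface). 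Hence $(\Phi_0)_z,(\Psi_0)_z\in\mathcal{H}ol_{\mathcal{P}}(\Omega^0(t))\subset\mathcal{H}ol_{\mathcal{N}}(\Omega^0(t))$, and Lemma \ref{goodgood} closes the argument for all four statements at once.

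The only technical point worth spelling out carefully is the well-definedness of $\mathcal{H}_{\omega}$ applied to these bounded but periodic (hence non-$L^2(\RR)$) inputs: the principal-value integral over $\RR$ that defines $\mathcal{H}_{\omega}$ is here only conditionally convergent. I would interpret $\mathcal{H}_{\omega}(\Phi_0\circ\omega)$ as the nontangential boundary value from below of the Cauchy integral of $\Phi_0\circ\omega$ against the curve $\omega$, in the BMO sense (consistent with the convention already used elsewhere in the paper, e.g.\ the footnote that $\mathfrak{H}$ maps $L^\infty$ into BMO). With this interpretation, Cauchy's theorem on $\Omega^0(t)$ together with the nontangential decay of $\Phi_0$ at infinity identifies that boundary value with $\Phi_0\circ\omega$ itself, which is exactly the content of Lemma \ref{goodgood}. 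This BMO-level well-definedness check is the main obstacle; once it is in place the four identities follow immediately.
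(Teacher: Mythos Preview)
Your proposal is correct and follows essentially the same route as the paper: the paper's proof simply asserts that the four functions lie in $\mathcal{H}ol_{\mathcal{N}}(\Omega^0(t))$ and invokes Lemma~\ref{goodgood}, while you spell out the implicit steps (the inclusion $\mathcal{H}ol_{\mathcal{P}}\subset\mathcal{H}ol_{\mathcal{N}}$ from the Remark, the chain-rule identification of the derivative quantities with $(\Phi_0)_z\circ\omega$ and $(\Psi_0)_z\circ\omega$, and the BMO interpretation for well-definedness). The paper records the BMO point in the Remark immediately following the Corollary, so your write-up is a faithful expansion of the paper's one-line argument.
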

\begin{proof}
The functions $\bar{\omega}-\alpha$, $\frac{\bar{\omega}_{\alpha}-1}{\omega_{\alpha}}$, $D_t^0\bar{\omega}$, $\frac{\partial_{\alpha}D_t^0\bar{\omega}}{\omega_{\alpha}}$ are in $\mathcal{H}ol_{\mathcal{N}}(\Omega^0(t))$. Then the corollary follows from lemma \ref{goodgood}.
\end{proof}

\begin{remark}
Note that in the above corollary, the Hilbert transform is defined by
$$\mathcal{H}_{\omega}f(\alpha)=\frac{1}{\pi i}p.v. \int_{-\infty}^{\infty} \frac{\omega_{\beta}}{\omega(\alpha)-\omega(\beta)}f(\beta)d\beta.$$
For a bounded smooth function $f$, $\mathcal{H}_{\omega}f$ does not always define an $L^{\infty}$ function. In such cases, $\mathcal{H}_{\omega}f$ is interpreted in $BMO$ sense.
\end{remark}

\noindent \textbf{Notation} Denote 
$$\mathcal{P}:= D_t^2-iA\partial_{\alpha}, \quad \quad \mathcal{P}_0:=(D_t^0)^2-iA_0\partial_{\alpha}.$$

\section{Water waves system with data in $X^s$}\label{nnnnn}
We consider long time existence of non-vanishing water waves system with data of the form $X^s:=H^s(\RR)+H^{s'}(\mathbb{T})$. This is a natural generalization of the current known long time existence results for water waves. Moreover,  if we restrict ourselves to smooth water waves, then this class of water waves has included many physically relevant situations.

Let $(\omega, D_t^0\omega, (D_t^0)^2\omega)$ be the solution to the periodic water wave system in the previous section. We consider the class of solutions of water wave system with boundary values $\omega(\alpha,t)$ at $\alpha=\pm\infty$, i.e., we consider
\begin{equation}\label{water}
\begin{cases}
(D_t^2-iA\partial_{\alpha})\zeta=-i\\
\bar{\zeta}-\alpha,~D_t\bar{\zeta}\in \mathcal{H}ol_{\mathcal{N}}(\Omega(t))\\
\lim_{\alpha\rightarrow \pm\infty}(\zeta-\omega)=0\\
\lim_{\alpha\rightarrow \pm\infty}(D_t\zeta-D_t^0\omega)=0.
\end{cases}
\end{equation}
Recall that $D_t=\partial_t+b\partial_{\alpha}$. $b$ and $A$ cannot be arbitrary. Instead, they are determined by the water wave system and the constraint that $\bar{\zeta}-\alpha, D_t\bar{\zeta}\in \mathcal{H}ol_{\mathcal{N}}(\Omega(t))$.  Denote 
$$\bar{\zeta}-\alpha=\Phi(\zeta(\alpha,t),t),\quad \quad D_t\bar{\zeta}=\Psi(\zeta(\alpha,t),t),$$
where $\Phi, \Psi\in \mathcal{H}ol_{\mathcal{N}}(\Omega(t))$.

In the following  subsections, we derive formula for $b, A$ and $\frac{a_t}{a}\circ \kappa^{-1}$, etc. The derivation is almost the same as that in \cite{Wu2009}, except that we are dealing with functions which are not necessarily vanishing at $\infty$.
\subsection{Formula for $b$ and $b_0$}
\begin{equation}\label{formulab}
(I-\mathcal{H}_{\zeta})b=-[D_t\zeta, \mathcal{H}_{\zeta}]\frac{\bar{\zeta}_{\alpha}-1}{\zeta_{\alpha}},
\end{equation}
and
\begin{equation}\label{formulab0}
(I-\mathcal{H}_{\omega})b_0=-[D_t^0b_0, \mathcal{H}_{\omega}]\frac{\partial_{\alpha}\bar{\omega}-1}{\omega_{\alpha}}.
\end{equation}
\begin{proof}
We have 
$$D_t\bar{\zeta}=b+\Phi_t\circ \zeta+D_t\zeta\Phi_{\zeta}\circ \zeta=b+\Phi_t\circ \zeta+D_t\zeta\frac{\bar{\zeta}_{\alpha}-1}{\zeta_{\alpha}}.$$
Since $D_t\bar{\zeta}, \Phi_t, \Phi_{\zeta}\in \mathcal{H}_{\mathcal{N}}(\Omega(t))$, if we apply $(I-\mathcal{H}_{\zeta})$ on both sides of the above equation and use lemma \ref{goodgood}, we have
\begin{align*}
0=&(I-\mathcal{H}_{\zeta})b+(I-\mathcal{H}_{\zeta})D_t\zeta \frac{\bar{\zeta}_{\alpha}-1}{\zeta_{\alpha}}\\
=& (I-\mathcal{H}_{\zeta})b+[D_t\zeta,\mathcal{H}_{\zeta}] \frac{\bar{\zeta}_{\alpha}-1}{\zeta_{\alpha}}.
\end{align*}
So we have\footnote{Note that $\mathcal{H}_{\zeta}b$ and $\mathcal{H}_{\omega}b_0$ are defined as BMO functions. Moreover, $\mathcal{H}_{\zeta}b-\mathcal{H}_{\omega}b_0\in H^s(\mathbb{R})$. Similar properties hold for other quantities such as $A, A_0$. }
\begin{equation}
(I-\mathcal{H}_{\zeta})b=-[D_t\zeta, \mathcal{H}_{\zeta}]\frac{\bar{\zeta}_{\alpha}-1}{\zeta_{\alpha}}.
\end{equation}
So we obtain (\ref{formulab}).
Use completely the same proof, we have 
\begin{equation}
(I-\mathcal{H}_{\omega})b_0=-[D_t^0b_0, \mathcal{H}_{\omega}]\frac{\partial_{\alpha}\bar{\omega}-1}{\omega_{\alpha}}.
\end{equation}
So we obtain (\ref{formulab0}).
\end{proof}

\subsection{Formula for $A$}
Use the water wave system, we have $D_t^2\bar{\zeta}+iA\bar{\zeta}_{\alpha}=i$. Note that 
\begin{equation}\label{d1d2}
D_t^2\bar{\zeta}=D_t D_t\bar{\zeta}=D_t \Psi\circ\zeta=\Psi_t\circ \zeta+D_t\zeta \Psi_{\zeta}\circ \zeta  \quad \quad \Psi_{\zeta}=\frac{\partial_{\alpha}D_t\bar{\zeta}}{\zeta_{\alpha}}.
\end{equation}
\begin{equation}\label{d1d3}
iA\bar{\zeta}_{\alpha}=iA+iA\partial_{\alpha}(\bar{\zeta}-\alpha)=iA+iA\zeta_{\alpha}\frac{\bar{\zeta}_{\alpha}-1}{\zeta_{\alpha}}=iA+(D_t^2\zeta+i)\frac{\bar{\zeta}_{\alpha}-1}{\zeta_{\alpha}}.
\end{equation}
Since $\Psi_t, \frac{\bar{\zeta}_{\alpha}-1}{\zeta_{\alpha}}\in \mathcal{H}ol_{\mathcal{N}}$, by lemma \ref{goodgood}, we have 
\begin{equation}\label{d1d4}
(I-\mathcal{H}_{\zeta})\Psi_{\zeta}\circ \zeta=0, \quad \quad (I-\mathcal{H}_{\zeta})\Psi_t\circ \zeta=0, \quad \quad (I-\mathcal{H}_{\zeta})\frac{\bar{\zeta}_{\alpha}-1}{\zeta_{\alpha}}=0.
\end{equation}
Apply $(I-\mathcal{H}_{\zeta})$ on both sides of $D_t^2\bar{\zeta}+iA\bar{\zeta}_{\alpha}=i$,  use (\ref{d1d2}), (\ref{d1d3}), and (\ref{d1d4}), we have 
\begin{equation}\label{d1d5}
(I-\mathcal{H}_{\zeta})D_t\zeta  \frac{\partial_{\alpha}D_t\bar{\zeta}}{\zeta_{\alpha}}+(I-\mathcal{H}_{\zeta})i(A-1)+(I-\mathcal{H}_{\zeta})D_t^2\zeta\frac{\bar{\zeta}_{\alpha}-1}{\zeta_{\alpha}}=0.
\end{equation}
Use (\ref{d1d4}) again, we can write (\ref{d1d5}) in commutator form:
\begin{equation}\label{formulaA}
(I-\mathcal{H}_{\zeta})(A-1)=i[D_t\zeta, \mathcal{H}_{\zeta}]\frac{\partial_{\alpha}D_t\bar{\zeta}}{\zeta_{\alpha}}+i[D_t^2\zeta, \mathcal{H}_{\zeta}]\frac{\bar{\zeta}_{\alpha}-1}{\zeta_{\alpha}}.
\end{equation}
\noindent Use completely the same argument, we get a nonlocal version of formula for $A_0$:
\begin{equation}\label{nonlocalA0}
(I-\mathcal{H}_{\omega})(A_0-1)=i[D_t^0\omega, \mathcal{H}_{\omega}]\frac{\partial_{\alpha}D_t^0\bar{\omega}}{\omega_{\alpha}}+i[(D_t^0)^2\omega, \mathcal{H}_{\omega}]\frac{\bar{\omega}_{\alpha}-1}{\omega_{\alpha}}.
\end{equation}
\begin{remark}
Formula (\ref{formulaA}) implies that $A-1$ is quadratic. 
\end{remark}

\subsection{Formula for $\frac{a_t}{a}\circ \kappa^{-1}$ and $\frac{(a_0)_t}{a_0}\circ\kappa_0^{-1}$}
Apply $D_t$ on both sides of $(D_t^2+iA\partial_{\alpha})\zeta=-i$, we have 
\begin{equation}\label{zheng}
(D_t^2+iA\partial_{\alpha})D_t\bar{\zeta}=-i\frac{a_t}{a}\circ \kappa^{-1}A\bar{\zeta}_{\alpha}.
\end{equation}
Apply $(I-\mathcal{H}_{\zeta})$ on both sides of (\ref{zheng}), use $(I-\mathcal{H}_{\zeta})D_t\bar{\zeta}=0$, we have 
\begin{equation}\label{dayu}
\begin{split}
&-i(I-\mathcal{H}_{\zeta})\frac{a_t}{a}\circ \kappa^{-1}A\bar{\zeta}_{\alpha}\\
=&(I-\mathcal{H}_{\zeta})(D_t^2+iA\partial_{\alpha})D_t\bar{\zeta}\\
=&[(D_t^2+iA\partial_{\alpha}),  \mathcal{H}_{\zeta}]D_t\bar{\zeta}\\
=& 2[D_t^2\zeta, \mathcal{H}_{\zeta}]\frac{\partial_{\alpha}D_t\bar{\zeta}}{\zeta_{\alpha}}+2[D_t\zeta, \mathcal{H}_{\zeta}]\frac{\partial_{\alpha}D_t^2\bar{\zeta}}{\zeta_{\alpha}}-\frac{1}{\pi i}\int \Big(\frac{D_t\zeta(\alpha)-D_t\zeta(\beta)}{\zeta(\alpha)-\zeta(\beta)}\Big)^2\partial_{\beta}D_t\bar{\zeta}d\beta.
\end{split}
\end{equation}
So we have 
\begin{equation}\label{ata11111}
\begin{split}
&(I-\mathcal{H}_{\zeta})(\frac{a_t}{a})\circ\kappa^{-1}A\bar{\zeta}_{\alpha}\\
=& 2i[D_t^2\zeta, \mathcal{H}_{\zeta}]\frac{\partial_{\alpha}D_t\bar{\zeta}}{\zeta_{\alpha}}+2i[D_t\zeta, \mathcal{H}_{\zeta}]\frac{\partial_{\alpha}D_t^2\bar{\zeta}}{\zeta_{\alpha}}-\frac{1}{\pi }\int \Big(\frac{D_t\zeta(\alpha)-D_t\zeta(\beta)}{\zeta(\alpha)-\zeta(\beta)}\Big)^2\partial_{\beta}D_t\bar{\zeta}d\beta.
\end{split}
\end{equation}

\noindent Use the same argument, we get nonlocal version of formula for $\frac{(a_0)_t}{a_0}\circ\kappa_0^{-1}$:

\begin{equation}\label{nonlocalat}
\begin{split}
(I-\mathcal{H}_{\omega})(\frac{(a_0)_t}{a_0}\circ\kappa_0^{-1}A_0\bar{\omega}_{\alpha})=& 2i[(D_t^0)^2\omega, \mathcal{H}_{\omega}]\frac{\partial_{\alpha}D_t^0\bar{\omega}}{\omega_{\alpha}}+2i[D_t^0\omega, \mathcal{H}_{\omega}]\frac{\partial_{\alpha}(D_t^0)^2\bar{\omega}}{\omega_{\alpha}}\\
&-\frac{1}{\pi }\int \Big(\frac{D_t^0\omega(\alpha)-D_t^0\omega(\beta)}{\omega(\alpha)-\omega(\beta)}\Big)^2\partial_{\beta}D_t^0\bar{\omega}d\beta
\end{split}
\end{equation}

\subsection{Formula for $b-b_0$} Write $b=b_0+b_1$. By (\ref{formulab}), (\ref{formulab0}), we have 
\begin{align*}
(I-\mathcal{H}_{\zeta})b-(I-\mathcal{H}_{\omega})b_0=& -[D_t\zeta,\mathcal{H}_{\zeta}\frac{1}{\zeta_{\alpha}}](\bar{\zeta}_{\alpha}-1)+[D_t^0\omega,\mathcal{H}_{\omega}\frac{1}{\omega_{\alpha}}](\bar{\omega}_{\alpha}-1)\\
\end{align*}
So we obtain
\begin{equation}\label{b1}
\begin{split}
(I-\mathcal{H}_{\zeta})(b-b_0)=&-[D_t\zeta-D_t^0\omega,\mathcal{H}_{\zeta}]\frac{\bar{\zeta}_{\alpha}-1}{\zeta_{\alpha}}-[D_t^0\omega, \mathcal{H}_{\zeta}\frac{1}{\zeta_{\alpha}}-\mathcal{H}_{\omega}\frac{1}{\omega_{\alpha}}](\bar{\zeta}_{\alpha}-1)\\
&-[D_t^0\omega, \mathcal{H}_{\omega}\frac{1}{\omega_{\alpha}}](\bar{\xi}_1){\alpha}-(\mathcal{H}_{\zeta}-\mathcal{H}_{\omega})b_0.
\end{split}
\end{equation}

\subsection{Formula for $\frac{a_t}{a}\circ \kappa^{-1}-\frac{(a_0)_t}{a_0}\circ \kappa_0^{-1}$} We have
\begin{equation}\label{zetaomegadiff}
\begin{split}
&(I-\mathcal{H}_{\zeta})\Big\{A\bar{\zeta}_{\alpha}\Big[\frac{a_t}{a}\circ\kappa^{-1}-\frac{(a_0)_t}{a_0}\circ\kappa_0^{-1}\Big]\Big\}\\
=& (I-\mathcal{H}_{\zeta})\Big(A\bar{\zeta}_{\alpha}\Big(\frac{a_t}{a}\Big)\circ \kappa^{-1}\Big)-(I-\mathcal{H}_{\omega})\Big(A_0\bar{\omega}_{\alpha}\Big(\frac{(a_0)_t}{a_0}\Big)\circ \kappa_0^{-1}\Big)\\
&+(\mathcal{H}_{\zeta}-\mathcal{H}_{\omega})\Big(A_0\bar{\omega}_{\alpha}\Big(\frac{(a_0)_t}{a_0}\Big)\circ \kappa_0^{-1}\Big).
\end{split}
\end{equation}

\subsection{Formula for $D_tb_1$}The idea of deriving formula for $D_tb_1$ is the same as that for $b_1$: find formula for $D_tb$ and $D_t^0b_0$ and then consider their difference.  The derivation of formula for $D_tb$ and $D_t^0b_0$ are similar to that in S. Wu's paper (See Proposition 2.7 of \cite{Wu2009}). We record the formula as follows.
\begin{equation}\label{Dtb}
\begin{split}
(I-\mathcal{H}_{\zeta})D_tb=& [D_t\zeta,\mathcal{H}_{\zeta}]\frac{\partial_{\alpha}(2b-D_t\bar{\zeta})}{\zeta_{\alpha}}-[D_t^2\zeta,\mathcal{H}_{\zeta}]\frac{\bar{\zeta}_{\alpha}-1}{\zeta_{\alpha}}\\
&+\frac{1}{\pi i}\int \Big(\frac{D_t\zeta(\alpha)-D_t\zeta(\beta)}{\zeta(\alpha)-\zeta(\beta)}\Big)^2 (\bar{\zeta}_{\beta}(\beta)-1)d\beta.
\end{split}
\end{equation}
For the periodic part, we have
\begin{equation}\label{Dtb0}
\begin{split}
(I-\mathcal{H}_{\omega})D_t^0b_0=& [D_t^0\omega,\mathcal{H}_{\omega}]\frac{\partial_{\alpha}(2b_0-D_t^0\bar{\omega})}{\zeta_{\alpha}}-[(D_t^0)^2\omega,\mathcal{H}_{\omega}]\frac{\bar{\omega}_{\alpha}-1}{\omega_{\alpha}}\\
&+\frac{1}{\pi i}\int \Big(\frac{D_t^0\omega(\alpha)-D_t^0\omega(\beta)}{\omega(\alpha)-\omega(\beta)}\Big)^2 (\bar{\omega}_{\beta}(\beta)-1)d\beta.
\end{split}
\end{equation}
Subtract (\ref{Dtb0}) from  (\ref{Dtb}) , we have
\begin{equation}\label{Dtb1}
\begin{split}
&(I-\mathcal{H}_{\zeta})D_tb_1\\
=& (I-\mathcal{H}_{\zeta})D_tb-(I-\mathcal{H}_{\omega})D_t^0b_0+(\mathcal{H}_{\zeta}-\mathcal{H}_{\omega})D_t^0b_0-(I-\mathcal{H}_{\zeta})b_1\partial_{\alpha}b_0\\
=& [D_t\zeta,\mathcal{H}_{\zeta}]\frac{\partial_{\alpha}(2b-D_t\bar{\zeta})}{\zeta_{\alpha}}-[D_t^0\omega,\mathcal{H}_{\omega}]\frac{\partial_{\alpha}(2b_0-D_t^0\bar{\omega})}{\zeta_{\alpha}}\\
&-[D_t^2\zeta,\mathcal{H}_{\zeta}]\frac{\bar{\zeta}_{\alpha}-1}{\zeta_{\alpha}}+[(D_t^0)^2\omega,\mathcal{H}_{\omega}]\frac{\bar{\omega}_{\alpha}-1}{\omega_{\alpha}}\\
&+\frac{1}{\pi i}\int \Big(\frac{D_t\zeta(\alpha)-D_t\zeta(\beta)}{\zeta(\alpha)-\zeta(\beta)}\Big)^2 (\bar{\zeta}_{\beta}(\beta)-1)d\beta-\frac{1}{\pi i}\int \Big(\frac{D_t^0\omega(\alpha)-D_t^0\omega(\beta)}{\omega(\alpha)-\omega(\beta)}\Big)^2 (\bar{\omega}_{\beta}(\beta)-1)d\beta\\
&+ (\mathcal{H}_{\zeta}-\mathcal{H}_{\omega})D_t^0b_0\\
&-(I-\mathcal{H}_{\zeta})b_1\partial_{\alpha}b_0.
\end{split}
\end{equation}

\subsection{Formula for $A-A_0$} By (\ref{formulaA}) and (\ref{nonlocalA0}), we have 
\begin{equation}\label{a1a1}
\begin{split}
(I-\mathcal{H}_{\zeta})(A-A_0)=& i[D_t^2\zeta,\mathcal{H}_{\zeta}]\frac{\bar{\zeta}_{\alpha}-1}{\zeta_{\alpha}}-i[(D_t^0)^2\omega,\mathcal{H}_{\omega}]\frac{\bar{\omega}_{\alpha}-1}{\omega_{\alpha}}\\
&+i[D_t\zeta,\mathcal{H}_{\zeta}]\frac{\partial_{\alpha}D_t\bar{\zeta}}{\zeta_{\alpha}}-i[D_t^0\omega,\mathcal{H}_{\omega}]\frac{\partial_{\alpha}D_t^0 \bar{\omega}}{\omega_{\alpha}}  \\
&+(\mathcal{H}_{\omega}-\mathcal{H}_{\zeta})(A_0-1)  .
\end{split}
\end{equation}

\vspace*{1ex}

\noindent Now we have formula for $b_1, D_tb_1, A_1$. So that we have a quasilinear system. It's not difficult to obtain local well-posedness of this quasilinear system. We omit the details and  focus on the long time existence. 


\subsection{A discussion on long time existence}
In order to prove long time well-posedness, one idea is to find some quantity $\zeta$ with $D_t\theta\approx D_t\zeta$, such that $$\mathcal{P}\theta=cubic.$$
In \cite{Wu2009}, S. Wu take $\theta=(I-\mathcal{H}_{\zeta})(\zeta-\alpha)$ and show that $\mathcal{P}\theta$ consists of cubic and higher order terms. For water waves that is neither periodic nor vanishing at spatial infinity, if we take $\theta=(I-\mathcal{H}_{\zeta})(\zeta-\alpha)$,  then $\mathcal{P}\theta$ is still cubic, at least in $BMO$ sense. As was explained in the introduction, since $\theta$ is not in any $L^2(\RR)$ based spaces, it's difficult to associate $\mathcal{P}\theta=cubic$ with an appropriate energy which still preserves this cubic structure.

Note however that, given any compatible initial data $(\zeta(\alpha,0), D_t\zeta(\alpha,0), D_t^2\zeta(\alpha,0))$ be such that 
$$(\partial_{\alpha}\zeta(\alpha,0)-1, D_t\zeta(\alpha,0), D_t^2\zeta(\alpha,0))\in X^s\times X^{s+1/2}\times X^s,$$
by Theorem \ref{longperiodic}, $\omega(\alpha,t), D_t^0\omega(\alpha,t), (D_t^0)^2\omega(\alpha,t)$ exists on time scale $O(\epsilon^{-2})$, so we need only to consider long time existence for $\xi_1:=\zeta-\omega$ and $D_t\xi_1$:  and it's advantageous to do so, because $\xi_1(\alpha,t)$ and $D_t\xi_1(\alpha,t)$ vanish as $\alpha\rightarrow \infty$, while $\zeta-\alpha$ and $D_t\zeta$ oscillate at $\infty$.  It turns out that $\mathcal{P}(I-\mathcal{H}_{\zeta})\xi_1$ consists of cubic and higher order nonlinearities. So we are able to prove long time existence in our situation.

\subsection{Governing equation for $\xi_1$}  
\subsubsection{$\mathcal{P}(I-\mathcal{H}_{\zeta})(\zeta-\alpha)$} In \cite{Wu2009}, the key ingredients that S. Wu derived $\mathcal{P}(I-\mathcal{H}_{\zeta})(\zeta-\alpha)$ are:  \begin{equation}\label{keyingredients}
(I-\mathcal{H}_{\zeta})D_t\bar{\zeta}=0,  \quad (I-\mathcal{H}_{\zeta})(\bar{\zeta}-\alpha)=0,  \quad  (I-\mathcal{H}_{\zeta})\frac{\bar{\zeta}_{\alpha}-1}{\zeta_{\alpha}}=0, \quad \mathcal{P}\zeta=-i.
\end{equation}
In our situation, (\ref{keyingredients}) is still true, despite that we have non-vanishing water waves at $\infty$.  Use the same derivation as in  \cite{Wu2009}, we have
\begin{equation}\label{cubic1}
\begin{split}
&\mathcal{P}(I-\mathcal{H}_{\zeta})(\zeta-\alpha)\\=&-2[D_t\zeta, \mathcal{H}_{\zeta}\frac{1}{\zeta_{\alpha}}+\bar{\mathcal{H}}_{\zeta}\frac{1}{\bar{\zeta}_{\alpha}}]\partial_{\alpha}D_t\zeta+\frac{1}{\pi i}\int \Big(\frac{D_t\zeta(\alpha)-D_t\zeta(\beta)}{\zeta(\alpha)-\zeta(\beta)}\Big)^2 \partial_{\beta}(\zeta-\bar{\zeta})d\beta\\
:=& G. 
\end{split}
\end{equation}
Similarly, 
\begin{equation}\label{cubic0}
\begin{split}
&\mathcal{P}_0(I-\mathcal{H}_{\omega})(\omega-\alpha)\\
=&-2[D_t^0\omega, \mathcal{H}_{\omega}\frac{1}{\omega_{\alpha}}+\bar{\mathcal{H}}_{\omega}\frac{1}{\bar{\omega}_{\alpha}}]\partial_{\beta}D_t^0\omega+\frac{1}{\pi i}\int_{-\infty}^{\infty} \Big(\frac{D_t^0\omega(\alpha)-D_t^0\omega(\beta)}{\omega(\alpha)-\omega(\beta)}\Big)^2\partial_{\beta}(\omega-\bar{\omega})d\beta\\
:=&G_0.
\end{split}
\end{equation}
\subsubsection{An equivalent quantity of $\xi_1$}
Denote 
\begin{equation}
\lambda:=(I-\mathcal{H}_{\zeta})(\zeta-\alpha)-(I-\mathcal{H}_{\omega})(\omega-\alpha).
\end{equation}
The reason we consider this quantity is that at least formally, we have known $\mathcal{P}(I-\mathcal{H}_{\zeta})(\zeta-\bar{\zeta})$ and $\mathcal{P}_0(I-\mathcal{H}_{\omega})(\omega-\bar{\omega})$ consist of cubic and higher order terms. So the quantity $\mathcal{P}(I-\mathcal{H}_{\zeta})(\zeta-\bar{\zeta})-\mathcal{P}_0(I-\mathcal{H}_{\omega})(\omega-\bar{\omega})$ is at least cubic. Moreover, $\mathcal{P}-\mathcal{P}_0$ is quadratic.  So $\mathcal{P}\lambda$ is cubic. 

Note that $\lambda$ might not be holomorphic in $\Omega(t)$. To avoid loss of derivatives, we consider the quantity
\begin{equation}
\theta:=(I-\mathcal{H}_{\zeta})\lambda.
\end{equation}
First we derive water waves equation for $\mathcal{P}\lambda$, and then we derive $\mathcal{P}(I-\mathcal{H}_{\zeta})\lambda$. 
Direct calculation gives
\begin{equation}
\begin{split}
D_t^2-(D_t^0)^2=& D_t^2-D_tD_t^0+D_tD_t^0-(D_t^0)^2\\
=& D_t(b_1\partial_{\alpha})+b_1\partial_{\alpha}D_t^0\\
=& (D_t b_1)\partial_{\alpha}+b_1D_t\partial_{\alpha}+b_1\partial_{\alpha}D_t^0.
\end{split}
\end{equation}
Then we have 
\begin{equation}\label{lambda}
\begin{split}
\mathcal{P}\lambda
=& \mathcal{P}(I-\mathcal{H}_{\zeta})(\zeta-\alpha)-\mathcal{P}_0(I-\mathcal{H}_{\omega})(\omega-\alpha)+(\mathcal{P}-\mathcal{P}_0)(I-\mathcal{H}_{\omega})(\omega-\alpha)\\
=& -2[D_t\zeta,\mathcal{H}_{\zeta}\frac{1}{\zeta_{\alpha}}+\bar{\mathcal{H}}_{\zeta}\frac{1}{\bar{\zeta}_{\alpha}}]\partial_{\alpha}D_t\zeta+2[D_t^0\omega,\mathcal{H}_{\omega}\frac{1}{\omega_{\alpha}}+\bar{\mathcal{H}}_{\omega}\frac{1}{\bar{\omega}_{\alpha}}]\partial_{\alpha}D_t^0\omega\\
&+\frac{1}{\pi i}\int \Big(\frac{D_t\zeta(\alpha,t)-D_t\zeta(\beta,t)}{\zeta(\alpha,t)-\zeta(\beta,t)}\Big)^2(\zeta-\bar{\zeta})_{\beta}d\beta\\
&-\frac{1}{\pi i}\int \Big(\frac{D_t^0\omega(\alpha,t)-D_t^0\omega(\beta,t)}{\omega(\alpha,t)-\omega(\beta,t)}\Big)^2(\omega-\bar{\omega})_{\beta}d\beta\\
&+ D_tb_1 \partial_{\alpha}(I-\mathcal{H}_{\omega})(\omega-\alpha)+b_1 D_t\partial_{\alpha}(I-\mathcal{H}_{\omega})(\omega-\alpha)\\
&+b_1\partial_{\alpha}D_t^0(I-\mathcal{H}_{\omega})(\omega-\alpha)-iA_1\partial_{\alpha}(I-\mathcal{H}_{\omega})(\omega-\alpha).
\end{split}
\end{equation}
So $\mathcal{P}\lambda$ consists of cubic and higher order nonlinearities.  Note that
\begin{equation}
\begin{split}
\mathcal{P}\theta=&\mathcal{P}(I-\mathcal{H}_{\zeta})\lambda\\
=&(I-\mathcal{H}_{\zeta})\mathcal{P}\lambda-[\mathcal{P},\mathcal{H}_{\zeta}]\lambda.
\end{split}
\end{equation}
We have
\begin{equation}
\begin{split}
[\mathcal{P},\mathcal{H}_{\zeta}]\lambda
=& 2[D_t\zeta,\mathcal{H}_{\zeta}]\frac{\partial_{\alpha}D_t\lambda}{\zeta_{\alpha}}-\frac{1}{\pi i}\int \Big(\frac{D_t\zeta(\alpha)-D_t\zeta(\beta)}{\zeta(\alpha)-\zeta(\beta)}\Big)^2 \partial_{\beta}\lambda d\beta.
\end{split}
\end{equation}
Note that 
\begin{equation}
\begin{split}
\lambda=&(I-\mathcal{H}_{\zeta})(\zeta-\omega)+(\mathcal{H}_{\omega}-\mathcal{H}_{\zeta})(\omega-\alpha)\\
=&(I-\mathcal{H}_{\zeta})\xi_1+(\mathcal{H}_{\omega}-\mathcal{H}_{\zeta})(\omega-\alpha)\\
:=& \lambda_1+\lambda_2.
\end{split}
\end{equation}
$\lambda_2$ is quadratic, so $[D_t\zeta, \mathcal{H}_{\zeta}]\frac{\partial_{\alpha}D_t\lambda_2}{\zeta_{\alpha}}$ is cubic. $\lambda_1$ is holomorphic in $\Omega(t)^c$, so $[D_t\zeta, \bar{\mathcal{H}}_{\zeta}\frac{1}{\bar{\zeta}_{\alpha}}]\partial_{\alpha}D_t\lambda_1$ is cubic. So 
$$[D_t\zeta,\mathcal{H}_{\zeta}]\frac{\partial_{\alpha}D_t\lambda_1}{\zeta_{\alpha}}=-[D_t\zeta,\bar{\mathcal{H}}_{\zeta}]\frac{\partial_{\alpha}D_t\lambda_1}{\bar{\zeta}_{\alpha}}+[D_t\zeta, \mathcal{H}_{\zeta}\frac{1}{\zeta_{\alpha}}+\bar{\mathcal{H}}_{\zeta}\frac{1}{\bar{\zeta}_{\alpha}}]\partial_\alpha D_t\lambda_1$$ is cubic. 

\subsection{Governing equation for $D_t\theta$.} The nonlinearities $\mathcal{P}(I-\mathcal{H}_{\zeta})D_t\theta$ contains a term of the form $D_t^2 b_1$, which loses derivatives in energy estimates.  So we consider the quantity 
$$\sigma=(I-\mathcal{H}_{\zeta})[D_t(I-\mathcal{H}_{\zeta})(\zeta-\alpha)-D_t^0(I-\mathcal{H}_{\omega})(\omega-\alpha)].$$
Denote 
$$\chi:=D_t(I-\mathcal{H}_{\zeta})(\zeta-\alpha)-D_t^0(I-\mathcal{H}_{\omega})(\omega-\alpha).$$
\begin{remark}
If we replace $\sigma$ by $\chi$, then we'll lose one derivative in the energy estimates. The advantage of $(I-\mathcal{H}_{\zeta})$ acting on $\chi$ is that $(I+\mathcal{H}_{\zeta})\sigma=0$, so $(I+\mathcal{H}_{\zeta})\partial_{\alpha}\sigma=-[\partial_{\alpha}, \mathcal{H}_{\zeta}]\sigma$, which prevents losing one derivative. 
\end{remark}
We have 
\begin{align*}
&\mathcal{P}D_t(I-\mathcal{H}_{\zeta})(\zeta-\alpha)\\
=& D_t\mathcal{P}(I-\mathcal{H}_{\zeta})(\zeta-\alpha)+[\mathcal{P}, D_t](I-\mathcal{H}_{\zeta})(\zeta-\alpha)\\
=&D_tG+[\mathcal{P}, D_t](I-\mathcal{H}_{\zeta})(\zeta-\alpha).
\end{align*}
And we have
\begin{align*}
&\mathcal{P}_0D_t^0(I-\mathcal{H}_{\omega})(\omega-\alpha)\\
=& D_t^0\mathcal{P}_0(I-\mathcal{H}_{\omega})+[\mathcal{P}_0, D_t^0](I-\mathcal{H}_{\omega})(\omega-\alpha)\\
=& D_t^0 G_0+[\mathcal{P}_0, D_t^0](I-\mathcal{H}_{\omega})(\omega-\alpha).
\end{align*}
So we have 
\begin{equation}
\begin{split}
\mathcal{P}\chi
=& \mathcal{P}D_t(I-\mathcal{H}_{\zeta})(\zeta-\alpha)-\mathcal{P}_0D_t^0(I-\mathcal{H}_{\omega})(\omega-\alpha)\\
&+(\mathcal{P}-\mathcal{P}_0)D_t^0(I-\mathcal{H}_{\omega})(\omega-\alpha)\\
=& D_tG-D_t^0G_0+(\mathcal{P}-\mathcal{P}_0)D_t^0(I-\mathcal{H}_{\omega})(\omega-\alpha)\\
&+[\mathcal{P}, D_t](I-\mathcal{H}_{\zeta})(\zeta-\alpha)-[\mathcal{P}_0, D_t^0](I-\mathcal{H}_{\omega})(\omega-\alpha)
\end{split}
\end{equation}
We have
\begin{align*}
\mathcal{P}\sigma=(I-\mathcal{H})\mathcal{P}\chi-[\mathcal{P},\mathcal{H}_{\zeta}]\chi
\end{align*}
\begin{equation}
\begin{split}
[\mathcal{P},\mathcal{H}]\chi=& 2[D_t\zeta,\mathcal{H}_{\zeta}]\frac{\partial_{\alpha}D_t\chi}{\zeta_{\alpha}}-\frac{1}{\pi i}\int \Big(\frac{D_t\zeta(\alpha)-D_t\zeta(\beta)}{\zeta(\alpha)-\zeta(\beta)}\Big)^2 \partial_{\beta}\chi d\beta.
\end{split}
\end{equation}
Use the same argument as we did for $2[D_t\zeta,\mathcal{H}_{\zeta}]\frac{\partial_{\alpha}D_t\lambda}{\zeta_{\alpha}}$, we can show that $2[D_t\zeta,\mathcal{H}_{\zeta}]\frac{\partial_{\alpha}D_t\chi}{\zeta_{\alpha}}$ is indeed cubic. 
\vspace*{3ex}

\subsection{Long time existence}
With previous preparations, use standard energy method (similar to those in \cite{Wu2009}, \cite{Totz2012}), we can complete the proof of Theorem \ref{theorem1}.  A minor modification of the argument in \S \ref{govern} -\S \ref{energyestimatesection} also gives a proof of Theorem \ref{theorem1}. We omit the details of the proof here. 

\begin{remark}
In our set up, we need the periodic solution $\omega$ to have $\frac{3}{2}+$ more derivatives than the decaying part $\xi_1$. This requirement is of course not optimal.  However, it's enough for us to justify the Peregrine soliton from the full water waves. 
\end{remark}

\begin{remark}
Theorem \ref{theorem1} can be interpreted as: Periodic water wave system is stable under Sobolev perturbation (note that this perturbation is indeed not small relative to the periodic part).
\end{remark}

\section{Multiscale analysis and the derivation of NLS from full water waves equation}

Our goal of this section is to formally derive the NLS from full water waves, which is similar to that in (\cite{Totz2012}), except that the water waves we are considering do not vanish at infinity. The method we use to derive the NLS is the multiscale analysis. Let \begin{equation}
    \alpha_0:=\alpha,\quad \alpha_1:=\epsilon \alpha,\quad t_0:=t,\quad t_1:=\epsilon t,\quad t_2:=\epsilon^2t.
\end{equation} 
Assume that $\bar{\zeta}-\alpha, D_t\bar{\zeta}\in \mathcal{H}ol(\Omega(t))$.
 Assume $\zeta$ can be expanded as a power series of $\epsilon$, i.e.,
\begin{equation}
\zeta=\alpha+\sum_{n\geq 1} \epsilon^n \zeta^{(n)}.
\end{equation}
We assume $\zeta^{(1)}$ is wave packet like, i.e., $\zeta^{(1)}=B(\alpha_1,t_0, t_1, t_2)e^{i \phi}$, where $\phi=k\alpha+\gamma t$ for some constants $k, \gamma>0$. We don't assume $B\in H^s(\RR)$. Instead, we assume $B=B_0+B_1$, with $B_0=B_0(t)$ independent of $\alpha$, and $B_1\in H^s(\RR)$. 

Because $\bar{\zeta}-\alpha$ is holomorphic, the leading order of $\bar{\zeta}-\alpha$ must be close to a holomorphic function. If $B_0\equiv 0$, we the following result:
\begin{lemma}[Propositioin 3.1 in \cite{Totz2012}]\label{prop3-1}
Let $f=
g(\epsilon\alpha)e^{-ik\alpha}$, with $g\in H^{s+m}(\RR)$, $k\neq 0$ and $s,m\geq 0$ be given, assume $\epsilon\leq 1$ and $g\in H^{s+m}(\RR)$. Then 
$$\norm{(I-sgn(k)\mathbb{H})f}_{H^{s}(\RR)}\leq C\frac{\epsilon^{m-\frac{1}{2}}}{k^m}\|g\|_{H^{s+m}(\RR)},$$
for some constant $C=C(s)$.
\end{lemma}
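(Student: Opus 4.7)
The natural plan is to pass to the frequency side. The operator $I-\sgn(k)\mathbb{H}$ is a Fourier multiplier: using the convention $\hat{h}(\xi)=\int h(x)e^{-2\pi ix\xi}dx$ adopted in the paper, $\widehat{\mathbb{H}h}(\xi)=-\sgn(\xi)\hat{h}(\xi)$, so $I-\sgn(k)\mathbb{H}$ corresponds to the multiplier $m(\xi):=1+\sgn(k)\sgn(\xi)$, which vanishes on the half-line $\sgn(\xi)=-\sgn(k)$ and equals $2$ on the other half-line. This vanishing on one half-line is the whole point: once $\hat f$ is computed, its support will lie (up to scale) around $\xi\approx -\sgn(k)\,k/(2\pi)$, which sits in the half-line where the multiplier vanishes. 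The algebraic decay of $\hat g$ will then convert this frequency separation into the quantitative gain $(\epsilon/k)^m$.

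Concretely, assume without loss of generality that $k>0$; the case $k<0$ is symmetric. First I would use the scaling and modulation rules of the Fourier transform to obtain
\begin{equation*}
\hat f(\xi)=\frac{1}{\epsilon}\,\hat g\!\left(\frac{\xi+k/(2\pi)}{\epsilon}\right),
\end{equation*}
and then write
\begin{equation*}
\no{(I-\mathbb{H})f}_{H^s(\RR)}^2=\int_{\RR}(1+|\xi|^2)^s\,|m(\xi)|^2\,|\hat f(\xi)|^2\,d\xi=4\int_0^\infty (1+|\xi|^2)^s\,\frac{1}{\epsilon^2}\Big|\hat g\!\Big(\frac{\xi+k/(2\pi)}{\epsilon}\Big)\Big|^2 d\xi.
\end{equation*}
The key step is the substitution $\eta=(\xi+k/(2\pi))/\epsilon$, which compresses the region of integration to $\{\eta\ge k/(2\pi\epsilon)\}$ and, using $\epsilon\le 1$, gives $1+|\epsilon\eta-k/(2\pi)|^2\le 1+\eta^2$.

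The last step is to trade weight for frequency support: on $\{\eta\ge k/(2\pi\epsilon)\}$ we have $\eta^{2m}\ge (k/(2\pi\epsilon))^{2m}$, hence
\begin{equation*}
(1+\eta^2)^s\le (1+\eta^2)^{s+m}\eta^{-2m}\le (2\pi)^{2m}\frac{\epsilon^{2m}}{k^{2m}}(1+\eta^2)^{s+m}.
\end{equation*}
Plugging this in and extending the integral to all of $\RR$ yields
\begin{equation*}
\no{(I-\mathbb{H})f}_{H^s(\RR)}^2\le \frac{4(2\pi)^{2m}}{\epsilon}\frac{\epsilon^{2m}}{k^{2m}}\no{g}_{H^{s+m}(\RR)}^2,
\end{equation*}
which is the claimed bound after taking square roots.

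There is really no serious obstacle here; the proof is essentially a one-page Fourier computation. The only points that require care are (i) keeping track of the Fourier convention so that the symbol of $\mathbb{H}$ comes out right with the correct sign, and (ii) handling the $\sgn(k)$ case bookkeeping (for $k<0$ the multiplier vanishes on $\xi>0$ and the modulation shifts $\hat g$ to be supported near $+k/(2\pi)$, with everything going through symmetrically). No new tools from the paper beyond the function space definitions are needed.
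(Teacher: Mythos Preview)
The paper does not supply its own proof of this lemma; it is quoted verbatim from \cite{Totz2012} and used as a black box. Your Fourier-side argument is correct and is exactly the standard proof: compute $\hat f$ as a rescaled shift of $\hat g$, observe that the multiplier of $I-\sgn(k)\mathbb{H}$ kills the half-line containing the bulk of $\hat f$, and convert the remaining frequency separation $|\eta|\ge k/(2\pi\epsilon)$ into the factor $(\epsilon/k)^{m}$ by inserting $\eta^{2m}$ into the weight. One cosmetic remark: the constant you obtain is $2(2\pi)^m$, which depends on $m$ rather than on $s$ as the lemma's statement suggests; this is a harmless misprint in the stated dependency and does not affect any downstream use.
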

\noindent If $f$ is oscillating at $\infty$, we have the following.
\begin{lemma}\label{regularitytodecay}
Let $c$ be a constant and assume $f=ce^{-ik\alpha}+
g(\epsilon\alpha)e^{-ik\alpha}$, with $g\in H^{s+m}(\RR)$, $k\neq 0$ and $s,m\geq 0$ be given. Assume $\epsilon\leq 1$. Then 
$$\norm{(I-sgn(k)\mathbb{H})f}_{H^{s}(\RR)}\leq C\frac{\epsilon^{m-\frac{1}{2}}}{k^m}\|g\|_{H^{s+m}(\RR)},$$
for some constant $C=C(s)$.
\end{lemma}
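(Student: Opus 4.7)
The plan is to reduce this lemma to Lemma~\ref{prop3-1} by observing that the constant-amplitude oscillating piece $ce^{-ik\alpha}$ is annihilated by $I - \mathrm{sgn}(k)\mathbb{H}$. Concretely, decompose
\begin{equation*}
f(\alpha) = ce^{-ik\alpha} + g(\epsilon\alpha)e^{-ik\alpha},
\end{equation*}
so that the $H^s$ norm of $(I - \mathrm{sgn}(k)\mathbb{H})f$ will come entirely from the second summand, which already satisfies the target estimate by Lemma~\ref{prop3-1}.

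The central step is the identity $(I - \mathrm{sgn}(k)\mathbb{H})(ce^{-ik\alpha}) = 0$. The quickest justification is Fourier: the flat-interface Hilbert transform $\mathbb{H}$ is the multiplier $-\mathrm{sgn}(\xi)$ on tempered distributions, and $\widehat{e^{-ik\alpha}}$ is a Dirac mass at $\xi = -k$, so $\mathbb{H}(e^{-ik\alpha}) = -\mathrm{sgn}(-k)e^{-ik\alpha} = \mathrm{sgn}(k)e^{-ik\alpha}$, whence $(I - \mathrm{sgn}(k)\mathbb{H})(ce^{-ik\alpha}) = c(1 - \mathrm{sgn}(k)^2)e^{-ik\alpha} = 0$. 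Equivalently, for $k > 0$ the function $e^{-ikz}$ is bounded, holomorphic and decays nontangentially in the lower half-plane, so it lies in $\mathcal{H}ol_{\mathcal{N}}$ of that half-plane and Lemma~\ref{goodgood} gives $(I - \mathbb{H})e^{-ik\alpha} = 0$ in the BMO sense; the case $k < 0$ is symmetric via complex conjugation.

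Substituting the cancellation back yields
\begin{equation*}
(I - \mathrm{sgn}(k)\mathbb{H})f = (I - \mathrm{sgn}(k)\mathbb{H})\bigl(g(\epsilon\alpha)e^{-ik\alpha}\bigr),
\end{equation*}
and Lemma~\ref{prop3-1} applied to $g(\epsilon\alpha)e^{-ik\alpha}$ delivers the bound $C\epsilon^{m-1/2}k^{-m}\|g\|_{H^{s+m}}$ exactly as claimed.

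The one subtlety, which I expect to be the only real obstacle, is that $e^{-ik\alpha} \notin L^2$, so the cancellation $(I - \mathrm{sgn}(k)\mathbb{H})(ce^{-ik\alpha}) = 0$ is a priori only a distributional or BMO identity, and a little care is needed before asserting it inside an $H^s$ bound. To make the subtraction rigorous I would regularize: replace $ce^{-ik\alpha}$ by $c\eta(\alpha/R)e^{-ik\alpha}$ for $\eta \in C_c^\infty(\mathbb{R})$ with $\eta \equiv 1$ near the origin, apply the bounded $L^2$ operator $(I - \mathrm{sgn}(k)\mathbb{H})$ to the resulting $L^2$ function, and verify that $(I - \mathrm{sgn}(k)\mathbb{H})(c\eta(\cdot/R)e^{-ik\alpha}) \to 0$ in $H^s$ as $R \to \infty$. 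On the Fourier side this reduces to showing $\|(1 + \mathrm{sgn}(k)\mathrm{sgn}(\xi))\,R\hat\eta(R(\xi+k))\|_{H^s} \to 0$, which is immediate from the rapid decay of $\hat\eta$ together with the fact that $1 + \mathrm{sgn}(k)\mathrm{sgn}(\xi)$ vanishes identically on the half-line containing $\xi = -k$.
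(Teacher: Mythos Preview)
Your proposal is correct and follows essentially the same approach as the paper: observe that $(I-\mathrm{sgn}(k)\mathbb{H})e^{-ik\alpha}=0$, so the constant-amplitude piece drops out and Lemma~\ref{prop3-1} handles the remainder. The paper's proof is in fact terser than yours---it simply asserts the cancellation identity without the Fourier or holomorphic justification you supply, and does not address the $e^{-ik\alpha}\notin L^2$ subtlety at all; your regularization argument is a welcome addition rather than a deviation.
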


\begin{proof}
For $k\neq 0$, we have
\begin{equation}
    (I-sgn(k)\mathbb{H})e^{-ik\alpha}=0.
\end{equation}
Therefore, by lemma \ref{prop3-1} in \cite{Totz2012}, we have 
\begin{equation}
    \|(I-\mathbb{H})f\|_{H^s(\RR)}=\norm{(I-\mathbb{H})g(\epsilon \alpha)e^{-ik\alpha}}_{H^s(\RR)}\leq C\frac{\epsilon^{m-\frac{1}{2}}}{k^m}\|g\|_{H^{s+m}(\RR)}.
\end{equation}
\end{proof}
Now we are ready to carry out asymptotic expansion and derive the focusing NLS.
As in \cite{Totz2012}, we use the following equation to perform multiscale analysis.
\begin{equation}
\begin{split}
(D_t^2-iA\partial_{\alpha}) & (I-\mathcal{H}_{\zeta})(\zeta-\bar{\zeta})=-2[D_t\zeta, \mathcal{H}_{\zeta}\frac{1}{\zeta_{\alpha}}+\bar{\mathcal{H}_{\zeta}}\frac{1}{\bar{\zeta}_{\alpha}}]\partial_{\alpha}D_t\bar{\zeta}\\
&+\frac{1}{\pi i}\int \Big(\frac{D_t\zeta(\alpha)-D_t\zeta(\beta)}{\zeta(\alpha)-\zeta(\beta)}\Big)^2\partial_{\beta}(\zeta-\bar{\zeta})d\beta:= G.
\end{split}
\end{equation}
So we need to expand every quantity/operator as asymptotic series in $\epsilon$. These have been done in (\cite{Totz2012}).  We expand $b, A, G$ as 
\begin{equation}
    b=\sum_{n\geq 0}\epsilon^n b^{(n)},\quad  A=\sum_{n\geq 0}\epsilon^n A^{(n)}, \quad G=\sum_{n\geq 0}\epsilon^n G_n.
\end{equation}
Since $b$ and $A-1$ are quadratic and $G$ is cubic, we have 
\begin{equation}
b^{(0)}=b^{(1)}=A^{(1)}=G_1=G_2=0, \quad A^{(0)}=1.
\end{equation}
Expand $\mathcal{H}_{\zeta}=H_0+\sum_{n\geq 1}\epsilon^n H_n$. Then 
\begin{equation}
\begin{split}
H_0f(\alpha)=&\mathbb{H}f(\alpha),\\
H_1f(\alpha)=&[\zeta^{(1)}, H_0]\partial_{\alpha_0}f,\\
H_2f(\alpha)=&[\zeta^{(1)}, H_0]f_{\alpha_1}+[\zeta^{(2)}, H_0]f_{\alpha_0}-[\zeta^{(1)}, H_0]\zeta_{\alpha_0}^{(1)}f_{\alpha_0}+\frac{1}{2}[\zeta^{(1)}, [\zeta^{(1)}, H_0]]\partial_{\alpha_0}^2f.
\end{split}
\end{equation}
See \S 3.1 in \cite{Totz2012} for the derivation of $H_0, H_1$ and $H_2$. 

\subsection{$O(1)$ hierarchy} This simply gives 
$$A^{(0)}=1.$$

\subsection{$\epsilon$ hierarchy}   We have 
\begin{align*}
(\partial_{t_0}^2-i\partial_{\alpha_0})(I-H_0)\zeta^{(1)}=0.
\end{align*}
Since $\zeta^{(1)}=B(\alpha_1,t_0, t_1, t_2)e^{i\phi}$, by Lemma \ref{regularitytodecay}, we have 
\begin{equation}
    (I-H_0)\zeta^{(1)}=2\zeta^{(1)}+O(\epsilon^4).
\end{equation}
So we have 
$$(\partial_{t_0}^2-i\partial_{\alpha_0})\zeta^{(1)}=O(\epsilon^4).$$
Then we get  $\zeta^{(1)}=B(\alpha_1, t_1, t_2)e^{i(k\alpha+\gamma t)}$, with $\gamma^2=k$. We simply choose $\gamma=\sqrt{k}$, as what we expected.

\subsection{$\epsilon^2$ level} We need
\begin{align*}
(\partial_{t_0}^2-i\partial_{\alpha_0})(I-H_0)\zeta^{(2)}=-(2\partial_{t_0}\partial_{t_1}-i\partial_{\alpha_1})(I-H_0)\zeta^{(1)}+(\partial_{t_0}^2-i\partial_{\alpha_0})H_1 \zeta^{(1)}.
\end{align*}
Note that 
\begin{align*}
(\partial_{t_0}^2-i\partial_{\alpha_0})H_1 \zeta^{(1)}=&  (\partial_{t_0}^2-i\partial_{\alpha_0})[\zeta^{(1)},H_0]\partial_{\alpha_0}Be^{i\phi}\\
=&  ik(\partial_{t_0}^2-i\partial_{\alpha_0})[\zeta^{(1)},I+H_0]Be^{i\phi}\\
=& O(\epsilon^4).
\end{align*}
To avoid secular terms, we choose $\zeta^{(1)}$ such that 
$$-(2\partial_{t_0}\partial_{t_1}-i\partial_{\alpha_1})(I-H_0)\zeta^{(1)}=0.$$
This is equivalent to 
$$ B_{t_1}-\frac{1}{2\gamma}B_{\alpha_1}=0.$$
So we choose $B=B(X,T)$, with $X= \alpha_1+\frac{1}{2\gamma}t_1=\epsilon(\alpha+\frac{1}{2\gamma}t)$, $T=t_2=\epsilon^2 t$. Note that $\frac{1}{2\gamma}=\frac{\partial \gamma}{\partial k}$, so $B$ travels at the group velocity.

To choose $\zeta^{(2)}$, we use $(I-\mathcal{H})(\bar{\zeta}-\alpha)=0$.  
\begin{align*}
(I-H_0)\bar{\zeta}^{(2)}=&H_1 \bar{\zeta}^{(1)}=[\zeta^{(1)}, H_0]\partial_{\alpha_0} \bar{\zeta}^{(1)}\\
=&-ik[\zeta^{(1)},H_0]\bar{B}e^{-i\phi}\\
=& ik[\zeta^{(1)},I-H_0]\bar{B}e^{-i\phi}\\
=&-ik(I-H_0)|B|^2.
\end{align*}
We choose 
$$\zeta^{(2)}=\frac{ik}{2}(I+H_0)|B|^2+\frac{ik}{2}|B_0|^2=\frac{ik}{2}(I+H_0)(|B|^2-|B_0|^2)+ik|B_0|^2.$$
Note that 
$$(I-H_0)\bar{\zeta}^{(2)}=-\frac{ik}{2}(I-H_0)(I-H_0)(|B|^2-|B_0|^2)-ik(I-H_0)|B_0|^2=-ik(I-H_0)|B|^2.$$

\subsection{$\epsilon^3$ level}
First, we need to expand $b=\sum_{n\geq 0}\epsilon^n b^{(n)}$. Since $(I-\mathcal{H})b=-[D_t\zeta,\mathcal{H}]\frac{\bar{\zeta}_{\alpha}-1}{\zeta_{\alpha}}$ is quadratic, we have  $b^{(0)}=b^{(1)}=0$. 
For $b_2$, we have 
\begin{equation}
\begin{split}
(I-H_0)b^{(2)}=&-[\partial_{t_0}\zeta^{(1)},H_0]\partial_{\alpha_0}\bar{\zeta}^{(1)}\\
=&-\gamma k[\zeta^{(1)},H_0]\bar{\zeta}^{(1)}=\gamma k[\zeta^{(1)}, I-H_0]\bar{\zeta}^{(1)}=-\gamma k(I-H_0)|B|^2.
\end{split}
\end{equation}

\noindent Since $b^{(2)}$ is real, we have 
$$b^{(2)}=-\gamma k|B|^2.$$
We need also to expand $A=\sum_{n\geq 0}\epsilon^n A^{(n)}$. Clearly, $A^{(0)}=1$, and $A^{(1)}=0$. We have 
\begin{align*}
(I-H_0)A^{(2)}=i[\partial_{t_0}^2\zeta^{(1)},H_0]\partial_{\alpha_0}\bar{\zeta}^{(1)}+i[\partial_{t_0}\zeta^{(1)},H_0]\partial_{\alpha_0}\partial_{t_0}\bar{\zeta}^{(1)}=0.
\end{align*}
Since $A^{(2)}$ is real, we have $A^{(2)}=0$.  

Use exactly the same calculation as in (\cite{Totz2012}),  we obtain
$$G_3=2k^3B|B|^2 e^{i\phi}.$$
Then for $O(\epsilon^3)$ terms, we have 
\begin{align*}
(\partial_{t_0}^2-i\partial_{\alpha_0})(I-H_0)\zeta^{(3)}=&-(\partial_{t_0}^2-i\partial_{\alpha_0})(-H^{(1)})\zeta^{(2)}-(\partial_{t_0}^2-i\partial_{\alpha_0})(-H^{(2)})\zeta^{(1)}\\
&-(2\partial_{t_0}\partial_{t_1}-i\partial_{\alpha_1})(I-H_0)\zeta^{(2)}-(2\partial_{t_0\partial_{t_1}}-i\partial_{\alpha_1})(-H^{(1)})\zeta^{(1)}\\
&-(2\partial_{t_0t_2}+\partial_{t_1}^2+2b_2\partial_{t_0}\partial_{\alpha_0})(I-H_0)\zeta^{(1)}+G_3\\
=&-(2\partial_{t_0t_2}+\partial_{t_1}^2+2b_2\partial_{t_0}\partial_{\alpha_0})(I-H_0)\zeta^{(1)}+2k^3B|B|^2e^{i\phi}\\
=& -2\gamma(2iB_T-\gamma^{''}B_{XX}+k^2\gamma B|B|^2 )e^{i\phi},
\end{align*}
where $\gamma^{''}=\frac{d^2\gamma}{dk^2}=-\frac{1}{4k^{3/2}}$.  To avoid secular growth, we choose $B$ such that 
$$2iB_T-\gamma^{''}B_{XX}+k^2\gamma B|B|^2=0.$$
So $B$ solves the focusing cubic NLS. So we have 
$$(\partial_{t_0}^2-i\partial_{\alpha_0})(I-H_0)\zeta^{(3)}=0.$$
From $(I-H_0)(\bar{\zeta}-\alpha)=0$, we have 
\begin{align*}
(I-H_0)\bar{\zeta}^{(3)}=& H^{(1)}\bar{\zeta}^{(2)}+H^{(2)}\bar{\zeta}^{(2)}\\
=&[\zeta^{(1)},H_0]\partial_{\alpha_0}\bar{\zeta}^{(2)}+[\zeta^{(2)},H_0]\partial_{\alpha_0}\bar{\zeta}^{(1)}+[\zeta^{(1)},H_0]\partial_{\alpha_1}\zeta^{(1)}\\
&-[\zeta^{(1)},H_0]\overline{\partial_{\alpha_0}\zeta^{(1)}}\partial_{\alpha_0}\zeta^{(1)}+\frac{1}{2}[\zeta^{(1)}, [\zeta^{(1)}, H_0]]\partial_{\alpha_0}^2\bar{\zeta}^{(1)}\\
=&(I-H_0)B\bar{B}_X-k^2Be^{i\phi}(I+H_0)|B|^2+k^2Be^{i\phi}H_0|B|^2\\
=&-k^2B|B|^2e^{i\phi}+(I-H_0)B\bar{B}_X.
\end{align*}
We choose 
$$\zeta^{(3)}=-\frac{1}{2}k^2\bar{B}|B|^2e^{-i\phi}+\frac{1}{2}(I+H_0)(\bar{B}B_X).$$
So we have an approximated solution 
\begin{equation}
\begin{split}
\tilde{\zeta}=&\alpha+\epsilon B(X,T)e^{i\phi}+\epsilon^2\Big\{\frac{ik}{2}(I+H_0)|B|^2+\frac{ik}{2}|B_0|^2\Big\}\\
&+\epsilon^3\Big\{-\frac{1}{2}k^2\bar{B}|B|^2e^{-i\phi}+\frac{1}{2}(I+H_0)(\bar{B}B_X)\Big\}.
\end{split}
\end{equation}

To find $b_3$, we have 
\begin{align*}
&(I-H_0)b_3\\=&-[\partial_{t_0}\zeta^{(2)},H_0]\partial_{\alpha_0}\bar{\zeta}^{(1)}-[\partial_{t_1}\zeta^{(1)},H_0]\partial_{\alpha_0}\bar{\zeta}^{(1)}-[\partial_{t_0}\zeta^{(1)},H_1]\partial_{\alpha_0}\bar{\zeta}^{(1)}\\
&-[\partial_{t_0}\zeta^{(1)},H_0]\partial_{\alpha_1}\bar{\zeta}^{(1)}-[\partial_{t_0}\zeta^{(1)},H_0]\partial_{\alpha_0}\bar{\zeta}^{(2)}-[\partial_{t_0}\zeta^{(2)},H_0]\partial_{\alpha_0}\bar{\zeta}^{(1)}(-\partial_{\alpha_0}\zeta^{(1)})\\
=& i\gamma(I+H_0)(B\bar{B}_X-\frac{1}{2}\bar{B}B_X)-2ik^2\bar{B}|B|^2e^{-i\phi}.
\end{align*}
So we have 
\begin{equation}
b_3=\Re\{ i\gamma(I+H_0)(B\bar{B}_X-\frac{1}{2}\bar{B}B_X)-2ik^2\bar{B}|B|^2e^{-i\phi}\}.
\end{equation}

\vspace*{2ex}

To estimate the error, we write $\tilde{\zeta}$ as
\begin{equation}\label{approxzetaomega1}
    \tilde{\zeta}:=\alpha+\tilde{\xi}_0+\tilde{\xi}_1,   \quad \quad \tilde{\omega}:=\alpha+\tilde{\xi}_0.
\end{equation}
where
\begin{equation}\label{approxzetaomega2}
\begin{split}
\tilde{\xi}_1=&\epsilon B_1e^{i\phi}+\epsilon^2\frac{ik}{2}(I+H_0)(|B|^2-|B_0|^2)\\
&+\epsilon^3\Big\{-\frac{1}{2}k^2(\bar{B}|B|^2-\bar{B}_0|B_0|^2)e^{-i\phi}+\frac{1}{2}(I+H_0)(\bar{B}B_X)\Big\}.
\end{split}
\end{equation}
\begin{equation}\label{approxzetaomega3}
\begin{split}
\tilde{\xi}_0=&\tilde{\zeta}-\tilde{\xi}_1-\alpha=\epsilon B_0e^{i\phi}-\frac{ik}{2}|B_0|^2\epsilon^2-\frac{1}{2}\epsilon^3 k^2\bar{B}_0|B_0|^2e^{-i\phi}.
\end{split}
\end{equation}
So $\tilde{\omega}-\alpha$ is periodic.  Also, we decompose $\tilde{b}$ as $\tilde{b}_1+\tilde{b}_0$, where 
\begin{equation}
\begin{split}
\tilde{b}_1=&-\epsilon^2 \omega k(|B|^2-|B_0|^2)+\epsilon^3\Big\{\Re\{i\omega (I+H_0)(B\bar{B}_X-\frac{1}{2}\bar{B}B_X)\\
&-2ik^2(\bar{B}|B|^2-\bar{B}_0|B_0|^2)e^{-i\phi}\Big\}.
\end{split}
\end{equation}
\begin{equation}
\tilde{b}_0=-\epsilon^2 |B_0|^2+\epsilon^3 \Re\{-2ik^2\bar{B}_0|B_0|^2e^{-i\phi}\}.
\end{equation}
So $\tilde{b}_1\in H^s$, and $\tilde{b}_0$ is periodic. We choose $\tilde{A}$ as $$\tilde{A}:=A^{(0)}+\epsilon A^{(1)}+\epsilon^2 A^{(2)}=1.$$

\vspace*{2ex}

\noindent \textbf{Notation.} Denote
\begin{equation}
\begin{cases}
\xi=\zeta-\alpha,\\
\xi_0=\omega-\alpha, \quad \xi_1=\xi-\xi_0, \\
r_0=\omega-\tilde{\omega},\\
 r_1=\xi_1-\tilde{\xi}_1,
\tilde{D}_t:=\partial_t+\tilde{b}\partial_{\alpha},\\
 \tilde{D}_t^0=\partial_t+\tilde{b}_0\partial_{\alpha},\\
\tilde{\mathcal{P}}:=\tilde{D}_t^2-i\tilde{A}\partial_{\alpha},\\
 \tilde{\mathcal{P}}_0:=(\tilde{D}_t^0)^2-i\tilde{A}_0\partial_{\alpha}.
\end{cases}
\end{equation}
And recall that the leading order $\zeta^{(1)}$ is
$$\zeta^{(1)}=Be^{i\phi}:= \zeta_0^{(1)}+\zeta_1^{(1)},$$
where $B=B_0+B_1$ solves NLS, $B_0$ is a constant, and $B_1\in H^{s+7}(\RR)$.

\begin{equation}\label{zeta1}
\zeta=Be^{i\phi}, \quad \zeta_0^{(1)}=B_0e^{i\phi},\quad \zeta_1^{(1)}=B_1e^{i\phi},
\end{equation}
Because $B$ scales like $\epsilon^2$ in $t$. In order to observe the modulation of the amplitute, the solution $\zeta$ must exist on time interval whose length scales like $\epsilon^{-2}$, i.e., we must have long time existence for the water waves system. 
\subsection{Well posedness of NLS}
\begin{thm}\label{NLSlocal}
Let $s\geq 1$. There exists $e_0=e_0(\norm{B_1(0)}_{H^s})>0$ such that the cauchy problem
\begin{equation}\label{NLS}
\begin{cases}
iB_t+B_{xx}=-2|B|^2B,\\
B(0)=B_0(0)+B_1(0)=1+f, \quad B_1(0)\in H^s(\RR).
\end{cases}
\end{equation}
is locally well-posed on $[0, e_0]$, and satisfies
\begin{equation}
\norm{B}_{L_t^{\infty}H_x^s(\RR)}\leq C(\norm{B_1(0)}_{H^s(\RR)}).
\end{equation}
\end{thm}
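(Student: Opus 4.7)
The plan is to reduce the problem to a standard semilinear Schrödinger equation on $H^s(\mathbb{R})$ by explicitly identifying a spatially homogeneous background solution and treating the $H^s$-part as a perturbation.

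First I would solve for the constant-in-space solution. Looking for a solution $B_0 = B_0(t)$ independent of $x$, the equation reduces to the ODE $iB_{0t} = -2|B_0|^2 B_0$. Since $|B_0|$ is conserved by this ODE, the initial condition $B_0(0) = 1$ forces $|B_0(t)| \equiv 1$, and integrating the phase gives $B_0(t) = e^{2it}$. This is a global smooth solution of NLS with $B_0(t) \in 1 + H^s$ only in the formal sense that its $x$-derivatives vanish; it captures exactly the non-decaying part of the data.

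Next I would write $B = B_0 + B_1$ and derive the equation for $B_1$. Subtracting the equations satisfied by $B$ and $B_0$ and using $|B_0|^2 = 1$, a direct expansion of $|B|^2 B - |B_0|^2 B_0$ yields
\begin{equation}
iB_{1t} + B_{1xx} = -4 B_1 - 2 e^{4it} \bar{B_1} - 4 e^{2it}|B_1|^2 - 2 e^{-2it} B_1^2 - 2|B_1|^2 B_1,
\end{equation}
a semilinear Schrödinger equation on $\mathbb{R}$ with bounded (in $t$) coefficients and quadratic plus cubic nonlinearities. The initial datum is $B_1(0) = f \in H^s(\mathbb{R})$.

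For the actual well-posedness statement I would run a standard Duhamel fixed-point argument in $C([0,e_0]; H^s(\mathbb{R}))$. Using the propagator $e^{it\partial_x^2}$, which is an isometry on $H^s$, one writes
\begin{equation}
B_1(t) = e^{it\partial_x^2} f + i\int_0^t e^{i(t-s)\partial_x^2} F(B_1, s)\,ds,
\end{equation}
where $F$ is the sum of the linear, quadratic and cubic terms above. Since $s \geq 1 > 1/2$, the Sobolev embedding $H^s(\mathbb{R}) \hookrightarrow L^\infty(\mathbb{R})$ together with the algebra property of $H^s$ implies $\|F(u,\cdot) - F(v,\cdot)\|_{H^s} \leq C(\|u\|_{H^s} + \|v\|_{H^s} + 1)^2 \|u-v\|_{H^s}$ uniformly in $t$, since the coefficients $e^{ikt}$ are bounded. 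A routine contraction-mapping argument on a closed ball of $C([0,e_0]; H^s)$ of radius $2\|f\|_{H^s}$ then produces a unique solution for some $e_0 = e_0(\|f\|_{H^s}) > 0$, together with the bound $\|B_1\|_{L_t^\infty H_x^s} \leq C(\|f\|_{H^s})$. Reassembling $B = e^{2it} + B_1$ gives the claim. There is essentially no obstacle beyond the usual verification of the contraction estimate; the only mild point is that the quadratic-in-$B_1$ terms coming from cross interactions with $B_0$ are genuinely present and must be absorbed, but they are harmless at the subcritical regularity $s \geq 1$.
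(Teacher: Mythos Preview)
Your argument is correct. The paper does not actually prove this theorem: it simply writes ``For a proof, see for example \cite{gallo2004schrodinger}.'' Your approach---splitting off the plane-wave background $B_0(t)=e^{2it}$, deriving a semilinear Schr\"odinger equation for the remainder $B_1$ with bounded-in-time coefficients, and closing by contraction in $C([0,e_0];H^s)$ using the algebra property for $s>1/2$---is precisely the standard method, and is in fact the strategy used in the reference the paper cites. One cosmetic point: the bound in the theorem statement should really be on $\|B-e^{2it}\|_{L_t^\infty H_x^s}$ rather than on $B$ itself (which is not in $H^s$); your reassembly step $B=e^{2it}+B_1$ makes this clear.
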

\noindent For a proof, see for example \cite{gallo2004schrodinger}.
\begin{remark}
The global well-posedness of (\ref{NLS}) is still open, due to the lack of coercive conservation law. However, because our goal is to control the water wave in $O(\epsilon^{-2})$ scale, and the NLS approximation to full water waves scales like $\epsilon^2$ in time, a local well-posedness for NLS is enough for our purpose. 
\end{remark}

\noindent In the following sections, we obtain energy estimates for the remainder terms $r_0, r_1$, respectively. With good energy estimates on the remainder terms, we are able to prove existence of solutions to full water wave equations whose leading term modulated according to the NLS. 

\section{Energy estimate I:  $r_0$}
\noindent Because formally, $\zeta-\tilde{\zeta}=O(\epsilon^4)$, $\lim_{\alpha\rightarrow \pm \infty}(\zeta-\omega)=0$, and $\omega-\alpha$ periodic, we have $(\tilde{\omega}, \tilde{D}_t^0\tilde{\omega}, \tilde{b}_0, \tilde{A}_0)$ approximate (\ref{special}) with error $O(\epsilon^4)$, at least formally. 

\vspace*{1ex}

 In this section, we obtain a priori energy estimates for the remainder $r_0$. The idea is the same as that in \cite{Totz2012}: use the facts that $\mathcal{P}_0(I-\mathcal{H}_p)\xi_0=cubic$ and $\tilde{\omega}, \tilde{D}_t^0\tilde{\omega}, \tilde{b}_0, \tilde{A}_0$ approximates $\omega, D_t^0\omega, b_0, A_0$ up to $\epsilon^4$, respectively, we derive water wave equations for a quantity which is equivalent to $r_0$. With these equations, we can then obtain energy estimates for $r_0$ on time scale $\epsilon^{-2}$. 
\begin{remark}
As before, we use the periodic Hilbert transform. The nonlocal Hilbert transform $\mathcal{H}_{\omega}$ is used when we estimate the error term $r_1$. 
\end{remark}

\vspace*{1ex}
\noindent First, let's derive water wave equation for $r_0$.

\subsection{Governing equation for $r_0$}
We have 
\begin{align*}
&((D_t^0)^2-iA_0\partial_{\alpha})(I-\mathcal{H}_{p})r_0\\=&  ((D_t^0)^2-iA_0\partial_{\alpha})(I-\mathcal{H}_{p})\tilde{\omega}-2[D_t^0\omega,\mathcal{H}_{p}\frac{1}{\omega_{\alpha}}+\bar{\mathcal{H}}_{p}\frac{1}{\bar{\omega}_{\alpha}}]\partial_{\alpha}D_t^0\omega\\
&+\frac{1}{ 4\pi i}\int_{\TT} \Big(\frac{D_t^0\omega(\alpha,t)-D_t^0\omega(\beta,t)}{\sin(\frac{\pi}{2}(\omega(\alpha,t)-\omega(\beta,t)))}\Big)^2(\omega-\bar{\omega})(\beta)d\beta\\
:=& \mathcal{G}.
\end{align*}
We split $\mathcal{G}$ as $\mathcal{G}=\mathcal{G}_1+\mathcal{G}_2+\mathcal{G}_3+\mathcal{G}_4$, where 
\begin{equation}
\mathcal{G}_1:=((D_t^0)^2-iA_0\partial_{\alpha})(I-\mathcal{H}_{p})\tilde{\omega}-((\tilde{D}_t^0)^2-i\tilde{A}_0\partial_{\alpha})(I-\tilde{\mathcal{H}}_{p})\tilde{\omega}.
\end{equation}

\begin{equation}
\mathcal{G}_2:=-2[D_t^0\omega,\mathcal{H}_{p}\frac{1}{\omega_{\alpha}}+\bar{\mathcal{H}}_{p}\frac{1}{\bar{\omega}_{\alpha}}]\partial_{\alpha}D_t^0\omega+2[\tilde{D}_t^0\tilde{\omega},\tilde{\mathcal{H}}_{p}\frac{1}{\tilde{\omega}_{\alpha}}+\bar{\tilde{\mathcal{H}}}_{p}\frac{1}{\bar{\tilde{\omega}}_{\alpha}}]\partial_{\alpha}\tilde{D}_t^0\tilde{\omega},
\end{equation}
and
\begin{equation}
\begin{split}
\mathcal{G}_3:=&\frac{1}{4\pi i}\int_{\TT} \Big(\frac{D_t^0\omega(\alpha,t)-D_t^0\omega(\beta,t)}{\sin(\frac{\pi}{2}(\omega(\alpha,t)-\omega(\beta,t)))}\Big)^2(\omega-\bar{\omega})(\beta)d\beta\\
&-\frac{1}{4\pi i}\int_{\TT}\Big(\frac{\tilde{D}_t^0\tilde{\omega}(\alpha,t)-\tilde{D}_t^0\tilde{\omega}(\beta,t)}{\sin(\frac{\pi}{2}(\tilde{\omega}(\alpha,t)-\tilde{\omega}(\beta,t)))}\Big)^2 (\tilde{\omega}-\bar{\tilde{\omega}})(\beta)d\beta,
\end{split}
\end{equation}
and
\begin{equation}
\begin{split}
\mathcal{G}_4:=&(\tilde{D}_t^0)^2-i\tilde{A}_0\partial_{\alpha})(I-\mathcal{H}_{p})\tilde{\omega}-2[\tilde{D}_t^0\tilde{\omega},\tilde{\mathcal{H}}_{p}\frac{1}{\tilde{\omega}_{\alpha}}+\bar{\tilde{\mathcal{H}}}_{p}\frac{1}{\bar{\tilde{\omega}}_{\alpha}}]\partial_{\alpha}\tilde{D}_t^0\tilde{\omega}\\
&+\frac{1}{4\pi i}\int_{\TT}\Big(\frac{\tilde{D}_t^0\tilde{\omega}(\alpha,t)-\tilde{D}_t^0\tilde{\omega}(\beta,t)}{\sin(\frac{\pi}{2}(\tilde{\omega}(\alpha,t)-\tilde{\omega}(\beta,t)))}\Big)^2 (\tilde{\omega}-\bar{\tilde{\omega}})(\beta)d\beta.
\end{split}
\end{equation}

\subsection{Governing equation for $D_t^0(I-\mathcal{H}_{p})r_0$}
We need to derive an equation to control $D_t^0 r_0$ as well. If we use the quantity $D_t^0(I-\mathcal{H}_{p})r_0$, then there will be loss of derivatives in energy estimates. So we consider instead the quantity 
$$\sigma_0:=(I-\mathcal{H}_{p})\Big\{D_t^0(I-\mathcal{H}_{p})(\omega-\alpha)-\tilde{D}_t^0(I-\tilde{\mathcal{H}}_{p})(\tilde{\omega}-\alpha)\Big\}.$$

We have 
\begin{equation}\label{okok}
\begin{split}
&((D_t^0)^2-iA_0\partial_{\alpha})(I-\mathcal{H}_{p})D_t^0(I-\mathcal{H}_{p})(\omega-\alpha)\\
=& -[(D_t^0)^2-iA_0\partial_{\alpha},\mathcal{H}_{p}]D_t^0(I-\mathcal{H}_{p})(\omega-\alpha)\\
&+(I-\mathcal{H}_{p})((D_t^0)^2-iA_0\partial_{\alpha})D_t^0(I-\mathcal{H}_{p})(\omega-\alpha)\\
=&-2[D_t^0\zeta,\mathcal{H}_{p}]\frac{\partial_{\alpha}(D_t^0)^2(I-\mathcal{H}_{p})(\omega-\alpha)}{\omega_{\alpha}}\\
&+\frac{1}{4\pi i}\int_{\TT} \Big(\frac{D_t^0\omega(\alpha)-D_t^0\omega(\beta)}{\sin(\frac{\pi}{2}(\omega(\alpha)-\omega(\beta)))}\Big)^2 \partial_{\beta}D_t^0 (I-\mathcal{H}_{p})(\omega(\beta)-\beta)d\beta\\
&+(I-\mathcal{H}_{p})[(D_t^0)^2-iA_0\partial_{\alpha}, D_t^0](I-\mathcal{H}_{p})(\omega-\alpha)+(I-\mathcal{H}_{p})D_t^0 \mathcal{G}.
\end{split}
\end{equation}
And we have
\begin{equation}\label{okok1}
\begin{split}
&((D_t^0)^2-iA_0\partial_{\alpha})(I-\mathcal{H}_p)\tilde{D}_t^0(I-\tilde{\mathcal{H}}_{p})(\tilde{\omega}-\alpha)\\
=& -[(D_t^0)^2-iA_0\partial_{\alpha},\mathcal{H}_{p}]\tilde{D}_t^0(I-\tilde{\mathcal{H}}_{p})(\tilde{\omega}-\alpha)\\
&+(I-\mathcal{H}_{p})((D_t^0)^2-iA_0\partial_{\alpha})\tilde{D}_t^0(I-\tilde{\mathcal{H}}_{p})(\tilde{\omega}-\alpha)\\
=&-2[D_t^0\omega,\mathcal{H}_{p}]\frac{\partial_{\alpha}D_t^0 \tilde{D}_t^0(I-\tilde{\mathcal{H}}_{p})(\tilde{\omega}-\alpha)}{\omega_{\alpha}}\\
&+\frac{1}{4\pi i}\int_{\TT} \Big(\frac{D_t^0\omega(\alpha)-D_t^0\omega(\beta)}{\sin(\frac{\pi}{2}(\omega(\alpha)-\omega(\beta)))}\Big)^2 \partial_{\beta}\tilde{D}_t^0 (I-\tilde{\mathcal{H}}_{p})(\tilde{\omega}-\beta)(\beta)d\beta\\
&+(I-\mathcal{H}_{p})((D_t^0)^2-iA_0\partial_{\alpha}) \tilde{D}_t^0(I-\tilde{\mathcal{H}}_{p})(\tilde{\omega}-\alpha)\\
=& -2[D_t^0\omega,\mathcal{H}_{p}]\frac{\partial_{\alpha}D_t^0 \tilde{D}_t^0(I-\tilde{\mathcal{H}}_{p})(\tilde{\omega}-\alpha)}{\omega_{\alpha}}\\
&+\frac{1}{4\pi i}\int_{\TT} \Big(\frac{D_t^0\omega(\alpha)-D_t^0\omega(\beta)}{\sin(\frac{\pi}{2}(\omega(\alpha)-\omega(\beta)))}\Big)^2 \partial_{\beta}\tilde{D}_t^0 (I-\tilde{\mathcal{H}}_{p})(\tilde{\omega}-\beta)(\beta)d\beta\\
&+(I-\mathcal{H}_{p})((\tilde{D}_t^0)^2-i\tilde{A}_0\partial_{\alpha}) \tilde{D}_t^0(I-\tilde{\mathcal{H}}_{p})(\tilde{\omega}-\alpha)\\
&+(I-\mathcal{H}_{p})((D_t^0)^2-iA_0\partial_{\alpha}-(\tilde{D}_t^0)^2+i\tilde{A}_0\partial_{\alpha})\tilde{D}_t^0(I-\tilde{\mathcal{H}}_{p})(\tilde{\omega}-\alpha).
\end{split}
\end{equation}
Use (\ref{okok}) and (\ref{okok1}), we have 
\begin{equation}
((D_t^0)^2-iA_0\partial_{\alpha})\sigma_0=\text{fourth~order}.
\end{equation}
Use alsmost the derivation and the estimates as that in N. Totz and S. Wu's work (\cite{Totz2012}), we obtain the following theorem.
\begin{thm}\label{errorperiodic}
Let $s'\geq 6$. Let $\tilde{\omega}, \tilde{b}_0, \tilde{A}_0$ be given as in Section 5.  There is compatible initial data $$(\omega(0), D_t^0\omega(0), (D_t^0)^2\omega(0))$$ to water waves system (\ref{special}) such that 
\begin{equation}
\begin{split}
&\norm{(\omega_{\alpha}(0)-1, D_t^0\omega(0), (D_t^0)^2\omega(0))-(\tilde{\omega}_{\alpha}-1, \tilde{D}_t^0\tilde{\omega}(t), (\tilde{D}_t^0)^2\tilde{\omega}(t))}_{\mathcal{H}^{s'}(\TT)}\leq C \epsilon^2,
\end{split}
\end{equation}
where $\mathcal{H}^s(\mathbb{T}):=H^{s'}(\mathbb{T})\times H^{s'+1/2}(\mathbb{T})\times H^{s'}(\mathbb{T})$. 
 Moreover, for all such initial data, there is a a unique solution $(\omega, D_t^0\omega, b_0, A_0)$ to (\ref{special}) on time interval $[0, C_0\epsilon^{-2}]$ for some $C_0=C_0(s)>0$ such that 
\begin{equation}
\begin{split}
&\sup_{t\in [0, C_0\epsilon^{-2}]}\norm{(\omega_{\alpha}-1, D_t^0\omega(t), (D_t^0)^2\omega(t))-(\tilde{\omega}_{\alpha}-1, \tilde{D}_t^0\tilde{\omega}(t), (\tilde{D}_t^0)^2\tilde{\omega}(t))}_{\mathcal{H}^{s'}(\mathbb{T})}\leq C \epsilon^{3/2}.
\end{split}
\end{equation}
Also, there is some constant $C=C(s)$ such that
\begin{equation}
\sup_{t\in [0, C_0\epsilon^{-2}]}\Big(\norm{b_0-\tilde{b}_0}_{H^{s'}(\mathbb{T})}+\norm{A_0-\tilde{A}_0}_{H^{s'}(\mathbb{T})}+\norm{D_t^0b_0-\tilde{D}_t^0\tilde{b}_0}_{H^{s'}(\TT)}\Big)\leq C\epsilon^{5/2},.
\end{equation}
\begin{equation}\label{A0difference}
\sup_{t\in [0, C_0\epsilon^{-2}]}\Big(\norm{b_0-\tilde{b}_0}_{W^{s'-1,\infty}}+\norm{A_0-\tilde{A}_0}_{W^{s'-1,\infty}}+\norm{D_t^0b_0-\tilde{D}_t^0\tilde{b}_0}_{W^{s'-1,\infty}}\Big)\leq C\epsilon^{3}.
\end{equation}
\end{thm}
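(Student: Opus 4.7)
The plan is to follow S. Wu's energy method as in \cite{Wu2009} and its adaptation to the modulation approximation in \cite{Totz2012}, now in the periodic setting with the wave packet background provided by Section 5. First I would construct compatible initial data $(\omega(0),D_t^0\omega(0),(D_t^0)^2\omega(0))$ for (\ref{special}) by starting from $(\tilde{\omega}(0),\tilde{D}_t^0\tilde{\omega}(0),(\tilde{D}_t^0)^2\tilde{\omega}(0))$ and adding a correction of size $O(\epsilon^2)$ in $\mathcal{H}^{s'}(\mathbb{T})$ in order to enforce (i) that $\bar{\omega}(0)-\alpha$ and $D_t^0\bar{\omega}(0)$ are exact boundary values of elements of $\mathcal{H}ol_{\mathcal{P}}(\Omega^0(0))$, and (ii) the algebraic compatibility identity on $A_0(0)$ required by Theorem \ref{localperiodic}. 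Because the construction of $\tilde{\omega}$ in Section 5 was arranged so that these two conditions already hold up to $O(\epsilon^4)$, such a correction exists at the claimed size; the existence part of the theorem then follows from Theorem \ref{longperiodic}, which provides a solution on $[0,C_0\epsilon^{-2}]$ of size $O(\epsilon)$ in $\mathcal{H}^{s'}(\mathbb{T})$.

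The core of the argument is the energy estimate for $r_0$. I would work with $(I-\mathcal{H}_p)r_0$ and the quantity
\begin{equation*}
\sigma_0 := (I-\mathcal{H}_p)\bigl\{D_t^0(I-\mathcal{H}_p)(\omega-\alpha)-\tilde{D}_t^0(I-\tilde{\mathcal{H}}_p)(\tilde{\omega}-\alpha)\bigr\},
\end{equation*}
introduced in the excerpt, whose Sobolev norms control $\|\partial_\alpha r_0\|_{H^{s'}}+\|D_t^0 r_0\|_{H^{s'+1/2}}+\|(D_t^0)^2 r_0\|_{H^{s'}}$ up to manifestly lower-order terms (via the chord-arc bound on $\omega,\tilde\omega$ and the boundedness of $(I-\mathcal{K}_p)^{-1}$ from Lemma \ref{layer}). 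The point is that $\mathcal{P}_0(I-\mathcal{H}_p)r_0=\mathcal{G}_1+\mathcal{G}_2+\mathcal{G}_3+\mathcal{G}_4$ is fourth order: the three difference terms $\mathcal{G}_1,\mathcal{G}_2,\mathcal{G}_3$ are cubic quantities evaluated on $(\omega,D_t^0\omega,\mathcal{H}_p)$ minus the same quantities on $(\tilde{\omega},\tilde{D}_t^0\tilde{\omega},\tilde{\mathcal{H}}_p)$, so Proposition \ref{singularperiodic} together with the a priori $O(\epsilon)$ bounds on the exact and approximate solutions factors each of them as $\epsilon^2\|r_0\|_{\mathcal{H}^{s'}}+\text{(lower order in }r_0)$; and $\mathcal{G}_4$ is the residual obtained by substituting the wave packet into the exact cubic-truncated identity, which the asymptotic expansion of Section 5 was designed to make $O(\epsilon^{7/2})$ in $H^{s'}(\mathbb{T})$ by virtue of the NLS satisfied by $B$, the choice of $\zeta^{(2)},\zeta^{(3)},\tilde{b}_0$, and Lemma \ref{regularitytodecay}. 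An entirely parallel analysis gives $\mathcal{P}_0\sigma_0=\text{fourth order}$, where the additional commutator $[\mathcal{P}_0,D_t^0](I-\mathcal{H}_p)(\omega-\alpha)$ is handled by (\ref{wuqiong}). Then a standard energy identity using the positivity $A_0\ge 1-C\epsilon^2>0$ and Gronwall's inequality over $[0,C_0\epsilon^{-2}]$ yields
\begin{equation*}
\|r_0(t)\|_{\mathcal{H}^{s'}(\mathbb{T})}\lesssim \epsilon^2+\epsilon^{-2}\cdot\epsilon^{7/2}\lesssim \epsilon^{3/2},
\end{equation*}
which is the desired bound.

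The estimates on $b_0-\tilde{b}_0$, $A_0-\tilde{A}_0$, and $D_t^0 b_0-\tilde{D}_t^0\tilde{b}_0$ follow by subtracting (\ref{bb0}) and (\ref{AA0}) from the residual-adjusted identities satisfied by $\tilde{b}_0,\tilde{A}_0$ (arising from the Section 5 construction), inverting $I-\mathcal{K}_p^{\ast}$ via Lemma \ref{layer}, and applying Proposition \ref{singularperiodic}; each commutator on the right-hand side carries an extra factor of $\epsilon$ from $\bar\omega_\alpha-1$ or $\bar{\tilde{\omega}}_\alpha-1$, producing the $\epsilon^{5/2}$ Sobolev bound, and Sobolev embedding $H^{s'}(\mathbb{T})\hookrightarrow W^{s'-1,\infty}(\mathbb{T})$, combined with a direct analysis of the fourth-order source using the smoothness of $\tilde\omega\in C^\infty(\mathbb{T})$, upgrades this to the $\epsilon^3$ estimate (\ref{A0difference}). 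The main obstacle will be the careful bookkeeping of the residuals $\mathcal{G}_4$ and the analogous residual in $\mathcal{P}_0\sigma_0$: one must verify that every term surviving the cascade of cancellations built into $\zeta^{(1)},\zeta^{(2)},\zeta^{(3)}$ is genuinely $O(\epsilon^{7/2})$ rather than merely $O(\epsilon^{5/2})$. This requires exploiting both the NLS equation for $B$ (to eliminate the would-be secular terms identified at the $O(\epsilon^3)$ level in Section 5) and Lemma \ref{regularitytodecay} to show that $(I-H_0)$ acting on bounded oscillatory symbols with slowly-varying amplitudes gains additional powers of $\epsilon$ beyond naive symbol counting.
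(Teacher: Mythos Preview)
Your proposal is correct and follows essentially the same route as the paper, which itself omits the details and simply refers to the Totz--Wu argument \cite{Totz2012} after setting up the decomposition $\mathcal{G}=\mathcal{G}_1+\mathcal{G}_2+\mathcal{G}_3+\mathcal{G}_4$ and the quantity $\sigma_0$. Your outline of the energy estimate and the Gronwall closure is exactly what the reference provides.

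One point deserves a sharper statement. Your explanation of the $\epsilon^3$ bound in (\ref{A0difference}) via ``Sobolev embedding \ldots\ combined with a direct analysis'' is not quite right as written: Sobolev embedding $H^{s'}(\mathbb{T})\hookrightarrow W^{s'-1,\infty}(\mathbb{T})$ can never improve the power of $\epsilon$, so it cannot upgrade $\epsilon^{5/2}$ to $\epsilon^3$. The actual mechanism is that in the purely periodic sector the amplitude $B_0$ is spatially constant, so there is no slowly varying factor $g(\epsilon\alpha)$ and hence no $\epsilon^{-1/2}$ loss coming from $\|g(\epsilon\,\cdot)\|_{L^2(\mathbb{R})}\sim\epsilon^{-1/2}\|g\|_{L^2}$; the residual $\mathcal{G}_4$ (and its analogue for $\sigma_0$) is therefore genuinely $O(\epsilon^4)$ in $H^{s'}(\mathbb{T})$, not merely $O(\epsilon^{7/2})$. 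This yields $\|r_0\|_{\mathcal{H}^{s'}(\mathbb{T})}\lesssim\epsilon^2$ directly, and then the commutator structure of (\ref{bb0})--(\ref{AA0}) gives an extra factor of $\epsilon$ to produce $\epsilon^3$ in either norm. For the same reason, your appeal to Lemma~\ref{regularitytodecay} at the end is unnecessary in the periodic case: when the amplitude is constant, $(I-H_0)e^{ik\alpha}$ is computed exactly with no error term, so there is no ``gain of powers of $\epsilon$'' to track---the cancellations are exact.
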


\section{Governing equation for $r_1$} \label{govern}
In this section, we derive a governing equation for the remainder term $r_1$. Because we need to obtain long time energy estimates for the error term (we'll prove that the error term $r_1$ has norm $O(\epsilon^{3/2})$ in Sobolev space), the nonlinearities of the equations governing $r_1$ has to be at least of fourth order. Since $(D_t^2-iA\partial_{\alpha})r_1$ is not obviously fourth order, we find some equivalent quantity of $r_1$ and consider its water wave equation. 
\subsection{Governing equation for $r_1$}  Recall that
$$\lambda:=(I-\mathcal{H}_{\zeta})\xi-(I-\mathcal{H}_{\omega})\xi_0.$$
We have shown that $\mathcal{P}\lambda$ consists of cubic and higher order nonlinearities. 
Define
$$\tilde{\lambda}:=(I-\mathcal{H}_{\tilde{\zeta}})\tilde{\xi}-(I-\mathcal{H}_{\tilde{\omega}})\tilde{\xi}_0.$$
Because $\tilde{\lambda}$ approximates $\lambda$ to $O(\epsilon^4)$, we expect $$\mathcal{P}\tilde{\lambda}=O(\epsilon^3),  \quad and \quad \mathcal{P}(\lambda-\tilde{\lambda})=O(\epsilon^4).$$
Also,  as we can see later, $\lambda-\tilde{\lambda}$ is equivalent to $r_1$ in appropriate sense. So it's natural to consider $\mathcal{P}(\lambda-\tilde{\lambda})$.  However, $\lambda-\tilde{\lambda}$ is not the boundary value of a holomorhpic function in $\Omega(t)^c$. There is the trouble of losing derivatives in energy estimates if we use $\lambda-\tilde{\lambda}$. To resolve this problem, we consider the quantity $\rho_1$ define by
\begin{equation}
\rho_1:=(I-\mathcal{H}_{\zeta})(\lambda-\tilde{\lambda}).
\end{equation}
Then $\rho_1$ is holomorphic in $\Omega(t)^c$.

We show that $\mathcal{P}\rho_1$ consists of fourth and higher order terms. The idea is to take advantage of the facts that $\mathcal{P}[(I-\mathcal{H}_{\zeta})(\zeta-\alpha)-(I-\mathcal{H}_{\omega})(\omega-\alpha)]$ is approximated by $\tilde{\mathcal{P}}[(I-\mathcal{H}_{\tilde{\zeta}})(\tilde{\zeta}-\alpha)-(I-\mathcal{H}_{\tilde{\omega}})(\tilde{\omega}-\alpha)]$ to $O(\epsilon^4)$, so their difference would be of order $O(\epsilon^4)$.

To be precise, because $\tilde{\zeta}, \tilde{b}$ approximate $\zeta, b$ to the order of $O(\epsilon^4)$, we have $(\tilde{\zeta}, \tilde{D}_t\tilde{\zeta}, \tilde{b}, \tilde{A})$ satisfy (\ref{lambda}) to the order of $O(\epsilon^4)$, i.e., 
\begin{align*}
&(\tilde{D}_t^2-i\tilde{A}\partial_{\alpha})[(I-\mathcal{H}_{\tilde{\zeta}})(\tilde{\zeta}-\alpha)-(I-\mathcal{H}_{\tilde{\omega}})(\tilde{\omega}-\alpha)]\\
=& -2[\tilde{D}_t\tilde{\zeta},\mathcal{H}_{\tilde{\zeta}}\frac{1}{\tilde{\zeta}_{\alpha}}+\bar{\mathcal{H}}_{\tilde{\zeta}}\frac{1}{\bar{\tilde{\zeta}}_{\alpha}}]\partial_{\alpha}\tilde{D}_t\tilde{\zeta}+2[\tilde{D}_t^0\tilde{\omega},\mathcal{H}_{\tilde{\omega}}\frac{1}{\tilde{\omega}_{\alpha}}+\bar{\mathcal{H}}_{\tilde{\omega}}\frac{1}{\bar{\tilde{\omega}}_{\alpha}}]\partial_{\alpha}\tilde{D}_t^0\tilde{\omega}\\
&+\frac{1}{\pi i}\int \Big(\frac{\tilde{D}_t\tilde{\zeta}(\alpha,t)-\tilde{D}_t\tilde{\zeta}(\beta,t)}{\tilde{\zeta}(\alpha,t)-\tilde{\zeta}(\beta,t)}\Big)^2(\tilde{\zeta}-\bar{\tilde{\zeta}})_{\beta}d\beta\\
&-\frac{1}{\pi i}\int \Big(\frac{\tilde{D}_t^0\tilde{\omega}(\alpha,t)-\tilde{D}_t^0\tilde{\omega}(\beta,t)}{\tilde{\omega}(\alpha,t)-\tilde{\omega}(\beta,t)}\Big)^2(\tilde{\omega}-\bar{\tilde{\omega}})_{\beta}d\beta\\
&+ \tilde{D}_t\tilde{b}_1 \partial_{\alpha}(I-\mathcal{H}_{\tilde{\omega}})(\tilde{\omega}-\alpha)+\tilde{b}_1\tilde{ D}_t\partial_{\alpha}(I-\mathcal{H}_{\tilde{\omega}})(\tilde{\omega}-\alpha)\\
&+\tilde{b}_1\partial_{\alpha}\tilde{D}_t^0(I-\mathcal{H}_{\omega})(\tilde{\omega}-\alpha)-i\tilde{A}_1\partial_{\alpha}(I-\mathcal{H}_{\tilde{\omega}})(\tilde{\omega}-\alpha)+\epsilon^4\mathcal{R},
\end{align*}
where $\epsilon^4\mathcal{R}$ is a known function (in terms of $B(X,T)$) which satisfies 
$$\norm{\epsilon^4 \mathcal{R}}_{H^s}\leq C\epsilon^{7/2}.$$

\begin{remark}
Throughout this paper, we'll use the notation $\epsilon^4\mathcal{R}$ frequently (and sometimes $\epsilon^4\mathcal{R}_1, \epsilon^4\mathcal{R}_2$). It might represent different quantities. However, it always represents a quantity which is in terms of $B(X,T)$ and $\epsilon$, and satisfies the estimate
$$\norm{\epsilon^4 \mathcal{R}}_{H^{s+7}}\leq C\epsilon^{7/2}.$$
\end{remark}

So we have 
\begin{equation}\label{lamlam}
\begin{split}
&(D_t^2-iA\partial_{\alpha})(\lambda-\tilde{\lambda})
=\mathcal{P}[(I-\mathcal{H}_{\zeta})(\zeta-\alpha)-(I-\mathcal{H}_{\omega})(\omega-\alpha)]\\
&-\tilde{\mathcal{P}}[(I-\mathcal{H}_{\tilde{\zeta}})(\tilde{\zeta}-\alpha)-(I-\mathcal{H}_{\tilde{\omega}})(\tilde{\omega}-\alpha)]\\
&+(\mathcal{P}-\tilde{\mathcal{P}})[(I-\mathcal{H}_{\tilde{\zeta}})(\tilde{\zeta}-\alpha)-(I-\mathcal{H}_{\tilde{\omega}})(\tilde{\omega}-\alpha)]\\
:=& \sum_{m=1}^5 \mathcal{R}_{1m}.
\end{split}
\end{equation}
where
\begin{equation}
\begin{split}
\mathcal{R}_{11}=&  -2[D_t\zeta,\mathcal{H}_{\zeta}\frac{1}{\zeta_{\alpha}}+\bar{\mathcal{H}}_{\zeta}\frac{1}{\bar{\zeta}_{\alpha}}]\partial_{\alpha}D_t\zeta+2[D_t^0\omega,\mathcal{H}_{\omega}\frac{1}{\omega_{\alpha}}+\bar{\mathcal{H}}_{\omega}\frac{1}{\bar{\omega}_{\alpha}}]\partial_{\alpha}D_t^0\omega\\
&-2[\tilde{D}_t\tilde{\zeta},\mathcal{H}_{\tilde{\zeta}}\frac{1}{\tilde{\zeta}_{\alpha}}+\bar{\mathcal{H}}_{\tilde{\zeta}}\frac{1}{\bar{\tilde{\zeta}}_{\alpha}}]\partial_{\alpha}\tilde{D}_t\tilde{\zeta}+2[\tilde{D}_t^0\tilde{\omega},\mathcal{H}_{\tilde{\omega}}\frac{1}{\tilde{\omega}_{\alpha}}+\bar{\mathcal{H}}_{\tilde{\omega}}\frac{1}{\bar{\tilde{\omega}}_{\alpha}}]\partial_{\alpha}\tilde{D}_t^0\tilde{\omega}.
\end{split}
\end{equation}

\begin{equation}
\begin{split}
\mathcal{R}_{12}=& \frac{1}{\pi i}\int \Big(\frac{D_t\zeta(\alpha,t)-D_t\zeta(\beta,t)}{\zeta(\alpha,t)-\zeta(\beta,t)}\Big)^2(\zeta-\bar{\zeta})_{\beta}d\beta
-\frac{1}{\pi i}\int \Big(\frac{D_t^0\omega(\alpha,t)-D_t^0\omega(\beta,t)}{\omega(\alpha,t)-\omega(\beta,t)}\Big)^2(\omega-\bar{\omega})_{\beta}d\beta\\
&-\Big\{\frac{1}{\pi i}\int \Big(\frac{\tilde{D}_t\tilde{\zeta}(\alpha,t)-\tilde{D}_t\tilde{\zeta}(\beta,t)}{\tilde{\zeta}(\alpha,t)-\tilde{\zeta}(\beta,t)}\Big)^2(\tilde{\zeta}-\bar{\tilde{\zeta}})_{\beta}d\beta-\frac{1}{\pi i}\int \Big(\frac{\tilde{D}_t^0\tilde{\omega}(\alpha,t)-\tilde{D}_t^0\tilde{\omega}(\beta,t)}{\tilde{\omega}(\alpha,t)-\tilde{\omega}(\beta,t)}\Big)^2(\tilde{\omega}-\bar{\tilde{\omega}})_{\beta}d\beta\Big\}
\end{split}
\end{equation}

\begin{equation}
\begin{split}
\mathcal{R}_{13}=&  D_tb_1 \partial_{\alpha}(I-\mathcal{H}_{\omega})(\omega-\alpha)+b_1 D_t\partial_{\alpha}(I-\mathcal{H}_{\omega})(\omega-\alpha)\\
&-\Big\{ \tilde{D}_t\tilde{b}_1 \partial_{\alpha}(I-\mathcal{H}_{\tilde{\omega}})(\tilde{\omega}-\alpha)+\tilde{b}_1\tilde{ D}_t\partial_{\alpha}(I-\mathcal{H}_{\tilde{\omega}})(\tilde{\omega}-\alpha)\Big\}
\end{split}
\end{equation}

\begin{equation}
\begin{split}
\mathcal{R}_{14}=&b_1\partial_{\alpha}D_t^0(I-\mathcal{H}_{\omega})(\omega-\alpha)-iA_1\partial_{\alpha}(I-\mathcal{H}_{\omega})(\omega-\alpha)\\
&-\Big\{\tilde{b}_1\partial_{\alpha}\tilde{D}_t^0(I-\mathcal{H}_{\omega})(\tilde{\omega}-\alpha)-i\tilde{A}_1\partial_{\alpha}(I-\mathcal{H}_{\tilde{\omega}})(\tilde{\omega}-\alpha)\Big\}
\end{split}
\end{equation}

\begin{equation}
\mathcal{R}_{15}=\epsilon^4\mathcal{R}.
\end{equation}
Denote $\mathcal{R}_{16}$ and $\mathcal{R}_{17}$ as follows:
\begin{equation}
\mathcal{R}_{16}=-2[D_t\zeta,\mathcal{H}_{\zeta}]\frac{\partial_{\alpha}D_t(\lambda-\tilde{\lambda})}{\zeta_{\alpha}}.
\end{equation}

\begin{equation}
\mathcal{R}_{17}=\frac{1}{\pi i}\int \Big(\frac{D_t\zeta(\alpha)-D_t\zeta(\beta)}{\zeta(\alpha)-\zeta(\beta)}\Big)^2\partial_{\beta}(\lambda-\tilde{\lambda})d\beta.
\end{equation}
Then by (\ref{lamlam}), lemma \ref{lemmaseven}, we have 
\begin{equation}\label{rho1equation}
\begin{split}
\mathcal{P}\rho_1=& \mathcal{P}(I-\mathcal{H}_{\zeta})(\lambda-\tilde{\lambda})
= (I-\mathcal{H}_{\zeta})\mathcal{P}(\lambda-\tilde{\lambda})-[\mathcal{P},\mathcal{H}_{\zeta}](\lambda-\tilde{\lambda})\\
=& (I-\mathcal{H}_{\zeta})\sum_{m=1}^5 \mathcal{R}_{1m}+\mathcal{R}_{16}+\mathcal{R}_{17}.
\end{split}
\end{equation}
Note that $\mathcal{R}_{16}$ is quadratic and $\mathcal{R}_{17}$ is cubic, so we need to explore the cancellations hidden behind when we estimate these terms.

\subsection{Governing equation for time evolution of $r_1$} We need to control $D_t r_1$ as well.  Denote
$$\delta:=D_t(I-\mathcal{H}_{\zeta})(\zeta-\alpha)-D_t^0(I-\mathcal{H}_{\omega})(\omega-\alpha).$$
We know that $\mathcal{P}\delta$ consists of cubic and higher order terms. 
Denote
$$\tilde{\delta}:=\tilde{D}_t(I-\mathcal{H}_{\tilde{\zeta}})(\tilde{\zeta}-\alpha)-\tilde{D}_t^0(I-\mathcal{H}_{\tilde{\omega}})(\tilde{\omega}-\alpha).$$
Then because $\tilde{\mathcal{P}}\tilde{\delta}$ approximates $\mathcal{P}\delta$ to $O(\epsilon^4)$, and $\mathcal{P}-\tilde{\mathcal{P}}=O(\epsilon^3)$,  we expect $\mathcal{P}(\delta-\tilde{\delta})=O(\epsilon^4)$. However, $\delta-\tilde{\delta}$ is not holomorphic in $\Omega(t)^c$, which would lose derivatives in energy estimates. So we consider the quantity $\sigma_1:=\delta-\tilde{\delta}$.

  By direct calculation, we have 

\begin{equation}
\begin{split}
\mathcal{P} \delta
=& D_tG-D_t^0G_0+(\mathcal{P}-\mathcal{P}_0)D_t^0(I-\mathcal{H}_{\omega})(\omega-\alpha)\\
&+[\mathcal{P}, D_t](I-\mathcal{H}_{\zeta})(\zeta-\alpha)-[\mathcal{P}_0, D_t^0](I-\mathcal{H}_{\omega})(\omega-\alpha).
\end{split}
\end{equation}

\noindent and
\begin{equation}
\begin{split}
\tilde{\mathcal{P}}\tilde{\delta}
=& \tilde{D}_t\tilde{G}-\tilde{D}_t^0\tilde{G}_0+(\tilde{\mathcal{P}}-\tilde{\mathcal{P}}_0)\tilde{D}_t^0(I-\mathcal{H}_{\tilde{\omega}})(\tilde{\omega}-\alpha)+\epsilon^4\mathcal{R}\\
&+[\tilde{\mathcal{P}},\tilde{D}_t](I-\mathcal{H}_{\tilde{\zeta}})(\tilde{\zeta}-\alpha)-[\tilde{\mathcal{P}}_0, \tilde{D}_t^0](I-\mathcal{H}_{\tilde{\omega}})(\tilde{\omega}-\alpha)
\end{split}
\end{equation}
So we have 
\begin{equation}
\begin{split}
\mathcal{P}(\delta-\tilde{\delta})=&\mathcal{P}\delta-\tilde{\mathcal{P}}\tilde{\delta}
+(\mathcal{P}-\tilde{\mathcal{P}})\tilde{\delta}+\epsilon^4\mathcal{R}\\
=& D_tG-D_t^0G_0-\tilde{D}_t\tilde{G}+\tilde{D}_t^0\tilde{G}_0+(\mathcal{P}-\mathcal{P}_0)D_t^0(I-\mathcal{H}_{\omega})(\omega-\alpha)\\
&-(\tilde{\mathcal{P}}-\tilde{\mathcal{P}}_0)\tilde{D}_t^0(I-\mathcal{\tilde{\omega}})(\tilde{\omega}-\alpha)+[\mathcal{P}, D_t](I-\mathcal{H}_{\zeta})(\zeta-\alpha)\\
&-[\mathcal{P}_0, D_t^0](I-\mathcal{H}_{\omega})(\omega-\alpha)-[\tilde{\mathcal{P}},\tilde{D}_t](I-\mathcal{H}_{\tilde{\zeta}})(\tilde{\zeta}-\alpha)\\
&+[\tilde{\mathcal{P}}_0, \tilde{D}_t^0](I-\mathcal{H}_{\tilde{\omega}})(\tilde{\omega}-\alpha)+\epsilon^4\mathcal{R}\\
:=&\mathcal{S}_1.
\end{split}
\end{equation}
Then $\mathcal{S}_1$ is fourth order. So we have 
\begin{equation}\label{sigma1equation}
\begin{split}
\mathcal{P}\sigma_1=&\mathcal{P}(I-\mathcal{H}_{\zeta})(\delta-\tilde{\delta})\\
=&(I-\mathcal{H}_{\zeta})\mathcal{P}(\delta-\tilde{\delta})-[\mathcal{P},\mathcal{H}](\delta-\tilde{\delta})\\
=&(I-\mathcal{H}_{\zeta})\mathcal{S}_1-2[D_t\zeta,\mathcal{H}_{\zeta}]\frac{\partial_{\alpha}D_t(\delta-\tilde{\delta})}{\zeta_{\alpha}}\\
&+\frac{1}{\pi i}\int \Big(\frac{D_t\zeta(\alpha)-D_t\zeta(\beta)}{\zeta(\alpha)-\zeta(\beta)}\Big)^2\partial_{\beta}(\delta-\tilde{\delta})d\beta.
\end{split}
\end{equation}
We use equations (\ref{rho1equation}) and (\ref{sigma1equation}) to study the evolution of  $r_1$.  A first step is to construct an appropriate energy which controls certain norm of $r_1$, and then show that this control exists for a sufficiently long time.

\subsection{Construction of energy} In this subsection, we construct energy for the water wave equations (\ref{rho1equation}) and (\ref{sigma1equation}). The energy is essentially the same as the energy used by S. Wu in \cite{Wu2009} and the energy by N. Totz and S. Wu in \cite{Totz2012}.  

First, let's recall the basic energy estimates by S. Wu(\cite{Wu2009}):
\begin{lemma}[Basic lemma]\label{basic}
Let $\Theta$ satisfies the equation
$$(D_t^2-iA\partial_{\alpha})\Theta=G$$ and $\Theta$ is smooth and decays fast at infinity. Let
\begin{equation}
E_0(t):=\int\frac{1}{A}|D_t\Theta(\alpha,t)|^2+i\Theta(\alpha,t)\partial_{\alpha}\bar{\Theta}(\alpha,t)d\alpha.
\end{equation}
Then 
\begin{equation}
\frac{dE_0}{dt}=\int \frac{2}{A}\Re(D_t\Theta \bar{G})-\frac{1}{A} \frac{a_t}{a}\circ \kappa^{-1}|D_t\Theta|^2 d\alpha.
\end{equation}
Moreover, if $\Theta$ is the boundary value of a holomorphic function in $\Omega(t)^c$, then 
\begin{equation}
\int i\Theta\partial_{\alpha}\bar{\Theta}d\alpha=-\int i\bar{\Theta}\partial_{\alpha}\Theta d\alpha \geq 0.
\end{equation}
\end{lemma}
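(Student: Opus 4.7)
The plan is to differentiate $E_0(t)$ directly in time and exploit the evolution equation $D_t^2\Theta=iA\partial_\alpha\Theta+G$ to reveal the structure claimed. The basic bookkeeping tool is the transport identity
\[
\frac{d}{dt}\int f\,d\alpha=\int\partial_t f\,d\alpha=\int(D_t f+b_\alpha f)\,d\alpha,
\]
which comes from $\partial_t=D_t-b\partial_\alpha$ together with integration by parts (justified since $\Theta$ and its derivatives decay at infinity). I will apply this to each of the two pieces of $E_0$.

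For the kinetic piece $\int\tfrac{1}{A}|D_t\Theta|^2\,d\alpha$, I would compute $D_t|D_t\Theta|^2=2\Re(\overline{D_t\Theta}\,D_t^2\Theta)$ and substitute $D_t^2\Theta=iA\partial_\alpha\Theta+G$ to obtain $2A\Re(i\overline{D_t\Theta}\partial_\alpha\Theta)+2\Re(\overline{D_t\Theta}\,G)$. Dividing by $A$ produces the forcing term $\tfrac{2}{A}\Re(\overline{D_t\Theta}\,G)$ plus a "virial" piece $2\Re(i\overline{D_t\Theta}\partial_\alpha\Theta)$. The leftover contribution from differentiating $1/A$, combined with the $b_\alpha|D_t\Theta|^2/A$ produced by the transport identity, collects into $-\tfrac{|D_t\Theta|^2}{A}\bigl(\tfrac{D_tA}{A}-b_\alpha\bigr)$. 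Here I invoke the identity $\tfrac{D_tA}{A}=\tfrac{a_t}{a}\circ\kappa^{-1}+b_\alpha$, which follows by time-differentiating the relation $A\circ\kappa=a\kappa_\alpha$; this converts the remainder into exactly $-\tfrac{1}{A}\tfrac{a_t}{a}\circ\kappa^{-1}|D_t\Theta|^2$, which is the second term in the claim.

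For the potential piece $\int i\Theta\partial_\alpha\bar\Theta\,d\alpha$, I differentiate under the integral and integrate the resulting $i\Theta\partial_\alpha\partial_t\bar\Theta$ by parts in $\alpha$ to get $\int i(\partial_t\Theta\partial_\alpha\bar\Theta-\partial_\alpha\Theta\partial_t\bar\Theta)\,d\alpha$. Replacing $\partial_t$ by $D_t-b\partial_\alpha$ causes the $b$-terms to cancel (they are symmetric), leaving $2\Re\int iD_t\Theta\partial_\alpha\bar\Theta\,d\alpha=-2\Re\int i\overline{D_t\Theta}\partial_\alpha\Theta\,d\alpha$. This is precisely the negative of the virial term produced above, so the two cancel and the stated formula for $dE_0/dt$ drops out. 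The auxiliary identity $\int i\Theta\partial_\alpha\bar\Theta\,d\alpha=-\int i\bar\Theta\partial_\alpha\Theta\,d\alpha$ is then just an integration by parts.

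For the positivity, writing $\Theta=\Phi\circ\zeta$ with $\Phi$ holomorphic in $\Omega(t)^c$, the integral pulls back to the contour integral $i\int_{\Sigma(t)}\Phi\,d\bar\Phi$ along the interface. An application of Green's theorem (with the orientation that puts $\Omega(t)^c$ on the correct side of the parametrization $\alpha\mapsto\zeta(\alpha,t)$) gives $\int_{\Sigma(t)}\Phi\,d\bar\Phi=-2i\iint_{\Omega(t)^c}|\Phi'(z)|^2\,dA$, so the desired quantity equals $2\iint_{\Omega(t)^c}|\Phi'|^2\,dA\geq 0$. The main obstacle is not conceptual but combinatorial: keeping track of signs so that the virial terms from the two pieces of $E_0$ cancel, and correctly invoking the identity relating $D_tA$, $b_\alpha$, and $\tfrac{a_t}{a}\circ\kappa^{-1}$; both points are standard in S.~Wu's energy framework, and the decay of $\Theta$ removes any worry about boundary contributions in the integrations by parts.
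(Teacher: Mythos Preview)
Your proof is correct and follows the standard argument from S.~Wu's energy framework; the paper itself does not supply a proof but simply recalls the statement from \cite{Wu2009}, so there is nothing to compare against beyond noting that your computation matches that source. The key bookkeeping identities you invoke---the transport relation $\frac{d}{dt}\int f\,d\alpha=\int(D_tf+b_\alpha f)\,d\alpha$ and $\frac{D_tA}{A}-b_\alpha=\frac{a_t}{a}\circ\kappa^{-1}$---are exactly the ones underlying Wu's original derivation, and your handling of the orientation in the Green's theorem step is consistent with the positivity claimed.
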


\vspace*{2ex}

\noindent \textbf{Notations:} Denote
\begin{equation}
\rho_1^{(n)}:=\partial_{\alpha}^n\rho_1,\quad \quad \quad \sigma_1^{(n)}:=\partial_{\alpha}^n \sigma_1.
\end{equation}
Because $\rho_1^{(n)}$ and $\sigma_1^{(n)}$ are not necessarily holomorphic in $\Omega(t)^c$, if we decompose them as
\begin{equation}\label{decompose}
\begin{split}
\rho_1^n=&\frac{1}{2}(I-\mathcal{H}_{\zeta})\rho_1^n+\frac{1}{2}(I+\mathcal{H}_{\zeta})\rho_1^{(n)}:=\phi_1^{(n)}+\mathcal{R}_1^{(n)}\\
\sigma_1^{(n)}=&\frac{1}{2}(I-\mathcal{H}_{\zeta})\sigma_1^{(n)}+\frac{1}{2}(I+\mathcal{H}_{\zeta})\sigma_1^n:=\Psi_1^{(n)}+\mathcal{S}_1^{(n)}.
\end{split}
\end{equation}
and define
\begin{equation}
\mathcal{E}_{n}(t):=\int \frac{1}{A}|D_t\rho_1^{(n)}|^2+i\phi_1^{(n)}\partial_{\alpha}\bar{\phi}_1^{(n)}d\alpha.
\end{equation}

\begin{equation}
\mathcal{F}_{n}(t):=\int \frac{1}{A}|D_t\sigma_1^{(n)}|^2+i\sigma_1^{(n)}\partial_{\alpha}\bar{\sigma}_1^{(n)}d\alpha.
\end{equation}
Define the energy as
\begin{equation}
\mathcal{E}(t):=\sum_{n=0}^s ( \mathcal{E}_{n}(t)+ \mathcal{F}_{n}(t)).
\end{equation}
By lemma \ref{basic},  each $\mathcal{E}_n$ is positive.  $\sigma_1^n$ might not be holomorphic in $\Omega(t)^c$.  However, we'll show that it is still essentially positive.  We'll show that this energy controls $\norm{D_tr_1}_{H^s}+\norm{(r_1)_{\alpha}}_{H^s}$.

\subsection{Evolution of $\mathcal{E}_n$ and $\mathcal{F}_n$}  To show that $r_1$ remains small (in the sense of some appropriate norm), we need to show that the energy $\mathcal{E}_s$ remains small for a long time. So we need to analyze the evolution of $\mathcal{E}_n$ and $\mathcal{F}_n$.  Note that
\begin{equation}\label{hhaa}
\begin{split}
(D_t^2-iA\partial_{\alpha})\rho_1^{(n)}=& \partial_{\alpha}^n (D_t^2-iA\partial_{\alpha})\rho_1+[D_t^2-iA\partial_{\alpha}, \partial_{\alpha}^n]\rho_1\\
=& \partial_{\alpha}^n(I-\mathcal{H}_{\zeta})\sum_{m=1}^5 \mathcal{R}_{1m}+\partial_{\alpha}^n(\mathcal{R}_{16}+\mathcal{R}_{17})+[D_t^2-iA\partial_{\alpha}, \partial_{\alpha}^n]\rho_1\\
:=& \mathcal{C}_{1,n}.
\end{split}
\end{equation}
Similarly, we derive governing equation for $\sigma_1^{(n)}=\partial_{\alpha}^n \sigma_1$. We have 
\begin{equation}\label{hhaa1}
\begin{split}
(D_t^2-iA\partial_{\alpha})\sigma_1^{(n)}=& \partial_{\alpha}^n (D_t^2-iA\partial_{\alpha})\sigma_1+[D_t^2-iA\partial_{\alpha}, \partial_{\alpha}^n]\sigma_1\\
:=& C_{2,n}.
\end{split}
\end{equation}

\noindent By basic lemma \ref{basic}, equations (\ref{hhaa}) and (\ref{hhaa1}), we have 
\begin{equation}
\begin{split}
\frac{d}{dt}\mathcal{E}_n(t)=& \int \frac{2}{A}\Re(D_t\rho_1^{(n)} \bar{\mathcal{C}}_{1,n}) -\frac{1}{A}\frac{a_t}{a}\circ \kappa^{-1} |D_t\rho_{1,n}^{(n)}|^2 d\alpha\\
&+2\Im \int \partial_t\mathcal{R}_1^{(n)}\partial_{\alpha}\bar{\phi}_1^{(n)}+\partial_t\mathcal{\phi}_1^{(n)}\partial_{\alpha}\bar{\mathcal{R}}_{1,n}^{(n)}
+\partial_t\mathcal{R}_{1}^{(n)}\partial_{\alpha}\bar{\mathcal{R}}_{1}^{(n)}
\end{split}
\end{equation}
And
\begin{equation}
\begin{split}
\frac{d}{dt}\mathcal{F}_n^{\sigma_1}(t)=& \int \frac{2}{A}\Re(D_t\sigma_1^{(n)} \bar{\mathcal{C}}_{2,n}) -\frac{1}{A}\frac{a_t}{a}\circ \kappa^{-1} |D_t\sigma_1^{(n)}|^2 d\alpha.
\end{split}
\end{equation}

\section{Bound for some quantities}\label{bounds}
In this section, we obtain bounds for the quantities which will be used in the energy estimates in next section. We bound these quantities in terms of an auxiliary quantity $E_s$, which is essentially equivalent to the energy $\mathcal{E}_s$. 

\subsection{An a priori assumption}
Let $T_0>0$, we make the following a priori assumption 
\begin{equation}\label{boot}
    \sup_{t\in [0,T_0]}\Big(\norm{(r_1)_{\alpha}}_{H^s}+\norm{D_t r_1}_{H^{s}}+\norm{D_t^2 r_1}_{H^{s}}\Big)\leq \epsilon.
\end{equation}
\begin{remark}
We'll eventually show that $\mathcal{E}_s\lesssim \epsilon^3$ and 
\begin{equation}
    \sup_{t\in [0,O(\epsilon^{-2})]}\Big(\norm{(r_1)_{\alpha}}_{H^s}+\norm{D_t r_1}_{H^{s+1/2}}+\norm{D_t^2 r_1}_{H^{s}}\Big)\lesssim \epsilon^{3/2},.
\end{equation}
which is much better than (\ref{boot}). Since this a priori assumption is easy to justify by a bootstrap argument, we won't provide the details for this justification.
\end{remark}
\vspace*{1ex}

\noindent \textbf{Convention.} In this and the next section, if not specified, then $$0\leq t\leq \min\{T_0, e_0\epsilon^{-2}, C_0\epsilon^{-2}, C_1\epsilon^{-2}\}$$
and the bootstrap assumption (\ref{boot}) holds. Here, $C_0$ is the same as that in Theorem \ref{longperiodic}, $e_0$ is the same as that in Theorem \ref{NLSlocal}, and $C_1$ is the same as that in Theorem \ref{theorem1}.

\subsubsection{Consequence of the a priori assumption}
\begin{lemma}\label{lemmazeta}
We have 
\begin{equation}
    \|\zeta_{\alpha}-1\|_{W^{s-1,\infty}}\leq C\epsilon.
\end{equation}
\end{lemma}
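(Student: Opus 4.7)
The plan is to decompose $\zeta_\alpha-1$ according to the approximation scheme and the splitting $r=r_0+r_1$, and then bound each piece using (i) explicit formulas for $\tilde\zeta$, (ii) the long-time control of $r_0$ from Theorem~\ref{errorperiodic}, and (iii) the bootstrap hypothesis~\eqref{boot} combined with Sobolev embedding.

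First I would write
\begin{equation*}
\zeta_\alpha-1 \;=\; (\tilde\zeta_\alpha-1) \;+\; (r_0)_\alpha \;+\; (r_1)_\alpha,
\end{equation*}
using $\zeta=\tilde\zeta+r_0+r_1$. For $\tilde\zeta_\alpha-1$, the explicit construction in \eqref{approxzetaomega1}--\eqref{approxzetaomega3} writes $\tilde\zeta-\alpha$ as a sum of three terms with prefactors $\epsilon, \epsilon^2, \epsilon^3$ whose coefficients are bounded functions of $B(X,T)$, $B_0$, and the phase $e^{i\phi}$. Since $B\in 1+H^{s+7}(\mathbb R)$ solves NLS on the relevant time interval with norm controlled by Theorem~\ref{NLSlocal}, and the operator $H_0$ is bounded on these spaces, one gets $\|\tilde\zeta_\alpha-1\|_{W^{s-1,\infty}}\lesssim \epsilon$ by direct differentiation (the derivative $\partial_\alpha$ landing on $e^{i\phi}$ costs an $O(1)$ factor $k$, while $\partial_\alpha$ landing on $B(X,T)$ costs a factor $\epsilon$).

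Next, for the periodic part $(r_0)_\alpha$, I would invoke Theorem~\ref{errorperiodic}, which yields
\begin{equation*}
\sup_{t\in[0,C_0\epsilon^{-2}]} \|(r_0)_\alpha\|_{H^{s'}(\mathbb T)} \;\leq\; C\epsilon^{3/2},
\end{equation*}
with $s'=s+s_0$ and $s_0>3/2$. Sobolev embedding on $\mathbb T$ then gives $\|(r_0)_\alpha\|_{W^{s-1,\infty}(\mathbb T)}\leq C\epsilon^{3/2}\leq C\epsilon$. For the localized part, the bootstrap assumption~\eqref{boot} gives $\|(r_1)_\alpha\|_{H^s(\mathbb R)}\leq \epsilon$, and since $s\geq 4$ we have $s-1>1/2$, so Lemma~\ref{L2toinfty} yields $\|(r_1)_\alpha\|_{W^{s-1,\infty}(\mathbb R)}\leq C\epsilon$.

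Adding the three contributions and using the triangle inequality in $W^{s-1,\infty}$ delivers $\|\zeta_\alpha-1\|_{W^{s-1,\infty}}\leq C\epsilon$, as claimed. There is no substantive obstacle here: the lemma is simply a book-keeping consequence of the construction of $\tilde\zeta$, the periodic error estimate already proved in Theorem~\ref{errorperiodic}, and the a priori hypothesis~\eqref{boot}. The only point worth checking carefully is that the regularity indices match, i.e.\ that $s+s_0>s-1+1/2$ (trivially true since $s_0>3/2$) and that $s>1/2$, both of which hold under the standing assumption $s\geq 4$.
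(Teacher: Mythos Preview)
Your proposal is correct and follows the same decomposition as the paper: write $\zeta_\alpha-1=(r_1)_\alpha+(r_0)_\alpha+(\tilde\zeta_\alpha-1)$, bound $(r_1)_\alpha$ via the bootstrap assumption and Sobolev embedding, and bound $(r_0)_\alpha+(\tilde\zeta_\alpha-1)$ directly from the explicit construction and Theorem~\ref{errorperiodic}. The paper's proof is simply a more terse version of exactly this argument.
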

\begin{proof}
We have
\begin{align*}
\zeta_{\alpha}-1=(r_1+r_0+\tilde{\zeta})_{\alpha}-1=(r_1)_{\alpha}+(\tilde{\zeta}_{\alpha}-1+(r_0)_{\alpha}).
\end{align*}
So we have 
\begin{equation}\label{zetaalpha}
\norm{\zeta_{\alpha}-1}_{W^{s-1,\infty}}\leq \norm{(r_1)_{\alpha}}_{W^{s-1,\infty}}+\norm{r_0+\tilde{\zeta}_{\alpha}-1}_{W^{s-1,\infty}}\leq C\epsilon.
\end{equation}
\end{proof}

We'll need the following lemma. Similar versions of this lemma have been appeared in \cite{Wu2009}.
\begin{lemma}\label{realinverse}
Assume the bootstrap assumption (\ref{boot}), let $f, h$ be real functions. Assume 
$$(I-\mathcal{H}_{\zeta})h\bar{\zeta}_{\alpha}=g\quad \quad or\quad \quad (I-\mathcal{H}_{\zeta})h=g.$$
Then we have for any $t\in [0,T_0]$,
\begin{equation}
\|h\|_{H^s}\leq 2\|g\|_{H^s}.
\end{equation}
\end{lemma}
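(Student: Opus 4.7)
The plan is to reduce both equations to scalar identities by taking real parts, then invert the resulting operator by Neumann series using the smallness $\|\zeta_\alpha - 1\|_{W^{s-1,\infty}} \lesssim \epsilon$ from the bootstrap assumption (\ref{boot}) and Lemma \ref{lemmazeta}.

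First I would handle $(I-\mathcal{H}_{\zeta})h=g$. Since $h$ is real, the kernel identity $\Re\{\tfrac{1}{\pi i}\tfrac{\zeta_\beta}{\zeta(\alpha)-\zeta(\beta)}\}$ defines $\mathcal{K}_\zeta$, so $\Re(\mathcal{H}_\zeta h) = \mathcal{K}_\zeta h$, and taking real parts gives
\begin{equation}
(I-\mathcal{K}_\zeta)h = \Re(g).
\end{equation}
Next I would show $\|\mathcal{K}_\zeta\|_{H^s\to H^s} \leq C\epsilon$. In the flat case $\zeta(\alpha)=\alpha$ the kernel of $\mathcal{H}_\zeta$ is $\tfrac{1}{\pi i(\alpha-\beta)}$, which is purely imaginary, so $\mathcal{K}_\zeta$ vanishes. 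Writing $\xi = \zeta-\alpha$ and expanding
\begin{equation}
\frac{\zeta_\beta}{\zeta(\alpha)-\zeta(\beta)} = \frac{1}{\alpha-\beta} + \frac{\xi_\beta}{\alpha-\beta} - \frac{\xi(\alpha)-\xi(\beta)}{(\alpha-\beta)(\zeta(\alpha)-\zeta(\beta))}(1+\xi_\beta),
\end{equation}
the real part is a finite sum of Calderon-type commutators in $\xi$ and $\xi_\beta$ of the form $S_1, S_2$ appearing in Proposition \ref{singular}, each carrying at least one factor of $\zeta_\alpha-1$. Applying Proposition \ref{singular}(2) and Lemma \ref{lemmazeta} yields $\|\mathcal{K}_\zeta\|_{H^s\to H^s}\leq C\epsilon$. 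For $\epsilon$ sufficiently small (say $C\epsilon \leq 1/2$), $(I-\mathcal{K}_\zeta)$ is invertible by Neumann series with $\|(I-\mathcal{K}_\zeta)^{-1}\|_{H^s\to H^s}\leq (1-C\epsilon)^{-1} \leq 2$, hence $\|h\|_{H^s}\leq 2\|\Re(g)\|_{H^s}\leq 2\|g\|_{H^s}$.

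For the second case $(I-\mathcal{H}_\zeta)(h\bar\zeta_\alpha)=g$, I would compute
\begin{equation}
\mathcal{H}_\zeta(h\bar\zeta_\alpha)(\alpha) = \frac{1}{\pi i}\int \frac{|\zeta_\beta|^2 h(\beta)}{\zeta(\alpha)-\zeta(\beta)}d\beta,
\end{equation}
and take real parts. Since $h$ is real, $\Re(h\bar\zeta_\alpha) = h\,\Re(\zeta_\alpha)$, so
\begin{equation}
h\,\Re(\zeta_\alpha) - \Re\bigl(\mathcal{H}_\zeta(h\bar\zeta_\alpha)\bigr) = \Re(g),
\end{equation}
which I rewrite as $(I+\mathcal{E})h = \Re(g)$ with $\mathcal{E}h := (\Re(\zeta_\alpha)-1)h - \Re(\mathcal{H}_\zeta(h\bar\zeta_\alpha))$. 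The multiplication part has $H^s$ operator norm bounded by $\|\zeta_\alpha-1\|_{W^{s-1,\infty}}\lesssim \epsilon$ (using the algebra property of $H^s$ for $s\geq 4$). The integral part again has a kernel that vanishes in the flat case and decomposes into Calderon commutators controlled by Proposition \ref{singular}(2), giving total norm $\|\mathcal{E}\|_{H^s\to H^s}\leq C\epsilon$. Neumann inversion then yields $\|h\|_{H^s}\leq 2\|g\|_{H^s}$.

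The main obstacle is establishing the $H^s$-smallness of $\mathcal{K}_\zeta$ (and its variant with kernel $|\zeta_\beta|^2$), rather than just the classical $L^2$-boundedness from Lemma \ref{layer}. This requires distributing $\partial_\alpha^s$ through the singular integrals and identifying each resulting term as a commutator to which Proposition \ref{singular}(2) applies, where the control hypotheses on $\gamma_j - 1$ in $H^{s-1}$ and $W^{s-2,\infty}$ are precisely what (\ref{boot}) together with Sobolev embedding supply; without the bootstrap smallness the Neumann series would not converge and only an $L^2$ bound (with constant depending on the chord-arc constants) would be available.
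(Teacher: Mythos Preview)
Your proposal is correct and is precisely the approach the paper has in mind: the paper does not supply a proof but refers to \cite{Wu2009} and, in the Remark following Lemma~\ref{layer}, already records exactly your key ingredient, namely that for small data $\|\mathcal{K}_p f\|_{H^s}\leq C\epsilon\|f\|_{H^s}$, from which invertibility of $I-\mathcal{K}_p$ (and here $I-\mathcal{K}_\zeta$) follows by Neumann series. Your treatment of the $h\bar\zeta_\alpha$ case via the operator $\mathcal{E}$ is a mild variant of the same idea (one could equivalently route through the adjoint $\mathcal{K}_\zeta^\ast$ defined in \S\ref{prelim}), but the substance is identical.
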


\subsubsection{The equivalence of $\rho_1$ and $r_1$}
\begin{lemma}\label{equivalencequantities}
Assume the a priori assumption (\ref{boot}).  We have 
\begin{equation}
    \norm{\partial_{\alpha}(\rho_1-2r_1)}_{H^s}\leq C(\epsilon E_s^{1/2}+\epsilon^{5/2}),\quad \quad \norm{D_t(\rho_1-2r_1)}_{H^{s+1/2}}\leq C(\epsilon E_s^{1/2}+\epsilon^{5/2}).
\end{equation}
\end{lemma}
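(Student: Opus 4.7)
The strategy is to unfold $\rho_1 = (I-\mathcal{H}_\zeta)(\lambda - \tilde\lambda)$ algebraically into a principal $r_1$-piece plus small remainders. Writing $\xi = \xi_0 + \xi_1$, $\tilde\xi = \tilde\xi_0 + \tilde\xi_1$, and applying the operator identity $Af - Bg = A(f-g) + (A-B)g$ to the pairs $(I - \mathcal{H}_\zeta, I - \mathcal{H}_{\tilde\zeta})$ and $(I - \mathcal{H}_\omega, I - \mathcal{H}_{\tilde\omega})$ using $\zeta - \tilde\zeta = r_0 + r_1$, $\omega - \tilde\omega = r_0$, $\xi_1 - \tilde\xi_1 = r_1$, I would obtain
\[
\lambda - \tilde\lambda \;=\; (I - \mathcal{H}_\zeta)\,r_1 \;+\; \mathcal{K},
\]
with $\mathcal{K}$ the sum of the operator-difference terms $(\mathcal{H}_\omega - \mathcal{H}_\zeta) r_0$, $(\mathcal{H}_{\tilde\zeta} - \mathcal{H}_\zeta)\tilde\xi_1$, and $-(\mathcal{H}_{\tilde\omega} - \mathcal{H}_\omega)\tilde\xi_0$. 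Applying $I - \mathcal{H}_\zeta$ once more and using $\mathcal{H}_\zeta^2 = I$ yields $\rho_1 = 2(I - \mathcal{H}_\zeta) r_1 + (I - \mathcal{H}_\zeta)\mathcal{K}$, and hence
\[
\rho_1 - 2 r_1 \;=\; -2\mathcal{H}_\zeta r_1 \;+\; (I - \mathcal{H}_\zeta)\mathcal{K}.
\]

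For the piece $(I - \mathcal{H}_\zeta)\mathcal{K}$, each summand of $\mathcal{K}$ is of Cauchy type with kernel containing one of the small factors $\omega - \zeta = -\xi_1$, $\tilde\zeta - \zeta = -(r_0 + r_1)$, or $\tilde\omega - \omega = -r_0$. Proposition \ref{singular} (with its periodic counterpart Proposition \ref{singularperiodic} when $\mathcal{H}_p$-type transforms are unfolded) then rewrites $\mathcal{K}$, $\partial_\alpha \mathcal{K}$, and $D_t \mathcal{K}$ as bilinear products of small quantities; plugging in $\norm{r_0}_{H^{s+s_0}(\TT)} + \norm{D_t^0 r_0}_{H^{s+s_0+1/2}(\TT)} \leq C\epsilon^{3/2}$ from Theorem \ref{errorperiodic}, $\norm{\tilde\xi_0}_{H^{s+7}(\TT)} + \norm{\tilde\xi_1}_{H^{s+7}(\RR)} \leq C\epsilon$ from Section 5, Lemma \ref{lemmazeta} for $\zeta_\alpha - 1$, and the bootstrap (\ref{boot}) for $r_1$, I obtain $\norm{\partial_\alpha (I - \mathcal{H}_\zeta)\mathcal{K}}_{H^s} + \norm{D_t (I - \mathcal{H}_\zeta)\mathcal{K}}_{H^{s+1/2}} \leq C\epsilon^{5/2}$, where $D_t$ is commuted through $\mathcal{H}_\zeta$ via Lemma \ref{lemmaseven}.

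The delicate piece is $-2\mathcal{H}_\zeta r_1$: naive $L^2$-boundedness gives only $\norm{\mathcal{H}_\zeta r_1}_{H^s} \lesssim \norm{r_1}_{H^s} \sim E_s^{1/2}$, which is one power of $\epsilon$ short of the target. Here I would exploit the approximate holomorphic structure of $r_1$ in $\Omega(t)^c$: since both $\zeta - \alpha$ and $\tilde\zeta - \alpha$ are, to leading order, positive-frequency wave packets $\epsilon B(\epsilon\alpha) e^{i\phi}$ (approximately boundary values of functions holomorphic in $\Omega^c$), and since $\bar\zeta - \alpha$ and $\bar\omega - \alpha$ satisfy the exact identities $(I - \mathcal{H}_\zeta)(\bar\zeta - \alpha) = 0$, $(I - \mathcal{H}_\omega)(\bar\omega - \alpha) = 0$ from Lemma \ref{goodgood}, the leading-order contributions to $(I + \mathcal{H}_\zeta)\xi_1$ and $(I + \mathcal{H}_\zeta)\tilde\xi_1$ coincide and cancel in $(I + \mathcal{H}_\zeta) r_1$. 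What remains I would represent as a finite sum of Cauchy-type operator-difference expressions (kernels again containing $\xi_1$, $r_0 + r_1$, etc.); Proposition \ref{singular} converts this into the bound $C(\epsilon E_s^{1/2} + \epsilon^{5/2})$ in $H^s$ after $\partial_\alpha$, and the analogous bound in $H^{s+1/2}$ after $D_t$ via Lemma \ref{lemmaseven}.

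The hard part is precisely this last gain of a power of $\epsilon$: without the wave-packet cancellation between $\xi_1$ and $\tilde\xi_1$ — itself a consequence of the multiscale construction of $\tilde\zeta$ in Section 5 — the Hilbert transform of $r_1$ is only comparable to $r_1$ itself, and the estimate would miss exactly the factor of $\epsilon$ that separates $E_s^{1/2} \sim \epsilon^{3/2}$ from the required $\epsilon E_s^{1/2} + \epsilon^{5/2} \sim \epsilon^{5/2}$. All three structural ingredients — the common leading wave-packet profile of $\xi_1$ and $\tilde\xi_1$, the exact holomorphy of $\bar\zeta - \alpha$ in $\Omega$, and the closeness of the curves $\zeta, \tilde\zeta, \omega, \tilde\omega$ — must combine quantitatively through Proposition \ref{singular} to deliver this single extra power of $\epsilon$.
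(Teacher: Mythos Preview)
Your proposal is correct and follows essentially the same route as the paper. Both arguments unfold $\rho_1$ to isolate $2r_1$ plus operator-difference remainders, identify $(I+\mathcal{H}_\zeta)r_1$ as the delicate piece, and gain the extra power of $\epsilon$ from the (approximate) holomorphy of $\zeta-\alpha,\tilde\zeta-\alpha$ in $\Omega(t)^c$ coming from the multiscale construction.

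One point where the paper is more explicit than your sketch: to handle $(I+\mathcal{H}_\zeta)r_1$ it writes
\[
(I+\mathcal{H}_\zeta)r_1=(\mathcal{H}_\zeta+\overline{\mathcal{H}_\zeta})r_1+(I-\overline{\mathcal{H}_\zeta})r_1,
\]
uses that the kernel of $\mathcal{H}_\zeta+\overline{\mathcal{H}_\zeta}$ carries a factor $\Im\zeta$ (hence an $\epsilon$), and then decomposes $r_1=(\zeta-\alpha)-(\tilde\zeta-\alpha)-(\omega-\alpha)+(\tilde\omega-\alpha)$. The exact identities you cite, $(I-\mathcal{H}_\zeta)(\bar\zeta-\alpha)=0$ and $(I-\mathcal{H}_\omega)(\bar\omega-\alpha)=0$, become upon conjugation $(I-\overline{\mathcal{H}_\zeta})(\zeta-\alpha)=0$ and $(I-\overline{\mathcal{H}_\omega})(\omega-\alpha)=0$; the approximate analogues $(I-\overline{\mathcal{H}_\zeta})(\tilde\zeta-\alpha)=O(\epsilon^4)$, $(I-\overline{\mathcal{H}_\omega})(\tilde\omega-\alpha)=O(\epsilon^4)$ come from the Section~5 construction. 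What remains is $(\overline{\mathcal{H}_\zeta}-\overline{\mathcal{H}_\omega})(\omega-\tilde\omega)$, bounded by $C\epsilon^{5/2}$ via Proposition~\ref{singular}. Your ``leading-order contributions coincide and cancel'' is exactly this mechanism; the conjugate Hilbert transform $\overline{\mathcal{H}_\zeta}$ is the concrete device that makes it go through.

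A minor algebraic slip: your $\mathcal{K}$ should contain $(\mathcal{H}_{\tilde\zeta}-\mathcal{H}_\zeta)\tilde\xi$ rather than $(\mathcal{H}_{\tilde\zeta}-\mathcal{H}_\zeta)\tilde\xi_1$ (or equivalently an extra $(\mathcal{H}_{\tilde\zeta}-\mathcal{H}_\zeta)\tilde\xi_0$ term); the paper's $\tilde\gamma$ groups these differently but the estimates are unaffected.
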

\begin{proof}
We have $\lambda:=(I-\mathcal{H}_{\zeta})(\zeta-\alpha)-(I-\mathcal{H}_{\omega})(\omega-\alpha)$, and $\tilde{\lambda}:=(I-\mathcal{H}_{\tilde{\zeta}})(\tilde{\zeta}-\alpha)-(I-\mathcal{H}_{\tilde{\omega}})(\tilde{\omega}-\alpha)$. Recall that $\rho_1:=(I-\mathcal{H}_{\zeta})(\lambda-\tilde{\lambda})$. So we have
\begin{align*}
\partial_{\alpha}\rho_1=& \partial_{\alpha} (I-\mathcal{H}_{\zeta})(\lambda-\tilde{\lambda}).
\end{align*}
We have
\begin{align}
\lambda-\tilde{\lambda}
=& (I-\mathcal{H}_{\zeta})r_1+(\mathcal{H}_{\omega}-\mathcal{H}_{\zeta})(\omega-\alpha)+(\mathcal{H}_{\tilde{\zeta}}-\mathcal{H}_{\zeta})(\tilde{\zeta}-\alpha)-(\mathcal{H}_{\tilde{\omega}}-\mathcal{H}_{\zeta})(\tilde{\omega}-\alpha). 
\end{align}
Denote
\begin{equation}
    \tilde{\gamma}:=(\mathcal{H}_{\omega}-\mathcal{H}_{\zeta})(\omega-\alpha)+(\mathcal{H}_{\tilde{\zeta}}-\mathcal{H}_{\zeta})(\tilde{\zeta}-\alpha)-(\mathcal{H}_{\tilde{\omega}}-\mathcal{H}_{\zeta})(\tilde{\omega}-\alpha). 
\end{equation}
So we have 
\begin{align}
    \partial_{\alpha}\rho_1=&\partial_{\alpha}(I-\mathcal{H}_{\zeta})(\lambda-\tilde{\lambda})\\
    =&\partial_{\alpha}(I-\mathcal{H}_{\zeta})(I-\mathcal{H}_{\zeta})r_1+\partial_{\alpha}(I-\mathcal{H}_{\zeta})\tilde{\gamma}\\
    =& 2\partial_{\alpha} (I-\mathcal{H}_{\zeta})r_1+\partial_{\alpha}(I-\mathcal{H}_{\zeta})\tilde{\gamma}\\
    =& 2\partial_{\alpha}r_1-\partial_{\alpha}(I+\mathcal{H}_{\zeta})r_1+\partial_{\alpha}(I-\mathcal{H}_{\zeta})\tilde{\gamma}.
\end{align}
We are aiming to prove that 
\begin{equation}
    \norm{-\partial_{\alpha}(I+\mathcal{H}_{\zeta})r_1+\partial_{\alpha}(I-\mathcal{H}_{\zeta})\tilde{\gamma}}_{H^s}\leq C\epsilon E_s^{1/2}+C\epsilon^{5/2}.
\end{equation}
For $\norm{\partial_{\alpha}(I+\mathcal{H}_{\zeta})r_1}_{H^s}$, we have 

\begin{align}
    (I+\mathcal{H}_{\zeta})r_1=& (\overline{\mathcal{H}_{\zeta}}+\mathcal{H}_{\zeta})r_1+(I-\overline{\mathcal{H}_{\zeta}})r_1.
\end{align}
The kernal of $\overline{\mathcal{H}_{\zeta}}+\mathcal{H}_{\zeta}$ is of order one, so it's easy  to obtain that
\begin{align}
\norm{\partial_{\alpha}(\overline{\mathcal{H}_{\zeta}}+\mathcal{H}_{\zeta})r_1}_{H^s}\leq C\epsilon E_s^{1/2}.
\end{align}
Decompose $$r_1=(\zeta-\alpha)-(\tilde{\zeta}-\alpha)-(\omega-\alpha)+(\tilde{\omega}-\alpha).$$
Use $(I-\overline{\mathcal{H}_{\zeta}})(\zeta-\alpha)=0$, $(I-\overline{\mathcal{H}_{\omega}})(\omega-\alpha)=0$, we have 
\begin{align}
    (I-\overline{\mathcal{H}_{\zeta}})r_1=& -(I-\overline{\mathcal{H}_{\zeta}})(\tilde{\zeta}-\alpha)+(I-\overline{\mathcal{H}_{\zeta}})(\tilde{\omega}-\alpha)+(\overline{\mathcal{H}_{\zeta}}-\overline{\mathcal{H}_{\omega}})(\omega-\alpha)\\
    =& -(I-\overline{\mathcal{H}_{\zeta}})(\tilde{\zeta}-\alpha)+(I-\overline{\mathcal{H}_{\omega}})(\tilde{\omega}-\alpha)+(\overline{\mathcal{H}_{\zeta}}-\overline{\mathcal{H}_{\omega}})(\omega-\tilde{\omega})
\end{align}
By the construction of $\tilde{\zeta}$ and $\tilde{\omega}$, we have 
\begin{equation}
    (I-\overline{\mathcal{H}_{\zeta}})(\tilde{\zeta}-\alpha)=O(\epsilon^4),\quad \quad (I-\overline{\mathcal{H}_{\omega}})(\tilde{\omega}-\alpha)=O(\epsilon^4).
\end{equation}
Therefore,
\begin{equation}
    \norm{\partial_{\alpha}(I-\overline{\mathcal{H}_{\zeta}})(\tilde{\zeta}-\alpha)-\partial_{\alpha} (I-\overline{\mathcal{H}_{\omega}})(\tilde{\omega}-\alpha)}_{H^s}\leq C\epsilon^{7/2}.
\end{equation}
Since 
\begin{equation}
    \norm{\omega-\tilde{\omega}}_{W^{s,\infty}}=\norm{r_0}_{W^{s,\infty}}\leq C\epsilon^2,
\end{equation}
and
\begin{equation}
    \norm{\partial_{\alpha}(\zeta-\omega)}_{H^s}\leq C\epsilon^{1/2},
\end{equation}
we have 
\begin{equation}
    \norm{(\overline{\mathcal{H}_{\zeta}}-\overline{\mathcal{H}_{\omega}})(\omega-\tilde{\omega})}_{H^s}\leq C\epsilon^{5/2}.
\end{equation}
So we obtain
\begin{equation}
    \norm{\partial_{\alpha}(\rho_1-2r_1)}_{H^s}\leq C(\epsilon E_s^{1/2}+\epsilon^{5/2}).
\end{equation}

\noindent Use similar argument, we have 
\begin{equation}
    \norm{D_t(\rho_1-2r_1)}_{H^{s+1/2}}\leq C(\epsilon E_s^{1/2}+\epsilon^{5/2}).
\end{equation}
\end{proof}

\begin{corollary}\label{equivalencerho1}
Assume the a priori assumption (\ref{boot}), we have 
\begin{equation}
    \norm{\partial_{\alpha}\rho_1}_{H^s}\leq C\epsilon,\quad \quad \norm{D_t\rho_1}_{H^{s+1/2}}\leq C\epsilon.
\end{equation}
\end{corollary}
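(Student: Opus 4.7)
My plan is to obtain this corollary as an almost immediate consequence of Lemma~\ref{equivalencequantities} combined with the bootstrap assumption (\ref{boot}), using the triangle inequality. The two bounds in the corollary both split as
\[
\partial_{\alpha}\rho_1 = \partial_{\alpha}(\rho_1-2r_1) + 2\partial_{\alpha}r_1, \qquad D_t\rho_1 = D_t(\rho_1-2r_1) + 2 D_t r_1,
\]
so the whole task reduces to bounding each piece separately.

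For the first pieces, Lemma~\ref{equivalencequantities} directly yields $\|\partial_{\alpha}(\rho_1-2r_1)\|_{H^s} \leq C(\epsilon E_s^{1/2} + \epsilon^{5/2})$ and the analogous $H^{s+1/2}$ estimate for $D_t(\rho_1-2r_1)$. Under (\ref{boot}), together with the already-established control on the periodic part $r_0$ from Theorem~\ref{errorperiodic} and the NLS approximation from \S5, the auxiliary quantity $E_s$ obeys $E_s^{1/2} \lesssim \epsilon$, so the $\epsilon E_s^{1/2}$ contribution is $O(\epsilon^2)$, and in particular $o(\epsilon)$. For the second pieces, I use the bootstrap hypothesis (\ref{boot}) to get $\|\partial_{\alpha}r_1\|_{H^s} \leq \epsilon$, which together with the above yields the first bound $\|\partial_{\alpha}\rho_1\|_{H^s} \leq C\epsilon$ straightforwardly.

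The second bound, $\|D_t\rho_1\|_{H^{s+1/2}} \leq C\epsilon$, is the only potentially delicate point, because the bootstrap assumption (\ref{boot}) gives only $\|D_t r_1\|_{H^s}\leq \epsilon$, not $H^{s+1/2}$. The extra half derivative is gained through the holomorphic structure: $\rho_1 = (I-\mathcal{H}_{\zeta})(\lambda-\tilde{\lambda})$ is the boundary value of a holomorphic function in $\Omega(t)^c$, so $D_t\rho_1$ obeys the relation $(I+\mathcal{H}_{\zeta})D_t\rho_1 = [D_t,\mathcal{H}_{\zeta}]\rho_1 = [D_t\zeta,\mathcal{H}_{\zeta}]\frac{\partial_{\alpha}D_t\rho_1}{\zeta_{\alpha}}$, which allows one to trade a derivative on $\rho_1$ for a half derivative of $D_t\rho_1$ via the identity $i\int f\partial_{\alpha}\bar f\,d\alpha \sim \||D|^{1/2}f\|_{L^2}^2$ available for holomorphic $f$. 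Concretely, decomposing $2 D_t r_1 = D_t\rho_1 - D_t(\rho_1-2r_1)$ and using this half derivative gain on the holomorphic piece together with the bootstrap $H^s$ bound provides the $H^{s+1/2}$ control.

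The main (and really only) obstacle is the half-derivative jump in the $D_t\rho_1$ estimate; once the holomorphic structure is used to absorb it, the corollary is a clean triangle-inequality consequence of the previous lemma and the a priori bound. I do not expect any further analytic difficulty beyond tracking the constants.
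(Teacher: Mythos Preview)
Your argument for the first bound $\|\partial_{\alpha}\rho_1\|_{H^s}\leq C\epsilon$ is exactly right and is precisely what the paper intends: triangle inequality on $\partial_{\alpha}\rho_1=\partial_{\alpha}(\rho_1-2r_1)+2\partial_{\alpha}r_1$, Lemma~\ref{equivalencequantities} for the first piece, and (\ref{boot}) for the second. The paper states the corollary without proof, and this is the implicit argument.

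For the second bound, you correctly spot that (\ref{boot}) only gives $\|D_t r_1\|_{H^s}\leq\epsilon$, not $H^{s+1/2}$, so the same triangle-inequality argument does not directly close. However, your proposed fix does not work. Holomorphicity of $\rho_1$ in $\Omega(t)^c$ gives the identity $\||D|^{1/2}f\|_{L^2}^2\sim i\int f\,\partial_{\alpha}\bar f\,d\alpha$, but this is a \emph{representation} of the $\dot H^{1/2}$ norm, not a regularity gain: by Cauchy--Schwarz the right-hand side is still controlled only by $\|f\|_{L^2}\|\partial_{\alpha}f\|_{L^2}$, which is just interpolation and requires the extra derivative you do not have. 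The relation $(I+\mathcal{H}_{\zeta})D_t\rho_1=[D_t\zeta,\mathcal{H}_{\zeta}]\frac{\partial_{\alpha}\rho_1}{\zeta_{\alpha}}$ (note: $\partial_{\alpha}\rho_1$, not $\partial_{\alpha}D_t\rho_1$) tells you $D_t\rho_1$ is holomorphic up to a quadratic error, but it does not convert an $H^s$ bound into an $H^{s+1/2}$ bound. The decomposition $2D_tr_1=D_t\rho_1-D_t(\rho_1-2r_1)$ you suggest is circular for the same reason.

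In the paper this corollary is invoked only for the $\partial_{\alpha}\rho_1$ estimate (e.g.\ in the control of $[D_t^2-iA\partial_{\alpha},\partial_{\alpha}^n]\rho_1$); the $H^{s+1/2}$ bound on $D_t\rho_1$ is never used downstream, and the genuine $H^{s+1/2}$ control on $D_tr_1$ appears only later through the energy $\mathcal{E}_s$. So the $H^{s+1/2}$ in the corollary is most plausibly a typo for $H^s$ (matching the bootstrap), or a harmless overclaim. Your proof of the part that matters is correct; just replace $H^{s+1/2}$ by $H^s$ for the $D_t\rho_1$ bound and drop the holomorphicity paragraph.
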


\subsection{An auxiliary quantity for the energy functional} The energy functional $\mathcal{E}_s$ is not very convenient in the energy estimates, so we introduce the quantity
\begin{equation}
E_s^{1/2}:=\norm{D_t r_1}_{H^s}+\norm{(r_1)_{\alpha}}_{H^s}+\norm{D_t^2r_1}_{H^s}.
\end{equation}
By (\ref{boot}), we have 
\begin{equation}
    \sup_{t\in [0,T_0]}E_s(t)\leq \epsilon.
\end{equation}
We'll show that 
$$E_s\leq C(\mathcal{E}+\epsilon^{5/2}),$$
and we'll control $\frac{d\mathcal{E}}{dt}$ in terms of $E_s$ and $\epsilon$, then we can obtain energy estimates on a lifespan of lengh $O(\epsilon^{-2})$. For this purpose, we control the quantities appear in the energy estimates in terms of $E_s$ and $\epsilon$.

\subsection{Bound $\tilde{b}$, $\tilde{b}_1$, $b_1$ and $b_1-\tilde{b}_1$} 
From the definition of $\tilde{b}$ and $\tilde{b}_1$, we have
\begin{equation}
    \norm{\tilde{b}}_{W^{s,\infty}}\leq C\epsilon^2,
\end{equation}
and
\begin{equation}
    \norm{\tilde{b}_1}_{H^s}\leq C\epsilon^{3/2}.
\end{equation}
It's not difficult to bound $\norm{b_1}_{H^s}$ by $C\epsilon^{3/2}$, and therefore $\norm{b_1-\tilde{b}_1}_{H^s}\leq C\epsilon^{3/2}$. However, it turns out that we need a bound better than $C\epsilon^{3/2}$ for the quantity $\norm{b_1-\tilde{b}_1}_{H^s}$. 

Because $\tilde{b}_1$ approximates $b_1$ to the order of $O(\epsilon^4)$,  we have 
\begin{equation}\label{b1tilde}
\begin{split}
(I-\mathcal{H}_{\tilde{\zeta}})\tilde{b}_1=& -[\tilde{D}_t\tilde{\zeta}-\tilde{D}_t^0\tilde{\omega},\mathcal{H}_{\tilde{\zeta}}]\frac{\bar{\tilde{\zeta}}_{\alpha}-1}{\tilde{\zeta}_{\alpha}}-[\tilde{D}_t^0\tilde{\omega}, \mathcal{H}_{\tilde{\zeta}}\frac{1}{\tilde{\zeta}_{\alpha}}-\mathcal{H}_{\tilde{\omega}}\frac{1}{\tilde{\omega}_{\alpha}}](\bar{\tilde{\zeta}}_{\alpha}-1)\\
&-[\tilde{D}_t^0\tilde{\omega}, \mathcal{H}_{\tilde{\omega}}\frac{1}{\tilde{\omega}_{\alpha}}]\bar{\xi}_{\alpha}-(\mathcal{H}_{\tilde{\zeta}}-\mathcal{H}_{\tilde{\omega}})b_0+\epsilon^4\mathcal{R},
\end{split}
\end{equation}
where $\epsilon^4\mathcal{R}$ is a function of $\tilde{b}_1$ such that $||\epsilon^4\mathcal{B}||_{H^{s+7}}\lesssim \epsilon^{7/2}$. 

To derive a formula for $b_1-\tilde{b}_1$, we subtract (\ref{b1}) from (\ref{b1tilde}).  In order to explore the cancellation relations and obtain good estimates, we group the similar terms together (the terminology 'similar' should be clear in the context).  We obtain the following 
\begin{align*}
&(I-\mathcal{H}_{\zeta})(b_1-\tilde{b}_1)= (I-\mathcal{H}_{\zeta})b_1-(I-\mathcal{H}_{\tilde{\zeta}})\tilde{b}_1-(\mathcal{H}_{\tilde{\zeta}}-\mathcal{H}_{\zeta})\tilde{b}_1\\
=& -[D_t\zeta-D_t^0\omega,\mathcal{H}_{\zeta}]\frac{\bar{\zeta}_{\alpha}-1}{\zeta_{\alpha}}+[\tilde{D}_t\tilde{\zeta}-\tilde{D}_t^0\tilde{\omega},\mathcal{H}_{\tilde{\zeta}}]\frac{\bar{\tilde{\zeta}}_{\alpha}-1}{\tilde{\zeta}_{\alpha}}    \quad \quad \quad \quad & :=B_1\\
&-[D_t^0\omega, \mathcal{H}_{\zeta}\frac{1}{\zeta_{\alpha}}-\mathcal{H}_{\omega}\frac{1}{\omega_{\alpha}}](\bar{\zeta}_{\alpha}-1)+[\tilde{D}_t^0\tilde{\omega}, \mathcal{H}_{\tilde{\zeta}}\frac{1}{\tilde{\zeta}_{\alpha}}-\mathcal{H}_{\tilde{\omega}}\frac{1}{\tilde{\omega}_{\alpha}}](\bar{\tilde{\zeta}}_{\alpha}-1) &:=B_2\\
&-[D_t^0\omega, \mathcal{H}_{\omega}\frac{1}{\omega_{\alpha}}]\bar{\xi}_{\alpha}+[\tilde{D}_t^0\tilde{\omega}, \mathcal{H}_{\tilde{\omega}}\frac{1}{\tilde{\omega}_{\alpha}}]\bar{\tilde{\xi}}_{\alpha}    &:=B_3\\
&-(\mathcal{H}_{\zeta}-\mathcal{H}_{\omega})b_0+(\mathcal{H}_{\tilde{\zeta}}-\mathcal{H}_{\tilde{\omega}})b_0   &:=B_4\\
&+\epsilon^4\mathcal{R}    &:=B_5\\
&-(\mathcal{H}_{\tilde{\zeta}}-\mathcal{H}_{\zeta})\tilde{b}_1    &:=B_6.
\end{align*}
To estimate $B_1$, we write $B_1$ as 
\begin{align*}
B_1=& -[D_t\zeta-D_t^0\omega-\tilde{D}_t\tilde{\zeta}+\tilde{D}_t^0\tilde{\omega}, \mathcal{H}_{\zeta}]\frac{\bar{\zeta}_{\alpha}-1}{\zeta_{\alpha}}\\
&+[\tilde{D}_t\tilde{\zeta}-\tilde{D}_t^0\tilde{\omega}, \mathcal{H}_{\tilde{\zeta}}\frac{1}{\tilde{\zeta}_{\alpha}}-\mathcal{H}_{\zeta}\frac{1}{\zeta_{\alpha}}](\bar{\zeta}_{\alpha}-1)\\
&-[\tilde{D}_t\tilde{\zeta}-\tilde{D}_t^0\tilde{\omega}, \mathcal{H}_{\tilde{\zeta}}\frac{1}{\tilde{\zeta}_{\alpha}}]\bar{r}_{\alpha}\\
:=& B_{11}+B_{12}+B_{13}.
\end{align*}

\vspace*{1ex}
\noindent The terms consist of $B_1$ are 'similar'.  The advantages of writing $B_1$ in this form are:
\begin{itemize}
\item  Each $B_{1j} (j=1,2,3)$ contains a factor which is in $H^s$ ( therefore $L^2$ estimate is possible). 
\item Each $B_{1j}$ contains a factor which explores the cancellation relations between the exact solution and the approximation.
\end{itemize}
\vspace*{1ex}

\subsubsection{Estimate $\norm{B_{11}}_{H^s}$} To estimate $B_{11}$, we rewrite the quantity
\begin{equation}\label{diffdiff}
\begin{split}
&D_t\zeta-D_t^0\omega-\tilde{D}_t\tilde{\zeta}+\tilde{D}_t^0\tilde{\omega}\\
=& (D_t-\tilde{D}_t)\zeta+\tilde{D}_t(\zeta-\tilde{\zeta})-(D_t^0-\tilde{D}_t^0)\omega-\tilde{D}_t^0(\omega-\tilde{\omega})\\
=& (b-\tilde{b})\zeta_{\alpha}+\tilde{D}_t r-(b_0-\tilde{b}_0)\omega_{\alpha}-\tilde{D}_t^0r_0\\
=& (b_1-\tilde{b}_1)\zeta_{\alpha}+(b_0-\tilde{b}_0)(\zeta_{\alpha}-\omega_{\alpha})+(\tilde{D}_t-\tilde{D}_t^0)r_0+\tilde{D}_tr_1\\
=& (b_1-\tilde{b}_1)\zeta_{\alpha}+(b_0-\tilde{b}_0)\partial_{\alpha}\xi_1+\tilde{b}_1\partial_{\alpha}r_0+(\tilde{D}_t-D_t)r_1+D_tr_1\\
=&  (b_1-\tilde{b}_1)\zeta_{\alpha}+(b_0-\tilde{b}_0)\partial_{\alpha}\xi_1+\tilde{b}_1\partial_{\alpha}r_0-(b-\tilde{b})\partial_{\alpha}r_1+D_tr_1\\
=& (b_1-\tilde{b}_1)(\zeta_{\alpha}-(r_1)_{\alpha})+(b_0-\tilde{b}_0)\partial_{\alpha}\tilde{\xi}_1+\tilde{b}_1(r_0)_{\alpha}+D_tr_1\\
=& (b_1-\tilde{b}_1)(\tilde{\zeta}_{\alpha}+(r_0)_{\alpha})+(b_0-\tilde{b}_0)\partial_{\alpha}\tilde{\xi}_1+\tilde{b}_1(r_0)_{\alpha}+D_tr_1.
\end{split}
\end{equation}
Use proposition \ref{singular} and estimate (\ref{zetaalpha}) for $\zeta_{\alpha}-1$,  we have 
\begin{align*}
&\norm{B_{11}}_{H^s}\\
\leq & C\norm{ (b_1-\tilde{b}_1)(\tilde{\zeta}_{\alpha}+(r_0)_{\alpha})+(b_0-\tilde{b}_0)\partial_{\alpha}\tilde{\xi}_1+\tilde{b}_1(r_0)_{\alpha}+D_tr_1}_{H^s}\norm{\zeta_{\alpha}-1}_{W^{s-1,\infty}}\\
\leq & C \Big(\norm{b_1-\tilde{b}_1}_{H^s}\norm{\tilde{\zeta}_{\alpha}+(r_0)_{\alpha}}_{W^{s,\infty}}+\norm{\tilde{b}_1}_{H^s}\norm{(r_0)_{\alpha}}_{W^{s,\infty}}+\norm{b_0-\tilde{b}_0}_{H^s}\norm{\partial_{\alpha}\tilde{\xi}_1}_{H^s}\\
&+\norm{D_tr_1}_{H^s}\Big)\norm{\zeta_{\alpha}-1}_{W^{s-1,\infty}}\\
\leq & C\Big(\norm{b_1-\tilde{b}_1}_{H^s}+\epsilon^{3/2}+E_s^{1/2}\Big)\epsilon.
\end{align*}
Here, we have used (see Theorem \ref{errorperiodic})
$$\norm{\tilde{b}_1}_{H^s}\leq C\epsilon^{3/2}, \quad \norm{(r_0)_{\alpha}}_{W^{s-1,\infty}}\leq C\epsilon^2,\quad \|b_0-\tilde{b}_0\|_{W^{s,\infty}}\leq C\epsilon^2.$$

\subsubsection{Estimate $\norm{B_{12}}_{H^s}$.}
For $B_{12}$, we write 
\begin{align*}
\tilde{D}_t\tilde{\zeta}-\tilde{D}_t^0\tilde{\omega}=& (\tilde{D}_t-\tilde{D}_t^0)\tilde{\zeta}+\tilde{D}_t^0(\tilde{\zeta}-\tilde{\omega})=\tilde{b}_1\partial_{\alpha}\tilde{\zeta}+\tilde{D}_t^0\tilde{\xi}_1.
\end{align*}
Then by proposition \ref{singular}, we have
\begin{equation}
\begin{split}
\norm{B_{12}}_{H^s}= & \norm{\tilde{D}_t\tilde{\zeta}-\tilde{D}_t^0\tilde{\omega}}_{H^s}\norm{\zeta_{\alpha}-1}_{W^{s-1,\infty}}\norm{\tilde{\zeta}-\tilde{\omega}}_{W^{s-1,\infty}}\\
\leq & ||\tilde{b}_1\partial_{\alpha}\tilde{\zeta}+\tilde{D}_t^0\tilde{\xi}_1||_{H^s}||\zeta_{\alpha}-1||_{W^{s-1,\infty}}||\tilde{\zeta}-\tilde{\omega}||_{W^{s-1,\infty}}\\
\leq & C\epsilon^{3/2}||\zeta_{\alpha}-1||_{W^{s-1,\infty}}\\
\leq & C\epsilon^{5/2}.
\end{split}
\end{equation}

\subsubsection{Estimate $\norm{B_{13}}_{H^s}$.} Use proposition \ref{singular}, we have 
\begin{align*}
\norm{B_{13}}_{H^s}=&\norm{[\tilde{b}_1\tilde{\zeta}_{\alpha}+\tilde{D}_t^0\tilde{\xi}_1,\mathcal{H}_{\tilde{\zeta}}\frac{1}{\tilde{\zeta}_{\alpha}}]\bar{r}_{\alpha}}_{H^s}\\
\leq &C\norm{\tilde{b}_1\tilde{\zeta}_{\alpha}+\tilde{D}_t^0\tilde{\xi}_1}_{W^{s,\infty}}\norm{\partial_{\alpha}r_1}_{H^s}+C\norm{\tilde{b}_1\tilde{\zeta}_{\alpha}+\tilde{D}_t^0\tilde{\xi}_1}_{H^{s}}\norm{\partial_{\alpha}r_0}_{W^{s-1,\infty}}\Big)\\
\leq & C\epsilon E_s^{1/2}+\epsilon^{5/2}.
\end{align*}
Here, we've used the estimates
\begin{equation}
    \|\tilde{b}_1\|_{H^s}\leq C\epsilon^{3/2}, \quad \|\tilde{D}_t^0\tilde{\xi}_1\|_{W^{s,\infty}}\leq C\epsilon, \quad \|\partial_{\alpha}r_0\|_{W^{s-1,\infty}}\leq C\epsilon^2,\quad \|\tilde{D}_t^0\tilde{\xi}_1\|_{H^s}\leq C\epsilon^{1/2}.
\end{equation}  
\vspace*{2ex}

\noindent So we have 
\begin{equation}
\norm{B_1}_{H^s}\leq C\epsilon\norm{b_1-\tilde{b}_1}_{H^s}+C\epsilon E_s^{1/2}+C\epsilon^{5/2}.
\end{equation}

\vspace*{1ex}

\noindent Use the same argument, we show that 
\begin{equation}
\norm{B_2}_{H^s}\leq C\epsilon^{5/2}+C\epsilon E_s^{1/2}.
\end{equation}

\begin{equation}
\norm{B_3}_{H^s}\leq C\epsilon^{5/2}+C\epsilon E_s^{1/2}.
\end{equation}
\begin{equation}
\norm{B_4}_{H^s}\leq C\epsilon^{5/2}+C\epsilon E_s^{1/2}.
\end{equation}
For $B_5$, it's trivially that
\begin{equation}
\norm{B_5}_{H^s}\leq C\epsilon^{5/2}.
\end{equation}
And 
\begin{equation}
\norm{B_6}_{H^s}\leq C\epsilon^{5/2}+CE_s^{1/2}.
\end{equation}
By lemma \ref{realinverse}, we have 
\begin{lemma}\label{bdiff}
Assume the a priori assumption (\ref{boot}), then we have 
\begin{equation}\label{diffb1}
\norm{b_1-\tilde{b}_1}_{H^s}\leq C\epsilon^{5/2}+C\epsilon E_s^{1/2}.
\end{equation}
\end{lemma}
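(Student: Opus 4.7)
The plan is to simply combine the six bounds on $B_1,\ldots,B_6$ that the author has already established and then invert the operator $I-\mathcal{H}_\zeta$ using Lemma \ref{realinverse}. Concretely, the identity derived just before the lemma reads
\[
(I-\mathcal{H}_\zeta)(b_1-\tilde b_1) \;=\; B_1+B_2+B_3+B_4+B_5+B_6,
\]
so summing the term-by-term estimates that precede the lemma statement yields
\[
\bigl\|(I-\mathcal{H}_\zeta)(b_1-\tilde b_1)\bigr\|_{H^s}\;\le\; C\epsilon\,\|b_1-\tilde b_1\|_{H^s} \;+\; C\epsilon\,E_s^{1/2}\;+\; C\epsilon^{5/2},
\]
where the first term comes exclusively from $B_{11}$ and the contributions of $B_2,B_3,B_4,B_5,B_6$ are all absorbed into $C\epsilon E_s^{1/2}+C\epsilon^{5/2}$.

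Next, since $b_1-\tilde b_1$ is a real function (both $b_1$ and $\tilde b_1$ are real by construction) and $\zeta$ satisfies the chord-arc condition under the bootstrap assumption \eqref{boot} (as a consequence of Lemma \ref{lemmazeta} together with the existing a priori control on $\tilde\zeta$ and $r_0$), Lemma \ref{realinverse} applies and gives
\[
\|b_1-\tilde b_1\|_{H^s}\;\le\;2\,\bigl\|(I-\mathcal{H}_\zeta)(b_1-\tilde b_1)\bigr\|_{H^s}.
\]
Combining the two displayed inequalities,
\[
\|b_1-\tilde b_1\|_{H^s}\;\le\;2C\epsilon\,\|b_1-\tilde b_1\|_{H^s}\;+\;2C\epsilon\,E_s^{1/2}\;+\;2C\epsilon^{5/2}.
\]
Choosing $\epsilon$ small enough that $2C\epsilon\le\tfrac12$, the first term on the right can be absorbed into the left-hand side, yielding the claimed bound
\[
\|b_1-\tilde b_1\|_{H^s}\;\le\; C\epsilon^{5/2}+C\epsilon\,E_s^{1/2}
\]
after relabeling the constant $C$.

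Essentially all the analytic work is already done in the estimates on $B_1,\ldots,B_6$; the lemma itself is only a consolidation. The only subtle point is that $B_{11}$ produces a self-coupling term $C\epsilon\|b_1-\tilde b_1\|_{H^s}$ on the right, which could in principle derail a direct application of Lemma \ref{realinverse}. The main obstacle, if any, is therefore ensuring that the constant in this self-coupling term is universal (independent of $\|b_1-\tilde b_1\|_{H^s}$ itself) so that the smallness of $\epsilon$ suffices to absorb it; this is guaranteed because the factor arose from $\|\zeta_\alpha-1\|_{W^{s-1,\infty}}\le C\epsilon$ in Lemma \ref{lemmazeta}, which depends only on the bootstrap assumption and not on the quantity being estimated.
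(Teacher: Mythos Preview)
Your proposal is correct and follows essentially the same approach as the paper: sum the $B_j$ estimates, invoke Lemma~\ref{realinverse} (using that $b_1-\tilde b_1$ is real), and absorb the self-coupling term $C\epsilon\|b_1-\tilde b_1\|_{H^s}$ by smallness of $\epsilon$. Your write-up is in fact slightly more detailed than the paper's, which compresses the whole argument into two lines.
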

\begin{proof}
From the estimates for $B_j,  j=1,...,m$, we have
$$\norm{b_1-\tilde{b}_1}_{H^s}\leq C\epsilon\norm{b_1-\tilde{b}_1}_{H^s}+C\epsilon^{5/2}+C\epsilon E_s^{1/2}.$$
For $\epsilon$ small such that $C\epsilon<1$, we have 
\begin{equation}
\norm{b_1-\tilde{b}_1}_{H^s}\leq C\epsilon^{5/2}+C\epsilon E_s^{1/2}.
\end{equation}
\end{proof}
\begin{corollary}\label{corollaryb}
Under the assumptions of lemma \ref{bdiff}, we have
\begin{equation}
    \|b_1\|_{H^s}\leq C\epsilon^{3/2},\quad \quad \norm{b}_{W^{s-1,\infty}}\leq C\epsilon^{2}.
\end{equation}
\end{corollary}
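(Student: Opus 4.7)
The plan is to prove the corollary as a direct consequence of Lemma \ref{bdiff}, combined with the explicit structure of the approximate solution $\tilde{b}_1$ from Section 5 and the periodic long-time theory of Section 3. Both estimates split along the natural decomposition $b=b_0+b_1$ and $b_1=\tilde{b}_1+(b_1-\tilde{b}_1)$: the periodic piece is handled by the bounds (\ref{wanli})--(\ref{wuqiong}); the main order of the vanishing piece is $\tilde{b}_1$, bounded directly from its formula; and the residual $b_1-\tilde{b}_1$ is controlled by Lemma \ref{bdiff} under the a priori assumption (\ref{boot}).

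For the first estimate $\|b_1\|_{H^s}\leq C\epsilon^{3/2}$, note that the formula for $\tilde{b}_1$ in Section 5 expresses it as a polynomial in $B-B_0$, $\bar{B}-\bar{B}_0$ and their derivatives, modulated by $e^{\pm i\phi}$, with an overall $\epsilon^2$ prefactor and all slow dependence entering through $X=\epsilon(\alpha+\tfrac{1}{2\gamma}t)$. The rescaling $\alpha\mapsto X$ converts $\|\cdot\|_{H^s(\RR_\alpha)}$ into $\epsilon^{-1/2}\|\cdot\|_{H^s(\RR_X)}$, which combined with the $\epsilon^2$ prefactor gives $\|\tilde{b}_1\|_{H^s}\leq C\epsilon^{3/2}$ (this is the content of \S 5). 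Lemma \ref{bdiff} furnishes $\|b_1-\tilde{b}_1\|_{H^s}\leq C(\epsilon^{5/2}+\epsilon E_s^{1/2})$; since the bootstrap (\ref{boot}) gives $E_s^{1/2}\leq \epsilon$, this is bounded by $C\epsilon^2$. The triangle inequality then yields the claim.

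For the second estimate $\|b\|_{W^{s-1,\infty}}\leq C\epsilon^2$, treat each piece of $b=b_0+\tilde{b}_1+(b_1-\tilde{b}_1)$ separately. The periodic bound (\ref{wuqiong}) gives $\|b_0\|_{W^{s'-1,\infty}(\mathbb{T})}\leq C\epsilon^2$. For $\tilde{b}_1$, observe that an $\alpha$-derivative of the modulated factor $e^{\pm i\phi}$ only produces an $O(1)$ constant (the frequencies $k$ and $\sqrt{k}$ are $O(1)$), while an $\alpha$-derivative of $B(X,T)$ or $\bar{B}(X,T)$ extracts an additional $\epsilon$ from the slow scaling; no derivative can degrade the overall $\epsilon^2$ prefactor, so $\|\tilde{b}_1\|_{W^{s-1,\infty}}\leq C\epsilon^2$. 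For the remainder, Sobolev embedding $H^s\hookrightarrow W^{s-1,\infty}$ combined with the $H^s$ bound from the previous paragraph gives $\|b_1-\tilde{b}_1\|_{W^{s-1,\infty}}\leq C\|b_1-\tilde{b}_1\|_{H^s}\leq C\epsilon^2$. Summing proves the corollary.

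The main obstacle here is not analytic but bookkeeping. All the ingredients are already in hand — Lemma \ref{bdiff}, the explicit formula for $\tilde{b}_1$, and the periodic estimate (\ref{wuqiong}) — so the task is to verify that the $\epsilon^2$ gain in $\tilde{b}_1$ (and in $b_1-\tilde{b}_1$ after using the bootstrap) is not lost under differentiation or the $L^\infty$ embedding. The key observation that differentiating $e^{\pm i\phi}$ only produces $O(1)$ factors while differentiating the envelope $B(X,T)$ extracts additional $\epsilon$'s is what ensures the $W^{s-1,\infty}$ norm of $\tilde{b}_1$ stays at the envelope size $\epsilon^2$ rather than inflating.
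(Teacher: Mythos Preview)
Your proof is correct and follows exactly the approach the paper has in mind: the corollary is stated without proof because it is an immediate consequence of the triangle inequality applied to $b_1=\tilde b_1+(b_1-\tilde b_1)$ and $b=b_0+b_1$, using the explicit bounds $\|\tilde b_1\|_{H^s}\le C\epsilon^{3/2}$, $\|\tilde b\|_{W^{s,\infty}}\le C\epsilon^2$ stated at the top of \S8.3, the periodic estimate (\ref{wuqiong}) for $b_0$, Lemma~\ref{bdiff} combined with the bootstrap $E_s^{1/2}\le\epsilon$, and the Sobolev embedding $H^s\hookrightarrow W^{s-1,\infty}$.
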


\begin{corollary}\label{corollaryDtzeta}
Under the assumptions of lemma \ref{bdiff}, we have 
\begin{equation}
     \norm{D_t\zeta}_{W^{s-1,\infty}}\leq C\epsilon.
\end{equation}
\end{corollary}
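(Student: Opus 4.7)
The statement is a direct bookkeeping consequence of the estimates already compiled in \S\ref{bounds}, and I would obtain it by splitting $D_t\zeta=\partial_t\zeta+b\,\zeta_{\alpha}$ and handling the two pieces independently. For the transport piece $b\,\zeta_{\alpha}$, Corollary~\ref{corollaryb} gives $\|b\|_{W^{s-1,\infty}}\leq C\epsilon^{2}$ and Lemma~\ref{lemmazeta} gives $\|\zeta_{\alpha}-1\|_{W^{s-1,\infty}}\leq C\epsilon$, so the algebra property of $W^{s-1,\infty}$ yields $\|b\,\zeta_{\alpha}\|_{W^{s-1,\infty}}\leq C\epsilon^{2}$, which is absorbed in the $O(\epsilon)$ bound we are after.

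For the time derivative $\partial_{t}\zeta$, I would use the decomposition $\zeta=\tilde{\zeta}+r_{0}+r_{1}$ and estimate each term separately. The term $\partial_{t}\tilde{\zeta}$ is computed directly from the explicit ansatz of Section~5: since $\tilde{\zeta}-\alpha=\epsilon B e^{i\phi}+O(\epsilon^{2})$ with $B=B(X,T)$, $X=\epsilon(\alpha+\tfrac{1}{2\gamma}t)$, $T=\epsilon^{2}t$, we get $\partial_{t}\tilde{\zeta}=i\gamma\epsilon B e^{i\phi}+O(\epsilon^{2})$, whose $W^{s-1,\infty}$ norm is controlled by $\|B\|_{L^{\infty}}+\|B_{1}\|_{H^{s+7}}$ and is therefore $O(\epsilon)$. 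For $\partial_{t}r_{0}$, I would write $\partial_{t}r_{0}=(D_{t}^{0}\omega-\tilde{D}_{t}^{0}\tilde{\omega})-(b_{0}-\tilde{b}_{0})\omega_{\alpha}-\tilde{b}_{0}(r_{0})_{\alpha}$; the first difference is $O(\epsilon^{3/2})$ in $W^{s-1,\infty}$ by Theorem~\ref{errorperiodic}, the second is $O(\epsilon^{3})$ by \eqref{A0difference}, and the third is $O(\epsilon^{7/2})$, altogether $O(\epsilon^{3/2})$.

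The remaining piece $\partial_{t}r_{1}$ is handled by writing $\partial_{t}r_{1}=D_{t}r_{1}-b\,(r_{1})_{\alpha}$; the bootstrap assumption \eqref{boot} gives $\|D_{t}r_{1}\|_{H^{s}}\leq\epsilon$ and hence $\|D_{t}r_{1}\|_{W^{s-1,\infty}}\leq C\epsilon$ by Sobolev embedding (since $s\geq 4$), while $\|b\,(r_{1})_{\alpha}\|_{W^{s-1,\infty}}\leq C\epsilon^{3}$ again by Corollary~\ref{corollaryb} and \eqref{boot}. Summing the three contributions yields $\|\partial_{t}\zeta\|_{W^{s-1,\infty}}\leq C\epsilon$, and combining with the $O(\epsilon^{2})$ bound on $b\,\zeta_{\alpha}$ gives the claim. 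There is no real obstacle here: the only mild care needed is tracking which estimates live in $W^{s-1,\infty}$ versus $H^{s}$, and ensuring that the Sobolev embedding loss of a half derivative is affordable, which it is since $s\geq 4$ and the relevant periodic quantities already carry $s_{0}>3/2$ extra derivatives.
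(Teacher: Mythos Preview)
Your argument is correct and uses the same ingredients as the paper, with only a cosmetic difference in the order of decomposition. The paper applies $D_t$ directly to the splitting $\zeta=\tilde{\zeta}+r_0+r_1$ and then rewrites $D_t r_0=D_t^0 r_0+(b-b_0)\partial_{\alpha}r_0$ and $D_t\tilde{\zeta}=\tilde{D}_t\tilde{\zeta}+(b-\tilde{b})\partial_{\alpha}\tilde{\zeta}$, whereas you first peel off $b\,\zeta_{\alpha}$ and then decompose $\partial_t\zeta$; the pieces you end up estimating and the bounds you invoke (the bootstrap for $D_t r_1$, Theorem~\ref{errorperiodic} for the periodic remainder, the explicit size of $\tilde{D}_t\tilde{\zeta}$, and Corollary~\ref{corollaryb} for $b$) are identical.
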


\begin{proof}We write $D_t\zeta$ as 
\begin{equation}
\begin{split}
D_t\zeta=& D_tr_1+D_tr_0+D_t\tilde{\zeta}\\
=&  D_tr_1+D_t^0r_0+(b-b_0)\partial_{\alpha}r_0+\tilde{D}_t\tilde{\zeta}+(b-\tilde{b})\partial_{\alpha}\tilde{\zeta}.
\end{split}
\end{equation}
So we have 
\begin{equation}\label{Dtzeta}
\norm{D_t\zeta}_{W^{s-1,\infty}}\leq C\epsilon+CE_s^{1/2}\leq C\epsilon.
\end{equation}
\end{proof}

\subsection{Bound $D_tb_1$ and $D_t(b_1-\tilde{b}_1)$} From the definition of $\tilde{D}_t\tilde{b}_1$, it's easy to obtain that 
\begin{equation}
    \norm{\tilde{D}_t\tilde{b}_1}_{H^s}\leq C\epsilon^{3/2},\quad \quad \norm{\tilde{D}_t\tilde{b}_1}_{W^{s,\infty}}\leq C\epsilon^{2}.
\end{equation}

To estimate $D_t(b_1-\tilde{b}_1)$, we need to derive a  formula for $D_t(b_1-\tilde{b}_1)$.  Since $D_t(b_1-\tilde{b}_1)$ is real,  it suffices to estimate $(I-\mathcal{H}_{\zeta})D_t(b_1-\tilde{b}_1)$.  We have 
\begin{align*}
(I-\mathcal{H}_{\zeta})D_t(b_1-\tilde{b}_1)=& \Big((I-\mathcal{H}_{\zeta})D_tb_1-(I-\mathcal{H}_{\tilde{\zeta}})\tilde{D}_t\tilde{b}_1\Big)\\
&+(\mathcal{H}_{\zeta}-\mathcal{H}_{\tilde{\zeta}})\tilde{D}_t\tilde{b}_1-(I-\mathcal{H}_{\zeta})(b_1-\tilde{b}_1)\partial_{\alpha}\tilde{b}_1.
\end{align*}
For $(I-\mathcal{H}_{\zeta})D_tb_1$, we have a formula given by (\ref{Dtb1}).  Since $\tilde{b}_1, \tilde{b}_0, \tilde{\zeta}, \tilde{\omega}$ approximate $b_1, b_0, \zeta, \omega$ to the order $O(\epsilon^4)$, respectively,  we have the following formula for $(I-\mathcal{H}_{\tilde{\zeta}})\tilde{D}_t\tilde{b}_1$:

\begin{equation}\label{tildeDtb1}
\begin{split}
& (I-\mathcal{H}_{\tilde{\zeta}})\tilde{D}_t\tilde{b}\\
=& [\tilde{D}_t\tilde{\zeta},\mathcal{H}_{\tilde{\zeta}}]\frac{\partial_{\alpha}(2\tilde{b}-\tilde{D}_t\bar{\tilde{\zeta}})}{\tilde{\zeta}_{\alpha}}-[\tilde{D}_t^0\tilde{\omega},\mathcal{H}_{\tilde{\omega}}]\frac{\partial_{\alpha}(2\tilde{b}_0-\tilde{D}_t^0\bar{\tilde{\omega}})}{\tilde{\zeta}_{\alpha}}-[\tilde{D}_t^2\tilde{\zeta},\mathcal{H}_{\tilde{\zeta}}]\frac{\bar{\tilde{\zeta}}_{\alpha}-1}{\tilde{\zeta}_{\alpha}}+[(\tilde{D}_t^0)^2\tilde{\omega},\mathcal{H}_{\tilde{\omega}}]\frac{\bar{\tilde{\omega}}_{\alpha}-1}{\tilde{\omega}_{\alpha}}\\
&+\frac{1}{\pi i}\int \Big(\frac{\tilde{D}_t\tilde{\zeta}(\alpha)-\tilde{D}_t\tilde{\zeta}(\beta)}{\tilde{\zeta}(\alpha)-\tilde{\zeta}(\beta)}\Big)^2 (\bar{\tilde{\zeta}}_{\beta}(\beta)-1)d\beta-\frac{1}{\pi i}\int \Big(\frac{\tilde{D}_t^0\tilde{\omega}(\alpha)-\tilde{D}_t^0\tilde{\omega}(\beta)}{\tilde{\omega}(\alpha)-\tilde{\omega}(\beta)}\Big)^2 (\bar{\tilde{\omega}}_{\beta}(\beta)-1)d\beta\\
&+ (\mathcal{H}_{\tilde{\zeta}}-\mathcal{H}_{\tilde{\omega}})\tilde{D}_t^0\tilde{b}_0-(I-\mathcal{H}_{\tilde{\zeta}})\tilde{b}_1\partial_{\alpha}\tilde{b}_0+\epsilon^4\mathcal{R},
\end{split}
\end{equation}
where $\epsilon^4\mathcal{R}$ satisfies
$||\epsilon^4\mathcal{R}||_{H^{s+7}}\leq C\epsilon^{7/2}.$
So we have 
\begin{align*}
&(I-\mathcal{H}_{\zeta})D_t(b_1-\tilde{b}_1):=\sum_{m=1}^6 F_m.
\end{align*}
where each $F_m$ are given as follows:

\vspace*{2ex}

\begin{equation}
\begin{split}
F_1=& [D_t\zeta,\mathcal{H}_{\zeta}]\frac{\partial_{\alpha}(2b-D_t\bar{\zeta})}{\zeta_{\alpha}}-  [\tilde{D}_t\tilde{\zeta},\mathcal{H}_{\tilde{\zeta}}]\frac{\partial_{\alpha}(2\tilde{b}-\tilde{D}_t\bar{\tilde{\zeta}})}{\tilde{\zeta}_{\alpha}}\\
&-[D_t^0\omega,\mathcal{H}_{\omega}]\frac{\partial_{\alpha}(2b_0-D_t^0\bar{\omega})}{\zeta_{\alpha}}+[\tilde{D}_t^0\tilde{\omega},\mathcal{H}_{\tilde{\omega}}]\frac{\partial_{\alpha}(2\tilde{b}_0-\tilde{D}_t^0\bar{\tilde{\omega}})}{\tilde{\zeta}_{\alpha}}.
\end{split}
\end{equation}

\begin{equation}
F_2= -[D_t^2\zeta,\mathcal{H}_{\zeta}]\frac{\bar{\zeta}_{\alpha}-1}{\zeta_{\alpha}}+[\tilde{D}_t^2\tilde{\zeta},\mathcal{H}_{\tilde{\zeta}}]\frac{\bar{\tilde{\zeta}}_{\alpha}-1}{\tilde{\zeta}_{\alpha}}+ [(D_t^0)^2\omega,\mathcal{H}_{\omega}]\frac{\bar{\omega}_{\alpha}-1}{\omega_{\alpha}}-[(\tilde{D}_t^0)^2\tilde{\omega},\mathcal{H}_{\tilde{\omega}}]\frac{\bar{\tilde{\omega}}_{\alpha}-1}{\tilde{\omega}_{\alpha}}.
\end{equation}

\begin{equation}
\begin{split}
F_3=&\frac{1}{\pi i}\int \Big(\frac{D_t\zeta(\alpha)-D_t\zeta(\beta)}{\zeta(\alpha)-\zeta(\beta)}\Big)^2 (\bar{\zeta}_{\beta}(\beta)-1)d\beta-\frac{1}{\pi i}\int \Big(\frac{\tilde{D}_t\tilde{\zeta}(\alpha)-\tilde{D}_t\tilde{\zeta}(\beta)}{\tilde{\zeta}(\alpha)-\tilde{\zeta}(\beta)}\Big)^2 (\bar{\tilde{\zeta}}_{\beta}(\beta)-1)d\beta\\
&-\frac{1}{\pi i}\int \Big(\frac{D_t^0\omega(\alpha)-D_t^0\omega(\beta)}{\omega(\alpha)-\omega(\beta)}\Big)^2 (\bar{\omega}_{\beta}(\beta)-1)d\beta+\frac{1}{\pi i}\int \Big(\frac{\tilde{D}_t^0\tilde{\omega}(\alpha)-\tilde{D}_t^0\tilde{\omega}(\beta)}{\tilde{\omega}(\alpha)-\tilde{\omega}(\beta)}\Big)^2 (\bar{\tilde{\omega}}_{\beta}(\beta)-1)d\beta
\end{split}
\end{equation}

\begin{equation}
F_4= (\mathcal{H}_{\zeta}-\mathcal{H}_{\omega})D_t^0b_0- (\mathcal{H}_{\tilde{\zeta}}-\mathcal{H}_{\tilde{\omega}})\tilde{D}_t^0\tilde{b}_0.
\end{equation}

\begin{equation}
F_5=-(I-\mathcal{H}_{\zeta})b_1\partial_{\alpha}b_0+(I-\mathcal{H}_{\tilde{\zeta}})\tilde{b}_1\partial_{\alpha}\tilde{b}_0.
\end{equation}

\begin{equation}
F_6=\epsilon^4\mathcal{R}
\end{equation}

\subsubsection{Estimate $\norm{F_1}_{H^s}$.} We write $F_1$ as 
\begin{align*}
F_1=& [D_t\zeta-\tilde{D}_t\tilde{\zeta}, \mathcal{H}_{\zeta}\frac{1}{\zeta_{\alpha}}]\partial_{\alpha}(2b-D_t\bar{\zeta})+[\tilde{D}_t\tilde{\zeta},\mathcal{H}_{\zeta}\frac{1}{\zeta_{\alpha}}-\mathcal{H}_{\tilde{\zeta}}\frac{1}{\tilde{\zeta}}]\partial_{\alpha}(2b-D_t\bar{\zeta})
\\
&+[\tilde{D}_t\tilde{\zeta},\mathcal{H}_{\tilde{\zeta}}\frac{1}{\tilde{\zeta}_{\alpha}}]\partial_{\alpha}(2(b-\tilde{b})-(D_t\bar{\zeta}-\tilde{D}_t\bar{\tilde{\zeta}}))\\
&-\Big\{ [D_t^0\omega-\tilde{D}_t^0\tilde{\omega}, \mathcal{H}_{\omega}\frac{1}{\omega_{\alpha}}]\partial_{\alpha}(2b_0-D_t^0\bar{\omega})+[\tilde{D}_t^0\tilde{\omega},\mathcal{H}_{\omega}\frac{1}{\omega_{\alpha}}-\mathcal{H}_{\tilde{\omega}}\frac{1}{\tilde{\omega}}]\partial_{\alpha}(2b_0-D_t^0\bar{\omega})
\\
&+[\tilde{D}_t^0\tilde{\omega},\mathcal{H}_{\tilde{\omega}}\frac{1}{\tilde{\omega}_{\alpha}}]\partial_{\alpha}(2(b_0-\tilde{b}_0)-(D_t^0\bar{\omega}-\tilde{D}_t^0\bar{\tilde{\omega}}))\Big\}\\
=&  \Big\{[D_t\zeta-\tilde{D}_t\tilde{\zeta}, \mathcal{H}_{\zeta}\frac{1}{\zeta_{\alpha}}]\partial_{\alpha}(2b-D_t\bar{\zeta})- [D_t^0\omega-\tilde{D}_t^0\tilde{\omega}, \mathcal{H}_{\omega}\frac{1}{\omega_{\alpha}}]\partial_{\alpha}(2b_0-D_t^0\bar{\omega})\Big\}\\
&+\Big\{[\tilde{D}_t\tilde{\zeta},\mathcal{H}_{\zeta}\frac{1}{\zeta_{\alpha}}-\mathcal{H}_{\tilde{\zeta}}\frac{1}{\tilde{\zeta}}]\partial_{\alpha}(2b-D_t\bar{\zeta})-[\tilde{D}_t^0\tilde{\omega},\mathcal{H}_{\omega}\frac{1}{\omega_{\alpha}}-\mathcal{H}_{\tilde{\omega}}\frac{1}{\tilde{\omega}}]\partial_{\alpha}(2b_0-D_t^0\bar{\omega})\Big\}\\
&+\Big\{[\tilde{D}_t\tilde{\zeta},\mathcal{H}_{\tilde{\zeta}}\frac{1}{\tilde{\zeta}_{\alpha}}]\partial_{\alpha}(2(b-\tilde{b})-(D_t\bar{\zeta}-\tilde{D}_t\bar{\tilde{\zeta}}))-[\tilde{D}_t^0\tilde{\omega},\mathcal{H}_{\tilde{\omega}}\frac{1}{\tilde{\omega}_{\alpha}}]\partial_{\alpha}(2(b_0-\tilde{b}_0)-(D_t^0\bar{\omega}-\tilde{D}_t^0\bar{\tilde{\omega}}))\Big\}\\
:=& F_{11}+F_{12}+F_{13}.
\end{align*}
The estimates for $F_{11}$, $F_{12}$ and $F_{13}$ are similar, so we give the details of $F_{11}$ only. We rewrite $F_{11}$ as
\begin{equation}
\begin{split}
F_{11}=& [D_t\zeta-\tilde{D}_t\tilde{\zeta}-(D_t^0\omega-\tilde{D}_t^0\tilde{\omega}), \mathcal{H}_{\zeta}\frac{1}{\zeta_{\alpha}}]\partial_{\alpha}(2b-D_t\bar{\zeta})\\
&+[D_t^0\omega-\tilde{D}_t^0\tilde{\omega}, \mathcal{H}_{\zeta}\frac{1}{\zeta_{\alpha}}-\mathcal{H}_{\omega}\frac{1}{\omega_{\alpha}}]\partial_{\alpha}(2b-D_t\bar{\zeta})\\
&+[D_t^0\omega-\tilde{D}_t^0\tilde{\omega}, \mathcal{H}_{\omega}\frac{1}{\omega_{\alpha}}]\partial_{\alpha}(2(b-b_0)-(D_t\bar{\zeta}-D_t^0\bar{\omega}))\\
:=& F_{111}+F_{112}+F_{113}.
\end{split}
\end{equation}
For $F_{111}$, use (\ref{diffdiff}), 
\begin{equation}
\begin{split}
&D_t\zeta-\tilde{D}_t\tilde{\zeta}-(D_t^0\omega-\tilde{D}_t^0\tilde{\omega})= (b_1-\tilde{b}_1)(\tilde{\zeta}_{\alpha}+(r_0)_{\alpha})+\tilde{b}_1(r_0)_{\alpha}+D_tr_1.
\end{split}
\end{equation}

Use (\ref{Dtzeta}),  (\ref{diffb1}), and Corollary \ref{corollaryb}, we have 
\begin{equation}
\begin{split}
\norm{F_{111}}_{H^s}\leq & C\norm{D_t\zeta-\tilde{D}_t\tilde{\zeta}-(D_t^0\omega-\tilde{D}_t^0\tilde{\omega})}_{H^s}\norm{2b-D_t\bar{\zeta}}_{W^{s-1,\infty}}\\
\leq & \norm{ (b_1-\tilde{b}_1)(\tilde{\zeta}_{\alpha}+(r_0)_{\alpha})+\tilde{b}_1(r_0)_{\alpha}+D_tr_1}_{H^s}\norm{2b-D_t\bar{\zeta}}_{W^{s-1,\infty}}\\
\leq &C\Big( (\epsilon^{5/2}+\epsilon E_s^{1/2})+\epsilon^{5/2}+E_s^{1/2}\Big)\epsilon\\
\leq & C\epsilon^{5/2}+C\epsilon E_s^{1/2}.
\end{split}
\end{equation}
For $F_{112}$, use proposition \ref{singular},  (\ref{diffb1}), (\ref{corollaryb}), (\ref{Dtzeta}),  it's easy to obtain the estimate
\begin{equation}
\norm{F_{112}}_{H^s}+\norm{F_{113}}_{H^s}\leq C\epsilon^{5/2}+C\epsilon E_s^{1/2}.
\end{equation}
So we obtain the estimate
\begin{equation}\label{F11}
\norm{F_{11}}_{H^s}\leq C\epsilon^{5/2}+C\epsilon E_s^{1/2}.
\end{equation}
Estimates for $F_{12}, F_{13}$ are similar to that of $F_{11}$, we obtain
\begin{equation}
\norm{F_{12}}_{H^s}+\norm{F_{13}}_{H^s}\leq C\epsilon^{5/2}+C\epsilon E_s^{1/2}.
\end{equation}
So we have 
\begin{equation}
\norm{F_{1}}_{H^s}\leq C\epsilon^{5/2}+C\epsilon E_s^{1/2}.
\end{equation}

\subsubsection{Estimate $\norm{F_2}_{H^s}$.}  We rewrite $F_2$ as
\begin{equation}
\begin{split}
 F_2=&-[D_t^2\zeta-\tilde{D}_t^2\tilde{\zeta},\mathcal{H}_{\zeta}\frac{1}{\zeta_{\alpha}}](\bar{\zeta}_{\alpha}-1)-[\tilde{D}_t^2\tilde{\zeta}, \mathcal{H}_{\zeta}\frac{1}{\zeta_{\alpha}}-\mathcal{H}_{\tilde{\zeta}}\frac{1}{\tilde{\zeta}_{\alpha}}](\bar{\zeta}_{\alpha}-1)-[\tilde{D}_t^2\tilde{\zeta},\mathcal{H}_{\tilde{\zeta}}\frac{1}{\tilde{\zeta}_{\alpha}}]\partial_{\alpha}(\bar{\zeta}-\bar{\tilde{\zeta}})\\
&+[(D_t^0)^2\omega-(\tilde{D}_t^0)^2\tilde{\omega},\mathcal{H}_{\omega}\frac{1}{\omega_{\alpha}}](\bar{\omega}_{\alpha}-1)+[(\tilde{D}_t^0)^2\tilde{\omega},\mathcal{H}_{\omega}\frac{1}{\omega_{\alpha}}-\mathcal{H}_{\tilde{\omega}}\frac{1}{\tilde{\omega}}](\bar{\omega}_{\alpha}-1)\\
&+[(\tilde{D}_t^0)^2\tilde{\omega}, \mathcal{H}_{\tilde{\omega}}\frac{1}{\tilde{\omega}_{\alpha}}](\bar{\omega}_{\alpha}-\bar{\tilde{\omega}}_{\alpha})\\
:=& \sum_{m=1}^3 F_{2m},
\end{split}
\end{equation}
where

\begin{equation}
F_{21}=-[D_t^2\zeta-\tilde{D}_t^2\tilde{\zeta},\mathcal{H}_{\zeta}\frac{1}{\zeta_{\alpha}}](\bar{\zeta}_{\alpha}-1)+[(D_t^0)^2\tilde{\omega}-(\tilde{D}_t^0)^2\omega,\mathcal{H}_{\omega}\frac{1}{\omega_{\alpha}}](\bar{\omega}_{\alpha}-1).
\end{equation}

\begin{equation}
F_{22}=-[\tilde{D}_t^2\tilde{\zeta}, \mathcal{H}_{\zeta}\frac{1}{\zeta_{\alpha}}-\mathcal{H}_{\tilde{\zeta}}\frac{1}{\tilde{\zeta}_{\alpha}}](\bar{\zeta}_{\alpha}-1)+[(\tilde{D}_t^0)^2\tilde{\omega},\mathcal{H}_{\omega}\frac{1}{\omega_{\alpha}}-\mathcal{H}_{\tilde{\omega}}\frac{1}{\tilde{\omega}}](\bar{\omega}_{\alpha}-1)
\end{equation}

\begin{equation}
F_{23}=-[\tilde{D}_t^2\tilde{\zeta},\mathcal{H}_{\tilde{\zeta}}\frac{1}{\tilde{\zeta}_{\alpha}}]\partial_{\alpha}(\bar{\zeta}-\bar{\tilde{\zeta}})+[(\tilde{D}_t^0)^2\tilde{\omega}, \mathcal{H}_{\tilde{\omega}}\frac{1}{\tilde{\omega}_{\alpha}}](\bar{\omega}_{\alpha}-\bar{\tilde{\omega}}_{\alpha}).
\end{equation}

\noindent The estimates for $F_{21}, F_{22}, F_{23}$ are similar, so we give the details of estimates of $F_{21}$ only. We rewrite $F_{21}$ as 
\begin{align*}
F_{21}=& -[D_t^2\zeta-\tilde{D}_t^2\tilde{\zeta}-(D_t^0)^2\omega+(\tilde{D}_t^0)^2\tilde{\omega},\mathcal{H}_{\zeta}\frac{1}{\zeta_{\alpha}}](\bar{\zeta}_{\alpha}-1)\\
&-[(D_t^0)^2\omega-(\tilde{D}_t^0)^2\tilde{\omega}, \mathcal{H}_{\zeta}\frac{1}{\zeta_{\alpha}}-\mathcal{H}_{\omega}\frac{1}{\omega_{\alpha}}](\bar{\zeta}_{\alpha}-1)\\
&-[(D_t^0)^2\omega-(\tilde{D}_t^0)^2\omega, \mathcal{H}_{\omega}\frac{1}{\omega_{\alpha}}](\bar{\zeta}_{\alpha}-\bar{\omega}_{\alpha})\\
:=& \sum_{m=1}^3 F_{21m}.
\end{align*}
To estimate $F_{211}$,  we rewrite $D_t^2\zeta-\tilde{D}_t^2\tilde{\zeta}-(D_t^0)^2\omega+(\tilde{D}_t^0)^2\tilde{\omega}$ as 
\begin{equation}\label{seconddiff}
\begin{split}
&D_t^2\zeta-\tilde{D}_t^2\tilde{\zeta}-(D_t^0)^2\omega+(\tilde{D}_t^0)^2\tilde{\omega}\\
=& D_t^2(\xi_1+\omega)-\tilde{D}_t^2(\tilde{\xi}_1+\tilde{\omega})-(D_t^0)^2\omega+(\tilde{D}_t^0)^2\tilde{\omega}\\
=& D_t^2\xi_1+(D_t^2-(D_t^0)^2)\omega-\tilde{D}_t^2\tilde{\xi}_1-((D_t^0)^2-(\tilde{D}_t^0)^2)\tilde{\omega}\\
=& D_t^2r_1+(D_t^2-\tilde{D}_t^2)\tilde{\xi}_1+(D_t^2-(D_t^0)^2)\omega-((D_t^0)^2-(\tilde{D}_t^0)^2)\tilde{\omega}\\
\end{split}
\end{equation}

\begin{lemma}\label{lalala}
Assume the a priori assumption (\ref{boot}). We have 
\begin{equation}
\begin{split}
&\norm{(D_t^2-\tilde{D}_t^2)\tilde{\xi}_1}_{H^s}+\norm{(D_t^2-(D_t^0)^2)\omega}_{H^s}+\norm{((D_t^0)^2-(\tilde{D}_t^0)^2)\tilde{\omega}}_{H^s}\\
\leq  & C\epsilon^{5/2}+C\epsilon \norm{D_t(b_1-\tilde{b}_1)}_{H^s}.
\end{split}
\end{equation}
where 
$$C=C(\norm{D_t^0\omega}_{H^{s'}(\mathbb{T})}, \norm{\tilde{D}\tilde{\xi}_1}_{H^{s}(\RR)})$$
for $s'>s+\frac{3}{2}$.

\end{lemma}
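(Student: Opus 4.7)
The plan is to expand each of the three material-derivative-squared differences using the three key identities $D_t - \tilde{D}_t = (b-\tilde{b})\partial_\alpha$, $D_t - D_t^0 = b_1\partial_\alpha$, and $D_t^0 - \tilde{D}_t^0 = (b_0 - \tilde{b}_0)\partial_\alpha$, then apply the bounds already established (Theorem \ref{errorperiodic}, Lemma \ref{bdiff}, Corollary \ref{corollaryb}, Theorem \ref{longperiodic}) together with the wave-packet structure of the approximate solution.

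For $(D_t^2 - \tilde{D}_t^2)\tilde{\xi}_1$, writing $D_t^2 - \tilde{D}_t^2 = D_t(D_t-\tilde{D}_t) + (D_t-\tilde{D}_t)\tilde{D}_t$ and applying to $\tilde{\xi}_1$ produces three terms of the form $(D_t(b-\tilde{b}))\partial_\alpha\tilde{\xi}_1$, $(b-\tilde{b})D_t\partial_\alpha\tilde{\xi}_1$, and $(b-\tilde{b})\partial_\alpha\tilde{D}_t\tilde{\xi}_1$. Because $\tilde{\xi}_1 = \epsilon B_1 e^{i\phi} + O(\epsilon^2)$ with envelope depending only on the slow variables $X = \epsilon(\alpha + t/(2\sqrt{k}))$ and $T = \epsilon^2 t$, each factor $\partial_\alpha^j \tilde{D}_t^k \tilde{\xi}_1$ is $O(\epsilon)$ in $W^{s,\infty}$. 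Combined with $\|b-\tilde{b}\|_{H^s} \lesssim \epsilon^2$ from Lemma \ref{bdiff} and the bound on $D_t(b_0-\tilde{b}_0)$ coming from (\ref{A0difference}), this produces a contribution of $C\epsilon^{5/2} + C\epsilon\norm{D_t(b_1-\tilde{b}_1)}_{H^s}$.

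For $((D_t^0)^2 - (\tilde{D}_t^0)^2)\tilde{\omega}$, everything is periodic, so we can work in $H^s(\mathbb{T})$. The expansion produces terms of the form $(b_0-\tilde{b}_0)\partial_\alpha D_t^0 \tilde{\omega}$ and $\tilde{D}_t^0((b_0-\tilde{b}_0)\tilde{\omega}_\alpha)$, all of which are controlled by $\|b_0-\tilde{b}_0\|_{W^{s,\infty}} \lesssim \epsilon^3$ (from (\ref{A0difference})) paired with periodic quantities whose size is at most $O(1)$; this yields a bound of $O(\epsilon^3)$, comfortably within $\epsilon^{5/2}$. For $(D_t^2 - (D_t^0)^2)\omega$, the expansion reads $(D_tb_1)\omega_\alpha + b_1 D_t\omega_\alpha + b_1\partial_\alpha D_t^0\omega$. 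The two terms with undifferentiated $b_1$ are bounded by $\|b_1\|_{H^s}\cdot\|\cdot\|_{W^{s,\infty}} \lesssim \epsilon^{3/2}\cdot\epsilon = \epsilon^{5/2}$ via Corollary \ref{corollaryb} and the smallness of $D_t^0\omega_\alpha$ and $\partial_\alpha D_t^0\omega$ from Theorem \ref{longperiodic}. For the delicate term $(D_tb_1)\omega_\alpha$, split $D_tb_1 = D_t(b_1-\tilde{b}_1) + D_t\tilde{b}_1$; the wave-packet structure of $\tilde{b}_1$ (leading order $-\gamma k\epsilon^2(|B|^2-|B_0|^2)$ with $B = B(X,T)$) forces $\partial_t$ to act on envelopes in the slow variables, producing $\norm{D_t\tilde{b}_1}_{H^s} \lesssim \epsilon^{5/2}$; the remaining contribution is paired with $\omega_\alpha = \tilde\omega_\alpha + (r_0)_\alpha$ and handled via $\norm{\omega_\alpha - 1}_{W^{s,\infty}} \lesssim \epsilon$ from Theorem \ref{longperiodic}.

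The main obstacle is extracting the small factor $\epsilon$ in front of $\norm{D_t(b_1-\tilde{b}_1)}_{H^s}$ in the second term: this relies crucially on combining the wave-packet scaling of $\tilde{b}_1$ — so that $\partial_t\tilde{b}_1$ picks up an extra $\epsilon$ from acting on envelopes in $(X,T)$ — with the smallness of $\omega_\alpha - 1$ coming from the periodic long-time existence result. The rest of the proof is careful bookkeeping: organizing the expansions so that every cross term is a product of a factor controlled by Lemma \ref{bdiff} (or the periodic approximation in Theorem \ref{errorperiodic}) with another factor of size $O(\epsilon)$ supplied by the wave-packet structure of the approximate solution or the periodic water wave solution itself.
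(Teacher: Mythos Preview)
Your approach is essentially the same as the paper's: expand each operator difference as a first-order commutator (e.g.\ $D_t^2-(D_t^0)^2 = (D_tb_1)\partial_\alpha + b_1D_t\partial_\alpha + b_1\partial_\alpha D_t^0$), split $D_tb_1 = D_t(b_1-\tilde b_1) + D_t\tilde b_1$, and invoke Lemma~\ref{bdiff}, Corollary~\ref{corollaryb}, Theorem~\ref{errorperiodic}, and Theorem~\ref{longperiodic}. Your explicit use of the wave-packet scaling to get $\|D_t\tilde b_1\|_{H^s}\lesssim\epsilon^{5/2}$ (the leading $\epsilon^2$ part of $\tilde b_1$ depends only on $(X,T)$, so $\partial_t$ gains an $\epsilon$) is in fact sharper than the $\epsilon^{3/2}$ bound the paper records just above the lemma, and this sharper bound is what is actually needed here.

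One point to correct: in the term $(D_t^2-(D_t^0)^2)\omega$, the piece $(D_t(b_1-\tilde b_1))\omega_\alpha$ is paired with $\omega_\alpha$, not with $\omega_\alpha-1$. Since $\|\omega_\alpha\|_{W^{s,\infty}}\approx 1$, you cannot extract an extra $\epsilon$ from this pairing; the contribution is genuinely $C\,\|D_t(b_1-\tilde b_1)\|_{H^s}$, not $C\epsilon\,\|D_t(b_1-\tilde b_1)\|_{H^s}$. (The paper's own chain of inequalities shows the same thing---the coefficient $C\epsilon$ in the lemma's statement appears to be a slip.) This does not harm the overall argument: the lemma is only applied after multiplication by $\|\zeta_\alpha-1\|_{W^{s-1,\infty}}\lesssim\epsilon$, so the bootstrap for $\|D_t(b_1-\tilde b_1)\|_{H^s}$ still closes with coefficient $C\epsilon<1$. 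By contrast, for the first term $(D_t^2-\tilde D_t^2)\tilde\xi_1$ your extraction of $\epsilon$ is correct, coming from $\|\partial_\alpha\tilde\xi_1\|_{W^{s,\infty}}\lesssim\epsilon$.
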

\begin{proof}
We have
\begin{align*}
&D_t^2-(D_t^0)^2
= D_t(D_t-D_t^0)+(D_t-D_t^0)D_t^0\\
=&D_t(b-b_0)\partial_{\alpha}+(b-b_0)\partial_{\alpha}D_t^0\\
=&(D_tb_1)\partial_{\alpha}+b_1D_t\partial_{\alpha}+b_1\partial_{\alpha}D_t^0.
\end{align*}
 So we have 
\begin{align*}
&\norm{(D_t^2-(D_t^0)^2)\omega}_{H^s}\\
\leq & \norm{(D_tb_1)\partial_{\alpha}\omega}_{H^s}+\norm{b_1D_t\partial_{\alpha}\omega}_{H^s}+\norm{b_1\partial_{\alpha}D_t^0\omega}_{H^s}\\
\leq & \norm{D_tb_1}_{H^s}\norm{\omega_{\alpha}}_{W^{s,\infty}}+\norm{b_1}_{H^s}\norm{\omega_{\alpha}}_{W^{s,\infty}}+\norm{b_1}_{H^s}\norm{\partial_{\alpha}D_t^0\omega}_{W^{s,\infty}}\\
\leq & \norm{D_t(b_1-\tilde{b}_1)}_{H^s}\norm{\omega_{\alpha}}_{W^{s,\infty}}+\norm{D_t \tilde{b}_1}_{H^s}\norm{\omega_{\alpha}}_{W^{s,\infty}}+\norm{b_1}_{H^s}\norm{\omega_{\alpha}}_{W^{s,\infty}}+\norm{b_1}_{H^s}\norm{\partial_{\alpha}D_t^0\omega}_{W^{s,\infty}}\\
\leq & C\epsilon^{5/2}+C\epsilon \norm{D_t(b_1-\tilde{b}_1)}_{H^s}.
\end{align*}
We decompose $D_t^2-(D_t^0)^2$ and $(D_t^0)^2-(\tilde{D}_t^0)^2$ in a similar way. With these decompositions,  the lemma follows easily.
\end{proof}

By (\ref{seconddiff}), lemma \ref{lalala}, and proposition \ref{singular}, we have 
\begin{equation}
\norm{F_{211}}_{H^s}\leq C\epsilon(\epsilon^{5/2}+\epsilon \norm{D_t(b_1-\tilde{b}_1)}_{H^s})\leq C\epsilon^{7/2}+C\epsilon^2\norm{D_t(b_1-\tilde{b}_1)}_{H^s}.
\end{equation}
The estimates for $F_{212}, F_{213}$ are the same and we obtain
\begin{equation}
\norm{F_{212}}_{H^s}+\norm{F_{213}}_{H^s}\leq C\epsilon^{5/2}+C\epsilon^2\norm{D_t(b_1-\tilde{b}_1)}_{H^s}.
\end{equation}
So we obtain
\begin{equation}
\norm{F_{21}}_{H^s}\leq C\epsilon^{5/2}+C\epsilon^2\norm{D_t(b_1-\tilde{b}_1)}_{H^s}.
\end{equation}
Similarly, we have
\begin{equation}
\norm{F_{22}}_{H^s}\leq C\epsilon^{5/2}+C\epsilon E_s^{1/2}+C\epsilon^2\norm{D_t(b_1-\tilde{b}_1)}_{H^s}.
\end{equation}

\begin{equation}
\norm{F_{23}}_{H^s}\leq C\epsilon^{5/2}+C\epsilon E_s^{1/2}+C\epsilon^2\norm{D_t(b_1-\tilde{b}_1)}_{H^s}.
\end{equation}

\noindent So we obtain
\begin{equation}
\norm{F_{2}}_{H^s}\leq C\epsilon^{5/2}+C\epsilon E_s^{1/2}+C\epsilon^2\norm{D_t(b_1-\tilde{b}_1)}_{H^s}.
\end{equation}

\vspace*{2ex}

\noindent Similar to the estimates for $F_1$ and $F_2$, we obtain
\begin{equation}
\norm{F_3}_{H^s}+\norm{F_4}_{H^s}+\norm{F_5}_{H^s}+\norm{F_6}_{H^s}\leq C\epsilon^{5/2}+C\epsilon E_s^{1/2}+C\epsilon^2\norm{D_t(b_1-\tilde{b}_1)}_{H^s}.
\end{equation}

So we obtain

\begin{equation}
\norm{D_t(b_1-\tilde{b}_1)}_{H^s}\leq C\epsilon^{5/2}+C\epsilon E_s^{1/2}+C\epsilon^2\norm{D_t(b_1-\tilde{b}_1)}_{H^s}.
\end{equation}
Therefore,
\begin{equation}\label{Dtb1diff}
\norm{D_t(b_1-\tilde{b}_1)}_{H^s}\leq C\epsilon^{5/2}+C\epsilon E_s^{1/2}\leq C\epsilon^2.
\end{equation}

\subsection{Bound $A_1-\tilde{A}_1$} Indeed, recall that $\tilde{A}_1=0$. We show that $\norm{A_1-\tilde{A}_1}_{H^s}$ can be bounded by $C\epsilon^{5/2}+C\epsilon E_s^{1/2}$.  Since $\tilde{A}_1$ satisfies the formula (\ref{a1a1}) for $A_1$ up to $O(\epsilon^3)$, we have 
\begin{equation}\label{tildea1}
\begin{split}
(I-\mathcal{H}_{\tilde{\zeta}})(\tilde{A}-\tilde{A}_0)=& i[\tilde{D}_t^2\tilde{\zeta},\mathcal{H}_{\tilde{\zeta}}]\frac{\bar{\tilde{\zeta}}_{\alpha}-1}{\tilde{\zeta}_{\alpha}}-i[(\tilde{D}_t^0)^2\tilde{\omega},\mathcal{H}_{\tilde{\omega}}]\frac{\bar{\tilde{\omega}}_{\alpha}-1}{\tilde{\omega}_{\alpha}}\\
&+i[\tilde{D}_t\tilde{\zeta},\mathcal{H}_{\tilde{\zeta}}]\frac{\partial_{\alpha}\tilde{D}_t\bar{\tilde{\zeta}}}{\tilde{\zeta}_{\alpha}}-i[\tilde{D}_t^0\tilde{\omega},\mathcal{H}_{\tilde{\omega}}]\frac{\partial_{\alpha}\tilde{D}_t^0 \bar{\tilde{\omega}}}{\tilde{\omega}_{\alpha}}  \\
&+(\mathcal{H}_{\tilde{\omega}}-\mathcal{H}_{\tilde{\zeta}})(\tilde{A}_0-1) +\epsilon^3\mathcal{R}
\end{split}
\end{equation}
Subtract (\ref{tildea1}) from (\ref{a1a1}), we obtain the following formula for $A_1-\tilde{A}_1$. We group similar terms together and write it in the following manner: 
\begin{align*}
&(I-\mathcal{H}_{\zeta})(A_1-\tilde{A}_1)
= (I-\mathcal{H}_{\zeta})A_1-(I-\mathcal{H}_{\tilde{\zeta}})\tilde{A}_1+(\mathcal{H}_{\zeta}-\mathcal{H}_{\tilde{\zeta}})\tilde{A}_1\\
=& \Big\{ i[D_t^2\zeta,\mathcal{H}_{\zeta}]\frac{\bar{\zeta}_{\alpha}-1}{\zeta_{\alpha}}- i[\tilde{D}_t^2\tilde{\zeta},\mathcal{H}_{\tilde{\zeta}}]\frac{\bar{\tilde{\zeta}}_{\alpha}-1}{\tilde{\zeta}_{\alpha}} -i[(D_t^0)^2\omega,\mathcal{H}_{\omega}]\frac{\bar{\omega}_{\alpha}-1}{\omega_{\alpha}}+i[(\tilde{D}_t^0)^2\tilde{\omega},\mathcal{H}_{\tilde{\omega}}]\frac{\bar{\tilde{\omega}}_{\alpha}-1}{\tilde{\omega}_{\alpha}}\Big\} \\
&+\Big\{i[D_t\zeta,\mathcal{H}_{\zeta}]\frac{\partial_{\alpha}D_t\bar{\zeta}}{\zeta_{\alpha}}-i[\tilde{D}_t\tilde{\zeta},\mathcal{H}_{\tilde{\zeta}}]\frac{\partial_{\alpha}\tilde{D}_t\bar{\tilde{\zeta}}}{\tilde{\zeta}_{\alpha}}-i[D_t^0\omega,\mathcal{H}_{\omega}]\frac{\partial_{\alpha}D_t^0 \bar{\omega}}{\omega_{\alpha}} +i[\tilde{D}_t^0\tilde{\omega},\mathcal{H}_{\tilde{\omega}}]\frac{\partial_{\alpha}\tilde{D}_t^0 \bar{\tilde{\omega}}}{\tilde{\omega}_{\alpha}}\Big\}\\
&+\Big\{(\mathcal{H}_{\omega}-\mathcal{H}_{\zeta})(A_0-1)-(\mathcal{H}_{\tilde{\omega}}-\mathcal{H}_{\tilde{\zeta}})(\tilde{A}_0-1) \Big\}\\
&+(\mathcal{H}_{\zeta}-\mathcal{H}_{\tilde{\zeta}})\tilde{A}_1\\
&+\epsilon^4 \mathcal{R}\\
:=& \sum_{m=1}^5 K_m.
\end{align*}
Note that we can estimate $K_1$ in exactly the same way as we did for the quantity $F_{211}$, and we obtain estimate
\begin{equation}
\norm{K_1}_{H^s}\leq C\epsilon^{5/2}+C\epsilon E_s^{1/2}.
\end{equation}
$K_2$ can be estimated the same way as we did for the quantity $F_1$, and we obtain
\begin{equation}
\norm{K_2}_{H^s}\leq C\epsilon^{5/2}+C\epsilon E_s^{1/2}.
\end{equation}
Estimates for $K_3$, $K_4$ and $K_5$ are straight forward , we have
\begin{equation}
\norm{K_3}_{H^s}+\norm{K_4}_{H^s}+\norm{K_5}_{H^s}\leq C\epsilon^{5/2}+C\epsilon E_s^{1/2}.
\end{equation}
So we obtain
\begin{equation}
\norm{(I-\mathcal{H}_{\zeta})(A_1-\tilde{A}_1)}_{H^s}\leq  C\epsilon^{5/2}+C\epsilon E_s^{1/2}.
\end{equation}
Therefore,
\begin{equation}\label{A1Diff}
\norm{A_1-\tilde{A}_1}_{H^s}\leq  C\epsilon^{5/2}+C\epsilon E_s^{1/2}.
\end{equation}
\begin{corollary}\label{A1estimate}
We have 
\begin{equation}
    \norm{A_1}_{H^s}\leq C\epsilon^{3/2}.
\end{equation}
\end{corollary}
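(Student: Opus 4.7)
The plan is to deduce this estimate directly from the already-established difference bound (\ref{A1Diff}) together with a straightforward quadratic bound for the approximate quantity $\tilde{A}_1$. By the triangle inequality,
\begin{equation*}
\|A_1\|_{H^s} \leq \|A_1 - \tilde{A}_1\|_{H^s} + \|\tilde{A}_1\|_{H^s}.
\end{equation*}
For the first summand, (\ref{A1Diff}) gives $\|A_1 - \tilde{A}_1\|_{H^s} \leq C\epsilon^{5/2} + C\epsilon E_s^{1/2}$, and the a priori assumption (\ref{boot}) implies $E_s^{1/2}\leq C\epsilon$, so that $\|A_1-\tilde{A}_1\|_{H^s}\leq C\epsilon^2$.

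For the second summand, I would invoke the multiscale construction in Section 5, where the expansion $\tilde{A} = A^{(0)} + \epsilon A^{(1)} + \epsilon^2 A^{(2)} = 1$ is derived; hence $\tilde{A}_1 = \tilde{A} - \tilde{A}_0 = 1 - \tilde{A}_0$. The periodic quantity $\tilde{A}_0$ satisfies the periodic formula (\ref{AA0}) applied to the approximate interface $\tilde{\omega}$, up to an $O(\epsilon^4)$ residue. Since $\tilde{\omega}-\alpha$, $\tilde{D}_t^0 \tilde{\omega}$, $(\tilde{D}_t^0)^2 \tilde{\omega}$, and $\frac{\bar{\tilde{\omega}}_\alpha-1}{\tilde{\omega}_\alpha}$ are all $O(\epsilon)$ in smooth periodic norms (by their explicit formulas (\ref{approxzetaomega1})--(\ref{approxzetaomega3})), each commutator on the right-hand side is $O(\epsilon^2)$ in $H^{s+s_0}(\TT)$ via Proposition \ref{singularperiodic}. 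Inverting $(I-\tilde{\mathcal{H}}_p)$ (using Lemma \ref{layer} and Lemma \ref{realinverse}) then yields $\|\tilde{A}_1\|_{H^{s+s_0}(\TT)}\leq C\epsilon^2$.

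Combining the two estimates, $\|A_1\|_{H^s}\leq C\epsilon^2\leq C\epsilon^{3/2}$ for $\epsilon$ sufficiently small. There is essentially no obstacle in this argument: the real content of the estimate is already carried by (\ref{A1Diff}), and the bound on $\tilde{A}_1$ is inherited automatically from the quadratic structure of the formula defining $\tilde{A}_0$, so the corollary is simply a packaging step confirming that the absolute size of $A_1$ matches what the difference estimate predicts relative to the small approximate background.
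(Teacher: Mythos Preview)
Your argument is correct and follows the paper's implicit approach (triangle inequality plus the difference bound (\ref{A1Diff}) and the bootstrap assumption). The only unnecessary detour is your treatment of $\tilde{A}_1$: the paper explicitly records (just above (\ref{tildea1})) that $\tilde{A}_1=0$ by construction, since $\tilde{A}=1$ and $\tilde{A}_0=1$; hence $A_1=A_1-\tilde{A}_1$ and (\ref{A1Diff}) together with $E_s^{1/2}\leq\epsilon$ already give $\|A_1\|_{H^s}\leq C\epsilon^2\leq C\epsilon^{3/2}$, with no need to invoke the periodic formula or Proposition \ref{singularperiodic}.
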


\begin{corollary}\label{boundAAA}
Assume the a priori assumption (\ref{boot}), then 
\begin{equation}\label{taylorsign}
    \inf_{t\in [0,T_0]}\inf_{\alpha\in \mathbb{R}}A(\alpha,t)\geq \frac{1}{2},\quad \quad \sup_{t\in [0,T_0]}\sup_{\alpha\in \mathbb{R}}A(\alpha,t)\leq 2.
\end{equation}
\end{corollary}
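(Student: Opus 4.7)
The plan is straightforward: I would decompose $A = A_0 + A_1$ and show that both pieces are already $L^\infty$-small, after which the Taylor sign lower and upper bounds follow immediately for $\epsilon$ small.

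First, for the periodic part. Under the convention in force throughout \S\ref{bounds}, the associated periodic water wave $\omega$ satisfies the hypotheses of Theorem \ref{longperiodic}, so the corollary of that theorem (specifically estimate (\ref{wuqiong})) gives
\begin{equation*}
\sup_{t \in [0,T_0]} \norm{A_0(t) - 1}_{W^{s'-1,\infty}(\mathbb{T})} \leq C\epsilon^2,
\end{equation*}
and in particular $\norm{A_0 - 1}_{L^\infty(\mathbb{T})} \leq C\epsilon^2$.

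Second, for the decaying part. Corollary \ref{A1estimate} provides $\norm{A_1(t)}_{H^s(\mathbb{R})} \leq C\epsilon^{3/2}$, and since $s \geq 4 > 1/2$, the Sobolev embedding in Lemma \ref{L2toinfty}(1) yields
\begin{equation*}
\sup_{t \in [0,T_0]} \norm{A_1(t)}_{L^\infty(\mathbb{R})} \leq C(s)\, \norm{A_1(t)}_{H^s(\mathbb{R})} \leq C\epsilon^{3/2}.
\end{equation*}

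Combining these two bounds via the triangle inequality gives $\norm{A(t) - 1}_{L^\infty(\mathbb{R})} \leq C\epsilon^{3/2}$ uniformly in $t \in [0,T_0]$; then choosing $\epsilon_0$ small enough that $C\epsilon_0^{3/2} \leq 1/2$ immediately yields both inequalities in (\ref{taylorsign}). There is no genuine obstacle at this stage: the real work was already carried out in proving Corollary \ref{A1estimate} (which relied on the commutator analysis of the formula for $A_1 - \tilde{A}_1$ that occupies the bulk of this section) and in Theorem \ref{longperiodic} (which provided the long-time $O(\epsilon^2)$ control of the periodic reference solution); this corollary simply assembles those two ingredients through Sobolev embedding.
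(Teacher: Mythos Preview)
Your proposal is correct and follows essentially the same approach as the paper: decompose $A=A_0+A_1$, use Theorem~\ref{longperiodic} (via estimate~(\ref{wuqiong})) for $\norm{A_0-1}_{\infty}\leq C\epsilon^2$, use Corollary~\ref{A1estimate} plus Sobolev embedding for $\norm{A_1}_{\infty}\leq C\epsilon^{3/2}$, and conclude for $\epsilon$ small. The paper's proof is just a terser version of what you wrote.
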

\begin{proof}
We have $A=A_0+A_1$. By Theorem \ref{longperiodic}, $\norm{A_0-1}_{\infty}\leq C\epsilon^2$. By Corollary \ref{A1estimate}, $\norm{A_1}_{\infty}\leq C\epsilon^{3/2}$. Therefore, for $\epsilon$ sufficiently small, we have (\ref{taylorsign}).
\end{proof}

\begin{definition}
Denote $L$ by the quantity 
\begin{equation}
\begin{split}
L=&[\mathcal{P}, D_t](I-\mathcal{H}_{\zeta})(\zeta-\alpha)-[\mathcal{P}_0, D_t^0](I-\mathcal{H}_{\omega})(\omega-\alpha)\\&-[\tilde{\mathcal{P}},\tilde{D}_t](I-\mathcal{H}_{\tilde{\zeta}})(\tilde{\zeta}-\alpha)+[\tilde{\mathcal{P}}_0, \tilde{D}_t^0](I-\mathcal{H}_{\tilde{\omega}})(\tilde{\omega}-\alpha)
\end{split}
\end{equation}
\end{definition}
This quantity arises in the energy estimates in the next section, so we need also to bound it in terms of $E_s$ and $\epsilon$. 
\subsection{Bound $L$}
We know that:
\begin{equation}
\begin{split}
[\mathcal{P}, D_t](I-\mathcal{H}_{\zeta})(\zeta-\alpha)=\Big(\frac{a_t}{a}\Big)\circ \kappa^{-1}iA\partial_{\alpha}(I-\mathcal{H}_{\zeta})(\zeta-\alpha),
\end{split}
\end{equation}
where
 $\Big(\frac{a_t}{a}\Big)\circ \kappa^{-1}$ is given by
\begin{equation}\label{ata}
\begin{split}
(I-\mathcal{H}_{\zeta})\Big(A\bar{\zeta}_{\alpha}\Big(\frac{a_t}{a}\Big)\circ \kappa^{-1}\Big)=& 2i[D_t^2\zeta, \mathcal{H}]\frac{\partial_{\alpha}D_t\bar{\zeta}}{\zeta_{\alpha}}+2i[D_t\zeta, \mathcal{H}]\frac{\partial_{\alpha}D_t^2\bar{\zeta}}{\zeta_{\alpha}}\\
&-\frac{1}{\pi} \int \Big( \frac{D_t\zeta(\alpha)-D_t\zeta(\beta)}{\zeta(\alpha)-\zeta(\beta)}\Big)^2 \partial_{\beta} D_t\bar{\zeta}(\beta)d\beta.
\end{split}
\end{equation}
Similarly, we have 
\begin{equation}
\begin{split}
[\mathcal{P}_0, D_t^0](I-\mathcal{H}_{\omega})(\omega-\alpha)=\Big(\frac{(a_0)_t}{a_0}\Big)\circ \kappa_0^{-1}iA_0\partial_{\alpha}(I-\mathcal{H}_{\omega})(\omega-\alpha),
\end{split}
\end{equation}
where $\Big(\frac{(a_0)_t}{a_0}\Big)\circ \kappa_0^{-1}$ is given by
\begin{equation}\label{omegaat}
\begin{split}
(I-\mathcal{H}_{\omega})\Big(A_0\bar{\omega}_{\alpha}\Big(\frac{(a_0)_t}{a_0}\Big)\circ \kappa_0^{-1}\Big)=& 2i[(D_t^0)^2\omega, \mathcal{H}]\frac{\partial_{\alpha}D_t^0\bar{\omega}}{\omega_{\alpha}}+2i[D_t^0\omega, \mathcal{H}]\frac{\partial_{\alpha}D_t^2\bar{\zeta}}{\zeta_{\alpha}}\\
&-\frac{1}{\pi} \int \Big( \frac{D_t^0\omega(\alpha)-D_t^0\omega(\beta)}{\omega(\alpha)-\omega(\beta)}\Big)^2 \partial_{\beta} D_t^0\bar{\omega}(\beta)d\beta.
\end{split}
\end{equation}
For brevity, denote
$$\psi=\Big(\frac{a_t}{a}\Big)\circ \kappa^{-1}, \quad \psi_0=\Big(\frac{(a_0)_t}{a_0}\Big)\circ \kappa_0^{-1}.$$
Let $\tilde{\psi}$ be the approximation of $\psi$ to the order $O(\epsilon^4)$, and $\tilde{\psi}_0$ the approximation of $\psi_0$ to the order $O(\epsilon^4)$. 
We have formula for $(I-\mathcal{H}_{\tilde{\zeta}})\tilde{\psi}$ and $(I-\mathcal{H}_{\tilde{\omega}})\tilde{\psi}_0$:
\begin{equation}\label{tildeata}
\begin{split}
(I-\mathcal{H}_{\tilde{\zeta}})\Big(\tilde{A}\bar{\tilde{\zeta}}_{\alpha}\tilde{\psi}\Big)=& 2i[\tilde{D}_t^2\tilde{\zeta}, \mathcal{H}]\frac{\partial_{\alpha}\tilde{D}_t\bar{\tilde{\zeta}}}{\tilde{\zeta}_{\alpha}}+2i[\tilde{D}_t\tilde{\zeta}, \mathcal{H}]\frac{\partial_{\alpha}\tilde{D}_t^2\bar{\tilde{\zeta}}}{\tilde{\zeta}_{\alpha}}\\
&-\frac{1}{\pi} \int \Big( \frac{\tilde{D}_t\tilde{\zeta}(\alpha)-\tilde{D}_t\tilde{\zeta}(\beta)}{\tilde{\zeta}(\alpha)-\tilde{\zeta}(\beta)}\Big)^2 \partial_{\beta} \tilde{D}_t\bar{\tilde{\zeta}}(\beta)d\beta+\epsilon^4\mathcal{R}_1,
\end{split}
\end{equation}
and
\begin{equation}
\begin{split}
(I-\mathcal{H}_{\tilde{\omega}})\Big(\tilde{A}_0\bar{\tilde{\omega}}_{\alpha}\tilde{\psi}_0\Big)=& 2i[(\tilde{D}_t^0)^2\tilde{\omega}, \mathcal{H}]\frac{\partial_{\alpha}\tilde{D}_t^0\bar{\tilde{\omega}}}{\tilde{\omega}_{\alpha}}+2i[\tilde{D}_t^0\tilde{\omega}, \mathcal{H}]\frac{\partial_{\alpha}(\tilde{D}_t^0)^2\bar{\tilde{\omega}}}{\tilde{\zeta}_{\alpha}}\\
&-\frac{1}{\pi} \int \Big( \frac{\tilde{D}_t^0\tilde{\omega}(\alpha)-\tilde{D}_t^0\tilde{\omega}(\beta)}{\tilde{\omega}(\alpha)-\tilde{\omega}(\beta)}\Big)^2 \partial_{\beta} \tilde{D}_t^0\bar{\tilde{\omega}}(\beta)d\beta+\epsilon^4\mathcal{R}_2,
\end{split}
\end{equation}
where
$$\norm{\epsilon^4\mathcal{R}_1-\epsilon^4\mathcal{R}_2}_{H^s}\leq \epsilon^{7/2}.$$
Denote 
$$\theta=(I-\mathcal{H}_{\zeta})(\zeta-\alpha), \quad \theta_0=(I-\mathcal{H}_{\omega})(\omega-\alpha),\quad \tilde{\theta}=(I-\mathcal{H}_{\tilde{\zeta}})(\tilde{\zeta}-\alpha),\quad \tilde{\theta}_0=(I-\mathcal{H}_{\tilde{\omega}})(\tilde{\omega}-\alpha).$$
Then  
$$L=\psi \partial_{\alpha}\theta-\tilde{\psi}\partial_{\alpha}\tilde{\theta}-\psi_0\partial_{\alpha}\theta_0+\tilde{\psi}_0\partial_{\alpha}\tilde{\theta}_0.$$
We rewrite $L$ in the following form: 
\begin{align*}
L=& (\psi-\psi_0)\partial_{\alpha}\theta+\psi_0\partial_{\alpha}(\theta-\theta_0)-(\tilde{\psi}-\tilde{\psi}_0)\partial_{\alpha}\tilde{\theta}-\tilde{\psi}_0\partial_{\alpha}(\tilde{\theta}-\tilde{\theta}_0)\\
=&(\psi-\psi_0)\partial_{\alpha}(\theta-\tilde{\theta})+(\psi-\psi_0-(\tilde{\psi}-\tilde{\psi}_0))\partial_{\alpha}\tilde{\theta}\\
&+(\psi_0-\tilde{\psi}_0)\partial_{\alpha}(\theta-\theta_0)+\tilde{\psi}_0\partial_{\alpha}(\theta-\theta_0-(\tilde{\theta}-\tilde{\theta}_0))\\
:=&L_1+L_2+L_3+L_4.
\end{align*}
The advantage of writing $L$ in this form is that, each $L_i$ can be written in the form $L_i=yz$, where $y\in H^s$, and $z=z_1+z_2$, where $z_1\in H^s$, and $z_2\in W^{s,\infty}$. Note that we cannot estimate $z$ directly in $W^{s,\infty}$, because $z_1$ might lose one derivative.

\subsubsection{Estimate $L_1$.} First we estimate $\theta-\tilde{\theta}$. We have 
\begin{align*}
\theta-\tilde{\theta}=& (I-\mathcal{H}_{\zeta})r+(\mathcal{H}_{\tilde{\zeta}}-\mathcal{H}_{\zeta})(\tilde{\zeta}-\alpha).
\end{align*}
Let $\mathcal{H}_{\zeta}^{\ast}$ be the adjoint of $\mathcal{H}_{\zeta}$, i.e.,
$$\mathcal{H}_{\zeta}^{\ast}f=-\zeta_{\alpha}\mathcal{H}\frac{f}{\zeta}_{\alpha}.$$ Then 
\begin{align*}
\partial_{\alpha}(\theta-\tilde{\theta})=&(I-\mathcal{H}_{\zeta}^{\ast})r_{\alpha}+(\mathcal{H}_{\tilde{\zeta}}^{\ast}-\mathcal{H}_{\zeta}^{\ast})(\tilde{\zeta}_{\alpha}-1)\\
=& (I-\mathcal{H}_{\zeta}^{\ast})(r_1)_{\alpha}+(I-\mathcal{H}_{\zeta}^{\ast})(r_0)_{\alpha}\\
&+\frac{1}{\pi i}\int \frac{\zeta_{\alpha}(\tilde{\zeta}(\alpha)-\tilde{\zeta}(\beta))-\tilde{\zeta}_{\alpha}(\zeta(\alpha)-\zeta(\beta))}{(\zeta(\alpha)-\zeta(\beta))(\tilde{\zeta}(\alpha)-\tilde{\zeta}(\beta))}(\tilde{\zeta}_{\beta}-1)d\beta\\
=& (I-\mathcal{H}_{\zeta}^{\ast})(r_1)_{\alpha}+(I-\mathcal{H}_{\zeta}^{\ast})(r_0)_{\alpha}\quad &:=Z_1+Z_2\\
&+\frac{1}{\pi i}\int \frac{(r_1)_{\alpha}(\tilde{\zeta}(\alpha)-\tilde{\zeta}(\beta))-\tilde{\zeta}_{\alpha}((r_1)_{\alpha}-(r_1)_{\beta})}{(\zeta(\alpha)-\zeta(\beta))(\tilde{\zeta}(\alpha)-\tilde{\zeta}(\beta))}(\tilde{\zeta}_{\beta}-1)d\beta  &:=Z_3\\
&+\frac{1}{\pi i}\int \frac{(r_0)_{\alpha}(\tilde{\zeta}(\alpha)-\tilde{\zeta}(\beta))-\tilde{\zeta}_{\alpha}((r_0)_{\alpha}-(r_0)_{\beta})}{(\zeta(\alpha)-\zeta(\beta))(\tilde{\zeta}(\alpha)-\tilde{\zeta}(\beta))}(\tilde{\zeta}_{\beta}-1)d\beta  & :=Z_4\\
:=& Z_1+Z_2+Z_3+Z_4.
\end{align*}
We have $Z_1:=(I-\mathcal{H}_{\zeta}^{\ast})(r_1)_{\alpha}$, so 
\begin{align*}
\norm{Z_1}_{H^s}\leq CE_s^{1/2}.
\end{align*}
For $Z_3$, use proposition \ref{singular},  we obtain
\begin{align*}
\norm{Z_3}_{H^s}\leq C\epsilon E_s^{1/2}.
\end{align*}
Use the fact that $||\partial_{\alpha}r_0||_{W^{s,\infty}}\leq C\epsilon^2$, it's straightforward to prove that 
$$\norm{Z_2+Z_4}_{W^{s,\infty}}\leq C\epsilon^2.$$

\vspace*{2ex}

\noindent         Next we estiamte $\psi-\psi_0$, we consider $(I-\mathcal{H}_{\zeta})(\psi-\psi_0)A\bar{\zeta}_{\alpha}$.  Note that
\begin{align*}
&(I-\mathcal{H}_{\zeta})A\bar{\zeta}_{\alpha}\psi-(I-\mathcal{H}_{\omega})A_0\bar{\omega}_{\alpha}\psi_0\\
=& (I-\mathcal{H}_{\zeta})A\bar{\zeta}_{\alpha}\psi-(I-\mathcal{H}_{\zeta})A\bar{\zeta}_{\alpha}\psi_0+(I-\mathcal{H}_{\zeta})A\bar{\zeta}_{\alpha}\psi_0
-(I-\mathcal{H}_{\zeta})A\bar{\omega}_{\alpha}\psi_0\\
& +(I-\mathcal{H}_{\zeta})A\bar{\omega}_{\alpha}\psi_0-(I-\mathcal{H}_{\zeta})A_0\bar{\tilde{\omega}}_{\alpha}\psi_0+
(I-\mathcal{H}_{\zeta})A_0\bar{\tilde{\omega}}\psi-(I-\mathcal{H}_{\omega})A_0\bar{\tilde{\omega}}\psi_0\\
=& (I-\mathcal{H}_{\zeta})A\bar{\zeta}_{\alpha}(\psi-\psi_0)+(I-\mathcal{H}_{\zeta})A(\bar{\xi}_1)_{\alpha}\tilde{\psi}\\
&+(I-\mathcal{H}_{\zeta})A_1\bar{\omega}_{\alpha}\psi+(\mathcal{H}_{\omega}-\mathcal{H}_{\zeta})A_0\bar{\omega}_{\alpha}\psi_0.
\end{align*}
So we have 
\begin{align*}
(I-&\mathcal{H}_{\zeta})A\bar{\zeta}_{\alpha}(\psi-\psi_0)=(I-\mathcal{H}_{\zeta})A\bar{\zeta}_{\alpha}\psi-(I-\mathcal{H}_{\omega})A_0\bar{\omega}_{\alpha}\psi_0\\
&-\Big\{(I-\mathcal{H}_{\zeta})A(\bar{\xi}_1)_{\alpha}\tilde{\psi}+(I-\mathcal{H}_{\zeta})A_1\bar{\omega}_{\alpha}\psi+(\mathcal{H}_{\omega}-\mathcal{H}_{\zeta})A_0\bar{\omega}_{\alpha}\psi_0\Big\}
\end{align*}
For $(I-\mathcal{H}_{\zeta})A\bar{\zeta}_{\alpha}\psi-(I-\mathcal{H}_{\omega})A_0\bar{\omega}_{\alpha}\psi_0$, subtract  (\ref{omegaat}) from (\ref{ata}),  and then group similar terms. We have estimated terms of these kinds before, so we omitt the details. We have
\begin{equation}
\norm{(I-\mathcal{H}_{\zeta})A\bar{\zeta}_{\alpha}\psi-(I-\mathcal{H}_{\omega})\tilde{A}\bar{\tilde{\zeta}}_{\alpha}\tilde{\psi}}_{H^s}\leq C(\epsilon^{5/2}+\epsilon E_s^{1/2}).
\end{equation}
Since $\tilde{\psi}$, $\psi_0$  and $A_1$ are quadratic, it's easy to show that 
\begin{equation}
\norm{(I-\mathcal{H}_{\zeta})A(\bar{\xi}_1)_{\alpha}\tilde{\psi}+(I-\mathcal{H}_{\zeta})A_1\bar{\omega}_{\alpha}\psi+(\mathcal{H}_{\omega}-\mathcal{H}_{\zeta})A_0\bar{\omega}_{\alpha}\psi_0}_{H^s}\leq C\epsilon^{5/2}.
\end{equation}
Combined the above estimates, we obtain
\begin{equation}
\begin{split}
||\psi-\psi_0||_{H^s}\leq & C||(I-\mathcal{H}_{\zeta})A\bar{\zeta}_{\alpha}(\psi-\psi_0)||_{H^s}\\
\leq & C(\epsilon E_s^{1/2}+\epsilon^{5/2}).
\end{split}
\end{equation}
So we have
\begin{equation}
\norm{L_1}_{H^s}\leq C(\epsilon E_s^{1/2}+\epsilon^{5/2})(E_s^{1/2}+\epsilon^2)\leq C(\epsilon^{7/2}+\epsilon E_s).
\end{equation}
The quantities $L_2, L_3$ and $L_4$ can be estimated in similar manner, and obtain 
\begin{equation}
\norm{L_2}_{H^s}+\norm{L_3}_{H^s}+\norm{L_4}_{H^s}\leq C(\epsilon^{7/2}+ \epsilon E_s).
\end{equation}

So we have
\begin{equation}\label{ll}
||L||_{H^s} \leq C(\epsilon^{7/2}+ \epsilon E_s).
\end{equation}

\section{Energy estimates}\label{energyestimatesection}
In Section \ref{govern}, we derive equations governing the evolution of $r_1$ and $D_tr_1$, respectively, and define energy for these quantities. In Section \ref{bounds},  we obtain aproiri bounds for some quantities which will be used in energy estimates. In this section, we obtain bounds for the energy $\mathcal{E}_s$. For this purpose, we estimate the quantity appear in $\frac{d\mathcal{E}_s}{dt}$, and bound $\frac{d\mathcal{E}_s}{dt}$ in terms of $E_s$ and $\epsilon$.  Then we obtain
\begin{equation}\label{ES}
\frac{d\mathcal{E}}{dt}\leq C(E_s^2+\epsilon E_s^{3/2}+\epsilon^2 E_s+\epsilon^{7/2}E_s^{1/2}+\epsilon^5).
\end{equation}
Then we show that $E_s$ is essentially  controlled by $\mathcal{E}$:
\begin{equation}\label{control}
E_s^{1/2}\leq C(\mathcal{E}+\epsilon^{5/2}).
\end{equation}
(\ref{ES}) and (\ref{control}) together give the bound 
\begin{equation}
\mathcal{E}\leq C\epsilon^3
\end{equation}
on time scale $\epsilon^{-2}$.

\subsection{Estimate $(I-\mathcal{H}_{\zeta})\mathcal{R}_{11}$} It suffices to estimate $\mathcal{R}_{11}$. 
Recall that 
\begin{equation}
\begin{split}
\mathcal{R}_{11}=&  -2[D_t\zeta,\mathcal{H}_{\zeta}\frac{1}{\zeta_{\alpha}}+\bar{\mathcal{H}}_{\zeta}\frac{1}{\bar{\zeta}_{\alpha}}]\partial_{\alpha}D_t\zeta+2[D_t^0\omega,\mathcal{H}_{\omega}\frac{1}{\omega_{\alpha}}+\bar{\mathcal{H}}_{\omega}\frac{1}{\bar{\omega}_{\alpha}}]\partial_{\alpha}D_t^0\omega\\
&-2[\tilde{D}_t\tilde{\zeta},\mathcal{H}_{\tilde{\zeta}}\frac{1}{\tilde{\zeta}_{\alpha}}+\bar{\mathcal{H}}_{\tilde{\zeta}}\frac{1}{\bar{\tilde{\zeta}}_{\alpha}}]\partial_{\alpha}\tilde{D}_t\tilde{\zeta}+2[\tilde{D}_t^0\tilde{\omega},\mathcal{H}_{\tilde{\omega}}\frac{1}{\tilde{\omega}_{\alpha}}+\bar{\mathcal{H}}_{\tilde{\omega}}\frac{1}{\bar{\tilde{\omega}}_{\alpha}}]\partial_{\alpha}\tilde{D}_t^0\tilde{\omega}.
\end{split}
\end{equation}
First, we rewrite $I:=-2[D_t\zeta,\mathcal{H}_{\zeta}\frac{1}{\zeta_{\alpha}}+\bar{\mathcal{H}}_{\zeta}\frac{1}{\bar{\zeta}_{\alpha}}]\partial_{\alpha}D_t\zeta+2[D_t^0\omega,\mathcal{H}_{\omega}\frac{1}{\omega_{\alpha}}+\bar{\mathcal{H}}_{\omega}\frac{1}{\bar{\omega}_{\alpha}}]\partial_{\alpha}D_t^0\omega$ as
\begin{equation}
\begin{split}
I=& -2[D_t\zeta-D_t^0\omega, \mathcal{H}_{\zeta}\frac{1}{\zeta_{\alpha}}+\bar{\mathcal{H}}_{\zeta}\frac{1}{\bar{\zeta}_{\alpha}}]\partial_{\alpha}D_t\zeta\\
&-2[D_t^0\omega, \mathcal{H}_{\zeta}\frac{1}{\zeta_{\alpha}}+\bar{\mathcal{H}}_{\zeta}\frac{1}{\bar{\zeta}}-\mathcal{H}_{\omega}\frac{1}{\omega_{\alpha}}-\bar{\mathcal{H}}_{\omega}\frac{1}{\bar{\omega}_{\alpha}}]\partial_{\alpha}D_t\zeta\\
&-2[D_t^0\omega, \mathcal{H}_{\omega}\frac{1}{\omega_{\alpha}}+\bar{\mathcal{H}}_{\omega}\frac{1}{\bar{\omega}_{\alpha}}]\partial_{\alpha}D_t^0\xi_1\\
:=& I_1+I_2+I_3.
\end{split}
\end{equation}
And we rewrite $\it{II}:=-2[\tilde{D}_t\tilde{\zeta},\mathcal{H}_{\tilde{\zeta}}\frac{1}{\tilde{\zeta}_{\alpha}}+\bar{\mathcal{H}}_{\tilde{\zeta}}\frac{1}{\bar{\tilde{\zeta}}_{\alpha}}]\partial_{\alpha}\tilde{D}_t\tilde{\zeta}+2[\tilde{D}_t^0\tilde{\omega},\mathcal{H}_{\tilde{\omega}}\frac{1}{\tilde{\omega}_{\alpha}}+\bar{\mathcal{H}}_{\tilde{\omega}}\frac{1}{\bar{\tilde{\omega}}_{\alpha}}]\partial_{\alpha}\tilde{D}_t^0\tilde{\omega}$ as

\begin{equation}
\begin{split}
\it{II}=& -2[\tilde{D}_t\tilde{\zeta}-\tilde{D}_t^0\tilde{\omega}, \mathcal{H}_{\tilde{\zeta}}\frac{1}{\tilde{\zeta}_{\alpha}}+\bar{\mathcal{H}}_{\tilde{\zeta}}\frac{1}{\bar{\tilde{\zeta}}_{\alpha}}]\partial_{\alpha}\tilde{D}_t\tilde{\zeta}\\
&-2[\tilde{D}_t^0\tilde{\omega}, \mathcal{H}_{\tilde{\zeta}}\frac{1}{\tilde{\zeta}_{\alpha}}+\bar{\mathcal{H}}_{\tilde{\zeta}}\frac{1}{\bar{\tilde{\zeta}}}-\mathcal{H}_{\tilde{\omega}}\frac{1}{\tilde{\omega}_{\alpha}}-\bar{\mathcal{H}}_{\tilde{\omega}}\frac{1}{\bar{\tilde{\omega}}_{\alpha}}]\partial_{\alpha}\tilde{D}_t\tilde{\zeta}\\
&-2[\tilde{D}_t^0\tilde{\omega}, \mathcal{H}_{\tilde{\omega}}\frac{1}{\tilde{\omega}_{\alpha}}+\bar{\mathcal{H}}_{\tilde{\omega}}\frac{1}{\bar{\tilde{\omega}}_{\alpha}}]\partial_{\alpha}\tilde{D}_t^0\tilde{\xi}_1\\
:=& \it{II}_1+\it{II}_2+\it{II}_3.
\end{split}
\end{equation}
\subsubsection{Estimate $\norm{I_1+\it{II}_1}_{H^s}$} We have 
\begin{align*}
I_1+\it{II}_1=& -2[D_t\zeta-D_t^0\omega-(\tilde{D}_t\tilde{\zeta}-\tilde{D}_t^0\tilde{\omega}), \mathcal{H}_{\zeta}\frac{1}{\zeta_{\alpha}}+\bar{\mathcal{H}}_{\zeta}\frac{1}{\bar{\zeta}_{\alpha}}]\partial_{\alpha}D_t\zeta\\
&-2[\tilde{D}_t\tilde{\zeta}-\tilde{D}_t^0\tilde{\omega}, \mathcal{H}_{\zeta}\frac{1}{\zeta_{\alpha}}+\bar{\mathcal{H}}_{\zeta}\frac{1}{\bar{\zeta}_{\alpha}}-\Big(\mathcal{H}_{\tilde{\zeta}}\frac{1}{\tilde{\zeta}_{\alpha}}+\bar{\mathcal{H}}_{\tilde{\zeta}}\frac{1}{\bar{\tilde{\zeta}}_{\alpha}}\Big)]\partial_{\alpha}D_t\zeta\\
&-2[\tilde{D}_t\tilde{\zeta}-\tilde{D}_t^0\tilde{\omega},\mathcal{H}_{\tilde{\zeta}}\frac{1}{\tilde{\zeta}_{\alpha}}+\bar{\mathcal{H}}_{\tilde{\zeta}}\frac{1}{\bar{\tilde{\zeta}}_{\alpha}}]\partial_{\alpha}(D_t\zeta-\tilde{D}_t\tilde{\zeta})\\
:=& \Lambda_1+\Lambda_2+\Lambda_3.
\end{align*}
Denote 
$$h:=D_t\zeta-D_t^0\omega-(\tilde{D}_t\tilde{\zeta}-\tilde{D}_t^0\tilde{\omega}).$$
We have 
\begin{equation}
\begin{split}
&[h,  \mathcal{H}_{\zeta}\frac{1}{\zeta_{\alpha}}+\bar{\mathcal{H}}_{\zeta}\frac{1}{\bar{\zeta}_{\alpha}}]\partial_{\alpha}D_t\zeta\\
=& -\frac{2}{\pi i}\int \frac{\Im\{\zeta(\alpha)-\zeta(\beta)\}(h(\alpha)-h(\beta))}{|\zeta(\alpha)-\zeta(\beta)|^2}\partial_{\beta}D_t\zeta(\beta) d\beta.
\end{split}
\end{equation}
Use (\ref{diffdiff}), 
\begin{equation}
\begin{split}
&D_t\zeta-\tilde{D}_t\tilde{\zeta}-(D_t^0\omega-\tilde{D}_t^0\tilde{\omega})= (b_1-\tilde{b}_1)(\tilde{\zeta}_{\alpha}+(r_0)_{\alpha})+\tilde{b}_1(r_0)_{\alpha}+D_tr_1.
\end{split}
\end{equation}
and proposition \ref{singular}, lemmma \ref{bdiff}, corollary \ref{corollaryb}, lemma \ref{lemmazeta}, corollary \ref{corollaryDtzeta}, we have 
\begin{equation}
\begin{split}
\norm{[h,  \mathcal{H}_{\zeta}\frac{1}{\zeta_{\alpha}}+\bar{\mathcal{H}}_{\zeta}\frac{1}{\bar{\zeta}_{\alpha}}]\partial_{\alpha}D_t\zeta}_{H^s}\leq & \norm{h}_{H^s}\norm{\Im \zeta_{\alpha}}_{X^{s-1},\infty}\norm{D_t\zeta}_{W^{n-1,\infty}}\\
\leq & C(\epsilon^{3/2}+E_s^{1/2})\epsilon^2\\
\leq & C(\epsilon^{7/2}+\epsilon^2 E_s^{1/2}).
\end{split}
\end{equation}
So we obtain
\begin{equation}
\norm{\Lambda_1}_{H^s} \leq C(\epsilon^{7/2}+\epsilon^2 E_s^{1/2}+\epsilon E_s+E_s^{3/2}).
\end{equation}

\vspace*{2ex}
\noindent $\Lambda_2$ is a singular integral of the form $S_2(A, f)$, whose kernel is at least of order two.  Note that
\begin{equation}
\norm{\tilde{D}_t\tilde{\zeta}-\tilde{D}_t^0\tilde{\omega}}_{H^s}=\norm{\tilde{D}_t\tilde{\xi}_1+\tilde{b}_1\tilde{\omega}_{\alpha}}_{H^s}\leq \norm{\tilde{D}_t\tilde{\xi}_1}_{H^s}+\norm{\tilde{b}_1\tilde{\omega}_{\alpha}}_{H^s}\leq C\epsilon^{1/2}.
\end{equation}
 By proposition \ref{singular}, lemma \ref{lemmazeta}, corollary \ref{corollaryDtzeta}, we have
\begin{equation}
\begin{split}
\norm{\Lambda_2}_{H^s}\leq & C\norm{\tilde{D}_t\tilde{\zeta}-\tilde{D}_t^0\tilde{\omega}}_{H^s}\norm{\Im(\zeta-\tilde{\zeta})_{\alpha}}_{W^{s-1,\infty}}\norm{\zeta_{\alpha}-\tilde{\zeta}_{\alpha}}_{W^{s-1,\infty}}\norm{D_t\zeta}_{W^{s-1,\infty}}\\
\leq & C\epsilon^{7/2}.
\end{split}
\end{equation}
The same argument gives
\begin{equation}
||\Lambda_3||_{H^s}\leq C(\epsilon^{7/2}+\epsilon^{2}E_s^{1/2})
.
\end{equation}
So we obtain
\begin{equation}
||I_1+\it{II}_1||_{H^s}\leq C(\epsilon^{7/2}+\epsilon^{2}E_s^{1/2}).
\end{equation}
We estimate $I_2+\it{II}_2$ and $I_3+\it{II}_3$ in the same way as we did for $I_1+\it{II}_1$, and obtain
\begin{equation}
\norm{I_2+\it{II}_2}_{H^s}+\norm{I_3+\it{II}_3}_{H^s}\leq C(\epsilon^{7/2}+\epsilon^{2}E_s^{1/2}).
\end{equation}
So we obtain
\begin{equation}
\norm{I+\it{II}}_{H^s}\leq C(\epsilon^{7/2}+\epsilon^{2}E_s^{1/2}).
\end{equation}
So we obtain
\begin{equation}
||(I-\mathcal{H}_{\zeta})\mathcal{R}_{11}||_{H^s}\leq C(\epsilon^{7/2}+\epsilon^{2}E_s^{1/2}).
\end{equation}
\vspace*{2ex}

\subsection{Estimate $(I-\mathcal{H}_{\zeta})\mathcal{R}_{12}$} The way we estimate $\mathcal{R}_{12}$ is similar to that of $\mathcal{R}_{11}$.

\begin{equation}
\begin{split}
\mathcal{R}_{12}=& \frac{1}{\pi i}\int \Big(\frac{D_t\zeta(\alpha,t)-D_t\zeta(\beta,t)}{\zeta(\alpha,t)-\zeta(\beta,t)}\Big)^2(\zeta-\bar{\zeta})_{\beta}d\beta-\frac{1}{\pi i}\int \Big(\frac{D_t^0\omega(\alpha,t)-D_t^0\omega(\beta,t)}{\omega(\alpha,t)-\omega(\beta,t)}\Big)^2(\omega-\bar{\omega})_{\beta}d\beta\\
&-\Big\{\frac{1}{\pi i}\int \Big(\frac{\tilde{D}_t\tilde{\zeta}(\alpha,t)-\tilde{D}_t\tilde{\zeta}(\beta,t)}{\tilde{\zeta}(\alpha,t)-\tilde{\zeta}(\beta,t)}\Big)^2(\tilde{\zeta}-\bar{\tilde{\zeta}})_{\beta}d\beta-\frac{1}{\pi i}\int \Big(\frac{\tilde{D}_t^0\tilde{\omega}(\alpha,t)-\tilde{D}_t^0\tilde{\omega}(\beta,t)}{\tilde{\omega}(\alpha,t)-\tilde{\omega}(\beta,t)}\Big)^2(\tilde{\omega}-\bar{\tilde{\omega}})_{\beta}d\beta\Big\}
\end{split}
\end{equation}
The idea is again to decompose $\zeta=\omega+\tilde{\xi}_1+r_1$ and $D_t=D_t^0+b_1\partial_{\alpha}$ to explore the cancellations, and then use Proposition \ref{singular} to obtain appropriate estimates.  For example, in the decomposition we'll obtain terms like
$$\mathcal{R}_{121}:=\frac{1}{\pi i}\int \Big(\frac{D_tr_1(\alpha,t)-D_tr_1(\beta,t)}{\zeta(\alpha,t)-\zeta(\beta,t)}\Big)^2 (\zeta-\bar{\zeta})_{\beta}d\beta.$$
Then we have
\begin{equation}
\begin{split}
||\mathcal{R}_{121}||_{H^s}\leq & C\norm{D_tr_1}_{H^s}\norm{D_tr_1}_{W^{s-1,\infty}}\norm{\Im\zeta_{\alpha}}_{W^{s-1,\infty}}\leq C\epsilon^2 E_s^{1/2}.
\end{split}
\end{equation}
Other terms can be estimated in a similar way, and we obtain
\begin{equation}
\norm{\mathcal{R}_{12}}_{H^s}\leq C(\epsilon^{7/2}+\epsilon^2 E_s^{1/2}).
\end{equation}

\subsection{Estimate $\mathcal{R}_{13}$} Recall that 
\begin{equation}
\begin{split}
\mathcal{R}_{13}=&  D_tb_1 \partial_{\alpha}(I-\mathcal{H}_{\omega})(\omega-\alpha)+b_1 D_t\partial_{\alpha}(I-\mathcal{H}_{\omega})(\omega-\alpha)\\
&-\Big\{ \tilde{D}_t\tilde{b}_1 \partial_{\alpha}(I-\mathcal{H}_{\tilde{\omega}})(\tilde{\omega}-\alpha)+\tilde{b}_1\tilde{ D}_t\partial_{\alpha}(I-\mathcal{H}_{\tilde{\omega}})(\tilde{\omega}-\alpha)\Big\}
\end{split}
\end{equation}
We provide the detail for the estimate of $$D_tb_1\partial_{\alpha}(I-\mathcal{H}_{\omega})(\omega-\alpha)-\tilde{D}_t\tilde{b}_1 \partial_{\alpha}(I-\mathcal{H}_{\tilde{\omega}})(\omega-\alpha).$$ The estimate for $b_1 D_t\partial_{\alpha}(I-\mathcal{H}_{\omega})(\omega-\alpha)-\tilde{b}_1\tilde{ D}_t\partial_{\alpha}(I-\mathcal{H}_{\tilde{\omega}})(\tilde{\omega}-\alpha)$ can be obtained in the same way. We have
\begin{equation}
\begin{split}
&D_tb_1\partial_{\alpha}(I-\mathcal{H}_{\omega})(\omega-\alpha)-\tilde{D}_t\tilde{b}_1 \partial_{\alpha}(I-\mathcal{H}_{\tilde{\omega}})(\omega-\alpha)\\
=& D_t(b_1-\tilde{b}_1)\partial_{\alpha}(I-\mathcal{H}_{\omega})(\omega-\alpha)+(b_1-\tilde{b}_1)\partial_{\alpha}\tilde{b}_1\partial_{\alpha}(I-\mathcal{H}_{\omega})(\omega-\alpha)\\
&+\tilde{D}_t\tilde{b}_1\partial_{\alpha}(\mathcal{H}_{\tilde{\omega}}-\mathcal{H}_{\omega})(\omega-\alpha)+\tilde{D}_t\tilde{b}_1\partial_{\alpha}(I-\mathcal{H}_{\tilde{\omega}})(\omega-\tilde{\omega})\\
:=& J_1+J_2+J_3+J_4.
\end{split}
\end{equation}
By Theorem \ref{longperiodic}, Sobolev embedding, and (\ref{Dtb1diff}), we have
\begin{align*}
\norm{J_1}_{H^s}=&\norm{D_t (b_1-\tilde{b}_1)\partial_{\alpha}(I-\mathcal{H}_{\omega})(\omega-\alpha)}_{H^s}\\
\leq & \norm{D_t(b_1-\tilde{b}_1)}_{H^s}\norm{\omega_\alpha-1}_{W^{s,\infty}}\\
\leq & C(\epsilon^{5/2}+\epsilon E_s^{1/2})\epsilon\\
\leq & C(\epsilon^{7/2}+\epsilon^2 E_s^{1/2}).
\end{align*}
By Theorem \ref{longperiodic}, Sobolev embedding, and corollary \ref{corollaryb}, we have 
\begin{align*}
\norm{J_2}_{H^s}\leq  &\norm{b_1-\tilde{b}_1}_{H^s}\norm{\partial_{\alpha}\tilde{b}_1}_{W^{s,\infty}}\norm{\partial_{\alpha}(I-\mathcal{H}_{\omega})(\omega-\alpha)}_{W^{s,\infty}}\\
\leq & C\epsilon^{7/2}.
\end{align*}
Estimates for $J_3$, $J_4$ are similar. So we obtain
So we have
\begin{equation}
\norm{(I-\mathcal{H}_{\zeta})\mathcal{R}_{13}}_{H^s(\RR)}\leq C(\epsilon^{7/2}+\epsilon^2 E_s^{1/2}).
\end{equation}

\subsection{Estimate $(I-\mathcal{H}_{\zeta})\mathcal{R}_{14}$} Write $\mathcal{R}_{14}$ as 
$$\mathcal{R}_{14}=\mathcal{R}_{141}+\mathcal{R}_{142},$$
where
\begin{equation}
\mathcal{R}_{141}=b_1\partial_{\alpha}D_t^0(I-\mathcal{H}_{\omega})(\omega-\alpha)-\tilde{b}_1\partial_{\alpha}\tilde{D}_t^0(I-\mathcal{H}_{\omega})(\tilde{\omega}-\alpha),
\end{equation}
and
\begin{equation}
\mathcal{R}_{142}=-iA_1\partial_{\alpha}(I-\mathcal{H}_{\omega})(\omega-\alpha)+i\tilde{A}_1\partial_{\alpha}(I-\mathcal{H}_{\tilde{\omega}})(\tilde{\omega}-\alpha).
\end{equation}
Estimates for these two terms are straightforward, we obtain
\begin{equation}
\norm{(I-\mathcal{H}_{\zeta})\mathcal{R}_{14}}_{H^s(\RR)}\leq C(\epsilon^{7/2}+\epsilon^2 E_s^{1/2}).
\end{equation}

\subsection{Estimate $\mathcal{R}_{16}$} Recall that 
\begin{equation}
\mathcal{R}_{16}=-2[D_t\zeta,\mathcal{H}_{\zeta}]\frac{\partial_{\alpha}D_t(\lambda-\tilde{\lambda})}{\zeta_{\alpha}}.
\end{equation}
To obtain better estimates, we explore the fact that $\frac{\partial_{\alpha}D_t(\lambda-\tilde{\lambda})}{\zeta_{\alpha}}$ is almot holomorphic in $\Omega(t)^c$. Write $\mathcal{R}_{16}$ as
\begin{align*}
\mathcal{R}_{16}=&-2[D_t\zeta,\mathcal{H}_{\zeta}+\bar{\mathcal{H}}\frac{1}{\bar{\zeta}_{\alpha}}]\frac{\partial_{\alpha}D_t(\lambda-\tilde{\lambda})}{\zeta_{\alpha}}+2[D_t\zeta, \bar{\mathcal{H}}\frac{1}{\bar{\zeta}_{\alpha}}]\partial_{\alpha}D_t(\lambda-\tilde{\lambda})\\
:=& \mathcal{R}_{161}+\mathcal{R}_{162}.
\end{align*}
It's easy to see that  
\begin{equation}
\norm{\mathcal{R}_{161}}_{H^s}\leq C(\epsilon^{7/2}+\epsilon^2 E_s^{1/2}).
\end{equation}
To estimate $\mathcal{R}_{162}$, we write
\begin{align*}
\mathcal{R}_{162}=&-2D_t\zeta(I-\bar{\mathcal{H}}_{\zeta})\frac{\partial_{\alpha}D_t(\lambda-\tilde{\lambda})}{\bar{\zeta}_{\alpha}}+2(I-\bar{\mathcal{H}}_{\zeta})D_t\zeta\frac{\partial_{\alpha}D_t(\lambda-\tilde{\lambda})}{\bar{\zeta}_{\alpha}}\\
:=&\mathcal{R}_{1621}+\mathcal{R}_{1622}.
\end{align*}
Note that 
\begin{align*}
\lambda-\tilde{\lambda}=& (I-\mathcal{H}_{\zeta})(\zeta-\alpha)-(I-\mathcal{H}_{\omega})(\omega-\alpha)-((I-\mathcal{H}_{\tilde{\zeta}})(\tilde{\zeta}-\alpha)-(I-\mathcal{H}_{\tilde{\omega}})(\tilde{\omega}-\alpha))\\
=& (I-\mathcal{H}_{\zeta})r_1+(\mathcal{H}_{\omega}-\mathcal{H}_{\zeta})(\omega-\alpha)+(\mathcal{H}_{\tilde{\zeta}}-\mathcal{H}_{\zeta})(\tilde{\zeta}-\alpha)-(\mathcal{H}_{\tilde{\omega}}-\mathcal{H}_{\zeta})(\tilde{\omega}-\alpha). 
\end{align*}
The last three terms are quadratic, and it's quite easy to see that they are bounded  in $H^s$ by 
$$\epsilon E_s^{1/2}+\epsilon^{5/2}.$$
So to bound $\mathcal{R}_{1621}$, it suffices to bound $-2D_t\zeta(I-\bar{\mathcal{H}}_{\zeta})\frac{\partial_{\alpha}D_t(I-\mathcal{H}_{\zeta})r_1}{\bar{\zeta}_{\alpha}}$. We have
\begin{align*}
&(I-\bar{\mathcal{H}}_{\zeta})\frac{\partial_{\alpha}D_t(I-\mathcal{H}_{\zeta})r_1}{\bar{\zeta}_{\alpha}}\\=& (I-\bar{\mathcal{H}}_{\zeta})\frac{\partial_{\alpha}(I-\mathcal{H}_{\zeta})D_tr_1}{\bar{\zeta}_{\alpha}}-(I-\bar{\mathcal{H}}_{\zeta})\frac{\partial_{\alpha}[D_t\zeta, \mathcal{H}_{\zeta}]\frac{\partial_{\alpha}r_1}{\zeta_{\alpha}}}{\bar{\zeta}_{\alpha}}.
\end{align*}
$(I-\bar{\mathcal{H}}_{\zeta})\frac{\partial_{\alpha}[D_t\zeta, \mathcal{H}_{\zeta}]\frac{\partial_{\alpha}r_1}{\zeta_{\alpha}}}{\zeta_{\alpha}}$ satisfies the estimate
\begin{align*}
\norm{(I-\bar{\mathcal{H}}_{\bar{\zeta}_{\alpha}})\frac{\partial_{\alpha}[D_t\zeta, \mathcal{H}_{\zeta}]\frac{\partial_{\alpha}r_1}{\bar{\zeta}_{\alpha}}}{\zeta_{\alpha}}}_{H^s}\leq C\epsilon E_s^{1/2}.
\end{align*}
And 
\begin{align*}
&(I-\bar{\mathcal{H}}_{\zeta})\frac{\partial_{\alpha}(I-\mathcal{H}_{\zeta})D_tr_1}{\bar{\zeta}_{\alpha}}\\
=& (I-\bar{\mathcal{H}}_{\zeta})\frac{\partial_{\alpha}(I+\bar{\mathcal{H}}_{\bar{\zeta}})D_tr_1}{\bar{\zeta}_{\alpha}}-(I-\bar{\mathcal{H}}_{\zeta})\frac{\partial_{\alpha}(\mathcal{H}_{\zeta}+\bar{\mathcal{H}}_{\bar{\zeta}})D_tr_1}{\bar{\zeta}_{\alpha}}.
\end{align*}
Note that the first term is zero, while the second term satisfies desired estimates.
So we have 
$$\norm{(I-\bar{\mathcal{H}}_{\zeta})\frac{\partial_{\alpha}(I-\mathcal{H}_{\zeta})D_tr_1}{\bar{\zeta}_{\alpha}}}_{H^s}\leq C(\epsilon E_s^{1/2}+\epsilon^{5/2}).$$
We can estimate $\norm{\mathcal{R}_{1622}}_{H^s}$ in a similar way.
So we obtain
\begin{equation}
\norm{\mathcal{R}_{16}}_{H^s}\leq C(\epsilon^{7/2}+\epsilon^2 E_s^{1/2}).
\end{equation}

\subsection{Estimate $\mathcal{R}_{17}$} It's easy to obtain estimate

\begin{equation}
\norm{\mathcal{R}_{17}}_{H^2}\leq C(\epsilon^{7/2}+\epsilon^2 E_s^{1/2}).
\end{equation}
\vspace*{2ex}

\noindent Since $\norm{\mathcal{R}_{15}}_{H^s}\leq \epsilon^{7/2}$, we obtain
\begin{equation}
\norm{(I-\mathcal{H}_{\zeta})\sum_{j=1}^5\mathcal{R}_j+\mathcal{R}_{16}+\mathcal{R}_{17}}_{H^s}\leq C(\epsilon^{7/2}+\epsilon E_s^{1/2}).
\end{equation}

\subsection{Estimate $[D_t^2-iA\partial_{\alpha}, \partial_{\alpha}^n]\rho_1$} We have
\begin{equation}
\begin{split}
&[D_t^2-iA\partial_{\alpha}, \partial_{\alpha}^n]\rho_1\\
=&\sum_{m=1}^n \partial_{\alpha}^{n-m}[D_t^2-iA\partial_{\alpha},\partial_{\alpha}]\partial_{\alpha}^{m-1}\rho_1\\
=&\sum_{m=1}^n \partial_{\alpha}^{n-m}(D_t[D_t,\partial_{\alpha}]+[D_t,\partial_{\alpha}]D_t+iA_{\alpha}\partial_{\alpha})\partial_{\alpha}^{m-1}\rho_1\\
=&-\sum_{m=1}^n \partial_{\alpha}^{n-m}D_t(b_{\alpha}\partial_{\alpha})\partial_{\alpha}^{m-1}\rho_1-\sum_{m=1}^n \partial_{\alpha}^{n-m}b_{\alpha}D_t\partial_{\alpha}^{m-1}\rho_1+i\sum_{m=1}^n\partial_{\alpha}^{n-m}A_{\alpha}\partial_{\alpha}^{m}\rho_1\\
:=& K_1+K_2+K_3.
\end{split}
\end{equation}
To estimate $K_1$, we use
\begin{equation}
\begin{split}
&\partial_{\alpha}^{n-m}D_t(b_{\alpha}\partial_{\alpha})\partial_{\alpha}^{m-1}\rho_1= \partial_{\alpha}^{n-m}(D_tb_{\alpha}\partial_{\alpha}^{m}\rho_1+b_{\alpha}D_t\partial_{\alpha}^m\rho_1)\\
=& \sum_{j=1}^{n-m} C_{n-m,j}(\partial_{\alpha}^{n-m-j}D_tb_{\alpha}\partial_{\alpha}^{m+j}\rho_1+\partial_{\alpha}^{n-m-j}b_{\alpha} \partial_{\alpha}^j D_t\partial_{\alpha}^m\rho_1),
\end{split}
\end{equation}
where 
\begin{equation}
    C_{n-m,j}=\frac{(n-m)!}{j!(n-m-j)!}.
\end{equation}

\noindent Write $D_tb_{\alpha}=\partial_{\alpha}D_tb-(b_{\alpha})^2$.
If $m>1$, then use (\ref{Dtb1diff}), corollary \ref{corollaryb}, we have 
\begin{equation}
\begin{split}
\norm{\partial_{\alpha}^{n-m-j}D_tb_{\alpha} \partial_{\alpha}^{m+j}\rho_1}_{L^2}\leq & \norm{\partial_{\alpha}^{n-m-j}D_tb_{\alpha} }_{\infty}\partial_{\alpha}^{m+j}\rho_1||_{L^2}\\
\leq &(\norm{b_{\alpha}}_{W^{n-1,\infty}}^2+\norm{\partial_{\alpha}^{n-m-j+1}D_t b}_{\infty})E_s^{1/2}\\
\leq & C\epsilon^2 E_s^{1/2}.
\end{split}
\end{equation}
Similarly, we have 
\begin{equation}
\norm{\partial_{\alpha}^{n-m-j}b_{\alpha} \partial_{\alpha}^j D_t\partial_{\alpha}^m\rho_1)}_{L^2}\leq  C\epsilon^{2}E_s^{1/2}.
\end{equation}

If $m=1$ and $j=0$,  then we cannot simply estimate $\partial_{\alpha}^{n-1}D_tb_{\alpha}$ in $L^{\infty}$, because if $n=s$, we'll lose derivatives. To avoid loss of derivatives,  we decompose $$D_tb=D_tb_1+b_1\partial_{\alpha}b_0+D_t^0b_0=D_t(b_1-\tilde{b}_1)+\tilde{D}_t\tilde{b}_1+b_1\partial_{\alpha}(b_0+\tilde{b}_1)+D_t^0b_0.$$
Then for $n\geq 3$, by corollary \ref{equivalencerho1}, Theorem \ref{longperiodic}, corollary \ref{corollaryb}, we have 
\begin{equation}
\begin{split}
&\norm{\partial_{\alpha}\rho_1\partial_{\alpha}^n D_tb}_{L^2}\\
=&\norm{\partial_{\alpha}\rho_1(\partial_{\alpha}^n D_t(b_1-\tilde{b}_1)+\partial_{\alpha}^n \tilde{D}_t\tilde{b}_1+\partial_{\alpha}^n(b_1\partial_{\alpha}(\tilde{b}_1+b_0)+\partial_{\alpha}^n(D_t^0b_0))}_{L^2}\\
\leq & \norm{D_t(b_1-\tilde{b}_1)}_{H^n}\norm{\partial_{\alpha}\rho_1}_{L^{\infty}}+\norm{b_1}_{H^n}\norm{\tilde{b}_1+b_0}_{W^{n+1,\infty}}\norm{\partial_{\alpha}\rho_1}_{L^{\infty}}+\norm{\partial_{\alpha}\rho_1}_{L^2}\norm{D_t^0b_0}_{W^{n,\infty}}\\
\leq & C\epsilon^{2}E_s^{1/2}.
\end{split}
\end{equation}
The quantity $\partial_{\alpha}^{n-1}(b_{\alpha})^2 \partial_{\alpha}\rho_1$ can be estimated similarly, and we obtain
\begin{equation}
\norm{\partial_{\alpha}^{n-1}(b_{\alpha})^2 \partial_{\alpha}\rho_1}_{L^2}\leq C\epsilon^{2}E_s^{1/2}.
\end{equation}
So we have 
\begin{equation}
\norm{\partial_{\alpha}^{n-m-j}D_tb_{\alpha} \partial_{\alpha}^{m+j}\rho_1}_{L^2}\leq C\epsilon^{2} E_s^{1/2}.
\end{equation}
We use same argument to obtain
\begin{equation}
\norm{\partial_{\alpha}^{n-m-j}b_{\alpha} \partial_{\alpha}^j D_t\partial_{\alpha}^m\rho_1)}_{L^2}\leq C\epsilon^{2} E_s^{1/2}.
\end{equation}
So we obtain 
\begin{equation}
\norm{K_1}_{L^2}\leq C\epsilon^2 E_s^{1/2}.
\end{equation}
Estimate $K_2, K_3$ in a similar way, we obtain
\begin{equation}
\norm{K_2+K_3}_{L^2}\leq C(\epsilon^{7/2}+\epsilon^{2} E_s^{1/2}).
\end{equation}
So we have 
\begin{equation}
\norm{[D_t^2-iA\partial_{\alpha}, \partial_{\alpha}^n]\rho_1}_{L^2}\leq C(\epsilon^{7/2}+\epsilon^{2} E_s^{1/2}).
\end{equation}

\vspace*{2ex}

\noindent So we obtain estimate for $\mathcal{C}_{1,n}$: for $0\leq n\leq s$, 
\begin{equation}
\norm{\mathcal{C}_{1,n}}_{L^2}\leq C(\epsilon^{7/2}+\epsilon^2 E_s^{1/2}).
\end{equation}

\subsection{Estimate $\partial_{\alpha}^n (I-\mathcal{H}_{\zeta})\mathcal{S}_1$} In this subsection we obtain estimate for the quantity $\partial_{\alpha}^n (I-\mathcal{H}_{\zeta})\mathcal{S}_1$. Since
$$\partial_{\alpha}^n (I-\mathcal{H}_{\zeta})\mathcal{S}_1=(I-\mathcal{H}_{\zeta})\partial_{\alpha}^n \mathcal{S}_1-[\partial_{\alpha}^n, \mathcal{H}_{\zeta}]\mathcal{S}_1,$$
it suffices to estimate $(I-\mathcal{H}_{\zeta})\partial_{\alpha}^n \mathcal{S}_1$.

\noindent  Recall that 
\begin{align*}
\mathcal{S}_1=&  D_tG-D_t^0G_0-\tilde{D}_t\tilde{G}+\tilde{D}_t^0\tilde{G}_0+(\mathcal{P}-\mathcal{P}_0)\tilde{D}_t(I-\mathcal{H}_{\omega})(\omega-\alpha)\\
&-(\tilde{\mathcal{P}}-\tilde{\mathcal{P}}_0)\tilde{D}_t^0(I-\mathcal{\tilde{\omega}})(\tilde{\omega}-\alpha)+\epsilon^4\mathcal{R}.
\end{align*}

\subsubsection{Estimate $\norm{ D_tG-D_t^0G_0-\tilde{D}_t\tilde{G}+\tilde{D}_t^0\tilde{G}_0}_{H^s}$}
We have 
\begin{equation}\label{DTG}
\begin{split}
D_tG=&-2[D_t^2\zeta,\mathcal{H}_{\zeta}\frac{1}{\zeta_{\alpha}}+\bar{\mathcal{H}}_{\zeta}\frac{1}{\bar{\zeta}_{\alpha}}]\partial_{\alpha}D_t\zeta-2[D_t\zeta,\mathcal{H}_{\zeta}\frac{1}{\zeta_{\alpha}}+\bar{\mathcal{H}}_{\zeta}\frac{1}{\bar{\zeta}_{\alpha}}]\partial_{\alpha}D_t^2\zeta\\
&+\frac{2}{\pi i}\int \Big(\frac{D_t\zeta(\alpha)-D_t\zeta(\beta)}{\zeta(\alpha)-\zeta(\beta)}\Big)^2\partial_{\beta} D_t\zeta(\beta)d\beta\\
&-\frac{2}{\pi i}\int \frac{|D_t\zeta(\alpha)-D_t\zeta(\beta)|^2}{(\bar{\zeta}(\alpha)-\bar{\zeta}(\beta))^2}\partial_{\beta}D_t\zeta(\beta)d\beta\\
&+\frac{4}{\pi}\int \frac{(D_t\zeta(\alpha)-D_t\zeta(\beta))(D_t^2\zeta(\alpha)-D_t^2\zeta(\beta))}{(\zeta(\alpha)-\zeta(\beta))^2}\partial_{\beta}\Im \zeta(\beta)d\beta\\
&+\frac{2}{\pi i}\int \Big(\frac{D_t\zeta(\alpha)-D_t\zeta(\beta)}{\zeta(\alpha)-\zeta(\beta)}\Big)^2\partial_{\beta} \Im D_t\zeta(\beta)d\beta\\
&-\frac{4}{\pi }\int \Big(\frac{D_t\zeta(\alpha)-D_t\zeta(\beta)}{\zeta(\alpha)-\zeta(\beta)}\Big)^3\partial_{\beta} \Im \zeta(\beta)d\beta.
\end{split}
\end{equation}
And
\begin{equation}\label{DTOMEGA}
\begin{split}
D_t^0G_0=& -2[(D_t^0)^2\omega, \mathcal{H}_{\omega}\frac{1}{\omega_{\alpha}}+\bar{\mathcal{H}}_{\omega}\frac{1}{\omega_{\alpha}}]\partial_{\alpha}D_t^0\omega\\
&-2[D_t^0\omega,\mathcal{H}_{\omega}\frac{1}{\omega_{\alpha}}+\bar{\mathcal{H}}_{\omega}\frac{1}{\omega_{\alpha}}]\partial_{\alpha}(D_t^0)\omega\\
&+\frac{2}{\pi i}\int \Big(\frac{D_t^0\omega(\alpha,t)-D_t^0\omega(\beta,t)}{\omega(\alpha,t)-\omega(\beta,t)}\Big)^2\partial_{\beta}D_t^0\omega(\beta,t)d\beta\\
&\frac{4}{\pi }\int \frac{(D_t^0\omega(\alpha)-D_t^0\omega(\beta))((D_t^0)^2\omega(\alpha)-(D_t^0)^2\omega(\beta))}{(\omega(\alpha)-\omega(\beta))^2}\partial_{\beta}\Im\{\omega(\beta)\}d\beta\\
&-\frac{2}{\pi i} \int\frac{|D_t^0\omega(\alpha,t)-D_t^0\omega(\beta,t)|^2}{(\bar{\omega}(\alpha,t)-\bar{\omega}(\beta,t))^2}\partial_{\beta}D_t^0\omega(\beta,t)d\beta\\
&+\frac{2}{\pi}\int\Big(\frac{D_t^0\omega(\alpha,t)-D_t^0\omega(\beta,t)}{\omega(\alpha,t)-\omega(\beta,t)}\Big)^2\partial_{\beta}\Im\{D_t^0\omega(\beta,t)\}d\beta\\
&-\frac{4}{\pi}\int\Big(\frac{D_t^0\omega(\alpha,t)-D_t^0\omega(\beta,t)}{\omega(\alpha,t)-\omega(\beta,t)}\Big)^2\partial_{\beta}\Im\{\omega(\beta,t)\}d\beta\\
\end{split}
\end{equation}
$\tilde{D}_t \tilde{G}$ and $\tilde{D}_t^0\tilde{G}_0$ are given similarly:  For $\tilde{D}_t \tilde{G}$, replace $D_t$ by $\tilde{D}_t$ and replance $\zeta$ by $\tilde{\zeta}$ in (\ref{DTG}). For $\tilde{D}_t^0\tilde{G}_0$, replace $D_t^0$ by $\tilde{D}_t^0$, and replace $\omega$ by $\tilde{\omega}$ in (\ref{DTOMEGA}).

\vspace*{2ex}
\noindent $\bullet$ Estimate the quantity
\begin{align*}
\mathcal{S}_{11}:=&-2[D_t^2\zeta,\mathcal{H}_{\zeta}\frac{1}{\zeta_{\alpha}}+\bar{\mathcal{H}}_{\zeta}\frac{1}{\bar{\zeta}_{\alpha}}]\partial_{\alpha}D_t\zeta+2[\tilde{D}_t^2\tilde{\zeta},\mathcal{H}_{\tilde{\zeta}}\frac{1}{\tilde{\zeta}_{\alpha}}+\bar{\mathcal{H}}_{\tilde{\zeta}}\frac{1}{\bar{\tilde{\zeta}}_{\alpha}}]\partial_{\alpha}\tilde{D}_t\tilde{\zeta}\\
&+2[(D_t^0)^2\omega,\mathcal{H}_{\omega}\frac{1}{\omega_{\alpha}}+\bar{\mathcal{H}}_{\omega}\frac{1}{\bar{\omega}_{\alpha}}]\partial_{\alpha}D_t^0\omega-2[(\tilde{D}_t^0)^2\tilde{\omega},\mathcal{H}_{\tilde{\omega}}\frac{1}{\tilde{\omega}_{\alpha}}+\bar{\mathcal{H}}_{\tilde{\omega}}\frac{1}{\bar{\tilde{\omega}}_{\alpha}}]\partial_{\alpha}\tilde{D}_t^0\tilde{\omega}
\end{align*}
Rewrite $\mathcal{S}_{11}$ as 
\begin{align*}
\mathcal{S}_{11}=& -2[D_t^2\zeta-\tilde{D}_t^2\tilde{\zeta}, \mathcal{H}_{\zeta}\frac{1}{\zeta_{\alpha}}+\bar{\mathcal{H}}_{\zeta}\frac{1}{\bar{\zeta}_{\alpha}}]\partial_{\alpha}D_t\zeta   \quad \quad \quad \quad \quad &:=S_{111}\\
&-2[\tilde{D}_t^2\tilde{\zeta}, \mathcal{H}_{\zeta}\frac{1}{\zeta_{\alpha}}+\bar{\mathcal{H}}_{\zeta}\frac{1}{\bar{\zeta}_{\alpha}}-\Big(\mathcal{H}_{\tilde{\zeta}}\frac{1}{\tilde{\zeta}_{\alpha}}+\bar{\mathcal{H}}_{\tilde{\zeta}}\frac{1}{\bar{\tilde{\zeta}}_{\alpha}}\Big)]\partial_{\alpha}D_t\zeta   &:= S_{112}\\
&-2[\tilde{D}_t^2\tilde{\zeta},\mathcal{H}_{\tilde{\zeta}}\frac{1}{\tilde{\zeta}_{\alpha}}+\bar{\mathcal{H}}_{\tilde{\zeta}}\frac{1}{\bar{\tilde{\zeta}}_{\alpha}}]\partial_{\alpha}(D_t\zeta-\tilde{D}_t\tilde{\zeta})    &:=S_{113}\\
&+2[(D_t^0)^2\omega-(\tilde{D}_t^0)^2\tilde{\omega},\mathcal{H}_{\omega}\frac{1}{\omega_{\alpha}}+\bar{\mathcal{H}}_{\omega}\frac{1}{\bar{\omega}_{\alpha}}]\partial_{\alpha}D_t^0\omega   &:=S_{114}\\
&+2[(\tilde{D}_t^0)^2\tilde{\omega},\mathcal{H}_{\omega}\frac{1}{\omega_{\alpha}}+\bar{\mathcal{H}}_{\omega}\frac{1}{\bar{\omega}_{\alpha}}-\Big(\mathcal{H}_{\tilde{\omega}}\frac{1}{\tilde{\omega}_{\alpha}}+\bar{\mathcal{H}}_{\tilde{\omega}}\frac{1}{\bar{\tilde{\omega}}_{\alpha}}\Big)]\partial_{\alpha}D_t^0\omega   &:=S_{115}\\
&+2[(\tilde{D}_t^0)^2\tilde{\omega},\mathcal{H}_{\tilde{\omega}}\frac{1}{\tilde{\omega}_{\alpha}}+\bar{\mathcal{H}}_{\tilde{\omega}}\frac{1}{\bar{\tilde{\omega}}_{\alpha}}]\partial_{\alpha}(D_t^0\omega-\tilde{D}_t^0\tilde{\omega})   &:=S_{116}.
\end{align*}
We give details of estimate of $S_{111}+S_{114}$ only, the estimates for $S_{112}+S_{115}$ and $S_{113}+S_{116}$ are similar. Write $S_{111}+S_{114}$ as
\begin{align*}
S_{111}+S_{114}=& -2[D_t^2\zeta-\tilde{D}_t^2\tilde{\zeta}-((D_t^0)^2\omega-(\tilde{D}_t^0)^2\tilde{\omega}), \mathcal{H}_{\zeta}\frac{1}{\zeta_{\alpha}}+\bar{\mathcal{H}}_{\zeta}\frac{1}{\bar{\zeta}_{\alpha}}]\partial_{\alpha}D_t\zeta\\
&-2[(D_t^0)^2\omega-(\tilde{D}_t^0)^2\tilde{\omega},\mathcal{H}_{\zeta}\frac{1}{\zeta_{\alpha}}+\bar{\mathcal{H}}_{\zeta}\frac{1}{\bar{\zeta}_{\alpha}}-\Big(\mathcal{H}_{\omega}\frac{1}{\omega_{\alpha}}+\bar{\mathcal{H}}_{\omega}\frac{1}{\bar{\omega}_{\alpha}}\Big)]\partial_{\alpha}D_t\zeta\\
&-2[(D_t^0)^2\omega-(\tilde{D}_t^0)^2\tilde{\omega},\mathcal{H}_{\omega}\frac{1}{\omega_{\alpha}}+\bar{\mathcal{H}}_{\omega}\frac{1}{\bar{\omega}_{\alpha}}]\partial_{\alpha}(D_t\zeta-D_t^0\omega)\\
:=&M_1+M_2+M_3.
\end{align*}
To estimate $M_1$, we use (\ref{seconddiff})
\begin{equation}
\begin{split}
&D_t^2\zeta-\tilde{D}_t^2\tilde{\zeta}-(D_t^0)^2\omega+(\tilde{D}_t^0)^2\tilde{\omega}\\
=& D_t^2r_1+(D_t^2-\tilde{D}_t^2)\tilde{\xi}_1+(D_t^2-(D_t^0)^2)\omega-((D_t^0)^2-(\tilde{D}_t^0)^2)\omega\\
\end{split}
\end{equation}
Denote
$$q:=(D_t^2-\tilde{D}_t^2)\tilde{\xi}_1+(D_t^2-(D_t^0)^2)\omega-((D_t^0)^2-(\tilde{D}_t^0)^2)\omega.$$
We have the rough estimate
\begin{equation}
\norm{q}_{H^s}\leq C(E_s^{1/2}+\epsilon^{3/2}).
\end{equation} 
Decompose $D_t\zeta$ as 
$$D_t\zeta=D_tr_1+D_t^0\omega+b_1\omega_{\alpha}+\tilde{D}_t\tilde{\xi}_1+(b-\tilde{b})\partial_{\alpha}\tilde{\xi}_1.$$
Then $M_1$ can be written as
\begin{align}
    M_1=& -2[D_t^2r_1, \mathcal{H}_{\zeta}\frac{1}{\zeta_{\alpha}}+\bar{\mathcal{H}}_{\zeta}\frac{1}{\bar{\zeta}_{\alpha}}]\partial_{\alpha}(D_tr_1+b_1\omega_{\alpha}+(b-\tilde{b})\partial_{\alpha}\tilde{\xi}_1)\\
    &-2[D_t^2r_1, \mathcal{H}_{\zeta}\frac{1}{\zeta_{\alpha}}+\bar{\mathcal{H}}_{\zeta}\frac{1}{\bar{\zeta}_{\alpha}}]\partial_{\alpha}(D_t^0\omega+\tilde{D}_t\tilde{\xi}_1)\\
    &-2[q, \mathcal{H}_{\zeta}\frac{1}{\zeta_{\alpha}}+\bar{\mathcal{H}}_{\zeta}\frac{1}{\bar{\zeta}_{\alpha}}]\partial_{\alpha}(D_tr_1+b_1\omega_{\alpha}+(b-\tilde{b})\partial_{\alpha}\tilde{\xi}_1)\\
    & -2[q, \mathcal{H}_{\zeta}\frac{1}{\zeta_{\alpha}}+\bar{\mathcal{H}}_{\zeta}\frac{1}{\bar{\zeta}_{\alpha}}]\partial_{\alpha}(D_t^0\omega+\tilde{D}_t\tilde{\xi}_1)\\
    :=&M_{11}+M_{12}+M_{13}+M_{14}.
\end{align}
For $M_{11}$, we have 
\begin{align}
    \norm{M_{11}}_{H^s}\leq & C\norm{D_t^2r_1}_{H^s}\norm{Im \zeta_{\alpha}}_{W^{s-1,\infty}}\norm{D_tr_1+b_1\omega_{\alpha}+(b-\tilde{b})\partial_{\alpha}\tilde{\xi}_1}_{H^s}\\
    \leq & C\epsilon E_s^{1/2}(E_s^{1/2}+\epsilon^{3/2}) \\
    \leq & C\epsilon^2 E_s^{1/2}.
\end{align}
For $M_{12}$, we have 
\begin{align}
    \norm{M_{12}}_{H^s}\leq & C\norm{D_t^2r_1}_{H^s}\norm{Im \zeta_{\alpha}}_{W^{s-1,\infty}}\norm{D_t^0\omega+\tilde{D}_t\tilde{\xi}_1}_{W^{s,\infty}}\leq C\epsilon^2 E_s^{1/2}.
\end{align}
For $M_{13}$, we have 
\begin{align}
    \norm{M_{13}}_{H^s}\leq C\norm{q}_{H^s}\norm{Im\zeta_{\alpha}}_{W^{s-1,\infty}}\norm{D_tr_1+b_1\omega_{\alpha}+(b-\tilde{b})\partial_{\alpha}\tilde{\xi}_1}_{H^s}\leq C\epsilon^2 E_s^{1/2}.
\end{align}
For $M_{14}$, we have 
\begin{align}
    \norm{M_{14}}_{H^s}\leq C\norm{q}_{H^s}\norm{Im\zeta_{\alpha}}_{W^{s-1,\infty}}\norm{D_t^0\omega+\tilde{D}_t\tilde{\xi}_1}_{W^{s,\infty}}\leq C\epsilon^2 E_s^{1/2}.
\end{align}
So we have 
\begin{equation}
\begin{split}
\norm{M_1}_{H^s}\leq & C\epsilon^2 E_s^{1/2}.
\end{split}
\end{equation}
$M_2$ and $M_3$ can be estimated similarly, we obtain
\begin{equation}
\norm{M_2}_{H^s}+\norm{M_3}_{H^s}\leq C\epsilon^2 E_s^{1/2}+\epsilon^{7/2}.
\end{equation}
So we have 
\begin{equation}
\norm{S_{111}+S_{114}}_{H^s}\leq C(\epsilon^2 E_s^{1/2}+\epsilon^{7/2}).
\end{equation}
Similarly,
\begin{equation}
\norm{S_{112}+S_{115}}_{H^s}\leq  C(\epsilon^2 E_s^{1/2}+\epsilon^{7/2}).
\end{equation}

\begin{equation}
\norm{S_{113}+S_{116}}_{H^s}\leq C(\epsilon^2 E_s^{1/2}+\epsilon^{7/2}).
\end{equation}
So we have 
\begin{equation}
\norm{\mathcal{S}_{11}}_{H^s}\leq C(\epsilon^2 E_s^{1/2}+\epsilon^{7/2}).
\end{equation}
We can estimate other quantities in $ D_tG-D_t^0G_0-\tilde{D}_t\tilde{G}+\tilde{D}_t^0\tilde{G}_0$ similarly, and obtain
\begin{equation}
\norm{ D_tG-D_t^0G_0-\tilde{D}_t\tilde{G}+\tilde{D}_t^0\tilde{G}_0}_{H^s}\leq C(\epsilon^2 E_s^{1/2}+\epsilon^{7/2}).
\end{equation}

\vspace*{2ex}

\subsubsection{Estimate $\norm{(\mathcal{P}-\mathcal{P}_0)\tilde{D}_t(I-\mathcal{H}_{\omega})(\omega-\alpha)-(\tilde{\mathcal{P}}-\tilde{\mathcal{P}}_0)\tilde{D}_t^0(I-\mathcal{\tilde{\omega}})(\tilde{\omega}-\alpha)}_{H^s}$} We have
\begin{equation}
\begin{split}
& (\mathcal{P}-\mathcal{P}_0)\tilde{D}_t(I-\mathcal{H}_{\omega})(\omega-\alpha)-(\tilde{\mathcal{P}}-\tilde{\mathcal{P}}_0)\tilde{D}_t^0(I-\mathcal{\tilde{\omega}})(\tilde{\omega}-\alpha)\\
=&  (\mathcal{P}-\mathcal{P}_0-\tilde{\mathcal{P}}+\tilde{\mathcal{P}}_0)\tilde{D}_t(I-\mathcal{H}_{\omega})(\omega-\alpha)+(\tilde{\mathcal{P}}-\tilde{\mathcal{P}}_0)[\tilde{D}_t(I-\mathcal{H}_{\omega})(\omega-\alpha)-\tilde{D}_t^0(I-\mathcal{\tilde{\omega}})(\tilde{\omega}-\alpha)]\\
:=&U_1+U_2.
\end{split}
\end{equation}
$U_2$ is a known function, it's easy to obtain that
\begin{equation}
\norm{U_2}_{H^s}\leq C\epsilon^{7/2}.
\end{equation}
The operator 
\begin{align*}
& \mathcal{P}-\mathcal{P}_0-\tilde{\mathcal{P}}+\tilde{\mathcal{P}}_0= D_tb_1\partial_{\alpha}+b_1\partial_{\alpha}D_t^0-\tilde{D}_t\tilde{b}_1\partial_{\alpha}-\tilde{b}_1\partial_{\alpha}\tilde{D}_t^0\\
=& (b_1-\tilde{b}_1)\partial_{\alpha}+\tilde{D}_t(b_1-\tilde{b}_1)\partial_{\alpha}+(b_1-\tilde{b}_1)\partial_{\alpha}D_t^0+\tilde{b}_1\partial_{\alpha}(b_0-\tilde{b}_0)\partial_{\alpha}.
\end{align*}
Then it's easy to obtain
\begin{equation}
\norm{U_1}_{H^s}\leq C(\epsilon^{7/2}+\epsilon^2E_s^{1/2}).
\end{equation}
So we have 
\begin{equation}
\begin{split}
&\norm{(\mathcal{P}-\mathcal{P}_0)\tilde{D}_t(I-\mathcal{H}_{\omega})(\omega-\alpha)-(\tilde{\mathcal{P}}-\tilde{\mathcal{P}}_0)\tilde{D}_t^0(I-\mathcal{\tilde{\omega}})(\tilde{\omega}-\alpha)}_{H^s}\\
\leq & C(\epsilon^{7/2}+\epsilon^2E_s^{1/2}).
\end{split}
\end{equation}

\subsubsection{Estimate $L$}  We have obtained estimate for the quantity $L$ in the previous section, we have 
\begin{equation}
\begin{split}
&(I-\mathcal{H}_{\zeta})(\zeta-\alpha)-(I-\mathcal{H}_{\omega})(\omega-\alpha)-(I-\mathcal{H}_{\tilde{\zeta}})(\tilde{\zeta}-\alpha)+(I-\mathcal{H}_{\tilde{\omega}})(\tilde{\omega}-\alpha)\\
\leq & C(\epsilon^{7/2}+\epsilon^2 E_s^{1/2}).
\end{split}
\end{equation}

\vspace*{2ex}

\noindent Therefore 
\begin{equation}
\norm{\partial_{\alpha}^n \mathcal{S}_1}_{L^s}\leq  C(\epsilon^{7/2}+\epsilon^2 E_s^{1/2}).
\end{equation}
So we have 
\begin{equation}
\norm{\partial_{\alpha}^n(I-\mathcal{H}_{\zeta}) \mathcal{S}_1}_{L^s}\leq  C(\epsilon^{7/2}+\epsilon^2 E_s^{1/2}).
\end{equation}

\subsection{Estimate $[D_t^2-iA\partial_{\alpha}, \partial_{\alpha}^n]\sigma_1$} The way we estimate this term is similar to that of $[D_t^2-iA\partial_{\alpha}, \partial_{\alpha}^n]\rho_1$. We omitt the details. We have 
\begin{equation}
\norm{[D_t^2-iA\partial_{\alpha}, \partial_{\alpha}^n]\sigma_1}_{H^s}\leq C(\epsilon^{7/2}+\epsilon^2E_s^{1/2}).
\end{equation}

\subsection{Estimate $\norm{-2[D_t\zeta,\mathcal{H}_{\zeta}]\frac{\partial_{\alpha}D_t(\delta-\tilde{\delta})}{\zeta_{\alpha}}}_{H^s}$ and 
$\norm{\frac{1}{\pi i}\int \Big(\frac{D_t\zeta(\alpha)-D_t\zeta(\beta)}{\zeta(\alpha)-\zeta(\beta)}\Big)^2\partial_{\beta}(\delta-\tilde{\delta})d\beta}_{H^s}$} Estimates of these two quantities are similar to that of $\mathcal{R}_{16}, \mathcal{R}_{17}$, respectively. We have
\begin{equation}
\begin{split}
    &\norm{-2[D_t\zeta,\mathcal{H}_{\zeta}]\frac{\partial_{\alpha}D_t(\delta-\tilde{\delta})}{\zeta_{\alpha}}}_{H^s}+
\norm{\frac{1}{\pi i}\int \Big(\frac{D_t\zeta(\alpha)-D_t\zeta(\beta)}{\zeta(\alpha)-\zeta(\beta)}\Big)^2\partial_{\beta}(\delta-\tilde{\delta})d\beta}_{H^s}\\
\leq &
C(\epsilon^{7/2}+\epsilon^2 E_s^{1/2}).
\end{split}
\end{equation}

\noindent Sum up these estimates 
\begin{lemma}
We have
\begin{equation}
\norm{\mathcal{C}_{2,n}}_{L^2}=\norm{\mathcal{P}\sigma_1^{(n)}}_{L^2}\leq  C(\epsilon^{7/2}+\epsilon^2 E_s^{1/2}).
\end{equation}
\end{lemma}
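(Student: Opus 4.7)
The plan is to combine the equation for $\mathcal{P}\sigma_1$ derived in (\ref{sigma1equation}) with the per-piece estimates that were already assembled in the subsections preceding the statement, and then handle the commutator with $\partial_\alpha^n$ by the same commutator/product scheme used for $[D_t^2-iA\partial_\alpha,\partial_\alpha^n]\rho_1$.

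First I would recall that
\begin{equation}
\mathcal{C}_{2,n} = \partial_\alpha^n \mathcal{P}\sigma_1 + [\mathcal{P},\partial_\alpha^n]\sigma_1,
\end{equation}
so by the triangle inequality it suffices to bound the two pieces separately in $L^2$. For $\partial_\alpha^n \mathcal{P}\sigma_1$ I would substitute (\ref{sigma1equation}), giving
\begin{equation}
\partial_\alpha^n \mathcal{P}\sigma_1 = \partial_\alpha^n(I-\mathcal{H}_\zeta)\mathcal{S}_1 - 2\,\partial_\alpha^n\!\left([D_t\zeta,\mathcal{H}_\zeta]\tfrac{\partial_\alpha D_t(\delta-\tilde\delta)}{\zeta_\alpha}\right) + \partial_\alpha^n\!\left(\tfrac{1}{\pi i}\!\int\!\Big(\tfrac{D_t\zeta(\alpha)-D_t\zeta(\beta)}{\zeta(\alpha)-\zeta(\beta)}\Big)^2\!\partial_\beta(\delta-\tilde\delta)\,d\beta\right).
\end{equation}
The three terms on the right are exactly the quantities estimated in the three subsections immediately preceding the lemma. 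The first was shown to satisfy $\norm{\partial_\alpha^n(I-\mathcal{H}_\zeta)\mathcal{S}_1}_{L^2}\leq C(\epsilon^{7/2}+\epsilon^2 E_s^{1/2})$ after an elaborate splitting of $\mathcal{S}_1$, and the second and third were handled analogously to $\mathcal{R}_{16}$ and $\mathcal{R}_{17}$, yielding the same bound. So I would simply cite those bounds.

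Second, for $[\mathcal{P},\partial_\alpha^n]\sigma_1$ I would follow the same decomposition used for $[\mathcal{P},\partial_\alpha^n]\rho_1$: write
\begin{equation}
[\mathcal{P},\partial_\alpha^n]\sigma_1 = -\sum_{m=1}^n \partial_\alpha^{n-m}D_t(b_\alpha\partial_\alpha)\partial_\alpha^{m-1}\sigma_1 - \sum_{m=1}^n \partial_\alpha^{n-m} b_\alpha D_t \partial_\alpha^{m-1}\sigma_1 + i\sum_{m=1}^n \partial_\alpha^{n-m}A_\alpha \partial_\alpha^m \sigma_1,
\end{equation}
expand each $D_t b_\alpha$ and $b_\alpha$ factor using the splittings $b=b_0+b_1$ and $A=A_0+A_1$, and invoke the controls on $b_1$, $D_tb_1$, $b_1-\tilde b_1$, $D_t(b_1-\tilde b_1)$, $A_1-\tilde A_1$, and their periodic counterparts from Theorem \ref{longperiodic}, Lemma \ref{bdiff}, Corollary \ref{corollaryb}, and (\ref{Dtb1diff}), (\ref{A1Diff}). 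The same top-order trick is needed when $m=1$: avoid estimating $\partial_\alpha^{n-1}D_t b$ in $L^\infty$ by writing $D_tb = D_t(b_1-\tilde b_1)+\tilde D_t \tilde b_1 + b_1\partial_\alpha(b_0+\tilde b_1)+D_t^0 b_0$ and pairing the $H^n$ norm of the first summand with $\norm{\partial_\alpha\sigma_1}_{L^\infty}$, which is small by the analogue of Lemma \ref{equivalencequantities} for $\sigma_1$ (i.e.\ $\sigma_1$ is equivalent to $2\,D_t r_1$ modulo lower-order controllable corrections). This gives $\norm{[\mathcal{P},\partial_\alpha^n]\sigma_1}_{L^2}\leq C(\epsilon^{7/2}+\epsilon^2 E_s^{1/2})$.

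Summing the four contributions gives the stated bound. The main obstacle is not any single estimate, but book-keeping: one must verify that the $\sigma_1$-analogue of Lemma \ref{equivalencequantities} holds, so that $\sigma_1$ (and its $\partial_\alpha$) carries the same smallness as $D_tr_1$ (resp.\ $\partial_\alpha D_tr_1$), since this is what turns the seemingly quadratic top-order contributions in $[\mathcal{P},\partial_\alpha^n]\sigma_1$ into truly fourth-order quantities. Once that equivalence is in place, the rest is a direct application of the already-proved estimates, combined through the triangle inequality.
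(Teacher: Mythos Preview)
Your proposal is correct and matches the paper's approach exactly: the lemma is stated in the paper immediately after the phrase ``Sum up these estimates,'' and its proof is nothing more than adding the bounds already established in the preceding subsections for $\partial_\alpha^n(I-\mathcal{H}_\zeta)\mathcal{S}_1$, for the two terms arising from $[\mathcal{P},\mathcal{H}_\zeta](\delta-\tilde\delta)$, and for $[\mathcal{P},\partial_\alpha^n]\sigma_1$. Your identification of the $\sigma_1$-analogue of Lemma~\ref{equivalencequantities} as the only nontrivial bookkeeping point is apt; the paper handles it by declaring the $[\mathcal{P},\partial_\alpha^n]\sigma_1$ estimate ``similar to that of $[D_t^2-iA\partial_\alpha,\partial_\alpha^n]\rho_1$'' and omitting the details.
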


\subsection{Estimate the quantity $\frac{a_t}{a}\circ \kappa^{-1}$} By (\ref{ata11111}), lemma \ref{realinverse}, corollary \ref{boundAAA}, it's easy to obtain
\begin{equation}
\norm{\frac{a_t}{a}\circ \kappa^{-1}}_{L^{\infty}}\leq \epsilon^{3/2}+\epsilon E_s^{1/2}.
\end{equation}

\subsection{Estimate $\mathcal{R}_1^{(n)}$} Recall that $$\mathcal{R}_1^{(n)}=\frac{1}{2}(I+\mathcal{H}_{\zeta})\rho_1^{(n)}=\frac{1}{2}(I+\mathcal{H}_{\zeta})\partial_{\alpha}^n \rho_1.$$
So we have
\begin{align*}
\mathcal{R}_1^{(n)}=& \frac{1}{2}\partial_{\alpha}^n (I+\mathcal{H}_{\zeta})\rho_1-\frac{1}{2}[\partial_{\alpha}^n, \mathcal{H}_{\zeta}]\rho_1\\
=&\frac{1}{2}\partial_{\alpha}^n(I+\mathcal{H}_{\zeta})(I-\mathcal{H}_{\zeta})(\lambda-\tilde{\lambda})-\frac{1}{2}\sum_{m=1}^n \partial_{\alpha}^{n-m}[\zeta_{\alpha}-1, \mathcal{H}_{\zeta}]\frac{\partial_{\alpha}^{m}\rho_1}{\zeta_{\alpha}}\\
=&-\frac{1}{2}\sum_{m=1}^n \partial_{\alpha}^{n-m}[\zeta_{\alpha}-1, \mathcal{H}_{\zeta}]\frac{\partial_{\alpha}^{m}\rho_1}{\zeta_{\alpha}}.
\end{align*}
Decompose $\zeta_{\alpha}-1=(r_1)_{\alpha}+(r_0)_{\alpha}+\tilde{\zeta}_{\alpha}-1$. We estimate $(r_1)_{\alpha}$ in $H^s(\RR)$ while estimate $(r_0)_{\alpha}+\tilde{\zeta}_{\alpha}-1$ in $W^{s,\infty}$. By Proposition \ref{singular}, 
\begin{equation}
\begin{split}
\norm{\partial_{\alpha}\mathcal{R}_1^{(n)}}_{L^2}\leq & \sum_{m=1}^n \norm{\partial_{\alpha}^{n-m+1}[\zeta_{\alpha}-1, \mathcal{H}_{\zeta}]\frac{\partial_{\alpha}^m \rho_1}{\zeta_{\alpha}}}_{L^2}\\
\leq & C\Big(\norm{(r_1)_{\alpha}}_{H^s}+\norm{(r_0)_{\alpha}+\tilde{\zeta}_{\alpha}-1}_{W^{s,\infty}}\Big)\norm{\partial_{\alpha}\rho_1}_{H^s}\\
\leq & C(E_s^{1/2}+\epsilon)E_s^{1/2}.
\end{split}
\end{equation}
We need also to bound $\partial_t \mathcal{R}_1^{(n)}$. We have
\begin{align*}
\partial_t\mathcal{R}_1^{(n)}=& -\frac{1}{2}\sum_{m=1}^n\partial_{\alpha}^{n-m}\partial_t[\zeta_{\alpha}-1,\mathcal{H}_{\zeta}]\frac{\partial_{\alpha}^m\rho_1}{\zeta_{\alpha}}\\
=&-\frac{1}{2}\sum_{m=1}^n \partial_{\alpha}^{n-m}\Big\{[\partial_{\alpha}\zeta_t, \mathcal{H}_{\zeta}]\frac{\partial_{\alpha}^m\rho_1}{\zeta_{\alpha}}+[\zeta_{\alpha}-1,\mathcal{H}_{\zeta}]\frac{\partial_{\alpha}^m \partial_t\rho_1}{\zeta_{\alpha}}\\
&-\frac{1}{\pi i}\int \frac{(\zeta_{\alpha}(\alpha)-\zeta_{\beta}(\beta))(\zeta_t(\alpha)-\zeta_t(\beta))}{(\zeta(\alpha)-\zeta(\beta))^2}\partial_{\beta}^m \rho_1(\beta)d\beta\Big\}
\end{align*}
Write $\partial_t=D_t-b\partial_{\alpha}$, then use Proposition \ref{singular} to obtain desired estimates. For example, the term 
$\partial_{\alpha}^{n-m}[\partial_{\alpha}D_t\zeta, \mathcal{H}_{\zeta}]\frac{\partial_{\alpha}^m\rho_1}{\zeta_{\alpha}}$ can be estimated as follows
\begin{align*}
&\norm{\partial_{\alpha}^{n-m}[\partial_{\alpha}D_t\zeta, \mathcal{H}_{\zeta}]\frac{\partial_{\alpha}^m\rho_1}{\zeta_{\alpha}}}_{L^2}\\
\leq & \norm{\partial_{\alpha}(D_t\zeta-D_t^0\omega)}_{H^{n-1}}+\norm{\partial_{\alpha}D_t^0\omega}_{W^{n-1,\infty}}\norm{\rho_1}_{H^{n}}\\
\leq & C(E_s+\epsilon E_s^{1/2}+\epsilon^{5/2}).
\end{align*}
Similar argument gives
\begin{equation}\label{partialtR1}
\norm{\partial_t \mathcal{R}_1^{(n)}}_{L^2}\leq  C(E_s+\epsilon E_s^{1/2}+\epsilon^{5/2}).
\end{equation}
So we have 
\begin{equation}
\int\partial_t\mathcal{R}_1^{(n)}\partial_{\alpha}\bar{\mathcal{R}}_1^{(n)}d\alpha\leq C(E_s^2+\epsilon E_s^{3/2}+\epsilon^2E_s+\epsilon^{7/2}E_s^{1/2}+\epsilon^5).
\end{equation}

\subsection{Estimate $\phi_1^{(n)}$} Recall that 
$$\phi_1^{(n)}=\frac{1}{2}(I-\mathcal{H}_{\zeta})\partial_{\alpha}^n\rho_1.$$
We use the rough estimate for $\phi_1^{(n)}$: for $n\leq s$, 
\begin{equation}\label{phi1n}
\begin{split}
\norm{\partial_{\alpha}\phi_1^{(n)}}_{L^2}=&\frac{1}{2}\norm{(I-\mathcal{H}_{\zeta})\partial_{\alpha}^{n+1}\rho_1-[\zeta_{\alpha}-1,\mathcal{H}_{\zeta}]\frac{\rho_1^{n+1}}{\zeta_{\alpha}}}_{L^2}\\
\leq & \norm{\partial_{\alpha}\rho_1}_{H^n}\\
\leq & C(E_s^{1/2}+\epsilon^{3/2}).
\end{split}
\end{equation}

\subsection{Estimate $\int \partial_{\alpha}\phi_1^{(n)}\overline{\partial_{t}\mathcal{R}_1^{(n)}}$} If we use (\ref{partialtR1}) and (\ref{phi1n}), then 
\begin{align}
    \int \partial_{\alpha}\phi_1^{(n)}\overline{\partial_{t}\mathcal{R}_1^{(n)}}\leq & \norm{\phi_1^{(n)}}_{L^2}\norm{\partial_t\mathcal{R}_1^{(n)}}_{L^2}\\ 
    \leq & C(E_s^{1/2}+\epsilon^{3/2})(E_s+\epsilon E_s^{1/2}+\epsilon^{5/2})\\
    \leq & C(E_s^{3/2}+\epsilon^4),
\end{align}
which is not good enough. So we need to explore the cancellation between $\phi_1^{(n)}$ and $\partial_t\mathcal{R}_1^{(n)}$.
We have
\begin{align*}
\partial_t\mathcal{R}_1^{(n)}=& \frac{1}{2}\partial_t (I+\mathcal{H}_{\zeta})\mathcal{R}_1^{(n)}\\
=& \frac{1}{2}(I+\mathcal{H}_{\zeta})\partial_t\mathcal{R}_1^{(n)}+[\zeta_t,\mathcal{H}]\frac{\partial_{\alpha}\mathcal{R}_1^{(n)}}{\zeta_{\alpha}}.
\end{align*}
For the second term, we have
\begin{align*}
\int \partial_{\alpha}\phi_1^{(n)}\overline{[\zeta_t,\mathcal{H}]\frac{\partial_{\alpha}\mathcal{R}_1^{(n)}}{\zeta_{\alpha}}}d\alpha\leq & (E_s^{1/2}+\epsilon^{3/2})((E_s^{1/2}+\epsilon)E_s^{1/2}\epsilon\\
\leq & C(\epsilon E_s^{3/2}+\epsilon^2 E_s+\epsilon^{7/2}E_s^{1/2}).
\end{align*}
For the first term, note that
\begin{equation}
    \partial_{\alpha}\phi_1^{(n)}=\frac{1}{2}(I-\mathcal{H}_{\zeta})\partial_{\alpha}^{n+1}\rho_1-\frac{1}{2}[\zeta_{\alpha}-1,\mathcal{H}_{\zeta}]\partial_{\alpha}^{n+1}\rho_1.
\end{equation}
We have 
\begin{align}
    \int [\zeta_{\alpha}-1,\mathcal{H}_{\zeta}]\partial_{\alpha}^{n+1}\rho_1\overline{(I+\mathcal{H}_{\zeta})\partial_t\mathcal{R}_1^{(n)}}d\alpha\leq & C\norm{\zeta_{\alpha}-1}_{W^{s-1,\infty}}\norm{\partial_{\alpha}\rho_1}_{H^s}\norm{\partial_{t}\mathcal{R}_1^{(n)}}_{L^2}\\
    \leq & C\epsilon (E_s^{1/2}+\epsilon^{3/2})(E_s+\epsilon E_s^{1/2}+\epsilon^{5/2})\\
    \leq & C(\epsilon E_s^{3/2}+\epsilon^2 E_s+\epsilon^{7/2}E_s^{1/2}+\epsilon^5).
\end{align}
We have 
\begin{align}
    &\int (I-\mathcal{H}_{\zeta})\partial_{\alpha}^{n+1}\rho_1\overline{(I+\mathcal{H}_{\zeta})\partial_t\mathcal{R}_1^{(n)}}d\alpha\\
    =& \int (I-\mathcal{H}_{\zeta})\partial_{\alpha}^{n+1}\rho_1(I-\mathcal{H}_{\zeta})\overline{\partial_t\mathcal{R}_1^{(n)}}d\alpha+\int (I-\mathcal{H}_{\zeta})\partial_{\alpha}^{n+1}\rho_1(\overline{\mathcal{H}_{\zeta}}
    +\mathcal{H}_{\zeta})\overline{\partial_t\mathcal{R}_1^{(n)}}d\alpha
\end{align}
We have 
\begin{align}
    &\int (I-\mathcal{H}_{\zeta})\partial_{\alpha}^{n+1}\rho_1(\overline{\mathcal{H}_{\zeta}}
    +\mathcal{H}_{\zeta})\overline{\partial_t\mathcal{R}_1^{(n)}}d\alpha\\
    \leq & \norm{\partial_{\alpha}^{n+1}\rho_1}_{L^2}\norm{Im \zeta_{\alpha}}_{W^{s-1,\infty}}\norm{\partial_t\mathcal{R}_1^{(n)}}_{L^2}\\
        \leq & C\epsilon (E_s^{1/2}+\epsilon^{3/2})(E_s+\epsilon E_s^{1/2}+\epsilon^{5/2})\\
    \leq & C(\epsilon E_s^{3/2}+\epsilon^2 E_s+\epsilon^{7/2}E_s^{1/2}+\epsilon^5).
\end{align}
By Cauchy integral formula, we have $$\int (I-\mathcal{H}_{\zeta})\partial_{\alpha}^{n+1}\rho_1(I-\mathcal{H}_{\zeta})\overline{\partial_t\mathcal{R}_1^{(n)}}\zeta_{\alpha}d\alpha=0$$ So we have 
\begin{align}
    \int (I-\mathcal{H}_{\zeta})\partial_{\alpha}^{n+1}\rho_1(I-\mathcal{H}_{\zeta})\overline{\partial_t\mathcal{R}_1^{(n)}}d\alpha=& \int (I-\mathcal{H}_{\zeta})\partial_{\alpha}^{n+1}\rho_1(I-\mathcal{H}_{\zeta})\overline{\partial_t\mathcal{R}_1^{(n)}}(1-\zeta_{\alpha})d\alpha\\
    \leq & \norm{\partial_{\alpha}^{n+1}\rho_1}_{L^2}\norm{ \zeta_{\alpha}-1}_{W^{s-1,\infty}}\norm{\partial_t\mathcal{R}_1^{(n)}}_{L^2}\\
        \leq & C\epsilon (E_s^{1/2}+\epsilon^{3/2})(E_s+\epsilon E_s^{1/2}+\epsilon^{5/2})\\
    \leq & C(\epsilon E_s^{3/2}+\epsilon^2 E_s+\epsilon^{7/2}E_s^{1/2}+\epsilon^5).
\end{align}
So we obtain
\begin{equation}\label{eeee}
\frac{d\mathcal{E}_n}{dt}\leq C(E_s^2+\epsilon E_s^{3/2}+\epsilon^2E_s+\epsilon^{7/2}E_s^{1/2}+\epsilon^5).
\end{equation}

\noindent The estimate of $\frac{d\mathcal{F}_n}{dt}$ is almost the same, and we obtain
\begin{equation}\label{ffff}
\frac{d\mathcal{F}_n}{dt}\leq C(\epsilon E_s^{3/2}+\epsilon^2E_s+\epsilon^{7/2}E_s^{1/2}+\epsilon^5).
\end{equation}
Combine (\ref{eeee}) and (\ref{ffff}), we obtain
\begin{equation}\label{ghijk}
\frac{d\mathcal{E}_s}{dt}\leq  C(\epsilon E_s^{3/2}+\epsilon^2E_s+\epsilon^{7/2}E_s^{1/2}+\epsilon^5).
\end{equation}

\subsection{Control $E_s$ in terms of $\mathcal{E}_s$}
To obtain bound on the energy $\mathcal{E}_s$, it's remaining to bound $E_s$ in terms of $\mathcal{E}_s$. First, we make the following reamrk.
\begin{lemma}\label{controlnegative}
We have  for $n\leq s$, 
$$\int \sigma_1^{(n)} \overline{\partial_{\alpha}\sigma_1^{(n)}}d\alpha \geq - C(\epsilon^{5}+\epsilon^{5/2} E_s^{1/2}+\epsilon E_s+E_s^{3/2}).$$
\end{lemma}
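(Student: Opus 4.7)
The idea is to use the decomposition $\sigma_1^{(n)} = \Psi_1^{(n)} + \mathcal{S}_1^{(n)}$ from (\ref{decompose}), where $\Psi_1^{(n)} = \frac{1}{2}(I-\mathcal{H}_\zeta)\sigma_1^{(n)}$ is the boundary value of a function holomorphic in $\Omega(t)^c$, and $\mathcal{S}_1^{(n)} = \frac{1}{2}(I+\mathcal{H}_\zeta)\sigma_1^{(n)}$ measures the failure of $\sigma_1^{(n)}$ to be holomorphic. Plugging this splitting into the integral produces four pieces; the purely holomorphic piece $i\int \Psi_1^{(n)}\overline{\partial_\alpha \Psi_1^{(n)}}\,d\alpha$ is non-negative by Lemma \ref{basic}. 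Hence it suffices to bound below the three remaining integrals, each involving at least one factor of $\mathcal{S}_1^{(n)}$, by the right-hand side of the claim.

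The structural observation that drives the estimate is that $\sigma_1 = (I-\mathcal{H}_\zeta)(\delta-\tilde\delta)$ already satisfies $(I+\mathcal{H}_\zeta)\sigma_1 \equiv 0$, so differentiating collapses $\mathcal{S}_1^{(n)}$ to a commutator:
\[
\mathcal{S}_1^{(n)} = \tfrac{1}{2}(I+\mathcal{H}_\zeta)\partial_\alpha^n\sigma_1 = -\tfrac{1}{2}[\partial_\alpha^n,\mathcal{H}_\zeta]\sigma_1.
\]
Iterating the identity $[\partial_\alpha,\mathcal{H}_\zeta]f = [\zeta_\alpha-1,\mathcal{H}_\zeta]\frac{f_\alpha}{\zeta_\alpha}$ from Lemma \ref{lemmaseven} expands this as a telescoping sum of terms of the form $\partial_\alpha^{n-m}[\zeta_\alpha-1,\mathcal{H}_\zeta]\frac{\partial_\alpha^m \sigma_1}{\zeta_\alpha}$, to which Proposition \ref{singular} applies. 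Combining the size control $\|\zeta_\alpha-1\|_{W^{s-1,\infty}} \lesssim \epsilon + E_s^{1/2}$ (which follows by decomposing $\zeta_\alpha - 1 = (r_1)_\alpha + (r_0)_\alpha + (\tilde\zeta_\alpha - 1)$ and using Lemma \ref{lemmazeta} together with the bootstrap (\ref{boot})) with an equivalence $\sigma_1 \approx 2 D_t r_1$ modulo quadratic errors---obtained by mirroring the algebraic manipulations of Lemma \ref{equivalencequantities}, which yields $\|\partial_\alpha \sigma_1\|_{H^n} \lesssim \epsilon^{3/2} + E_s^{1/2}$---one arrives at
\[
\|\mathcal{S}_1^{(n)}\|_{L^2} + \|\partial_\alpha \mathcal{S}_1^{(n)}\|_{L^2} \leq C(\epsilon + E_s^{1/2})(\epsilon^{3/2}+E_s^{1/2}).
\]

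With this in hand, Cauchy--Schwarz on each of the remaining three integrals, together with the $L^2$-boundedness of $\mathcal{H}_\zeta$ from Lemma \ref{layer} giving $\|\Psi_1^{(n)}\|_{L^2} + \|\partial_\alpha \Psi_1^{(n)}\|_{L^2} \lesssim \epsilon^{3/2}+E_s^{1/2}$, produces an upper bound of the shape $(\epsilon^{3/2}+E_s^{1/2})(\epsilon+E_s^{1/2})(\epsilon^{3/2}+E_s^{1/2})$, whose expansion is dominated by $\epsilon^5 + \epsilon^{5/2}E_s^{1/2} + \epsilon E_s + E_s^{3/2}$, matching the claim. The main technical obstacle will be establishing the sharp $H^n$ equivalence between $\sigma_1$ and $2D_tr_1$: unlike $\rho_1$, the quantity $\sigma_1$ involves one extra material derivative, so the commutator $[\mathcal{H}_\zeta, D_t]$ and the discrepancies $(\mathcal{H}_\zeta - \mathcal{H}_{\tilde\zeta})(\mathcal{H}_\omega - \mathcal{H}_{\tilde\omega})$ terms must be tracked carefully, separating those with $H^s$-decaying factors from the purely periodic $W^{s,\infty}$ contributions, to ensure the bound on $\|\partial_\alpha\sigma_1\|_{H^n}$ is of order $\epsilon^{3/2}+E_s^{1/2}$ rather than the rougher $O(1)$ one would get from a direct estimate.
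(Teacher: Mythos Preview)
Your approach is essentially identical to the paper's: decompose $\sigma_1^{(n)}=\Psi_1^{(n)}+\mathcal{S}_1^{(n)}$, observe that the purely holomorphic term $i\int\Psi_1^{(n)}\partial_\alpha\overline{\Psi_1^{(n)}}\,d\alpha$ is nonnegative, rewrite $\mathcal{S}_1^{(n)}$ as the commutator $-\tfrac12[\partial_\alpha^n,\mathcal{H}_\zeta]\sigma_1$, and estimate the three cross terms by Cauchy--Schwarz together with Proposition~\ref{singular}. That is exactly what the paper does.

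There is, however, a small numerical slip in your final step. You claim $\|\partial_\alpha\sigma_1\|_{H^n}\lesssim \epsilon^{3/2}+E_s^{1/2}$ and then assert that the resulting product $(\epsilon^{3/2}+E_s^{1/2})^2(\epsilon+E_s^{1/2})$ is dominated by $\epsilon^5+\epsilon^{5/2}E_s^{1/2}+\epsilon E_s+E_s^{3/2}$. But the pure-$\epsilon$ term of that product is $\epsilon^{3/2}\cdot\epsilon\cdot\epsilon^{3/2}=\epsilon^4$, which is \emph{larger} than the $\epsilon^5$ in the stated bound, so the inequality does not follow. The fix is already contained in the argument you invoke: the analogue of Lemma~\ref{equivalencequantities} for $\sigma_1$ gives the sharper correction $\|\sigma_1-2D_tr_1\|_{H^s}\leq C(\epsilon E_s^{1/2}+\epsilon^{5/2})$, so in fact $\|\partial_\alpha\sigma_1\|_{H^n}\lesssim E_s^{1/2}+\epsilon^{5/2}$ (not $\epsilon^{3/2}$). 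With this exponent the constant term becomes $\epsilon^{5/2}\cdot\epsilon^{5/2}=\epsilon^5$, matching the claim. The paper itself records this sharper bound for $\Psi_1^{(n)}$ (with $\epsilon^{5/2}$) even though its earlier rough estimate for the analogous $\phi_1^{(n)}$ is stated with $\epsilon^{3/2}$; for the present lemma the $\epsilon^{5/2}$ version is what is needed.
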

\begin{proof}
We decompose $\sigma_1^{(n)}=\Psi_1^{(n)}+\mathcal{S}_1^{(n)}$ as in (\ref{decompose}). So we have 
\begin{align*}
\int \sigma_1^{(n)} \overline{\partial_{\alpha}\sigma_1^{(n)}}d\alpha=& \int \Psi_1^{(n)} \overline{\partial_{\alpha}\Psi_1^{(n)}}d\alpha+\int \mathcal{S}_1^{(n)} \overline{\partial_{\alpha}\mathcal{S}_1^{(n)}}d\alpha +\int \Psi_1^{(n)} \overline{\partial_{\alpha}\mathcal{S}_1^{(n)}}d\alpha+\int \mathcal{S}_1^{(n)} \overline{\partial_{\alpha}\Psi_1^{(n)}}d\alpha\\
:=& W_1+W_2+W_3+W_4.
\end{align*}
Similar to the estimate for $\mathcal{R}_1^{(n)}$ and $\phi_1^{(n)}$, we have 
\begin{equation}
\norm{\partial_{\alpha}\mathcal{S}_1^{(n)}}_{H^s}\leq C(E_s+\epsilon E_s^{1/2}+\epsilon^{5/2}).
\end{equation}
\begin{equation}
\norm{\Psi_1^{(n)}}_{H^s}\leq C(E_s^{1/2}+\epsilon^{5/2}).
\end{equation}
The lemma follows directly from the above three equations. 
\end{proof}

\subsubsection{Bound $\norm{D_t^2r_1}_{H^s}$ by $\norm{D_tr_1}_{H^s}$ and $\norm{\partial_{\alpha}r_1}_{H^s}$} First we derive equation governing $D_t^2r_1$. We have by water waves equation
\begin{equation}
D_t^2\zeta=iA\zeta_{\alpha}-i,\quad \quad (D_t^0)^2\omega=iA_0\omega_{\alpha}-i.
\end{equation}

\begin{equation}
\tilde{D}_t^2\tilde{\zeta}=i\tilde{A}\tilde{\zeta}_{\alpha}-i +\epsilon^4\mathcal{R}_1, \quad \quad (\tilde{D}_t^0)^2\tilde{\omega}=i\tilde{A}_0\partial_{\alpha}\tilde{\omega}-i+\epsilon^4\mathcal{R}_2.
\end{equation}
where $$\norm{\epsilon^4\mathcal{R}_1-\epsilon^4\mathcal{R}_2}_{ H^{s+7}}\leq C\epsilon^{7/2}.$$
So we have 
\begin{equation}\label{ddddd}
D_t^2\zeta-(D_t^0)^2\omega-(\tilde{D}_t^2\tilde{\zeta}-(\tilde{D}_t^0)^2\tilde{\omega})= iA\zeta_{\alpha}-iA_0\omega_{\alpha}-(i\tilde{A}\tilde{\zeta}_{\alpha}-i\tilde{A}_0\tilde{\omega}_{\alpha})+error, 
\end{equation}
with 
$$\norm{error}_{H^s}\leq C\epsilon^{7/2}.$$
We write left hand side of (\ref{ddddd}) as 

\begin{equation}\label{LHSO}
\begin{split}
\text{LHS of } (\ref{ddddd})=& D_t^2\xi_1+(D_t^2-(D_t^0)^2)\omega-(\tilde{D}_t^2\tilde{\xi}_1+(\tilde{D}_t^2-(\tilde{D}_t^0)^2)\tilde{\omega})\\
=& D_t^2r_1+(D_t^2-\tilde{D}_t^2)\tilde{\xi}_1+(D_t^2-(D_t^0)^2)\omega+(\tilde{D}_t^2-(\tilde{D}_t^0)^2)\tilde{\omega}.
\end{split}
\end{equation}

\noindent Split $A=A_0+A_1$, $\tilde{A}=\tilde{A}_0+\tilde{A}_1$. We write the right hand side of (\ref{ddddd}) as  (omitt the $\epsilon^4$ term)
\begin{equation}\label{RHSO}
\begin{split}
\text{RHS of } (\ref{ddddd})=& iA_1\zeta_{\alpha}+iA_0\partial_{\alpha}\xi_1-(i\tilde{A}_1\tilde{\zeta}_{\alpha}+i\tilde{A}_0\partial_{\alpha}\tilde{\xi}_1)\\
=& i(A_1-\tilde{A}_1)\zeta_{\alpha}+i\tilde{A}_1\partial_{\alpha}r+i(A_0-\tilde{A}_0)\partial_{\alpha}\xi_1+i\tilde{A}_0\partial_{\alpha}r_1.
\end{split}
\end{equation}
By (\ref{ddddd}), (\ref{LHSO}), and (\ref{RHSO}),  we obtain
\begin{equation}\label{aaaaa}
\begin{split}
D_t^2r_1  =& i(A_1-\tilde{A}_1)\zeta_{\alpha}+i\tilde{A}_1\partial_{\alpha}r+i(A_0-\tilde{A}_0)\partial_{\alpha}\xi_1+i\tilde{A}_0\partial_{\alpha}r_1\\ &-\Big\{(D_t^2-\tilde{D}_t^2)\tilde{\xi}_1+(D_t^2-(D_t^0)^2)\omega+(\tilde{D}_t^2-(\tilde{D}_t^0)^2)\tilde{\omega}\Big\}
\end{split}
\end{equation}
By (\ref{A1Diff}), decompose $\zeta_{\alpha}=(r_1)_{\alpha}+(r_0)_{\alpha}+\tilde{\zeta}_{\alpha}$, it's easy to obtain 
\begin{equation}\label{qqq1}
\norm{i(A_1-\tilde{A}_1)\zeta_{\alpha}}_{H^s}\leq C\norm{A_1-\tilde{A}_1}_{H^s}\leq C\epsilon^{5/2}+C\epsilon E_s^{1/2}.
\end{equation}
By corollary \ref{A1estimate}, we have 
\begin{equation}\label{qqq2}
\begin{split}
\norm{iA_1\partial_{\alpha}\xi_1}_{H^s}\leq & \norm{A_1}_{H^s}\norm{\partial_{\alpha}\xi_1}_{H^s}\leq \epsilon^{5/2}.
\end{split}
\end{equation}
By (\ref{A0difference}) and Sobolev embedding, we have 
\begin{equation}\label{qqq3}
\begin{split}
\norm{i(A_0-\tilde{A}_0)\partial_{\alpha}\xi_1}_{H^s}\leq &\norm{A_0-\tilde{A}_0}_{W^{s,\infty}}\norm{\partial_{\alpha}\xi_1}_{H^s}\leq C\epsilon^{7/2}.
\end{split}
\end{equation}
Obviously,
\begin{equation}\label{qqq4}
\norm{i\tilde{A}_0\partial_{\alpha}r_1}_{H^s}\leq \norm{\tilde{A}_0}_{W^{s,\infty}}\norm{\partial_{\alpha}r_1}_{H^s}\leq C\norm{\partial_{\alpha}r_1}_{H^s}.
\end{equation}
By lemma \ref{lalala} and (\ref{Dtb1diff}), we have
\begin{equation}\label{qqq5}
\begin{split}
&\norm{(D_t^2-\tilde{D}_t^2)\tilde{\xi}_1+(D_t^2-(D_t^0)^2)\omega+(\tilde{D}_t^2-(\tilde{D}_t^0)^2)\tilde{\omega}}_{H^s}\leq C\epsilon^{5/2}.
\end{split}
\end{equation}
Recall that $\tilde{A}_1=0$. So $i\tilde{A}_1\partial_{\alpha}r=0$.

\noindent Use  (\ref{aaaaa}), together with (\ref{qqq1})-(\ref{qqq5}), and recall that $E_s^{1/2}:=\norm{\partial_{\alpha}r_1}_{H^s}+\norm{D_t r_1}_{H^s}+\norm{D_t^2r_1}_{H^s}$, we obtain
\begin{align}
\norm{D_t^2r_1}_{H^s}\leq & C\norm{\partial_{\alpha}r_1}_{H^s}+C\epsilon E_s^{1/2}+\epsilon^{5/2}\\
\leq & C(\norm{\partial_{\alpha}r_1}_{H^s}+\norm{D_tr_1}_{H^s})+C\epsilon \norm{D_t^2 r_1}_{H^s}+C\epsilon^{5/2}.
\end{align}
Therefore, for $\epsilon$ sufficiently small, we have 
\begin{equation}
\norm{D_t^2r}_{H^s}\leq C(\norm{\partial_{\alpha}r_1}_{H^s}+\norm{D_tr_1}_{H^s}+\epsilon^{5/2}).
\end{equation}
Now we are ready to bound $E_s$ by $\mathcal{E}_s$.
\begin{lemma}
We have
\begin{equation}
\norm{\partial_{\alpha}r_1}_{H^s}+\norm{D_tr_1}_{H^s}\leq C(\mathcal{E}^{1/2}+\epsilon^{5/2}).
\end{equation}
\end{lemma}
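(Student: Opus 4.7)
The plan is to deduce bounds on $\norm{\partial_\alpha r_1}_{H^s}$ and $\norm{D_t r_1}_{H^s}$ directly from the energy $\mathcal{E}$, using (i) the positivity of each $\mathcal{E}_n$ together with the Taylor sign condition, (ii) the equivalence of $\rho_1$ and $\sigma_1$ with $r_1$ and $D_tr_1$ respectively, and (iii) the pointwise identity $D_t^2 r_1 \approx i\tilde{A}_0 \partial_\alpha r_1$ coming from the water wave equations satisfied by $\zeta, \omega, \tilde\zeta, \tilde\omega$. The final closure uses absorption of a small factor $\epsilon E_s^{1/2}$ into the left-hand side.

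First I would extract $\norm{D_t\rho_1}_{H^s}$ from $\mathcal{E}$. Since Corollary \ref{boundAAA} gives $A\geq \tfrac12$, each term $\int \tfrac{1}{A}|D_t\rho_1^{(n)}|^2\,d\alpha$ controls $\tfrac12\norm{D_t\rho_1^{(n)}}_{L^2}^2$; moreover, each $\phi_1^{(n)}=\tfrac12(I-\mathcal{H}_\zeta)\partial_\alpha^n\rho_1$ is the boundary value of a function holomorphic in $\Omega(t)^c$, so by Lemma \ref{basic}, $\int i\phi_1^{(n)}\partial_\alpha\bar\phi_1^{(n)}\,d\alpha\geq 0$. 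Summing over $n\leq s$ yields $\norm{D_t\rho_1}_{H^s}\leq C\mathcal{E}^{1/2}$. Then Lemma \ref{equivalencequantities} gives $\norm{D_t(\rho_1-2r_1)}_{H^{s+1/2}}\leq C(\epsilon E_s^{1/2}+\epsilon^{5/2})$, from which
\[
\norm{D_tr_1}_{H^s}\leq \tfrac12\norm{D_t\rho_1}_{H^s}+\tfrac12\norm{D_t(\rho_1-2r_1)}_{H^s}\leq C\bigl(\mathcal{E}^{1/2}+\epsilon E_s^{1/2}+\epsilon^{5/2}\bigr).
\]

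To get $\norm{\partial_\alpha r_1}_{H^s}$ I would first bound $\norm{D_t^2 r_1}_{H^s}$ by extracting $\norm{D_t\sigma_1}_{H^s}$ from $\mathcal{F}_n$, using Lemma \ref{controlnegative} to absorb the sign-indefinite contribution $\int i\sigma_1^{(n)}\partial_\alpha\bar\sigma_1^{(n)}\,d\alpha$ at cost $O(\epsilon^5+\epsilon E_s+E_s^{3/2})$, and then the equivalence $\sigma_1\approx 2D_tr_1$ which is proved by the same decomposition that underlies Lemma \ref{equivalencequantities} applied to $\delta-\tilde\delta$, giving an error of size $C(\epsilon E_s^{1/2}+\epsilon^{5/2})$ after handling one extra commutator $[D_t,\mathcal{H}_\zeta]$. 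This produces $\norm{D_t^2 r_1}_{H^s}\leq C(\mathcal{E}^{1/2}+\epsilon E_s^{1/2}+\epsilon^{5/2})$. I then invoke identity (\ref{aaaaa}), namely
\[
D_t^2 r_1 = i\tilde{A}_0\partial_\alpha r_1+i(A_1-\tilde{A}_1)\zeta_\alpha + i(A_0-\tilde{A}_0)\partial_\alpha\xi_1 - \bigl\{(D_t^2-\tilde{D}_t^2)\tilde\xi_1+(D_t^2-(D_t^0)^2)\omega-(\tilde D_t^2-(\tilde D_t^0)^2)\tilde\omega\bigr\},
\]
where every term on the right except $i\tilde{A}_0\partial_\alpha r_1$ has been bounded in $H^s$ by $C(\epsilon E_s^{1/2}+\epsilon^{5/2})$ via (\ref{qqq1})--(\ref{qqq5}). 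Solving for $\partial_\alpha r_1$ and using $\tilde{A}_0\geq \tfrac12$ gives $\norm{\partial_\alpha r_1}_{H^s}\leq C(\norm{D_t^2r_1}_{H^s}+\epsilon E_s^{1/2}+\epsilon^{5/2})\leq C(\mathcal{E}^{1/2}+\epsilon E_s^{1/2}+\epsilon^{5/2})$.

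Combining the two displays yields $\norm{\partial_\alpha r_1}_{H^s}+\norm{D_tr_1}_{H^s}\leq C(\mathcal{E}^{1/2}+\epsilon E_s^{1/2}+\epsilon^{5/2})$; together with the bound $\norm{D_t^2r_1}_{H^s}\leq C(\norm{\partial_\alpha r_1}_{H^s}+\norm{D_tr_1}_{H^s}+\epsilon^{5/2})$ established just before the lemma, we obtain $E_s^{1/2}\leq C(\mathcal{E}^{1/2}+\epsilon E_s^{1/2}+\epsilon^{5/2})$, and for $\epsilon$ small enough the $\epsilon E_s^{1/2}$ term is absorbed into the left. The main obstacle is the second step: setting up the equivalence $\sigma_1\approx 2D_tr_1$ carefully enough that the residual is bounded by $\epsilon E_s^{1/2}+\epsilon^{5/2}$, since this requires commuting $D_t$ past $(I-\mathcal{H}_\zeta)$, $(I-\mathcal{H}_\omega)$, $(I-\mathcal{H}_{\tilde\zeta})$, $(I-\mathcal{H}_{\tilde\omega})$ and tracking the resulting quadratic (in $\epsilon$) commutator remainders using Proposition \ref{singular} and the previously-established bounds on $b_1-\tilde b_1$, $A_1-\tilde A_1$, and the change-of-variable data; once this is done, the remaining steps are bookkeeping.
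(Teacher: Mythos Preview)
Your proposal is correct and follows essentially the same route as the paper. The paper organizes Step~2 around the auxiliary quantity $U:=iA\zeta_\alpha-iA_0\omega_\alpha-i\tilde A\tilde\zeta_\alpha+i\tilde A_0\tilde\omega_\alpha$, showing first $(I-\mathcal{H}_\zeta)(r_1)_\alpha\approx(I-\mathcal{H}_\zeta)U$ and then $(I-\mathcal{H}_\zeta)U\approx D_t\sigma_1$; you instead invert the already-established identity (\ref{aaaaa}) to get $\partial_\alpha r_1$ from $D_t^2r_1$, and then link $D_t^2r_1$ to $D_t\sigma_1$ via the equivalence $\sigma_1\approx 2D_tr_1$. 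These are the same computation: $U$ is exactly the common value of both sides of (\ref{ddddd}), so your intermediate stop at $D_t^2r_1$ just names one face of $U$ explicitly. The ``main obstacle'' you flag---controlling the residual in $D_t\sigma_1-2D_t^2r_1$---is precisely the content of the paper's calculation with $V_1,V_2$ and the commutator $[D_t\zeta,\mathcal{H}_\zeta]$ in (\ref{UUU})--(\ref{VVV}).
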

\begin{proof}
\noindent \textbf{Step 1.} Show that 
\begin{equation}\label{step1estimates}
\norm{D_tr_1}_{H^s}\leq C(\epsilon E_s^{1/2}+E_s+\epsilon^{5/2}+\norm{D_t\rho_1}_{H^s}).
\end{equation}
See the proof of lemma \ref{equivalencequantities}.

\vspace*{1ex}

\noindent \textbf{Step 2.} Show that 
$$\norm{(r_1)_{\alpha}}_{H^s}\leq  C(\norm{D_t\sigma_1}_{H^s}+\epsilon E_s^{1/2}+\epsilon^{5/2}).$$
Use the fact that $\bar{r}_1$ is almost holomorphic, similar to the argument in step 1, we have 
\begin{equation}\label{begin}
\norm{(r_1)_{\alpha}}_{H^s}\leq C(\norm{(I-\mathcal{H}_{\zeta})(r_1)_{\alpha}}_{H^s}+\epsilon E_s^{1/2}+\epsilon^{5/2}).
\end{equation}
To bound $\norm{(I-\mathcal{H}_{\zeta})(r_1)_{\alpha}}_{H^s}$ in terms of $\norm{D_t\sigma_1}_{H^s}$ plus an error term, consider 
$$U:=iA\zeta_{\alpha}-iA_0\omega_{\alpha}-i\tilde{A}\tilde{\zeta}_{\alpha}+i\tilde{A}_0\tilde{\omega}_{\alpha}.$$
Use the fact that $\tilde{A}=\tilde{A}_0=1$,  we have on one hand,
\begin{align*}
U=&i(\zeta_{\alpha}-\omega_{\alpha}-\tilde{\zeta}_{\alpha}+\tilde{\omega}_{\alpha})+i(A-1)\zeta_{\alpha}-i(A_0-1)\omega_{\alpha}-
i(\tilde{A}-1)\tilde{\zeta}_{\alpha}+i(\tilde{A}_0-1)\tilde{\omega}_{\alpha}\\
=& i(r_1)_{\alpha}+i(A-A_0)\omega_{\alpha}+i(A-1)\xi_{\alpha}.
\end{align*}
So 
\begin{equation}\label{okokokok}
\norm{(I-\mathcal{H}_{\zeta})(r_1)_{\alpha}}_{H^s}\leq \norm{(I-\mathcal{H}_{\zeta})U}_{H^s}+C(\epsilon E_s^{1/2}+E_s+\epsilon^{5/2}).
\end{equation}
On the other hand, use water wave equations $$(D_t^2-iA\partial_{\alpha})\zeta=-i, 
\quad \quad ((D_t^0)^2-iA_0\partial_{\alpha})\tilde{\omega}=-i,$$
 and the fact that $\tilde{\zeta}, \tilde{\omega}$ are good approximations:
$$(\tilde{D}_t^2-i\tilde{A}\partial_{\alpha})\tilde{\zeta}=-i+\epsilon^4\tilde{\mathcal{R}}, \quad \quad ((\tilde{D}_t^0)^2-i\tilde{A}_0\partial_{\alpha})\tilde{\omega}=-i+\epsilon^4\tilde{\mathcal{R}}_0,$$
with 
$$\norm{\epsilon^4\tilde{\mathcal{R}}-\epsilon^4\tilde{\mathcal{R}}_0}_{H^s}\leq \epsilon^{7/2}.$$
Denote $\epsilon^4\mathcal{R}:=\epsilon^4\tilde{\mathcal{R}}-\epsilon^4\tilde{\mathcal{R}}_0$. 
So we have
\begin{align*}
U=& D_t^2\zeta-(D_t^0)^2\omega-\tilde{D}_t^2\tilde{\zeta}+(\tilde{D}_t^0)^2\tilde{\omega}+\epsilon^4\mathcal{R}\\
=&  D_t(D_t\zeta-D_t^0\omega-\tilde{D}_t\tilde{\zeta}+\tilde{D}_t^0\tilde{\omega})+(b-b_0)\partial_{\alpha}D_t^0\omega+(b-\tilde{b}_0)\partial_{\alpha}\tilde{D}_t\tilde{\zeta}\\
&-(b-\tilde{b}_0)\partial_{\alpha}\tilde{D}_t^0\tilde{\omega}
+\epsilon^4\mathcal{R}.
\end{align*}
We have 
\begin{equation}
\begin{split}
&\norm{(b-b_0)\partial_{\alpha}D_t^0\omega+(b-\tilde{b}_0)\partial_{\alpha}\tilde{D}_t\tilde{\zeta}
-(b-\tilde{b}_0)\partial_{\alpha}\tilde{D}_t^0\tilde{\omega}}_{H^s}\\
\leq & C(\epsilon E_s^{1/2}+\epsilon^{5/2}).
\end{split}
\end{equation}
Denote 
$$U_{error}:=(b-b_0)\partial_{\alpha}D_t^0\omega+(b-\tilde{b}_0)\partial_{\alpha}\tilde{D}_t\tilde{\zeta}-(b-\tilde{b}_0)\partial_{\alpha}\tilde{D}_t^0\tilde{\omega}
+\epsilon^4\mathcal{R}.$$
We have
\begin{equation}\label{UUU}
\begin{split}
(I-\mathcal{H}_{\zeta})U=& (I-\mathcal{H}_{\zeta})D_t(D_t\zeta-D_t^0\omega-\tilde{D}_t\tilde{\zeta}+\tilde{D}_t^0\tilde{\omega})+(I-\mathcal{H}_{\zeta})U_{error}\\
=& D_t(I-\mathcal{H}_{\zeta})(D_t\zeta-D_t^0\omega-\tilde{D}_t\tilde{\zeta}+\tilde{D}_t^0\tilde{\omega})\\
&+[D_t\zeta, \mathcal{H}_{\zeta}]\frac{\partial_{\alpha}(D_t\zeta-D_t^0\omega-\tilde{D}_t\tilde{\zeta}+\tilde{D}_t^0\tilde{\omega})}{\zeta_{\alpha}}+(I-\mathcal{H}_{\zeta})U_{error}.
\end{split}
\end{equation}
Because we want to get estiamtes in terms of $D_t\sigma_1$, we rewrite $(I-\mathcal{H}_{\zeta})(D_t\zeta-D_t^0\omega-\tilde{D}_t\tilde{\zeta}+\tilde{D}_t^0\tilde{\omega})$ as
\begin{equation}\label{VVV}
\begin{split}
&(I-\mathcal{H}_{\zeta})(D_t\zeta-D_t^0\omega-\tilde{D}_t\tilde{\zeta}+\tilde{D}_t^0\tilde{\omega})\\
=& (I-\mathcal{H}_{\zeta})(D_t\xi-D_t^0\xi_0-\tilde{D}_t\tilde{\xi}+\tilde{D}_t^0\tilde{\xi}_0+b-b_0-\tilde{b}-\tilde{b}_0)\\
=& (I-\mathcal{H}_{\zeta})D_t\xi-(I-\mathcal{H}_{\omega})D_t^0\xi_0-(I-\mathcal{H}_{\tilde{\zeta}})\tilde{D}_t\tilde{\xi}+(I-\mathcal{H}_{\tilde{\omega}})\tilde{D}_t^0\tilde{\xi}_0
\\
&+(\mathcal{H}_{\zeta}-\mathcal{H}_{\omega})D_t^0\xi_0+(\mathcal{H}_{\zeta}-\mathcal{H}_{\tilde{\zeta}})\tilde{\xi}
-(\mathcal{H}_{\zeta}-\mathcal{H}_{\tilde{\omega}})\tilde{D}_t^0\tilde{\xi}_0+(I-\mathcal{H}_{\zeta})
(b-b_0-\tilde{b}-\tilde{b}_0)\\
:=&(\delta-\tilde{\delta})+ V_1+V_2,
\end{split}
\end{equation}
where
$$V_1:=(\mathcal{H}_{\zeta}-\mathcal{H}_{\omega})D_t^0\xi_0+(\mathcal{H}_{\zeta}-\mathcal{H}_{\tilde{\zeta}})\tilde{\xi}
-(\mathcal{H}_{\zeta}-\mathcal{H}_{\tilde{\omega}})\tilde{D}_t^0\tilde{\xi}_0,$$
and 
$$V_2:=(I-\mathcal{H}_{\zeta})
(b-b_0-\tilde{b}-\tilde{b}_0).$$
We have 
\begin{equation}\label{V}
\norm{D_tV_1}_{H^s}+\norm{D_tV_2}_{H^s}\leq C(\epsilon E_s^{1/2}+E_s+\epsilon^{5/2}).
\end{equation}
Combine (\ref{UUU}), (\ref{VVV}), and (\ref{V}), use the fact that 
$$\delta-\tilde{\delta}=\sigma_1+error,$$
where the $H^s$ norm of the error is controlled by $C(\epsilon E_s+E_s+\epsilon^{5/2}$. We obtain
\begin{equation}\label{good}
\norm{(I-\mathcal{H}_{\zeta})U}_{H^s}= \norm{D_t\sigma_1}_{H^s}+C(\epsilon E_s^{1/2}+\epsilon^{5/2})
\end{equation}
Combine (\ref{begin}),  (\ref{okokokok}) and (\ref{good}), we have 
\begin{equation}\label{end}
\norm{(r_1)_{\alpha}}_{H^s}\leq C(\norm{D_t \sigma_1}_{H^s}+\epsilon E_s^{1/2}+\epsilon^{5/2}).
\end{equation}

\vspace*{2ex}

\noindent \textbf{Step 3.} Control $\norm{D_t \sigma_1}_{H^s}$ and $\norm{D_t\rho_1}_{H^s}$ by $\mathcal{E}_s$.

\vspace*{2ex}

\noindent By corollary \ref{boundAAA} and lemma \ref{controlnegative}, we have 
\begin{align}
    \norm{D_t \sigma_1}_{H^s}+\norm{D_t\rho_1}_{H^s}\leq 2\mathcal{E}_s+C(\epsilon^{5}+\epsilon^{5/2} E_s^{1/2}+\epsilon E_s+E_s^{3/2}).
\end{align}

\noindent 
Combine (\ref{step1estimates}) and (\ref{end}), we obtain
$$E_s^{1/2}\leq C(\mathcal{E}_s+\epsilon E_s^{1/2}+\epsilon^{5/2}).$$
    So we obtian
\begin{equation}\label{abcde}
E_s^{1/2}\leq C(\mathcal{E}_s+\epsilon^{5/2}).
\end{equation}
\end{proof}
\noindent Combine (\ref{ghijk}) and (\ref{abcde}), we obtain
\begin{equation}
\frac{d\mathcal{E}_s}{dt}\leq C(\epsilon^{7/2} \mathcal{E}_s^{1/2}+\epsilon^2 \mathcal{E}_s+\epsilon \mathcal{E}_s^{3/2}+\epsilon^{5}).
\end{equation}

\noindent By bootstrap argument, we obtain
\begin{proposition}
Let $s\geq 4$ be given. $B(0)$, $e_0$ and $M_0$ be given in Theorem \ref{NLSlocal}. Let $C_0$, $C_1$ be given by Theorem \ref{longperiodic}, Theorem \ref{theorem1}, respectively. Denote $B(X,T)$ the solution of (\ref{NLS})  with initial data $B(0)$, and let $\zeta_1^{(1)}$ be defined as in  (\ref{zeta1}).  Let $\epsilon_0$ be given. Suppose $\mathcal{E}(0)=M_0\epsilon^3$. Then there exists a probably smaller $\epsilon_0=\epsilon_0(e_0, C_0, C_1, M_0, \delta, \norm{B(0)-1}_{H^{s+7}})$ so that for all $0<\epsilon<\epsilon_0$ and $0\leq t\leq \min\{C_0\epsilon^{-2}, C_1\epsilon^{-2}, e_0\epsilon^{-2}\}$, we have $\mathcal{E}(t)\leq C\epsilon^3$, where  $C=C(e_0, C_0, C_1, M_0, \norm{B(0)-1}_{H^{s+7}})$. 
\end{proposition}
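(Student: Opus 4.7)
The plan is a standard bootstrap/continuity argument built on the differential inequality derived above, combined with the a priori bounds accumulated in Section \ref{bounds} and the local existence theorems that guarantee the components of the problem exist on the requested time scale. First I would collect the ingredients: (i) the long-time existence of the NLS solution $B(X,T)$ on $[0,e_0\epsilon^{-2}]$ from Theorem \ref{NLSlocal}, which makes the approximate solution $(\tilde\zeta,\tilde D_t\tilde\zeta,\tilde D_t^2\tilde\zeta)$ well-defined on that interval; (ii) the long-time existence of the periodic part $(\omega,D_t^0\omega,(D_t^0)^2\omega)$ on $[0,C_0\epsilon^{-2}]$ from Theorem \ref{longperiodic} together with Theorem \ref{errorperiodic}, which controls $r_0$ and hence $b_0,A_0,\tilde b_0,\tilde A_0$; (iii) local well-posedness of the full system on $X^s$ (Theorem \ref{theorem1}) on $[0,C_1\epsilon^{-2}]$, which provides a unique $\zeta$ to talk about. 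Set $T_\star:=\min\{C_0,C_1,e_0\}\epsilon^{-2}$.

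Next I would set up the bootstrap. Define
\begin{equation*}
T^\ast:=\sup\Big\{T\in[0,T_\star]:\ \mathcal{E}(t)\leq K\epsilon^3\text{ for all }t\in[0,T]\Big\},
\end{equation*}
where $K$ is a large constant to be chosen, depending only on $M_0$, $s$, and $\|B(0)-1\|_{H^{s+7}}$. By continuity and the hypothesis $\mathcal{E}(0)=M_0\epsilon^3$, $T^\ast>0$ provided $K>M_0$. On $[0,T^\ast]$, the relation $E_s^{1/2}\leq C(\mathcal{E}^{1/2}+\epsilon^{5/2})$ from \eqref{abcde} yields $E_s(t)\leq C'K\epsilon^3\leq\epsilon^2$ once $\epsilon$ is small enough depending on $K$; in particular this \emph{verifies} the a priori assumption \eqref{boot} used throughout Section \ref{bounds} and Section \ref{energyestimatesection}, so every estimate leading to
\begin{equation*}
\frac{d\mathcal{E}_s}{dt}\leq C\bigl(\epsilon^{7/2}\mathcal{E}_s^{1/2}+\epsilon^2\mathcal{E}_s+\epsilon\mathcal{E}_s^{3/2}+\epsilon^5\bigr)
\end{equation*}
is legitimate on $[0,T^\ast]$.

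I would then integrate. Using $\mathcal{E}(t)\leq K\epsilon^3$ on this interval,
\begin{equation*}
\frac{d\mathcal{E}_s}{dt}\leq C\bigl(\epsilon^{7/2}(K\epsilon^3)^{1/2}+\epsilon^2 K\epsilon^3+\epsilon (K\epsilon^3)^{3/2}+\epsilon^5\bigr)\leq C''(K)\,\epsilon^5,
\end{equation*}
so Gronwall/direct integration over $[0,T^\ast]\subset[0,T_\star]$ gives
\begin{equation*}
\mathcal{E}(t)\leq \mathcal{E}(0)+C''(K)\,\epsilon^5\cdot T^\ast\leq M_0\epsilon^3+C''(K)\min\{C_0,C_1,e_0\}\,\epsilon^3.
\end{equation*}
Choosing $K:=2\bigl(M_0+C''(K)\min\{C_0,C_1,e_0\}\bigr)$—which is possible because the right-hand side depends on $K$ only through $C''(K)$, and $C''(K)$ is polynomial in $K$, so one solves a fixed-point inequality for $K$ once—yields $\mathcal{E}(t)\leq (K/2)\epsilon^3$ strictly below the bootstrap threshold on $[0,T^\ast]$. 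A standard continuity argument then forces $T^\ast=T_\star$, giving the claimed bound $\mathcal{E}(t)\leq C\epsilon^3$ on $[0,\min\{C_0,C_1,e_0\}\epsilon^{-2}]$.

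The main obstacle is not the bootstrap itself, which is essentially mechanical, but making sure the threshold $\epsilon_0$ is chosen in the correct order: one first fixes $K$ from the fixed-point inequality above, then chooses $\epsilon_0=\epsilon_0(e_0,C_0,C_1,M_0,\|B(0)-1\|_{H^{s+7}})$ small enough that (a) $K\epsilon^3\leq\epsilon^2$ so that \eqref{boot} holds with room to spare, (b) the smallness conditions implicit in Lemma \ref{bdiff}, Corollary \ref{boundAAA} (Taylor sign condition), and the periodic long-time theorem are met, and (c) the chord-arc constants for $\zeta,\omega,\tilde\zeta,\tilde\omega$ stay uniformly bounded so that the Hilbert-transform and double-layer estimates of Lemma \ref{layer} and Proposition \ref{singular} remain valid with constants independent of $t\in[0,T_\star]$. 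Once these dependencies are tracked, the proposition follows.
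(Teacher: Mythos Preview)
Your approach is exactly what the paper intends: the paper's own ``proof'' is the single phrase ``By bootstrap argument, we obtain'', and you have supplied the standard continuity argument that this phrase abbreviates. The ingredients you list and the order in which you invoke them are correct.

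One technical point deserves tightening. In your integration step you bound $\epsilon^2\mathcal{E}_s$ by $\epsilon^2K\epsilon^3=K\epsilon^5$ and then integrate over $[0,T^\ast]$, producing a contribution $CKk_0\epsilon^3$ on the right. This makes $C''(K)$ contain a term \emph{linear} in $K$, so the fixed-point inequality $K\geq 2(M_0+C''(K)k_0)$ is not automatically solvable: it requires $Ck_0<1/2$, which you have no control over since $C$ comes from the nonlinear estimates and $k_0=\min\{C_0,C_1,e_0\}$ is fixed. The clean fix is to treat the linear-in-$\mathcal{E}_s$ term by Gronwall rather than by the bootstrap bound: from
\[
\frac{d\mathcal{E}_s}{dt}\leq C\epsilon^2\mathcal{E}_s + C\bigl(\epsilon^{7/2}\mathcal{E}_s^{1/2}+\epsilon\mathcal{E}_s^{3/2}+\epsilon^5\bigr),
\]
Gronwall gives the harmless factor $e^{C\epsilon^2 t}\leq e^{Ck_0}$, and on the remaining terms the bootstrap yields only $K^{1/2}$ and $K^{3/2}\epsilon^{1/2}$ dependence. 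After absorbing $K^{3/2}\epsilon^{1/2}\leq 1$ into the choice of $\epsilon_0$, the inequality for $K$ becomes $K/2\geq e^{Ck_0}\bigl(M_0+Ck_0(K^{1/2}+2)\bigr)$, which is solvable for large $K$ since the right side grows like $K^{1/2}$. With this adjustment your argument is complete.
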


\section{Justification of the NLS from full water waves}
In this section, we show that for non-vanishing wave packet-like data, the solution to the water wave system exists on the $O(\epsilon^{-2})$ time scale, and is well-approximated by the wave packet whose modulation evolves according the the 1d focusing NLS. Let's summarize what we have done so far: 

\begin{itemize}
\item [1.] Use the NLS to construct approximate solutions to the full water waves. Let $B$ be a  solution to 1d focusing NLS. We show that there is an approximate solution $\tilde{\zeta}=\alpha+\epsilon \zeta^{(1)}+\epsilon^2\zeta^{(2)}+\epsilon^3\zeta^{(3)}$ to the water waves system on time scale $O(\epsilon^{-2})$ such that $\zeta^{(1)}=Be^{i\phi}$. 

\item [2.] A priori energy estimate of error term.  Also, we show that if $(\zeta, D_t\zeta, D_t^2\zeta)$ is a solution to water waves system, with approximation $(\tilde{\zeta}, \tilde{D}_t\tilde{\zeta}, \tilde{D}^2\tilde{\zeta})$, then the remainder term $r:=\zeta-\tilde{\zeta}$ satisfies some good energy estimates on time scale $O(\epsilon^{-2})$. 

\end{itemize}
\noindent In the next subsection, we show that there exists initial data $(\zeta_0, v_0, w_0)$ such that $\bar{\zeta}_0-\alpha$ and $\bar{v}_0$ are holomorphic, and $(\zeta_0, v_0, w_0)-(\tilde{\zeta}(0), \tilde{D}_t\tilde{\zeta}(0), \tilde{D}_t^2\tilde{\zeta}(0))=O(\epsilon^{3/2})$ in appropriate sense. 

\subsection{Construction of appropriate initial data} In this subsection, we construct initial data to the water waves system which is close to the approximation $(\tilde{\zeta}, \tilde{D}_t\tilde{\zeta}, \tilde{D}_t^2\tilde{\zeta})$. To be precise, we need
\begin{itemize}
\item [(I-1)] $\bar{\zeta}_0-\alpha, D_t\bar{\zeta}_0\in \mathcal{H}ol_{\mathcal{N}}(\Omega(0))$, which is equivalent to 
$$(I-\mathcal{H}_{\zeta_0})(\bar{\zeta}_0-\alpha)=0,\quad \quad (I-\mathcal{H}_{\zeta_0})D_t\bar{\zeta}(0)=0.$$

\item [(I-2)] $\zeta_0=\omega(0)+\xi_1(0)$, such that 
$$(I-\mathcal{H}_p)(\bar{\omega}(0)-\alpha)=0, \quad \quad (I-\mathcal{H}_p)D_t^0\omega(0)=0.$$

\item [(I-3)] The distance between $\omega(0)$ and $\tilde{\omega}(0)$ is small:
$$\norm{\omega(0)-\tilde{\omega}(0)}_{W^{s'+1,\infty}}\leq C\epsilon^2.$$
$$\norm{D_t^0\omega(0)-\tilde{D}_t^0\tilde{\omega}(0)}_{W^{s',\infty}}\leq C\epsilon^2.$$
And the distance between $\xi_1$ and $\tilde{\xi}_1$ is also small:
$$
\norm{\partial_{\alpha}(\xi_1(0)-\tilde{\xi}_1(0))}_{H^s(\RR)}\leq C\epsilon^{3/2}.
$$

$$\norm{D_t\xi_1(0)-\tilde{D}_t\tilde{\xi}_1(0)}_{H^{s+1/2}(\RR)}\leq C\epsilon^{3/2}.$$

\item [(I-4)] $A_0(0)$, $(D_t^0)^2\omega(0)$, $D_t^0\omega(0)$ satisfy the following compatibility condition  \begin{equation}
(I-\mathcal{H}_p)(A_0(0)-1)=i[(D_t^0)^2\omega(0), \mathcal{H}_p]\frac{\partial_{\alpha}\bar{\omega}(0)-1}{\partial_{\alpha}\omega(0)}+i[D_t^0\omega(0), \mathcal{H}_p]\frac{\partial_{\alpha}D_t^0\bar{\omega}(0)}{\partial_{\alpha}\omega(0)}.
\end{equation}

\item [(I-5)]
$A(0)$, $D_t^2\zeta(0)$, $D_t\zeta(0)$ satisfy the following compatibility condition 
\begin{equation}
\begin{split}
(I-\mathcal{H}_{\zeta_0})(A(0)-1)=i[D_t^2\zeta(0), \mathcal{H}_{\zeta_0}]\frac{\bar{\zeta}_{\alpha}-1}{\zeta_{\alpha}}+i[D_t\zeta(0), \mathcal{H}_{\zeta(0)}]\frac{\partial_{\alpha}D_t\bar{\zeta}(0)}{\zeta_{\alpha}}
\end{split}
\end{equation}
\end{itemize}

\noindent In the following lemma, we show that initial data satisfy (I-1)-(I-5) exist.
\begin{lemma}\label{initial}
For sufficiently small $\epsilon_0>0$, there exist $\omega(0)$, $\zeta(0)=\zeta_0$ such that for all $\epsilon<\epsilon_0$, (I1)-(I5) hold. 
\end{lemma}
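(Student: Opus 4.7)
The plan is to construct the data as small perturbations of the approximate solution $(\tilde\zeta(0),\tilde D_t\tilde\zeta(0),\tilde D_t^2\tilde\zeta(0))$ and its periodic component $(\tilde\omega(0),\tilde D_t^0\tilde\omega(0),(\tilde D_t^0)^2\tilde\omega(0))$, using contraction mappings to enforce the holomorphicity constraints (I-1)--(I-2), and using the water wave identities together with (I-4)--(I-5) to define the Taylor sign coefficients and the accelerations. The logical order is: first build $\omega(0)$ and $\zeta_0=\omega(0)+\xi_1(0)$, then the velocities $D_t^0\omega(0)$ and $D_t\zeta(0)$, then $A_0(0)$ and $A(0)$ from (I-4), (I-5) by inverting $(I-\mathcal{H}_p)$ and $(I-\mathcal{H}_{\zeta_0})$ on real-valued right-hand sides, and finally the accelerations via the algebraic identities $(D_t^0)^2\omega(0)=-i+iA_0(0)\omega_\alpha(0)$ and $D_t^2\zeta(0)=-i+iA(0)\zeta_\alpha(0)$.

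For the periodic position I would write $\omega(0)=\tilde\omega(0)+g_0$ with $g_0$ periodic and real, and seek $g_0$ so that $(I-\mathcal{H}_p)(\overline{\omega(0)}-\alpha)=0$. The defect $(I-\mathcal{H}_p)(\overline{\tilde\omega(0)}-\alpha)$ is of order $\epsilon^4$ by the multiscale construction in Section 5, and the linearization in $g_0$ at $g_0=0$ is $I-\mathcal{K}_p^*+O(\epsilon)$, which is invertible on $H^{s'+s_0}(\TT)$ by Lemma \ref{layer}; a standard contraction thus produces a unique real $g_0$ with $\norm{g_0}_{H^{s'+s_0}(\TT)}\lesssim\epsilon^4$. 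Applying the same scheme to $\tilde D_t^0\tilde\omega(0)$ yields $D_t^0\omega(0)$ satisfying the second half of (I-2). The decaying correction is similar: write $\xi_1(0)=\tilde\xi_1(0)+g_1$ with $g_1\in H^{s+1}(\RR)$ real and enforce $(I-\mathcal{H}_{\zeta_0})(\overline{\zeta_0}-\alpha)=0$. The key point is that the defect $(I-\mathcal{H}_{\tilde\zeta(0)})(\overline{\tilde\zeta(0)}-\alpha)$, evaluated with the already-constructed periodic piece $\omega(0)$, has small $H^s(\RR)$ norm: each localized building block in $\tilde\xi_1(0)$ has the form $B_1(\epsilon\alpha)e^{\pm ik\alpha}$, possibly multiplied by bounded periodic factors, so Lemma \ref{regularitytodecay} applied with $m=2$ together with $B_1(0)\in H^{s+7}(\RR)$ gives the defect in $H^s(\RR)$ of order $O(\epsilon^{3/2})$ (and in fact much smaller if one takes $m$ larger). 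A contraction argument in $g_1$, using Lemma \ref{realinverse} to handle the real-part inversion and Lemma \ref{layer} for the boundedness of $\mathcal{H}_{\zeta_0}$, then produces $g_1$ with $\norm{\partial_\alpha g_1}_{H^s(\RR)}\leq C\epsilon^{3/2}$. The same scheme applied to $\tilde D_t\tilde\zeta(0)$ delivers $D_t\zeta(0)$ at the $H^{s+1/2}(\RR)$ regularity required by (I-3).

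With positions and velocities in place, (I-4) and (I-5) become linear equations of the form $(I-\mathcal{H}_p)(A_0(0)-1)=F_0$ and $(I-\mathcal{H}_{\zeta_0})(A(0)-1)=F$ with explicit right-hand sides; taking real parts and inverting $(I-\mathcal{K}_p^*)$ and $(I-\mathcal{K}_{\zeta_0}^*)$ via Lemma \ref{layer} defines $A_0(0)-1\in H^{s'+s_0}(\TT)$ and $A(0)-1\in H^s(\RR)$ with sizes controlled by the already-established bounds on the other quantities, and the accelerations are then read off from the water wave identities. All closeness bounds in (I-3) propagate through these definitions by triangle-inequality bookkeeping together with the commutator estimates of Section 2. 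The main technical obstacle is the sharp $H^s(\RR)$ control on the holomorphicity defect of the oscillatory approximation $\tilde\zeta(0)$: the multiscale construction only controls it in $L^\infty$ at order $\epsilon^4$, and to run the fixed-point argument in $H^s(\RR)$ one must exploit the oscillatory gain $\epsilon^{m-1/2}$ of Lemma \ref{regularitytodecay}, which is precisely the reason the statement of Theorem \ref{main} requires $B(0)\in 1+H^{s+7}(\RR)$.
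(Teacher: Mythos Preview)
Your strategy is reasonable in spirit but differs substantially from the paper's, and it has a gap that the paper's approach is specifically designed to avoid.

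The paper does \emph{not} construct $\omega(0)$ by perturbing $\tilde\omega(0)$ with a small correction and running a contraction on the holomorphicity defect. Instead it writes down an explicit transcendental ansatz: one solves $\bar\omega_0-\alpha=c\,e^{-i\omega_0}$ for $\omega_0$ by the iteration $\bar\omega_{n+1}=\alpha+ce^{-i\omega_n}$ (Lemma~\ref{periodicinitialhelp}). The point is that $z\mapsto e^{-iz}$ is bounded holomorphic in the lower half-plane, so this equation \emph{forces} $(I-\mathcal H_{\omega_0})(\bar\omega_0-\alpha)=0$ automatically, with no linearization or double-layer inversion needed. This is the whole content of Lemmas~\ref{periodicinitial}--\ref{periodicinitialhelp}, and the Appendix explains why one cannot simply take $\omega_0=\alpha+ce^{ik\alpha}$. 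For $\xi_1(0)$ the paper again avoids inverting a defect: it sets $\bar\zeta_0-\alpha=(I+\mathcal H_{\zeta_0})f$ with $f=\tfrac12(\overline{\tilde\xi_1}+\overline{\omega_0-\alpha})$, which is holomorphic \emph{by construction}, and then iterates the resulting fixed-point equation~(\ref{north}) for the complex unknown $\xi_1$.

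Your scheme instead perturbs by \emph{real} $g_0,g_1$ and inverts what you call $I-\mathcal K_p^*$. Two issues. First, the linearization of $(I-\mathcal H_p^{\omega})(\bar\omega-\alpha)$ in a real perturbation, after taking real parts, is $I-\mathcal K_p$ (not its adjoint). Second, and more importantly, you never explain why solving only the real part of the complex constraint $(I-\mathcal H_p)(\bar\omega(0)-\alpha)=0$ yields the full constraint. It does---because any $G$ in the range of $I-\mathcal H_p$ satisfies $(I+\mathcal H_p)G=0$, and for such $G$ with $\text{Re}\,G=0$ one gets $(I+\mathcal K_p)\text{Im}\,G=0$, hence $G=0$ for curves close to flat---but this argument is missing from your sketch, and without it the contraction you describe is only solving half of an equation. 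The paper's ansatz-based approach sidesteps this entirely by never having to invert a projection.
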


\noindent To prove lemma \ref{initial}, we first prove the following.
\begin{lemma}\label{periodicinitial}
Let $\tilde{\omega}_0=\alpha+\epsilon \omega^{(1)}+\epsilon^2\omega^{(2)}+\epsilon^3\omega^{(3)}$ be such that the $\omega^{(1)}=c_0e^{ik\alpha}$ for some constant $c_0$, and $\omega^{(2)}, \omega^{(3)}\in W^{s'+1,\infty}(\TT)$. Then there exists $\omega(0)$ be such that $\omega(0)-\alpha$ is periodic with period 2$\pi$,  
$$\norm{\omega(0)-\tilde{\omega}}_{W^{{s'+1,\infty}}(\mathbb{T})}\leq C\epsilon^2,$$
and $\bar{\omega}(0)-\alpha=\Phi_0(\omega(\alpha,0))$, where $\Phi_0$ is holomorphic in the domain bounded above by the curve $\omega(0)$, satisfying
$$\lim_{\Im z\rightarrow-\infty} \Phi_0(z)=0.$$
\end{lemma}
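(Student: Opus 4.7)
The plan is to build $\omega(0)$ as a reparametrization of the curve $\tilde\Gamma:=\{\tilde{\omega}_0(\alpha):\alpha\in\RR\}$, choosing the parameter so that $\overline{\omega(0)}-\alpha$ becomes the boundary value of a function in $\mathcal{H}ol_{\mathcal{P}}(\tilde\Omega)$, where $\tilde\Omega$ denotes the $2\pi$-periodic domain bounded above by $\tilde\Gamma$. For $\epsilon$ small, $\tilde{\omega}_0'=1+O(\epsilon)$, so $\tilde\Gamma$ is a smooth chord-arc curve invariant under horizontal $2\pi$-translation. As a preliminary normalization I would absorb the mean $c_\epsilon:=\tfrac{1}{2\pi}\int_0^{2\pi}\Im\tilde{\omega}_0\,d\alpha$ by the vertical shift $\tilde{\omega}_0\mapsto\tilde{\omega}_0-ic_\epsilon$; since $\omega^{(1)}=c_0 e^{ik\alpha}$ (with integer $k>0$) has zero mean, $c_\epsilon=O(\epsilon^2)$ and this shift stays within the allowed tolerance. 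This normalization is what permits $\Im\Phi_0$ to decay to $0$ as $\Im z\to-\infty$.

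Next I would construct $\Phi_0\in\mathcal{H}ol_{\mathcal{P}}(\tilde\Omega)$ from a Dirichlet problem: let $v$ be harmonic in $\tilde\Omega$ with $v|_{\tilde\Gamma}=-\Im(z)$ and $v\to 0$ as $\Im z\to-\infty$, $u$ its harmonic conjugate vanishing at $-\infty$, and set $\Phi_0:=u+iv$. Define $\rho:\tilde\Gamma\to\RR$ by $\rho(z):=\Re(z-\Phi_0(z))$; on $\tilde\Gamma$ one has $\Im(\bar z-\Phi_0(z))=-\Im(z)-\Im\Phi_0(z)=0$ by the Dirichlet condition, so $\bar z-\Phi_0(z)=\rho(z)$ is genuinely real there. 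The composition $\rho\circ\tilde{\omega}_0$ has derivative $1+O(\epsilon)$ and commutes with translation by $2\pi$, hence is a $2\pi$-equivariant diffeomorphism of $\RR$; I set $\omega(0)(\alpha):=\rho^{-1}(\alpha)$. By construction $\overline{\omega(0)}(\alpha)=\alpha+\Phi_0(\omega(0)(\alpha))$ with $\Phi_0\in\mathcal{H}ol_{\mathcal{P}}(\tilde\Omega)$, and $\omega(0)-\alpha$ is $2\pi$-periodic, which is exactly the holomorphy statement of the lemma.

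For the $O(\epsilon^2)$ estimate I would write $\omega(0)=\tilde{\omega}_0\circ\psi$ with $\psi=\mathrm{id}+\delta$, reducing the bound to $\|\delta\|_{W^{s'+1,\infty}}=O(\epsilon^2)$. The key algebraic fact driving the gain is that $\epsilon\bar c_0 e^{-ikz}$ already lies in $\mathcal{H}ol_{\mathcal{P}}$ (since $k>0$ makes $e^{-ikz}$ decay as $\Im z\to-\infty$) and agrees with $\overline{\epsilon\omega^{(1)}}$ on the flat interface; linearizing the Dirichlet construction around the flat curve yields $\Phi_0=\epsilon\bar c_0 e^{-ikz}+O(\epsilon^2)$, so $\Re\Phi_0(\tilde{\omega}_0(\beta))=\epsilon\Re(c_0 e^{ik\beta})+O(\epsilon^2)$. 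This precisely cancels the $O(\epsilon)$ part of $\Re\tilde{\omega}_0(\beta)-\beta$ in the defining equation $\rho(\tilde{\omega}_0(\alpha+\delta))=\alpha$, forcing $\delta=O(\epsilon^2)$. The main obstacle I anticipate is the regularity bookkeeping for $\Phi_0$: the periodic Hilbert transform is not bounded on $L^\infty$, so $\|\Phi_0\|_{W^{s'+1,\infty}(\TT)}$ cannot be controlled directly from $W^{s'+1,\infty}$ boundary data. I would work in $H^{s'+1}(\TT)$ where it is continuous by Lemma~\ref{layer} and then recover $W^{s'+1,\infty}$ via the Sobolev embedding on the compact torus; once these $H^s$/$W^{k,\infty}$ conversions are made, the verifications that $\rho\circ\tilde{\omega}_0$ is a global diffeomorphism and that the first-order term of $\Phi_0$ is indeed $\epsilon\bar c_0 e^{-ikz}$ reduce to routine linearization around the flat interface.
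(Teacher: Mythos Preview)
Your approach is correct but genuinely different from the paper's. You keep the curve $\tilde\Gamma=\{\tilde\omega_0(\alpha)\}$ fixed, construct $\Phi_0$ as the holomorphic function with $\Im\Phi_0=-\Im z$ on $\tilde\Gamma$ via a periodic Dirichlet problem, and then reparametrize so that $\bar z-\Phi_0(z)$ becomes the new real parameter $\alpha$. The paper does something much more elementary: it \emph{prescribes} $\Phi_0(z)=c\,e^{-ikz}$ (with $c=\epsilon\bar c_0$), which is automatically in $\mathcal{H}ol_{\mathcal P}$ since $k>0$, and then solves the scalar fixed--point equation
\[
\bar\omega-\alpha=c\,e^{-ik\omega}
\]
for $\omega$ by Picard iteration $\bar\omega_{n+1}=\alpha+c\,e^{-ik\omega_n}$, obtaining $\|\omega-\alpha-\bar c e^{ik\alpha}\|_{W^{s'+1,\infty}}\le C\epsilon^2$ directly. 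Since $\tilde\omega_0$ and $\alpha+\epsilon c_0e^{ik\alpha}$ already differ by $O(\epsilon^2)$ in $W^{s'+1,\infty}$, the desired bound follows by the triangle inequality.

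The trade--offs: the paper's argument is a two--line contraction and avoids entirely the Dirichlet/harmonic--conjugate machinery and the $L^\infty$--unboundedness of the periodic Hilbert transform that you (correctly) flag as the main technical obstacle in your route. Your argument, on the other hand, preserves the given curve up to reparametrization and a harmless $O(\epsilon^2)$ vertical shift, and would apply verbatim to any $\tilde\omega_0$ with $\tilde\omega_0'-1$ small, not just one whose leading term is a single Fourier mode; the cancellation you identify, that $\Phi_0=\epsilon\bar c_0 e^{-ikz}+O(\epsilon^2)$ kills the $O(\epsilon)$ part of $\Re\tilde\omega_0-\alpha$, is exactly the mechanism that in the paper's version is hard--wired into the choice of $\Phi_0$.
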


In the appendix, we show that if we define $\Gamma:=\{\alpha+ce^{ik\alpha}:\alpha\in \RR\}$ and $\Omega_-^0$ the domain which is below $\Gamma$, then $ce^{-ik\alpha}$ is not holomorphic in $\Omega_-^0$. So we cannot simply take $\omega(0)=\alpha+ce^{ik\alpha}$.

\vspace*{1ex}

\noindent Lemma \ref{periodicinitial} is a direct consequence of the following lemma.
\begin{lemma}\label{periodicinitialhelp}
Let $s'\geq 0$ and $\epsilon\geq 0$ be fixed. There exists $\epsilon_0=\epsilon_0(s')>0$ be sufficiently small such that for all $\epsilon$ with $0\leq \epsilon\leq \epsilon_0$, there exists  $\omega$ such that $\omega-\alpha\in W^{s'+1,\infty}(\TT)$, satisfying
\begin{equation}\label{equation1}
\bar{\omega}-\alpha=ce^{-i\omega},
\end{equation}
and
\begin{equation}
    \norm{\omega-\alpha-\bar{c}e^{i\alpha}}_{W^{s'+1,\infty}(\TT)}\leq C\epsilon^2,
\end{equation}
for some constant $C=C(s')>0$.
\end{lemma}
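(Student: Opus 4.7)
The plan is to solve the equation $\bar{\omega}-\alpha=ce^{-i\omega}$ by a contraction mapping argument in $W^{s'+1,\infty}(\mathbb{T})$, treating $|c|$ (which is of size $O(\epsilon)$, by the way this lemma is invoked in Lemma \ref{periodicinitial}) as the small parameter. Write $g:=\omega-\alpha$; since $\alpha$ is real, the equation $\bar{\omega}-\alpha=ce^{-i\omega}$ is equivalent to $\bar g=ce^{-i\alpha}e^{-ig}$, and taking the complex conjugate once more gives the equivalent fixed-point formulation
\begin{equation*}
g=\bar{c}\,e^{i\alpha}\,e^{i\bar g}=:T(g).
\end{equation*}
Note that if $g$ is $2\pi$-periodic, then so is $T(g)$, so $T$ preserves periodicity.

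First I would check that $T$ maps a small closed ball into itself. Fix $R:=2|c|$ and let $\mathcal{B}_R:=\{g\in W^{s'+1,\infty}(\mathbb{T}):\|g\|_{W^{s'+1,\infty}}\le R\}$. Because $W^{s'+1,\infty}(\mathbb{T})$ is a Banach algebra and $e^{i\bar g}=\sum_{n\ge 0}(i\bar g)^n/n!$ converges in this norm with
\begin{equation*}
\|e^{i\bar g}-1\|_{W^{s'+1,\infty}}\le C(s')\|g\|_{W^{s'+1,\infty}}\,e^{C(s')\|g\|_{W^{s'+1,\infty}}},
\end{equation*}
for $g\in\mathcal{B}_R$ we get $\|T(g)\|_{W^{s'+1,\infty}}\le|c|\|e^{i\alpha}\|_{W^{s'+1,\infty}}(1+C(s')R\,e^{C(s')R})\le 2|c|$ once $|c|$ is small enough. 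A parallel estimate on $e^{i\bar g_1}-e^{i\bar g_2}$ using the integral-form mean value theorem and the algebra property yields
\begin{equation*}
\|T(g_1)-T(g_2)\|_{W^{s'+1,\infty}}\le C(s')|c|\,\|g_1-g_2\|_{W^{s'+1,\infty}}\quad\text{for }g_1,g_2\in\mathcal{B}_R,
\end{equation*}
so for $|c|\le \epsilon_0(s')$ sufficiently small $T$ is a contraction of $\mathcal{B}_R$, and the Banach fixed-point theorem gives a unique $g\in\mathcal{B}_R$ with $g=T(g)$; setting $\omega=\alpha+g$ furnishes the desired solution of $\bar\omega-\alpha=ce^{-i\omega}$.

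To upgrade to the quantitative bound $\|\omega-\alpha-\bar c\,e^{i\alpha}\|_{W^{s'+1,\infty}}\le C\epsilon^2$, observe $\bar c\,e^{i\alpha}=T(0)$ and write
\begin{equation*}
g-\bar c\,e^{i\alpha}=T(g)-T(0)=\bar c\,e^{i\alpha}\bigl(e^{i\bar g}-1\bigr),
\end{equation*}
which, combined with $\|g\|_{W^{s'+1,\infty}}\le 2|c|$, yields $\|g-\bar c\,e^{i\alpha}\|_{W^{s'+1,\infty}}\le C(s')|c|^2\le C\epsilon^2$, since $|c|\lesssim\epsilon$ in the intended application. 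This is the claim.

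I don't anticipate a serious obstacle: the only thing to be careful about is that the equation is anti-holomorphic in $g$ (because of the conjugate), so one must use the twice-conjugated form $g=T(g)$ rather than trying to iterate the original relation directly; once that is done, every step reduces to routine algebra-property estimates in $W^{s'+1,\infty}(\mathbb{T})$. The only subtle point, worth flagging, is verifying that periodicity is preserved under $T$ and that the constants depend only on $s'$ (not on $\epsilon$); both follow from the fact that $e^{i\alpha}$ is smooth and periodic and that the Banach algebra constant of $W^{s'+1,\infty}(\mathbb{T})$ depends only on $s'$.
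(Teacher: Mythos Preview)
Your proof is correct and is essentially the same as the paper's: both solve $\bar\omega-\alpha=ce^{-i\omega}$ by Picard iteration with $|c|$ as the small parameter, and your map $T(g)=\bar c\,e^{i\alpha}e^{i\bar g}$ is exactly the paper's recursion $\bar\omega_{n+1}=\alpha+ce^{-i\omega_n}$ rewritten in terms of $g=\omega-\alpha$. The only cosmetic difference is that you invoke the Banach fixed-point theorem directly in $W^{s'+1,\infty}(\mathbb{T})$, whereas the paper writes out the Cauchy estimate by hand in $L^\infty$ and then asserts the $W^{s'+1,\infty}$ convergence; your packaging is slightly cleaner.
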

\begin{proof}
We prove the lemma by iteration. It suffices to prove the case that $s'=0$. Let $\omega_0:=\alpha$ and define
$$\bar{\omega}_{n+1}=\alpha+ce^{-i\omega_n}.$$
Then $\omega_1=\alpha+\bar{c}e^{i\alpha}$. We have 
$$\norm{\omega_1-\omega_0}_{\infty}=|c|=\epsilon.$$
Then we have 
\begin{align*}
\norm{\omega_{2}-\omega_1}_{\infty}=& |c|  \norm{e^{-i\omega_{0}}}_{\infty}\norm{e^{-i(\omega_1-\omega_{0})}-1}_{\infty}\\
= & \epsilon \norm{\sum_{k\geq 1}\frac{(-i(\omega_1-\omega_{0}))^k}{k!}}_{\infty}\\
\leq & \epsilon \norm{\omega_1-\omega_{0}}_{\infty} \sum_{n\geq 1}\frac{\norm{\omega_1-\omega_{0}}_{\infty}^{k-1}}{k!}\\
\leq &\epsilon\norm{\omega_1-\omega_0}_{\infty}\sum_{n\geq 1}\frac{\epsilon^{k-1}}{k!}\leq \epsilon \norm{\omega_1-\omega_0}_{\infty} \sum_{n\geq 1}\frac{\epsilon^{k-1}}{(k-1)!}\\
\leq & 2\epsilon \norm{\omega_1-\omega_{0}}_{\infty}.
\end{align*}
From this, we have also that 
$$\norm{\omega_2-\alpha}_{\infty}\leq \norm{\omega_2-\omega_1}_{\infty}+\norm{\omega_1-\alpha}_{\infty}\leq \epsilon+2\epsilon^2.$$
We claim that 
\begin{equation}
\norm{\omega_n-\alpha}_{\infty}\leq \epsilon+\sum_{k=2}^n (2\epsilon)^k,
\end{equation}
and
\begin{equation}
\norm{\omega_{n}-\omega_{n-1}}_{\infty}\leq 2\epsilon \norm{\omega_{n-1}-\omega_{n-2}}_{\infty}.
\end{equation}
Indeed,
\begin{align*}
\norm{\omega_{n+1}-\omega_n}_{\infty}=& |c|\norm{e^{-i\alpha}e^{-i(\omega_{n-1}-\alpha)}}_{\infty}\norm{e^{-i(\omega_n-\omega_{n-1})}-1}_{\infty}\\
\leq & \epsilon e^{2\epsilon}\norm{\omega_n-\omega_{n-1}}_{\infty}\sum_{k\geq 1}\frac{\norm{\omega_n-\omega_{n-1}}_{\infty}^{k-1}}{k!}\\
\leq &\epsilon e^{2\epsilon}\norm{\omega_n-\omega_{n-1}}_{\infty}\sum_{k\geq 1}\frac{(2\epsilon)^{k-1}}{(k-1)!} \quad \leq \epsilon e^{2\epsilon}\norm{\omega_n-\omega_{n-1}}_{\infty}e^{2\epsilon}\\
\leq & 2\epsilon \norm{\omega_n-\omega_{n-1}}_{\infty}.
\end{align*}
So the induction hypothesis is verified, and the claim follows. So 
$$\norm{\omega_n-\omega_{n-1}}_{\infty}\leq (2\epsilon)^n.$$
There exists $\xi_0\in L^{\infty}$ such that 
$$\omega_n-\alpha\rightarrow \xi_0.$$
It's easy to show that $\xi_0\in W^{s'+1}(\TT)$ and $\omega_n-\alpha\rightarrow \xi_0$ in $W^{s'+1,\infty}(\TT)$. Moreover, if we denote $\omega:=\xi_0+\alpha$, we have 
$$\norm{\omega-\alpha-\bar{c}e^{ik\alpha}}_{W^{s'+1,\infty}}\leq C\epsilon^2.$$
So the proof of Lemma \ref{periodicinitialhelp} and hence Lemma \ref{periodicinitial} is completed.
\end{proof}

\noindent With lemma \ref{periodicinitial}, we can  prove lemma \ref{initial}.
\begin{proof}[Proof of lemma \ref{initial}]
Given $\tilde{\zeta}$ and $\tilde{\omega}$ given by (\ref{approxzetaomega1})-(\ref{approxzetaomega3}),
by Lemma \ref{periodicinitial}, there exists $\omega_0$ with $\omega_0-\alpha\in W^{s'+1,\infty}(\TT)$ such that $\bar{\omega}_0-\alpha=B_0(0)e^{-i\omega_0}$ and $\norm{\omega_0-\alpha-B_0(0)\epsilon e^{i\alpha}}_{W^{s'+1,\infty}}\leq C(s)\epsilon^2$. By Lemma \ref{goodgood}, we have 
\begin{equation}\label{ooo}
(I-\mathcal{H}_{\omega_0})(\bar{\omega}_0-\alpha)=B_0(0)(I-\mathcal{H}_{\omega_0})e^{-i\omega_0}=0.
\end{equation}
We want to find $\zeta_0$ such that 
\begin{equation}\label{zzz}
(I-\mathcal{H}_{\zeta_0})(\bar{\zeta}_0-\alpha)=0.
\end{equation}
Write 
$$\zeta_0=\omega_0+\xi_1(0).$$
We want 
\begin{equation}\label{errorh}
\xi_1(0)-\tilde{\xi}_1(0)=O(\epsilon^{3/2}).
\end{equation}
We simply write $\xi_1(0)$ as $\xi_1$ and $\tilde{\xi}_1(0)$ as $\tilde{\xi}_1$.  By (\ref{zzz}), we expect that \begin{equation}\label{alaska}
\bar{\zeta}_0-\alpha=(I+\mathcal{H}_{\zeta_0})f,
\end{equation}
for some function $f$. By (\ref{errorh}), we expect that $f$ should be closed to $\tilde{\xi}$. It's easy to see that we can take 
$$f=\frac{1}{2}(\overline{\tilde{\xi}_1+\omega_0-\alpha}).$$
Denote $\xi_0:=\omega_0-\alpha$.  Recall that $(I-\mathcal{H}_{\omega_0})\bar{\omega}_0=0$, so we have
$$\frac{1}{2}(I+\mathcal{H}_{\omega_0})\bar{\xi}_0=\bar{\xi}_0.$$
So we have 
\begin{align*}
(I+\mathcal{H}_{\zeta_0})f=& \frac{1}{2}(I+\mathcal{H}_{\zeta_0})\bar{\tilde{\xi}}_1+ \frac{1}{2}(I+\mathcal{H}_{\zeta_0})\bar{\xi}_0\\
=&\frac{1}{2}(I+\mathcal{H}_{\zeta_0})\bar{\tilde{\xi}}_1+ \frac{1}{2}(I+\mathcal{H}_{\omega_0})\bar{\xi}_0+(\mathcal{H}_{\zeta_0}-\mathcal{H}_{\omega_0})\bar{\xi}_0\\
=& \frac{1}{2}(I+\mathcal{H}_{\zeta_0})\bar{\tilde{\xi}}_1+\bar{\xi}_0+(\mathcal{H}_{\zeta_0}-\mathcal{H}_{\omega_0})\bar{\xi}_0
\end{align*}
So (\ref{alaska}) is equivalent to 
\begin{equation}\label{north}
\bar{\xi}_1=\frac{1}{2}(I+\mathcal{H}_{\zeta_0})\bar{\tilde{\xi}}_1+(\mathcal{H}_{\zeta_0}-\mathcal{H}_{\omega_0})\bar{\xi}_0.
\end{equation}
(\ref{north}) can be solved by iteration: let $g_0=0$, $z_0=\alpha$. Assume $g_n$ has been constructed, define  $z_n=g_n+\omega_0$. Then define $g_{n+1}$ by
$$\bar{g}_{n+1}=\frac{1}{2}(I+\mathcal{H}_{z_n})\bar{\tilde{\xi}}_1+(\mathcal{H}_{z_n}-\mathcal{H}_{\omega_0})\bar{\xi}_0.$$
Then it's easy to prove that $\{g_n\}$ defines a Cauchy sequence in $H^{s+7}(\RR)$, given that $\tilde{\xi}_1\in H^{s+7}(\RR)$. 
See lemma 5.1 of \cite{Totz2012} for example. 

Use the same argument, we can show that $D_t\bar{\zeta}(0)\in \mathcal{H}ol_{\mathcal{N}}$, and (1), (2), (3)  hold.  (4) and (5) can be proved similarly.

\end{proof}

\subsection{Long time well-posedness}

By energy estimates in the previous section and the initial data constructed above, we can prove the following theorem.

\begin{thm}\label{wuwu}
Let $M_0>0$, $s\geq 4$, $k\geq 0$ be given. $B(0)=B_1(0)+B_0(0)$, with $B_0(0)=1$, $B_1(0)\in H^{s+7}(\RR)$, and $e_0$ be given in Theorem \ref{NLSlocal}. Let $C_0$ be as in Theorem \ref{longperiodic}, and $C_1$ be as in Theorem \ref{theorem1}. Denote 
\begin{equation}
    k_0:=\min\{e_0, C_0, C_1\}.
\end{equation}
Denote the $B(X,T)$ the solution of (\ref{NLS})  with initial data $B(0)$, and let $\zeta_1^{(1)}$ be defined as in  (\ref{zeta1}).  Then there is $\epsilon_0=\epsilon_0(s, M_0, \norm{B_1(0)}_{H^{s+7}(\RR)})>0$ so that for $\epsilon<\epsilon_0$, there exists compatible initial data $(\zeta_0, v_0, w_0)$ to water waves system such that
$$(\zeta_0, v_0, w_0)=(\omega_0+\xi_1(0), D_t^0\omega_0+(v_1)_0, (D_t^0)^2\omega_0+(w_1)_0),$$ 
where $(\omega_0, D_t^0\omega_0, (D_t^0)^2\omega_0)$ is a compatible initial data for periodic water waves sytem (\ref{special}),  satisfying
\begin{equation}
\begin{split}
&\norm{(\partial_{\alpha}\omega_0, D_t^0\omega_0, (D_t^0)^2\omega_0)-\epsilon(\partial_{\alpha}\zeta_0^{(1)}(0), \partial_t \zeta_0^{(1)}(0), \partial_t^2\zeta_0^{(1)}(0))}_{H^{s'}(\mathbb{T})\times H^{s'+1/2}(\mathbb{T})\times H^{s'}(\mathbb{T})}\\
&\leq M_0 \epsilon^2,
\end{split}
\end{equation}
\begin{equation}
\begin{split}
&\norm{(\partial_{\alpha}\xi_1(0), (v_1)_0, (w_1)_0) -\epsilon( \partial_{\alpha}\zeta_1^{(1)}(0),  \partial_t\zeta_1^{(1)}(0),  \partial_t^2 \zeta_1^{(1)}(0)}_{H^{s}\times H^{s+1/2}\times H^{s}}\\
\leq &M_0\epsilon^{3/2},
\end{split}
\end{equation}
and for all such initial data, there exists a possibly smaller $\epsilon_0=\epsilon_0(\norm{B_1(0)}_{H^{s+7}(\RR)}, M_0, k_0)>0$ such that the water waves system has a unique solution $\zeta(\alpha,t)$ with $(\partial_{\alpha}(\zeta-\alpha), D_t\zeta, D_t^2\zeta)\in C([0, k_0\epsilon^{-2}]; X^s\times X^{s+1/2}\times X^s)$ satisfying 
\begin{equation}
\begin{split}
\sup_{0\leq t\leq k_0\epsilon^{-2}}\norm{(\zeta_{\alpha}(t)-1, D_t\zeta(t), D_t^2\zeta(t)-\epsilon (\zeta_{\alpha}^{(1)}(t), \zeta_t^{(1)}, \zeta_{tt}^{(1)})}_{X^s\times X^{s+1/2}\times X^s}\leq C\epsilon^{3/2},
\end{split}
\end{equation}
for some constant $C=C(s,M_0, \norm{B(0)-1}_{H^{s+7}})$.
\end{thm}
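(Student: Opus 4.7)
The plan is to assemble the components developed throughout the paper into a bootstrap argument. First, I would invoke Theorem \ref{NLSlocal} to obtain the NLS solution $B(X,T)$ on $[0,e_0\epsilon^{-2}]$, which gives us the approximate profile $\tilde{\zeta}, \tilde{D}_t\tilde{\zeta}, \tilde{D}_t^2\tilde{\zeta}$ together with its periodic decomposition $\tilde{\omega}, \tilde{\xi}_1$ as in \eqref{approxzetaomega1}-\eqref{approxzetaomega3}. Second, I would apply Lemma \ref{initial}: this constructs genuine initial data $(\zeta_0, v_0, w_0) = (\omega_0+\xi_1(0), D_t^0\omega_0+(v_1)_0, (D_t^0)^2\omega_0+(w_1)_0)$ satisfying the holomorphicity constraints (I-1)-(I-5), with the periodic piece $\omega_0$ produced by Lemma \ref{periodicinitial} within $O(\epsilon^2)$ of $\tilde{\omega}(0)$ in $W^{s'+1,\infty}(\mathbb{T})$, and the decaying correction $\xi_1(0)$ produced by the iteration scheme within $O(\epsilon^{3/2})$ of $\tilde{\xi}_1(0)$ in $H^{s+7}(\mathbb{R})$.

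With this initial data in hand, the third step is to invoke the long time existence Theorem \ref{theorem1} to produce a unique water waves solution $\zeta(\alpha,t)$ with $(\zeta_{\alpha}-1, D_t\zeta, D_t^2\zeta) \in C([0, C_1\epsilon^{-2}]; X^{s-1/2}\times X^{s+1/2}\times X^s)$. The key point is that on the time interval $[0,k_0\epsilon^{-2}]$ where $k_0=\min\{e_0, C_0, C_1\}$, all three building blocks (NLS, periodic water waves, full water waves) coexist and the decomposition $\zeta=\omega+\xi_1$ into its periodic and decaying parts is legitimate, as are the corresponding decompositions of $b, A$, so the remainders $r_0:=\omega-\tilde{\omega}$ and $r_1:=\xi_1-\tilde{\xi}_1$ are well-defined.

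Fourth, I would control the periodic remainder $r_0$ by Theorem \ref{errorperiodic}, giving
\[
\sup_{t\in[0,k_0\epsilon^{-2}]} \norm{(\partial_\alpha r_0, D_t^0 r_0, (D_t^0)^2 r_0)}_{H^{s'}(\mathbb{T})\times H^{s'+1/2}(\mathbb{T})\times H^{s'}(\mathbb{T})} \leq C\epsilon^{3/2},
\]
and simultaneously control the decaying remainder $r_1$ via the energy $\mathcal{E}(t)$ constructed in Section \ref{govern}. The energy inequality derived in Section \ref{energyestimatesection}, namely
\[
\frac{d\mathcal{E}}{dt}\leq C(\epsilon^{7/2}\mathcal{E}^{1/2}+\epsilon^2 \mathcal{E}+\epsilon\mathcal{E}^{3/2}+\epsilon^5),
\]
combined with the coercivity relation $E_s^{1/2}\leq C(\mathcal{E}^{1/2}+\epsilon^{5/2})$ from \eqref{abcde} and Gr\"onwall's inequality, yields $\mathcal{E}(t)\leq C\epsilon^3$ on $[0,k_0\epsilon^{-2}]$. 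This translates into $\norm{(\partial_\alpha r_1, D_t r_1, D_t^2 r_1)}_{H^s\times H^{s+1/2}\times H^s}\leq C\epsilon^{3/2}$ by the energy/norm equivalence plus the a priori bounds from Section \ref{bounds} on $b-\tilde b$, $A-\tilde A$, etc.

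The main obstacle, which is however already handled in the preceding sections, is closing the bootstrap assumption \eqref{boot}: one must check that the a priori assumption used to derive every single estimate in Sections \ref{bounds} and \ref{energyestimatesection} is consistent with the $O(\epsilon^{3/2})$ bound we obtain from the energy estimate. I would run a standard continuation argument: the initial energy satisfies $\mathcal{E}(0)\leq M_0\epsilon^3$ by construction of the initial data; define $T^* = \sup\{T\leq k_0\epsilon^{-2}: \mathcal{E}(t)\leq 2M_0\epsilon^3\text{ on }[0,T]\}$; on $[0,T^*]$ the bootstrap assumption \eqref{boot} holds (since $\epsilon^{3/2}\ll\epsilon$), so the differential inequality above gives $\mathcal{E}(T^*)\leq \tfrac{3}{2}M_0\epsilon^3$ for $\epsilon$ sufficiently small, contradicting maximality unless $T^*=k_0\epsilon^{-2}$. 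Combining the $r_0$ and $r_1$ estimates and adding the already-known estimate $\norm{\zeta-\tilde\zeta}\lesssim \epsilon^4$ at the formal level (contributing the $\epsilon^3$ terms of $\tilde\zeta$ to the approximation error when comparing to $\epsilon\zeta^{(1)}$), we obtain the stated bound in $X^s\times X^{s+1/2}\times X^s$.
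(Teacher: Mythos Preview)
Your proposal is correct and follows essentially the same assembly-and-bootstrap strategy as the paper: construct the approximate profile from the NLS solution, build compatible initial data via Lemma \ref{initial}, invoke the long-time existence Theorem \ref{theorem1}, control $r_0$ by Theorem \ref{errorperiodic} and $r_1$ by the energy $\mathcal{E}$ of Sections \ref{govern}--\ref{energyestimatesection}, and close with the continuation argument encapsulated in the Proposition at the end of Section \ref{energyestimatesection}. The paper itself states Theorem \ref{wuwu} as a direct consequence of these ingredients without writing out a separate proof, so your outline is precisely the intended argument.
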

In particular, if we take $B$  to be the Peregrine soliton, then we justify the Peregrine soliton from the full water waves.

\subsection{Rigorous justification of the Peregrine soliton in  Lagrangian coordinates}
Let's change of variables back the our more familiar lagrangian coordinates.
We have $\kappa_t=b(\kappa)$. This gives a smooth function $\kappa:\RR\rightarrow \RR$. Taking $k_0$ smaller if necessary, it's easy to show that 
\begin{equation}
\norm{\kappa_{\alpha}}_{W^{s-1, \infty}}\geq 1/2, \quad \forall ~t\in [0, k_0\epsilon^{-2}].
\end{equation}
So $\kappa$ is a diffeomorphism.  Let $z=\zeta\circ \kappa$, $a$ be such that $(a\kappa_{\alpha})\circ \kappa^{-1}=A$, we obtain water waves equation (\ref{system_boundary}), which is in Lagrangian coordinates.  We can then obtain estimates for the remainder term in Lagrangian coordinates. 
\begin{remark}\label{remarkchangeback}
In Lagrangian coordinates, $\zeta_{\alpha}-1$ becomes $z_{\alpha}-\kappa_{\alpha}$.  So we have 
\begin{equation}
    z_{\alpha}-1=z_{\alpha}-\kappa_{\alpha}+(\kappa_{\alpha}-1).
\end{equation}
We have 
\begin{equation}
   \sup_{t\in [0,k_0\epsilon^{-2}]} \norm{z_{\alpha}-\kappa_{\alpha}}_{X^{s-1/2}}\lesssim \epsilon^{3/2}.
\end{equation}
However, it seems that $\norm{\kappa_{\alpha}-1}_{X^{s-1/2}}$ can be as large as $\epsilon^{1/2}$ on time scale $O(\epsilon^{-2})$. So we are unable to rigorously justify the modulation approximation for $Re\{z_{\alpha}-1\}$. Please see \cite{Totz2012} for more details.

\end{remark}

\section*{Acknowledgement}
The author would like to thank his Ph.D advisor, Prof. Sijue Wu, for introducing him this interesting topic, for many helpful discussions and  invaluable comments, and for carefully reading the draft of this paper. The author would like to thank Prof. Peter Miller for providing references  regarding NLS with nonzero boundary values at $\infty$. The author would  like to thank Prof. Yongsheng Han for his help. The author would also like to thank Fan Zheng, S. Shahshahani for invalueable comments and Prof. Tao Luo for carefully reading the draft of this paper. This work is partially supported by NSF grant DMS-1361791.

\appendix

\section{Holomorphicity of plane waves}
Let $\zeta(\alpha)=\alpha+ce^{ik\alpha}, \alpha\in \RR$, $c$ is a small constant, $k>0$, for simplicity, assume $k$ is an integer. Let 
$$\Gamma:=\{\zeta(\alpha):\alpha\in \RR\}.$$
Then $\Gamma$ is a graph. Let $\Omega_+$ be the region above $\Gamma$, and $\Omega_-$ the region below $\Gamma$.
On one hand, it'  easy to prove that
\begin{lemma}
$\alpha$, $e^{-ik\alpha}$ and $e^{ik\alpha}$ are holomorphic in $\Omega_+$. 
\end{lemma}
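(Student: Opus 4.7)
The plan is to exhibit the holomorphic extensions by inverting the parametrization $\zeta$. Namely, I would show that the entire function $\zeta(z):=z+ce^{ikz}$ restricts, for $|c|$ sufficiently small, to a biholomorphism from the open upper half plane $\mathbb{H}:=\{z\in\CC:\Im z>0\}$ onto $\Omega_+$; its holomorphic inverse then supplies the desired extensions in all three cases simultaneously.

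For injectivity on the closed upper half plane, if $\zeta(z_1)=\zeta(z_2)$ with $\Im z_1,\Im z_2\ge 0$, then $z_1-z_2=c(e^{ikz_2}-e^{ikz_1})$, and integrating $ike^{ikz}$ along the straight segment from $z_1$ to $z_2$ (which remains in the closed upper half plane by convexity, where $|e^{ikz}|=e^{-k\Im z}\le 1$) yields $|z_1-z_2|\le k|c|\,|z_1-z_2|$. For $k|c|<1$ this forces $z_1=z_2$. The same bound shows $\zeta'(z)=1+ikce^{ikz}$ is nonvanishing on $\overline{\mathbb{H}}$, so $\zeta$ is a local biholomorphism. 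Since $\zeta(z)-z$ is bounded on $\mathbb{H}$, the map is proper, and comparing the image with $\CC\setminus\Gamma$ at infinity (where $\zeta(z)\approx z$) identifies $\zeta(\mathbb{H})$ with the connected component of $\CC\setminus\Gamma$ containing $iy$ for large $y>0$, namely $\Omega_+$.

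Granted the biholomorphism, set $\alpha(w):=\zeta^{-1}(w)$, which is holomorphic on $\Omega_+$. By construction $\alpha(\zeta(\alpha_0))=\alpha_0$ for $\alpha_0\in\RR$, so $\alpha(w)$ is the required holomorphic extension of the boundary value $\alpha_0\mapsto\alpha_0$. Moreover $ce^{ik\alpha(w)}=w-\alpha(w)$ is holomorphic on $\Omega_+$ with boundary value $ce^{ik\alpha_0}$ on $\Gamma$, and since $e^{ik\alpha(w)}$ is nowhere zero, its reciprocal $e^{-ik\alpha(w)}$ is also holomorphic on $\Omega_+$ with boundary value $e^{-ik\alpha_0}$, completing all three cases.

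The main obstacle is the global image identification $\zeta(\mathbb{H})=\Omega_+$: injectivity and local biholomorphism follow immediately from the contraction estimate $k|c|<1$, but pinning down the image as exactly $\Omega_+$ requires either an argument-principle computation along the boundary of a large half-disc in $\mathbb{H}$ or the observation that $\zeta$, being the identity plus a bounded holomorphic perturbation, induces a proper local homeomorphism and hence a global homeomorphism onto the correct component of $\CC\setminus\Gamma$. This step is routine for small $|c|$ but is the one place the argument is not purely algebraic.
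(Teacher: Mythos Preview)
Your argument is correct. The paper itself provides no proof of this lemma, merely asserting that it is ``easy to prove,'' so there is no approach to compare against; your construction of the holomorphic inverse $\zeta^{-1}:\Omega_+\to\mathbb{H}$ via the contraction estimate $k|c|<1$ is the natural way to fill the gap, and the image identification you flag as the one nontrivial step is indeed routine by the properness/connectedness argument you sketch.
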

On the other hand, we'll show that $e^{ik\alpha}$ cannot be boundary value of a bounded holomorphic function in $\Omega_-$.
\begin{lemma}
If $c\neq 0$, then $e^{ik\alpha}$ cannot be boundary value of a holomorphic function in $\Omega_-$.
\end{lemma}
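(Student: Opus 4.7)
The plan is a contradiction via contour integration. Assume for contradiction that there is a bounded holomorphic $F:\Omega_-\to\mathbb{C}$, continuous up to $\Gamma$, with $F(\zeta(\alpha))=e^{ik\alpha}$. Set $G(z):=F(z)e^{-ikz}$. Because $|e^{-ikz}|=e^{k\Im z}$ with $k>0$, and $\Im z$ is bounded above on $\Omega_-$, $G$ is bounded there; moreover $|G(z)|\le\|F\|_{L^\infty}e^{k\Im z}\to 0$ as $\Im z\to-\infty$.

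For each integer $N\ge 1$ and $R>0$ I will apply Cauchy's theorem to $G$ on the domain bounded above by $\zeta([-N\pi,N\pi])$, on the sides by the vertical segments dropping from $\zeta(\pm N\pi)$ to height $-R$, and below by the horizontal segment at height $-R$. The bottom contribution is majorized by $2N\pi\|F\|_{L^\infty}e^{-kR+k|c|}$ and vanishes as $R\to\infty$, while each vertical contribution is uniformly bounded, in both $N$ and $R$, by $\|F\|_{L^\infty}\int_{-R}^{|c|}e^{ks}\,ds\le\|F\|_{L^\infty}e^{k|c|}/k$. Sending $R\to\infty$ in Cauchy's identity therefore yields the $N$-independent estimate
\[
\Big|\int_{-N\pi}^{N\pi} e^{ik\alpha}\,e^{-ik\zeta(\alpha)}\,\zeta'(\alpha)\,d\alpha\Big|\;\le\;\frac{2\|F\|_{L^\infty}e^{k|c|}}{k}.
\]

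The contradiction will come from computing this integral exactly. Substituting $\zeta(\alpha)=\alpha+ce^{ik\alpha}$ and $\zeta'(\alpha)=1+ikce^{ik\alpha}$ collapses the integrand to the $2\pi$-periodic function $e^{-ikce^{ik\alpha}}(1+ikce^{ik\alpha})$, so the integral over $[-N\pi,N\pi]$ is exactly $N$ times its value over one period $[-\pi,\pi]$. The substitution $w=ce^{ik\alpha}$, $d\alpha=dw/(ikw)$, converts the one-period integral into a $k$-fold traversal of the circle $\{|w|=|c|\}$ of $\frac{e^{-ikw}(1+ikw)}{ikw}\,dw$; the integrand has a simple pole at $w=0$ with residue $1/(ik)$, so one period equals $k\cdot 2\pi i/(ik)=2\pi$ and the full left-hand side equals $2\pi N$. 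This defeats the $N$-independent bound above as soon as $N>\|F\|_{L^\infty}e^{k|c|}/(\pi k)$, completing the contradiction. The only genuinely delicate step is establishing continuity of $F$ up to $\Gamma$ so that Cauchy's theorem applies at the top boundary, which is implicit in the notion of ``boundary value''; the remaining ingredients---the exponential decay of $G$, the uniform side and bottom estimates, and the residue calculation---are routine, and in particular no periodicity of $F$ or Schwarz reflection across $\Gamma$ is needed.
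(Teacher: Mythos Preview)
Your proof is correct and takes a genuinely different route from the paper's. The paper argues as follows: since $e^{ik\alpha}$ already extends holomorphically to $\Omega_+$ (the previous lemma), a hypothetical extension to $\Omega_-$ would glue across $\Gamma$ to produce an entire extension; this forces $\alpha$ itself to be entire in $\zeta$, making $\Psi(z)=z+ce^{ikz}$ globally invertible, which Picard's great theorem forbids because $\Psi$ has an essential singularity at $\infty$ and hence takes every value infinitely often.

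Your argument is more self-contained and more elementary: you never invoke the $\Omega_+$ extension or Picard, and instead get a contradiction directly from Cauchy's theorem by showing that the top-boundary integral $\int_{-N\pi}^{N\pi} G(\zeta(\alpha))\zeta'(\alpha)\,d\alpha$ equals $2\pi N$ exactly (your residue/Fourier computation is correct) while the side contributions are bounded uniformly in $N$ by the exponential decay of $e^{-ikz}$ in $\Omega_-$. The trade-off is that you use boundedness of $F$ (to control the sides and bottom), whereas the paper's statement does not assume boundedness explicitly; in the context of this paper, where the relevant class is $\mathcal{H}ol_{\mathcal{N}}$, your hypothesis is the natural one and nothing is lost. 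Both proofs need continuity of the extension up to $\Gamma$---yours for Cauchy on the truncated domain, the paper's for the Morera-type gluing across $\Gamma$---so your caveat on that point is apt and applies equally to the original argument.
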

\begin{proof}
If $e^{ik\alpha}$ is boundary value of a holomorphic function in $\Omega_-$, then $e^{ik\alpha}$ is entire, and so $\alpha$ is entire. Assume $\alpha=\Phi(\zeta(\alpha))$, $\Phi$ entire. Let $\Psi(\zeta)=\zeta+ce^{ik\zeta}$. Then $\Psi$ is entire, and $\Psi(\alpha)=\zeta(\alpha)$. So we have 
$$
\begin{cases}
\Psi\circ \Phi(\zeta(\alpha))=\zeta(\alpha)\\
\Phi\circ \Psi(\alpha)=\alpha.
\end{cases}
$$
$\Psi\circ \Phi$ and $\Phi\circ \Psi$ are entire, we must have $\Psi\circ \Phi(z)\equiv z, \Phi\circ\Psi(z)\equiv z$. So $\Psi$ and $\Phi$ are inverse of each other. 

If $c\neq 0$, then the function $z+ce^{ikz}$ has an essential singularity at $\infty$ because $ce^{ikz}$ does. By Picard's theorem, $z+ce^{ikz}$ attains all values in $\mathbb{C}$ infinitely many times with at most one exception.  Suppose $z_0$ is this exception, i.e., $z+ce^{ikz}=z_0$ has finitely many solutions (possibly none). But then $z+ce^{ikz}=z_0+2\pi$ has infinitely many solutions. Then 
$$z-2\pi+ce^{ikz}=z_0  \quad \Rightarrow \quad z-2\pi+ce^{ik(z-2\pi)}=z_0.$$
So $z+ce^{ikz}=z_0$ has infinitely many solutions, a contradiction. 

In particular, $z+ce^{ikz}=0$ has infinitely many solutions. So $\Psi$ is not invertible, contradiction. 
\end{proof}

\begin{lemma}
If $e^{-ik\alpha}$ is boundary value of a holomorphic function in $\Omega_-$, then $e^{ik\alpha}$ is also holomorphic in $\Omega_-$.
\end{lemma}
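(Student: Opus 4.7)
The plan is to show that the function $G$ provided by the hypothesis is non-vanishing in $\Omega_-$, so that $F := 1/G$ is holomorphic in $\Omega_-$ with boundary value $F(\zeta(\alpha)) = 1/e^{-ik\alpha} = e^{ik\alpha}$ on $\Gamma$, giving the conclusion. The proof therefore reduces to establishing that $G$ has no zeros in $\Omega_-$.

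First I would pull $G$ back through the entire parametrization $\zeta(\alpha) = \alpha + ce^{ik\alpha}$. Since $\zeta'(\alpha) = 1 + ikce^{ik\alpha}$ has modulus close to $1$ on $\mathbb{R}$ when $|c|$ is small, $\zeta$ restricts to a biholomorphism from a strip $S_\delta := \{-\delta < \Im \alpha \le 0\}$ (for some $\delta > 0$) onto a one-sided neighborhood of $\Gamma$ inside $\overline{\Omega_-}$. Then $\tilde G(\alpha) := G(\zeta(\alpha))$ is holomorphic in $S_\delta$ and continuous up to $\mathbb{R}$ with boundary value $e^{-ik\alpha}$. Since the entire function $e^{-ikz}$ also takes the boundary value $e^{-ik\alpha}$ on $\mathbb{R}$, the difference $H(\alpha) := \tilde G(\alpha) - e^{-ik\alpha}$ is holomorphic in $S_\delta$ and vanishes continuously on $\mathbb{R}$. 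Schwarz reflection across $\mathbb{R}$ then extends $H$ to a holomorphic function on $\{-\delta < \Im \alpha < \delta\}$ still vanishing on $\mathbb{R}$, and the identity theorem forces $H \equiv 0$ in $S_\delta$. Hence $G(\zeta(\alpha)) = e^{-ik\alpha}$ throughout $S_\delta$, which is nowhere zero, so $G$ is non-vanishing on the one-sided neighborhood $\zeta(S_\delta) \subset \Omega_-$ of $\Gamma$.

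The main obstacle is promoting this non-vanishing from a neighborhood of $\Gamma$ to all of $\Omega_-$, since $\zeta$ has critical points in the deep lower half $\alpha$-plane (where $e^{ik\alpha} = i/(kc)$) and so the local representation $G(z) = e^{-ik\alpha(z)}$ cannot be analytically continued by inverse-function considerations alone. I would handle this by exploiting the quasi-periodicity $\zeta(\alpha + 2\pi/k) = \zeta(\alpha) + 2\pi/k$, which, combined with the boundary identity $G(\zeta(\alpha)) = e^{-ik\alpha}$, forces $G(z + 2\pi/k) = G(z)$ on $\Gamma$ and hence on all of $\Omega_-$ by the identity theorem. The function $G$ thus descends to the quotient cylinder $\Omega_-/(2\pi/k)\mathbb{Z}$, which via $w = e^{ikz}$ is conformally equivalent to a region of the form $\{|w| > \rho(w/|w|)\}$ with $\rho$ close to $1$. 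On this annular region the descended function agrees with $1/w$ on the inner boundary (since $w = e^{ik\alpha}$ and $1/w = e^{-ik\alpha}$ on $\Gamma$), and by the same Schwarz-reflection uniqueness argument as above it must equal $1/w$ throughout. In particular it is nowhere zero, whence $G$ is nowhere zero in $\Omega_-$ and $F := 1/G$ completes the proof.
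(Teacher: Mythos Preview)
Your Step~1 (the strip argument showing $G\circ\zeta=e^{-ik\,\cdot}$ near $\mathbb{R}$) is fine, but Step~2 contains a genuine error that breaks the proof. You claim that on the inner boundary of the cylinder the descended function equals $1/w$, justifying this by ``$w=e^{ik\alpha}$ on $\Gamma$''. That is false: on $\Gamma$ one has $z=\zeta(\alpha)=\alpha+ce^{ik\alpha}$, so
\[
w=e^{ikz}=e^{ik\alpha}\,e^{ikc\,e^{ik\alpha}}\neq e^{ik\alpha}
\]
whenever $c\neq 0$. Hence $1/w\neq e^{-ik\alpha}=G(z)$ on $\Gamma$, and the comparison with $1/w$ collapses. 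Without that identification, periodicity and the near-boundary information from Step~1 give you nothing about zeros of $G$ deep inside $\Omega_-$; there is no a~priori bound or structural constraint left to rule them out. The final ``same Schwarz-reflection uniqueness argument'' is also not directly applicable, since the inner boundary is a curve rather than a line or circle, but this is moot once the boundary identity itself fails.

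The paper circumvents the global continuation problem by an algebraic trick you did not use. From $\zeta(\alpha)=\alpha+ce^{ik\alpha}$ one solves $\alpha=\zeta(\alpha)-c/e^{-ik\alpha}=\zeta(\alpha)-c/G(\zeta(\alpha))$, which suggests setting $H(z):=z-c/G(z)$. This $H$ is meromorphic on $\Omega_-$ (holomorphic away from the zero set $S$ of $G$) with boundary value $\alpha$, so $e^{-ikH}$ and $G$ share the boundary value $e^{-ik\alpha}$, forcing $e^{-ikH}=G$ on $\Omega_-\setminus S$. But at any $z_0\in S$, $H$ has a pole, so $e^{-ikH}$ has an essential singularity there, which is incompatible with $G$ being holomorphic and vanishing at $z_0$. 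Thus $S=\emptyset$, and $G$ is nowhere zero. This argument never needs to invert $\zeta$ globally or pass to the cylinder; it exploits instead the specific relation between $\zeta$ and the boundary data.
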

\begin{proof}
Let $e^{-ik\alpha}=G(\zeta(\alpha))$, where $G$ is holomorphic in $\Omega_-$. Then the zeros of $G$ is a discrete set, which we denote by $S$. We'll show that $S=\emptyset$.  Since $\zeta=\alpha+ce^{ik\alpha}$, we have 
$$\alpha=\zeta(\alpha)-\frac{c}{e^{-ik\alpha}}=\zeta(\alpha)-\frac{c}{G(\zeta(\alpha))}.$$
Define$H(\zeta):=\zeta-\frac{c}{G(\zeta)}, ~ \zeta\in \Omega_-$.  Then $H$ has boundary value $\alpha$. 
So $\alpha$ is boundary value of a meromorphic function in $\Omega_-$, with poles at $S$. 

Note that $e^{-ikH(\zeta(\alpha))}$ has boundary values $e^{-ik\alpha}$, and $e^{-ikH(\zeta)}$ is holomorphic in $\Omega_-\setminus S$, by uniqueness extension of holomorphic functions, we must have $e^{-ikH(\zeta)}=G(\zeta)$ on $\Omega_-\setminus S$. 

If $S\neq \emptyset$, then take $z_0\in S$. Then since $G(z_0)$ is defined,  $z_0$ must be a removable singularity of $e^{-ikH(\zeta)}$. However,  since $z_0$ is a pole of $H(\zeta)$, so $z_0$ is an essential singularity of $e^{-ikH(\zeta)}$, a contradiction. So $S=\emptyset$.  

So we conclude that $\alpha$ is holomorphic in $\Omega_-$, and so $e^{ik\alpha}$ is holomorphic in $\Omega_-$.
\end{proof}
\begin{corollary}
$e^{-ik\alpha}$ cannot be the boundary value of a holomorphic function in $\Omega_-$ if $c\neq 0$. 
\end{corollary}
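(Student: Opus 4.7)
The plan is to derive the corollary immediately from the two preceding lemmas by a contradiction argument. Suppose, for the sake of contradiction, that $c\neq 0$ and that $e^{-ik\alpha}$ is the boundary value on $\Gamma$ of some bounded holomorphic function $G$ on $\Omega_-$. The previous lemma then supplies a holomorphic extension of $e^{ik\alpha}$ to $\Omega_-$: one defines $H(\zeta):=\zeta-c/G(\zeta)$ on $\Omega_-\setminus S$ (where $S$ is the zero set of $G$), observes that $H$ has boundary value $\alpha$, rules out $S=\emptyset$ via the essential-singularity argument already given (an isolated zero of $G$ would produce a pole of $H$ and hence an essential singularity of $e^{-ikH}$, contradicting the removability forced by $G=e^{-ikH}$ on $\Omega_-\setminus S$), and concludes that $\alpha$ and therefore $e^{ik\alpha}$ extends holomorphically to all of $\Omega_-$.

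This, however, directly contradicts the lemma immediately preceding it, which shows that when $c\neq 0$ the function $e^{ik\alpha}$ cannot be realized as the boundary value of a holomorphic function on $\Omega_-$: the extension $\Psi(z)=z+ce^{ikz}$ of $\zeta(\alpha)$ is entire, and if a holomorphic inverse $\Phi$ existed on $\Omega_-$, entirety of both compositions would force $\Psi$ to be globally invertible on $\mathbb{C}$, which fails by Picard's theorem applied to the essential singularity of $ce^{ikz}$ at infinity. Combining these two incompatible conclusions delivers the contradiction, so no such $G$ can exist, and the corollary is established.

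There is essentially no new work to carry out here; the entire content of the corollary is the concatenation of the two lemmas already proved. The only mild subtlety worth highlighting in the write-up is ensuring that the hypothesis of the second lemma is satisfied, i.e.\ that $G$ is genuinely defined and holomorphic on an open set whose boundary contains $\Gamma$, so that the identity $e^{-ikH(\zeta)}=G(\zeta)$ on $\Omega_-\setminus S$ follows from uniqueness of holomorphic continuation from the boundary values along $\Gamma$. No step looks like an obstacle; the argument is a one-line corollary, and the real analytical content lives in the two lemmas preceding it.
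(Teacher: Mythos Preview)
Your proposal is correct and is exactly the intended argument: the corollary follows immediately by contraposition, combining the two preceding lemmas (if $e^{-ik\alpha}$ were holomorphic in $\Omega_-$, the last lemma would force $e^{ik\alpha}$ to be holomorphic there too, contradicting the lemma before it when $c\neq 0$). One small slip: in your parenthetical recap you wrote ``rules out $S=\emptyset$'' where you meant ``shows $S=\emptyset$'' (equivalently, rules out $S\neq\emptyset$).
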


\bibliography{qingtangbib}{}
\bibliographystyle{plain}

\end{document}